\tikzset{join/.code=\tikzset{after node path={%
\ifx\tikzchainprevious\pgfutil@empty\else(\tikzchainprevious)%
edge[every join]#1(\tikzchaincurrent)\fi}}}
\tikzset{>=stealth',every on chain/.append style={join},
         every join/.style={->}}
\tikzstyle{labeled}=[execute at begin node=$\scriptstyle,
\newcommand{\arXiv}[1]{\href{http://arxiv.org/abs/#1}{\tt arXiv:\nolinkurl{#1}}}
\newcommand{\arxiv}[1]{\href{http://arxiv.org/abs/#1}{\tt arXiv:\nolinkurl{#1}}}
\newcommand{\googlebooks}[1]{(preview at \href{http://books.google.com/books?id=#1}{google books})}
\newcommand{\hookdownarrow}{\mathrel{\rotatebox[origin=c]{-90}{$\hookrightarrow$}}}
\definecolor{dark-red}{rgb}{0.7,0.25,0.25}
\definecolor{dark-blue}{rgb}{0.15,0.15,0.55}
\definecolor{medium-blue}{rgb}{0,0,.8}
\definecolor{DarkGreen}{RGB}{0,150,0}
\definecolor{lightred}{RGB}{255,99,99}
\definecolor{lightblue}{RGB}{99,99,255}
	\newcommand{\bigast}{\mathop{\mathlarger{\mathlarger{\Asterisk}}}}
\theoremstyle{plain}
\newtheorem{thm}{Theorem}[section]
\newtheorem*{thm*}{Theorem}
\newtheorem{alphathm}{Theorem}
\newtheorem{cor}[thm]{Corollary}
\newtheorem*{cor*}{Corollary}
\newtheorem{lem}[thm]{Lemma}
\newtheorem{prop}[thm]{Proposition}
\newtheorem*{quest*}{Question}
\theoremstyle{definition}
\newtheorem{defn}[thm]{Definition}
\newtheorem{rem}[thm]{Remark}
\DeclareMathOperator{\id}{id}
\DeclareMathOperator{\op}{op}
\DeclareMathOperator{\Tr}{Tr}
\DeclareMathOperator{\tr}{tr}
\newcommand{\comment}[1]{}
\newcommand{\be}{\begin{enumerate}[(1)]}
\newcommand{\ee}{\end{enumerate}}
\newcommand{\N}{\mathbb{N}}
\newcommand{\Z}{\mathbb{Z}}
\newcommand{\Q}{\mathbb{Q}}
\newcommand{\F}{\mathbb{F}}
\newcommand{\R}{\mathbb{R}}
\newcommand{\C}{\mathbb{C}}
\newcommand{\noshow}[1]{}
\newcommand{\MR}[1]{}
\newcommand{\Asterisk}{\mathop{\scalebox{1.5}{\raisebox{-0.2ex}{$*$}}}}%
\newcommand{\vphi}{\varphi}
\newcommand{\<}{\left\langle}
\renewcommand{\>}{\right\rangle}
\def\semicolon{;}
\def\applytolist#1{
    \expandafter\def\csname multi#1\endcsname##1{
        \def\multiack{##1}\ifx\multiack\semicolon
            \def\next{\relax}
        \else
            \csname #1\endcsname{##1}
            \def\next{\csname multi#1\endcsname}
        \fi
        \next}
    \csname multi#1\endcsname}
\def\calc#1{\expandafter\def\csname c#1\endcsname{{\mathcal #1}}}
\def\bbc#1{\expandafter\def\csname bb#1\endcsname{{\mathbb #1}}}
\def\bfc#1{\expandafter\def\csname bf#1\endcsname{{\mathbf #1}}}
\def\sfc#1{\expandafter\def\csname s#1\endcsname{{\sf #1}}}
\def\ffc#1{\expandafter\def\csname f#1\endcsname{{\mathfrak #1}}}
\tikzstyle{shaded}=[fill=red!10!blue!20!gray!30!white]
\tikzstyle{unshaded}=[fill=white]
\tikzstyle{empty box}=[circle, draw, thick, fill=white, opaque, inner sep=2mm]
\tikzstyle{annular}=[scale=.7, inner sep=1mm, baseline]
\tikzstyle{rectangular}=[scale=.75, inner sep=1mm, baseline=-.1cm]
\begin{document}

\title{Free products of finite-dimensional and other von Neumann algebras in terms of free Araki--Woods factors}
\author{
	Michael Hartglass\thanks{Department of Mathematics and Computer Science, Santa Clara University} \\ \tt{\footnotesize mhartglass@scu.edu}
	\and
	Brent Nelson\thanks{Department of Mathematics, Michigan State University} \\ \tt{\footnotesize brent@math.msu.edu}
}\date{}
\maketitle

\begin{abstract}
\noindent
We show that any free product of finite-dimensional von Neumann algebras equipped with non-tracial states is isomorphic to a free Araki--Woods factor with its free quasi-free state possibly direct sum a finite-dimensional von Neumann algebra. This gives a complete answer to questions posed by Dykema in \cite{Dyk97} and Shlyakhtenko in \cite{Shl97}, which had been partially answered by Houdayer in \cite{Hou07} and Ueda in \cite{MR3483468}. We also extend this to suitable infinite-dimensional von Neumann algebras with almost periodic states.
\end{abstract}

%%%%%%%%%%%%%%%%%%%%%%%%%%%%%%%%%%
%%%%%%%%%%%%%%%%%%%%%%%%%%%%%%%%%%
%%%%%%%%%%%%%%%%%%%%%%%%%%%%%%%%%%

\section*{Introduction}

Since the advent of free probability by Voiculescu, there has been significant interest in studying free products of von Neumann algebras (see \cite{MR1201693, MR1363079, MR1372534,Dyk97, Shl97, MR1738186, MR1813523, Hou07, MR2838053, MR2875863, MR3164718, MR3092256, MR3483468} among others).  A landmark result of Dykema expressed free products of finite-dimensional von Neumann algebras with tracial states in terms of interpolated free group factors.  Specifically, he proved:

\begin{thm*}[{\cite[Theorem 4.6]{MR1201693}}]
Let $A$ and $B$ be hyperfinite von Neumann algebras equipped with faithful normal tracial states $\phi$ and $\psi$, respectively.  Assume that $\dim(A) \geq 2$, $\dim(B) \geq 2$, and $\dim(A) + \dim(B) \geq 5$.  Then
	\[
		(A, \phi) * (B, \psi) \cong (L(\F_{t}),\tau) \oplus C
	\]
where $C$ is finite-dimensional (possibly zero), and $t$ can be computed directly from $(A, \phi)$ and $(B, \psi)$ using ``free dimension."
\end{thm*}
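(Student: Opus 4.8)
The plan is to isolate a numerical invariant that is additive under free products and that pins down the free-group parameter, and then to prove the theorem first for finite-dimensional $A,B$ before passing to the hyperfinite case by approximation. The invariant is Dykema's \emph{free dimension}: for a finite-dimensional $A \cong \bigoplus_{k=1}^{n} M_{p_k}(\C)$ with $\phi = \bigoplus_k \alpha_k \tr_{p_k}$ (normalized traces, $\sum_k \alpha_k = 1$), set
\[
	\operatorname{fdim}(A,\phi) = 1 - \sum_{k=1}^{n} \frac{\alpha_k^2}{p_k^2}.
\]
This is calibrated so that $\operatorname{fdim}(M_p(\C)) = 1 - p^{-2}$, $\operatorname{fdim}(\C) = 0$, and --- most importantly --- so that $\operatorname{fdim}(L(\F_t),\tau) = t$ and free dimension is additive under free products whenever the result is a factor. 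The target parameter is then $t = \operatorname{fdim}(A,\phi) + \operatorname{fdim}(B,\psi)$, corrected by whatever mass is absorbed into the finite-dimensional summand $C$.

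The two engines driving the finite-dimensional computation are (i) the amplification/compression formula for interpolated free group factors, $(L(\F_t))_\gamma \cong L(\F_{1 + \gamma^{-2}(t-1)})$ for $\gamma \in (0,\infty)$, together with the free additivity $L(\F_s) * L(\F_t) \cong L(\F_{s+t})$; and (ii) a matrix-unit compression argument. For (ii), if $A$ contains a system of matrix units for $M_{p}(\C)$ with minimal projection $e_{11}$, I would compress $A * B$ by $e_{11}$: the corner $e_{11}(A*B)e_{11}$ is generated by the reduced algebras sitting under $e_{11}$ together with the $B$-conjugates of $e_{11}$, and a free-independence computation identifies it as a free product of strictly simpler data, the reduction in matrix size being paid for by the amplification formula (i). Iterating this collapses all matrix blocks to weighted abelian algebras, so the whole problem reduces to free products $\C^n * \C^m$, which are handled by Voiculescu's original random-matrix models, with the free-dimension bookkeeping reassembling the answer. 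The hypotheses $\dim A, \dim B \geq 2$ and $\dim A + \dim B \geq 5$ are exactly what keeps $t > 1$ and avoids the degenerate case $(\C^2,\tfrac12\tr) * (\C^2,\tfrac12\tr) \cong L(D_\infty)$, where the free dimension equals $1$ and no factor appears.

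To reach the hyperfinite case I would write $A$ and $B$ as strong closures of increasing chains of finite-dimensional subalgebras $A_i \uparrow A$, $B_i \uparrow B$ compatible with the states, so that $(A,\phi)*(B,\psi)$ is the strong closure of $\bigcup_i (A_i,\phi)*(B_i,\psi)$. Each term is, by the finite-dimensional case, of the form $L(\F_{t_i}) \oplus C_i$ with $t_i \uparrow t := \lim_i \operatorname{fdim}$. The crucial point is that the connecting inclusions are \emph{standard embeddings} of free group factors --- trace-preserving maps realizing $L(\F_{t_i})$ inside $L(\F_{t_{i+1}})$ in the canonical way --- for which an inductive limit is again an interpolated free group factor, with parameter the supremum $t$; the pieces $C_i$ either stabilize or are absorbed, leaving the single summand $C$.

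The main obstacle is the interplay of (i) with the limiting step. The compression formula for $L(\F_t)$ is itself a substantial theorem resting on random-matrix models and the fundamental-group computations of R\u{a}dulescu, and without it the free-dimension arithmetic has no content. Granting it, the genuinely delicate part is verifying that the connecting maps really are standard embeddings and that inductive limits along such embeddings stay within the interpolated family with the expected parameter: an increasing union of free group factors need not be a free group factor, so the entire approximation argument hinges on controlling these embeddings. Finally, tracking precisely which ``large atoms'' of $(A,\phi)$ and $(B,\psi)$ survive into $C$ --- the same degeneration visible in the excluded $D_\infty$ case --- requires separate, if routine, care.
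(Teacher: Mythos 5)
Your proposal reconstructs, in essentially correct form, Dykema's actual proof of this theorem from \cite{MR1201693} (which the present paper cites rather than reproves): the free-dimension invariant $1-\sum_k \alpha_k^2/p_k^2$, the compression/amplification formula $(L(\F_t))_\gamma \cong L(\F_{1+\gamma^{-2}(t-1)})$ together with matrix-unit compressions to strip matrix blocks down to the abelian base cases, and then standard embeddings with their inductive-limit stability to pass from finite-dimensional to hyperfinite algebras, with the excluded case $\bbC^2 * \bbC^2$ correctly identified as the degenerate boundary. This is the same skeleton the paper itself adapts to the non-tracial setting in Sections~\ref{sec:fd} and~\ref{sec:std_embeddings}, so no further comparison is needed.
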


\noindent
This theorem therefore established that free group factors were the ``minimal" II$_{1}$ factors that could appear in a free product of two von Neumann algebras with tracial states.

Later, R\u{a}dulescu \cite{MR1372534} studied the free product $(L(\Z),\tau) * (M_{2}(\C), \psi)$ for $\psi$ a nontracial state, and showed that it is a type $\mathrm{III}_{\lambda}$ factor with centralizer $L(\F_{\infty})$ and discrete core $L(\bbF_\infty)\otimes\cB(\cH)$.  Dykema \cite{Dyk97} and Ueda \cite{MR3483468} (using different techniques)  extended R\u{a}dulescu's result with the following theorem.

\begin{thm*}[{\cite[Theorem 3]{Dyk97} and \cite[Theorem 3]{MR3483468}}]\label{thm:Dykema}
Let $A$ and $B$ be two separable von Neumann algebras equipped with faithful normal states $\phi$ and $\psi$ respectively, at least one of which is not a trace.  Assume that $A$ and $B$ are not one-dimensional, and are countable direct sums of the following
\begin{enumerate}

\item[\normalfont{(1)}] Type I factors

\item[\normalfont{(2)}] Diffuse hyperfinite von Neumann algebras on which $\phi$ or $\psi$ is a trace

\end{enumerate}
Let $(\cM, \vphi) = (A, \phi) * (B, \psi)$.  Then $\cM = \cM_{0} \oplus C$ where $C$ is finite-dimensional (possibly zero) and, $\cM_{0}$ is a type $\mathrm{III}$ factor.  Defining $\vphi_{0} = \vphi|_{\cM_{0}}$, we have $\cM_{0}^{\vphi_{0}} \cong L(\F_{\infty})$, $\vphi_{0}$ is almost periodic, and the point spectrum of $\Delta_{\vphi}$ (the modular operator of $\vphi$) is the group generated by the point spectra of $\Delta_{\phi}$ and $\Delta_{\psi}$.

\end{thm*}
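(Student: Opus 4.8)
The plan is to realize the factor $\cM_0$ as an almost periodic free Araki--Woods factor in the sense of Shlyakhtenko, and then to read off every claimed invariant from the modular theory of that model. Once an identification $\cM_0 \cong \Gamma(H_\R, U_t)''$ is in hand, with $U_t$ almost periodic and with eigenvalues generating the subgroup $\Lambda \subseteq \R_{>0}$ determined by the data, Shlyakhtenko's structural analysis of these factors directly supplies that $\cM_0$ is type III, that its free quasi-free state has centralizer $L(\F_\infty)$, and that the point spectrum of its modular operator is $\Lambda$. So the substantive work is the identification, and the invariant $\Lambda$ is what must be matched. That $\vphi$ is almost periodic at all is the easy input: on a type I factor $B(\cH)$ a normal faithful state has modular operator with pure point spectrum equal to the ratios of the eigenvalues of its density, while on a diffuse hyperfinite algebra carrying a trace the modular operator is trivial; since $\sigma^\vphi$ restricts on each free factor to the given modular flow, $\vphi$ is almost periodic and $\text{pt-spec}(\Delta_\vphi)$ is contained in the group generated by $\text{pt-spec}(\Delta_\phi)$ and $\text{pt-spec}(\Delta_\psi)$.

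First I would pass to an explicit set of free generators. Writing $A = \bigoplus_i A_i$, $B = \bigoplus_j B_j$ and diagonalizing the states on the type I summands, one obtains systems of matrix units together with free generators for the diffuse tracial pieces, all free with respect to $\vphi$. The mechanism producing the type III structure is then the standard one: for minimal eigenprojections $p \in A$ and $q \in B$, the mixed corners relating them --- built from products of the free generators --- realize generalized circular elements, and as $p,q$ range over the spectral projections these assemble into a $*$-free family of generalized circular elements whose covariances are governed by the eigenvalue ratios $\lambda_i/\lambda_k$. Matching this family against Shlyakhtenko's canonical presentation identifies the von Neumann subalgebra they generate with a free Araki--Woods factor whose orthogonal group $U_t$ has precisely the eigenvalues in $\Lambda$; this simultaneously yields the reverse containment $\Lambda \subseteq \text{pt-spec}(\Delta_\vphi)$.

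It remains to account for the two direct summands. The minimal central projections of $A \oplus B$ that freeness cannot ``mix'' across the free product --- heuristically, the weighted atoms of $\phi$ and $\psi$ that fail to be connected to any partner --- are exactly what cannot be absorbed into generalized circular generators, and these split off as the finite-dimensional summand $C$; a connectedness bookkeeping over the weighted atoms determines $C$ precisely and shows the complementary piece is a factor. I would prove factoriality of $\cM_0$ from the freeness of the generators: the modular automorphism $\sigma_t^\vphi$ scales each generalized circular generator by its eigenvalue, so any central modular-invariant projection must be trivial, and ergodicity of the nontrivial part of $\Lambda$ then forces type III.

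I expect the identification step to be the main obstacle, and within it the precise bookkeeping of which weighted atoms survive into $C$ versus being swallowed into the free Araki--Woods core, together with the exact matching of covariances to the eigenvalue list $\Lambda$. If one does not wish to outsource the centralizer to Shlyakhtenko's computation, one can instead compute $\cM_0^{\vphi_0}$ directly: it is generated by the diagonal parts of the generators and the tracial diffuse pieces, all free with respect to the trace $\vphi_0|_{\cM_0^{\vphi_0}}$, hence is a tracial free product of compressed hyperfinite algebras, and Dykema's free-dimension calculus for interpolated free group factors gives $\cM_0^{\vphi_0} \cong L(\F_t)$ with $t = \infty$ (the infinitely many bridging generators forced by the nontrivial modular flow yield infinite free dimension), whence $L(\F_\infty)$. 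Carrying out this calculus uniformly across all summands, and reconciling the non-tracial free product with its tracial centralizer, is where essentially all of the technical labor resides.
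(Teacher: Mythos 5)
First, a point of orientation: the statement you were given is not proved in this paper at all --- it is the background theorem of Dykema \cite{Dyk97} and Ueda \cite{MR3483468}, quoted in the introduction --- so the relevant comparison is with those proofs and with the role the statement plays here. Against that backdrop your proposal has a fatal circularity. Your central step --- identify $\cM_0$ with an almost periodic free Araki--Woods factor $(T_H,\vphi_H)$ and then read off type III-ness, the centralizer $L(\F_\infty)$, and the point spectrum from Shlyakhtenko's structure theory --- is strictly \emph{stronger} than the theorem to be proved. It is precisely Shlyakhtenko's question, which was open when Dykema and Ueda proved the cited theorem, and whose affirmative answer is the main result of the present paper (Theorems A and B), obtained only via the graph algebras $\cM(\Gamma,\mu)$, the inductive computation of $M_n(\C)*[\C\oplus\C]$, and the standard-embedding machinery of Section 4. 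You cannot invoke that identification as the engine of a proof of the Dykema--Ueda theorem; neither of the original proofs does, and both instead establish the listed invariants directly (Dykema by analyzing eigenoperators of the modular group, compressions, and standard embeddings of the centralizers; Ueda by structural arguments about the discrete core).

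Moreover, the one step you offer toward the identification --- that the ``mixed corners'' between minimal eigenprojections of $\phi$ and $\psi$ assemble into a $*$-free family of generalized circular elements matching Shlyakhtenko's canonical presentation --- is asserted, not proved, and it is exactly where all known approaches break down. In the non-tracial setting with lopsided weights (already for $\underset{\alpha,1-\alpha}{M_2(\C)} * [\underset{\beta}{\C}\oplus\underset{1-\beta}{\C}]$ with $\frac12<\alpha<\beta<1$) the relevant operators have partial-isometry polar parts with defect projections --- compare the distribution of $Y_eY_e^*$ when $\mu(e)<1$ in Subsection 1.4 --- and the naive matching of covariances fails; this is why Houdayer's result carried restrictive mass hypotheses and why this paper needed the weighted-graph model at all. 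Your fallback (compute $\cM_0^{\vphi_0}$ directly by free-dimension calculus) does gesture at Dykema's actual route, but as written it skips the substance: why the centralizer is generated by the elements you name, why the relevant inclusions in the inductive limit are standard embeddings (so that the limit is $L(\F_\infty)$ rather than some other free-dimension value), and why the point spectrum of $\Delta_\vphi$ equals, rather than merely contains, the group generated by the point spectra of $\Delta_\phi$ and $\Delta_\psi$ --- your containment in the harder direction rests on the same unproven freeness claim. The bookkeeping of the finite-dimensional summand $C$ via ``connectedness of weighted atoms'' is a reasonable heuristic consistent with \cite{Dyk97}, but it is not an argument.
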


Notably, the above theorem does not address how to determine when two different $\cM_{0}$'s are isomorphic.  Shlyakhtenko created a natural candidate for $\cM_{0}$ when he constructed the (almost periodic) \emph{free Araki--Woods factors} $(T_{H}, \vphi_{H})$, indexed by countable, non-trivial subgroups $H<\bbR^+$, and equipped with faithful normal states called \emph{free quasi-free states} \cite{Shl97}.  Shlyakhtenko showed that for countable, non-trivial $H<\bbR^+$, $(T_{H}, \vphi_{H})$ is a factor of type III$_{\lambda}$ for $\lambda \in (0, 1]$, and $\lambda \neq 1$ if and only if $H = \langle \lambda \rangle$.  In addition, it was shown that the point spectrum of $\Delta_{\vphi_{H}}$ is exactly $H$, the family $\{(T_{H}, \vphi_{H})\colon H<\R^+\}$  is closed under taking free products, $H$ uniquely determines $(T_{H}, \vphi_{H})$, $(T_{H})^{\vphi_{H}} \cong L(\F_{\infty})$, and $T_H$ has discrete core isomorphic to $L(\bbF_\infty)\otimes \cB(\cH)$.  Furthermore, Shlyakhtenko showed that the factor of R\u{a}dulescu \cite{MR1372534} is isomorphic to $(T_{\lambda}, \vphi_{\lambda})$ for some $\lambda \in (0, 1)$.
 
These von Neumann algebras arise from a Fock space construction; namely, a modification of Voiculescu's free Gaussian functor \cite{MR1217253}. In particular, the free quasi-free state is given by the vacuum state and for $H=\<1\>$ the construction yields a free group factor. It is thus natural to assert that $(T_{H}, \vphi_{H})$ are the non-tracial analogues of the free group factors. This led to the following question posed by Shlyakhtenko:

\begin{quest*}[\cite{Shl97}]
Suppose that $(A, \phi)$, $(B, \psi)$, and $(\cM_0,\vphi_0)$ are as in the above theorem. Must $(\cM_{0}, \varphi_{0}) \cong (T_{H}, \vphi_{H})$ where $H$ is the subgroup of $\R^{+}$ generated by the point spectra of $\Delta_{\phi}$ and $\Delta_{\psi}$?

\end{quest*}

A partial answer to the above question was obtained by Houdayer \cite{Hou07} who identified $(M_{2}(\C), \phi) *  (M_{2}(\C), \psi)$ with an almost periodic free Araki--Woods factor provided that at least one of $\phi$ or $\psi$ is not a trace. An essential step in his proof was to identify $(M_{2}(\C),\phi)* [\C\oplus \C]$ with an almost periodic free Araki--Woods factor (see Lemma \ref{lem:Hou} below). However, his methods required one to assume that the mass in $\C\oplus \C$ was not overly concentrated in a single summand, and this hypothesis prevents one from applying his methods to free products of higher dimensional matrix algebras.

%\begin{thm*}[\cite{Hou07}]
%Suppose $\alpha$ and $\beta$ satisfy $\frac12 \leq \beta \leq \alpha < 1$ with $\alpha>\frac12$, and let $\lambda = \frac{1-\alpha}{\alpha}$.
%
%\begin{enumerate}
%
%\item $\underset{\alpha, 1 - \alpha}{M_{2}(\C)} * (\underset{\beta}{\C} \oplus \underset{1-\beta}{\C}) \cong (T_{\lambda}, \vphi_{\lambda})$.
%
%\item Suppose $(A, \phi)$ and $(B, \psi)$ are two von Neumann algebras with faithful normal states, and that there exist modular inclusions $\underset{\alpha, 1 - \alpha}{M_{2}(\C)} \hookrightarrow A$ and $(\underset{\beta}{\C} \oplus \underset{1-\beta}{\C}) \hookrightarrow B$.  Then
%$$
%(A, \phi) * (B, \psi) \cong (A, \phi) * (B, \psi)  * (T_{\lambda}, \vphi_{\lambda}).
%$$
%
%\end{enumerate}
%
%\end{thm*}

More evidence of a positive answer to Shlyakhtenko's question was obtained by Ueda, who showed in \cite{MR3483468} that the discrete core of $\cM_{0}$ as above is isomorphic to $L(\F_{\infty}) \otimes \cB(\cH)$. 

Despite these breakthroughs, it was still unknown how to identify the following free products with almost periodic free Araki--Woods factors (see Section~\ref{subsec:status_quo} for an explanation of the notation):
\begin{itemize}

\item $\underset{\alpha, 1 - \alpha}{M_{2}(\C)} * \left[\underset{\beta}{\C} \oplus \underset{1-\beta}{\C}\right]$ where $\frac12 < \alpha < \beta  < 1$;

\item $(M_{n}(\C), \phi) * \left[\underset{\beta}{\C} \oplus \underset{1-\beta}{\C}\right]$ for $\phi$ non-tracial and $n \geq 3$;

\item $(M_{n}(\C), \phi) * (M_{m}(\C), \psi)$ for arbitrary $n$ and $m$ and at least one of $\phi$ or $\psi$ non-tracial.

\end{itemize}

In this paper we answer Shlyakhtenko's question in the affirmative, namely we prove the following theorem (see Theorem \ref{thm:finitedim}).

\begin{alphathm}\label{thm:finitedim2}
Let $(A, \phi)$ and $(B, \psi)$ be two finite-dimensional von Neumann algebras with faithful states $\phi$ and $\psi$ respectively, both of which are at least two-dimensional.  Assume that at least one of $\phi$ or $\psi$ is not a trace, and that up to unitary conjugation,
$$
(A, \phi) = \bigoplus_{i = 1}^{n} \underset{\alpha_{i,1},\cdots, \alpha_{i, k_{i}}}{\overset{p_{i,1}, \cdots, p_{i, k_{i}}}{M_{k_{i}}(\C)}} \qquad\text{and}\qquad(B, \psi) = \bigoplus_{j = 1}^{m} \underset{\beta_{j,1},\cdots, \beta_{j, \ell_{j}}}{\overset{q_{j,1}, \cdots, q_{j, \ell_{j}}}{M_{\ell_{j}}(\C)}}.
$$
Let $H$ be the group generated by the point spectra of $\Delta_{\phi}$ and $\Delta_{\psi}$.  Then 
$$
(A, \phi) * (B, \psi) \cong (T_{H}, \vphi_{H}) \oplus C
$$
where $C$ is finite-dimensional (possibly zero) and can be determined explicitly from $(A, \phi)$ and $(B, \psi)$.  See Theorem \ref{thm:finitedim} below. 
% The central summands of $C$ are determined exactly as in \cite{Dyk97} as follows:  $C$ can only be nonzero if either $k_{i} = 1$ for some $i$ or  $\ell_{j} = 1$ for some $j$.  If $k_{i} = 1$ for some $i$, then a nonzero central summand appears appears if and only if there is an index $j$, satisfying $\gamma_{i, j} = \sum_{l = 1}^{\ell_{j}} \frac{1 - \alpha_{i, 1}}{\beta_{j, l}} < 1$.  This central summand is 
%$$
%\underset{\gamma_{i, j, 1}', \cdots, \gamma_{i, j, \ell_{j}}'}{\overset{\overline{q}_{i, j, 1}, \cdots \overline{q}_{i, j, \ell_{j}}}{M_{\ell_{j}}(\C)}}
%$$
%where $\gamma_{i, j, k}' = \beta_{j, k}(1 - \gamma_{i, j})$ and $\overline{q}_{i, j, k} \leq p_{i, 1} \wedge q_{j, k}$.  An analogous remark holds for $\ell_{j} = 1$.

\end{alphathm}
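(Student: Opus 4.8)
The plan is to use the Dykema--Ueda theorem for the coarse structure and to reduce everything to a \emph{constructive} identification of the type III factor part with a free Araki--Woods factor. By that theorem we may write $(A,\phi)*(B,\psi) = \cM_0 \oplus C$ with $C$ finite-dimensional and $\cM_0$ a type III factor whose almost periodic state $\vphi_0$ satisfies $\cM_0^{\vphi_0}\cong L(\bbF_\infty)$ and has $\Delta_{\vphi_0}$ with point spectrum exactly $H$. Knowing these invariants is not by itself enough, since it is precisely Shlyakhtenko's question whether they determine the algebra. However, Shlyakhtenko showed both that the point spectrum of $\Delta_{\vphi_H}$ is $H$ and that $H$ determines $(T_H,\vphi_H)$ uniquely; so once $\cM_0$ is exhibited as \emph{some} free Araki--Woods factor, its point spectrum $H$ identifies it as precisely $(T_H,\vphi_H)$. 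The entire task is therefore to realize $\cM_0$ concretely as a free Araki--Woods factor, and I would do this by modeling the whole free product on an explicit operator-valued Fock space.

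Concretely, I would first present $(A,\phi)*(B,\psi)$ as the von Neumann algebra generated by the finite-dimensional algebra $D$ spanned by the minimal projections $p_{i,r}$ and $q_{j,s}$ together with an operator-valued semicircular/circular family over $D$ whose covariance encodes the weights $\alpha_{i,r}$ and $\beta_{j,s}$. This is the weighted bipartite graph picture: the vertices are the matrix summands of $A$ and $B$ (weighted by the state) and the off-diagonal generators are the edges carrying the free product. The base cases to nail down first are the minimal free products $\underset{\alpha,1-\alpha}{M_2(\C)} * [\underset{\beta}{\C}\oplus\underset{1-\beta}{\C}]$ and $(M_n(\C),\phi)*[\C\oplus\C]$. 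I would extend Lemma~\ref{lem:Hou} to \emph{all} weight configurations, in particular removing Houdayer's hypothesis that the mass of $\C\oplus\C$ not be concentrated, by a direct computation in the Fock model, and then handle $n\ge 3$ by a compression/amplification argument that cuts $M_n(\C)$ down to the $2\times 2$ corners one already understands.

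With the base cases in hand, the engine of the argument is an absorption lemma: for finite-dimensional $(D,\delta)$ one should have $(T_H,\vphi_H)*(D,\delta)\cong(T_{H'},\vphi_{H'})\oplus C'$, where $H'$ is generated by $H$ together with the point spectrum of $\Delta_\delta$. This combines Shlyakhtenko's closure of the free Araki--Woods family under mutual free products with a separate treatment of the finite-dimensional factor, and amounts to stability of the class $\{T_H\oplus C\}$ under free products with finite-dimensional algebras. I would then decompose $(A,\phi)$ and $(B,\psi)$ along their matrix summands, write the full free product as an iterated free product of base-case pieces, and apply the absorption lemma repeatedly, at each stage enlarging $H$ by the new point-spectral ratios and splitting off any new finite-dimensional corner into $C$. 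The final bookkeeping identifies $H$ as the group generated by all ratios $\alpha_{i,r}/\alpha_{i,r'}$ and $\beta_{j,s}/\beta_{j,s'}$, i.e.\ by the point spectra of $\Delta_\phi$ and $\Delta_\psi$, and determines $C$ explicitly from the low-weight summands.

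I expect the \textbf{main obstacle} to be exactly the regime Houdayer could not reach: the balanced or concentrated configurations and the higher matrix algebras $M_n(\C)$ with $n\ge 3$, where the interaction between the matrix units and the modular automorphism group is most delicate. The difficulty is twofold. First, one must produce genuinely \emph{state-preserving} isomorphisms rather than bare von Neumann algebra isomorphisms, so the modular data -- the weights and the resulting point spectrum $H$ -- must be tracked exactly through every compression and every application of the absorption lemma. Second, one must correctly detect when a finite-dimensional summand splits off; this is governed by the small weights and is the mechanism producing the (possibly nonzero) $C$, and one must verify that what remains is a \emph{single} type III factor with point spectrum exactly $H$, matching the Dykema--Ueda output. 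Reconciling the explicit Fock-model computation with the abstract structural statement is where the real content lies.
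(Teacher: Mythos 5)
Your overall architecture actually tracks the paper's quite closely---the ``operator-valued semicircular family over the diagonal $D$ with covariance encoding the weights'' is precisely the free graph von Neumann algebra $\cM(\Gamma,\mu)$ machinery the paper runs on, and your base cases are exactly Theorems \ref{thm:M2} and \ref{thm:induction}---but your assembly engine has a genuine gap. The absorption lemma you propose, $(T_H,\vphi_H)*(D,\delta)\cong (T_{H'},\vphi_{H'})\oplus C'$, is simultaneously mis-stated and too weak. Mis-stated because $T_H$ is diffuse, so such a free product is always a factor and $C'=0$: no finite-dimensional corner ever splits off at an absorption stage, and your mechanism therefore mislocates the origin of $C$, which arises only in the base computations (the $\gamma<1$ cases of Theorems \ref{thm:M2} and \ref{thm:induction}, where a corner $\overline{e_{i,i}}\leq e_{i,i}\wedge p$ of a matrix summand survives). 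Too weak because after one step the intermediate object is $T_{H_1}\oplus C_1$, so the next stage requires computing free products of the form $\left[\,T_{H_1}\oplus F\,\right]*(D_2,\delta_2)$ with $F$ finite-dimensional, which plain absorption of $T_{H_1}$ does not reach; moreover $A*B$ is simply not an iterated free product of its matrix summands in any literal sense. What is needed---and what the paper supplies---is Dykema's compression lemma (Lemma \ref{lem:Dykema}), which converts the replacement of a single direct summand into a genuine free product at the level of a compressed corner, the central-support transfer lemma (Lemma \ref{lem:antman}) to reconstitute the full algebra from that corner, and the intermediate Propositions \ref{prop:matrix1}, \ref{prop:matrix2}, and \ref{prop:matrix3} handling free products against direct sums that contain a diffuse summand. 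Without these, your ``bookkeeping'' step is a restatement of the problem rather than an argument.

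The second gap is in the induction to $M_n(\C)$, $n\geq 3$. Compressing by $P=e_{n-1,n-1}+e_{n,n}$ does not produce a plain free product of $2\times 2$ corners one already understands: it produces an \emph{amalgamated} free product $M_2(\C)*_{D'}(P\cN P)$ over the two-dimensional diagonal $D'$ (Proposition \ref{prop:createfree}), and there is no general technique for evaluating such amalgams. The paper resolves exactly this point with the graph model: it realizes $P\cN P$ as a corner of a graph algebra, adjoins an edge to implement the amalgamated copy of $M_2(\C)$, and concatenates edges (the element $u_4 Y_{e_1}$) to recognize $e_{n,n}\cM e_{n,n}$ as a corner of a \emph{different} graph algebra $\Gamma'$ whose isomorphism class is known from \cite{HNFreeGraph}. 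Your ``direct computation in the Fock model'' for the base case $\frac12<\alpha<\beta<1$ is likewise a placeholder for the same maneuver, which is the hardest part of the paper (Theorem \ref{thm:M2} passes through Proposition \ref{prop:amalgamate}, a three-vertex graph algebra, and the case split $\gamma\geq 1$ versus $\gamma<1$). Finally, your Dykema--Ueda framing is legitimate but inessential: provided all isomorphisms are state-preserving, exhibiting the factor part as some $(T_{H''},\vphi_{H''})$ does pin down $H''=H$ by point spectrum, but the paper never needs the a priori coarse decomposition, since the constructive computation delivers the decomposition, the group $H$, and the summand $C$ simultaneously.
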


This theorem is extended to the hyperfinite case in the following Theorem (see Theorem \ref{thm:hyperfinite}).

\begin{alphathm}\label{thm:hyperfinite2}
Let $(A, \phi)$ and $(B, \psi)$ be von Neumann algebras with normal faithful states $\phi$ and $\psi$ with at least one of $\phi$ or $\psi$ not a trace.  Assume further that $\dim(A), \dim(B) \geq 2$ and $A$ and $B$  are countable direct sums of algebras of the following types:
\begin{itemize}

\item separable type I factors with faithful normal states;

\item diffuse von Neumann algebras of the form $\displaystyle \overline{\bigotimes_{n=1}^{\infty} (F_{i}, \phi_{i}) }$ where each $F_{i}$ is finite-dimensional and the state is the tensor product of the $\phi_{i}$;  

\item $(M, \gamma) \otimes (L(\F_{t}),\tau)$ with $M$ a separable type I factor, finite or infinite-dimensional;

\item $(N, \gamma') \otimes (T_{G}, \vphi_{G})$ with $N$ a separable type I factor, finite or infinite-dimensional, and $G$ a countable, non-trivial subgroup of $\R_{+}$.

\end{itemize}
\noindent
Let $(\cM, \vphi) = (A, \phi) * (B, \psi)$.  Then $(\cM, \vphi) \cong (T_{H}, \vphi_{H}) \oplus C$ where $H$ is the group generated by the point spectra of $\Delta_{\phi}$ and $\Delta_{\psi}$, and $C$ is finite-dimensional and is determined exactly as in Theorem \ref{thm:finitedim} below.

\end{alphathm}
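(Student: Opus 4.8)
The plan is to deduce Theorem~\ref{thm:hyperfinite2} from the finite-dimensional case, Theorem~\ref{thm:finitedim2}, by an approximation argument built on Shlyakhtenko's structural results for the free Araki--Woods factors $(T_H,\vphi_H)$ recalled above---in particular that the family $\{(T_H,\vphi_H)\}$ is closed under free products \cite{Shl97}, that $H$ determines $(T_H,\vphi_H)$, and that $(T_H)^{\vphi_H}\cong L(\F_\infty)$ with discrete core $L(\F_\infty)\otimes\cB(\cH)$. First I would isolate a single ``master'' absorption statement: for any $(P,\omega)$ drawn from the allowed list of building blocks and any subgroup $G<\R_+$, the free product $\bigl[(T_G,\vphi_G)\oplus D\bigr]*(P,\omega)$ is again of the form $(T_{G'},\vphi_{G'})\oplus D'$, where $D,D'$ are finite-dimensional and $G'=\langle G,\sigma_p(\Delta_\omega)\rangle$. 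Granting this, the theorem follows by writing $A=\bigoplus_i A_i$ and $B=\bigoplus_j B_j$ and building the free product one summand at a time, with the finite-dimensional corner $C$ and the group $H$ accumulating exactly as in Theorem~\ref{thm:finitedim2}.

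The two hyperfinite types of block---separable type~I factors and the diffuse tensor products $\overline{\bigotimes_n(F_n,\phi_n)}$---I would handle by state-preserving finite-dimensional approximation. Each such $(P,\omega)$ is the weak closure of an increasing chain of \emph{unital} finite-dimensional subalgebras $P_1\subset P_2\subset\cdots$ on which $\omega$ restricts faithfully: for the tensor product one takes the truncations $F_1\otimes\cdots\otimes F_N$, and for a type~I factor $\cB(\cH)$ with density matrix $\rho=\sum_i\mu_i e_{ii}$ one takes the ``staircase'' $P_n=M_n(\C)\oplus\C(1-p_n)$ with $p_n=\sum_{i\le n}e_{ii}$. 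The point spectra $\sigma_p(\Delta_{\omega|_{P_n}})$ then generate subgroups increasing to $\sigma_p(\Delta_\omega)$. The crucial continuity input is that the free product commutes with such inductive limits, so that $(T_G\oplus D)*(P,\omega)=\overline{\bigcup_n (T_G\oplus D)*(P_n,\omega|_{P_n})}$ with compatible states, together with the fact that $\overline{\bigcup_n(T_{H_n},\vphi_{H_n})}\cong(T_H,\vphi_H)$ whenever $H_n\uparrow H$, which I would read off from Shlyakhtenko's uniqueness. Each finite stage is computed by Theorem~\ref{thm:finitedim2}, and passage to the limit yields the master statement for these blocks.

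The remaining blocks, $(M,\gamma)\otimes(L(\F_t),\tau)$ and $(N,\gamma')\otimes(T_G,\vphi_G)$, are not hyperfinite and admit no finite-dimensional approximation, so I would treat them structurally. Here I would exploit the stability of the free group and free Araki--Woods factors under amplification---so that tensoring with a type~I factor and compressing by a minimal projection reduces a free product involving $M\otimes T_G$ to one involving $T_G$ itself---together with Shlyakhtenko's closure of the family under free products. For the tracial factor $L(\F_t)$ the state contributes trivial point spectrum $\{1\}$, so such a summand affects $H$ only through the modular data of $\gamma$ (resp.\ $\gamma'$), while the free-group part is absorbed using Dykema's tracial free-product technology \cite{MR1201693} inside the centralizer, already known to be $L(\F_\infty)$. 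Combining the three cases gives the master absorption statement, and hence the theorem.

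The main obstacle, I expect, is the continuity argument in the type~III setting: one must verify that the inductive limit preserves faithfulness and normality of the free-product state, that the finite-dimensional summand $C$ and its weights stabilize rather than escape to infinity, and above all that the intermediate point spectra converge to generate exactly $H$---so that $\overline{\bigcup_n T_{H_n}}\cong T_H$ with no spectrum lost or spuriously created in the limit. A secondary difficulty is the non-hyperfinite blocks, where absorption must be established by hand via amplification and core computations rather than by approximation; reconciling the tracial contribution of $L(\F_t)$ and the type~III contribution of $T_G$ into a single group $H$ and a single finite-dimensional $C$ is the delicate bookkeeping step.
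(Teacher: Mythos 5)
Your overall architecture---absorbing one direct summand at a time, approximating the hyperfinite blocks by increasing unital finite-dimensional subalgebras, and handling the $(M,\gamma)\otimes L(\F_t)$ and $(N,\gamma')\otimes (T_G,\vphi_G)$ blocks by amplification---does match the paper's strategy, but your crucial continuity input contains a genuine gap, and it is exactly the gap that the paper's Section~\ref{sec:std_embeddings} exists to fill. Shlyakhtenko's uniqueness theorem says that the group $H$ determines the isomorphism class of $(T_H,\vphi_H)$; it says nothing about inductive limits. The isomorphism class of $\overline{\bigcup_n \cM_n}$ depends on the connecting embeddings $\cM_n\hookrightarrow\cM_{n+1}$ and not only on the isomorphism classes of the $\cM_n$, so from $\cM_n\cong (T_{H_n},\vphi_{H_n})$ and $H_n\uparrow H$ one cannot ``read off'' that $\overline{\bigcup_n\cM_n}\cong (T_H,\vphi_H)$---just as, in the tracial setting, Dykema could not conclude that an inductive limit of interpolated free group factors is again an interpolated free group factor without proving the connecting maps are \emph{standard embeddings} \cite{MR1201693,MR1256179}. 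The paper's proof introduces the analogous notion for almost periodic free Araki--Woods factors (Definition~\ref{def:standard}), built on a hyperfinite matricial model that permits cutting, pasting, and compressing generalized circular elements (Lemmas~\ref{lem:hyperfinite_disintegration} and \ref{lem:cut_and_paste}, Proposition~\ref{prop:hyperfinitematrixmodel}, Corollary~\ref{cor:changegen2}), and then verifies that standardness is independent of the chosen generators and projections (Proposition~\ref{prop:changestandard}), composes and passes to inductive limits (Proposition~\ref{prop:std_embedding_inductive_limit}), is stable under compression (Proposition~\ref{prop:std_embedding_compression}), and holds for the canonical inclusions into free products (Proposition~\ref{prop:std_embedding_free_product}). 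Your proposal offers no substitute for this machinery; you correctly flag the limit identification as ``the main obstacle,'' but the only tool you invoke for it---uniqueness of $(T_H,\vphi_H)$---does not address it.

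Two secondary discrepancies are worth noting. First, at each stage of the exhaustion one must know not merely that the stage is $(T_{G_n},\vphi_{G_n})\oplus D_n$, but that the finite-dimensional corner stabilizes and that the inclusion restricted to the complement of $C$ is standard; the paper arranges this in Lemma~\ref{lem:inf_direct_sum_M_n} by first splitting off the \emph{finitely many} summands (the sets $I_0$, $J_0$) capable of producing finite-dimensional corners, choosing the truncation parameter $K$ so that the residual central projections $p_A$, $p_B$ have small enough mass to lie in the diffuse part, and then certifying standardness at every step via Lemma~\ref{lem:Dykema} together with Propositions~\ref{prop:std_embedding_compression} and \ref{prop:std_embedding_free_product}. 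Second, the paper does not approximate $\cB(\cH)$ by your staircase subalgebras $M_n(\bbC)\oplus\bbC(1-p_n)$: free products with $\cB(\cH)$ are computed directly in Theorems~\ref{thm:B(H)} and \ref{thm:fdB(H)} using modular-inclusion absorption (Proposition~\ref{prop:absorb}), which sidesteps the convergence-of-weights question your staircase would raise. These are repairable bookkeeping issues, but the missing standard-embedding mechanism in the limit step is essential, and without it the proof does not go through.
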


The key to attacking this problem is a non-tracial free graph von Neumann algebra $\cM(\Gamma, \mu)$ constructed by the authors in \cite{HNFreeGraph}. Here $\Gamma=(V,E)$ is a finite, directed, connected graph and $\mu\colon E\to \bbR$ is a weighting on the edges. In \cite{HNFreeGraph} the authors identified $\cM(\Gamma, \mu)$ with an almost periodic free Araki--Woods factor (up to direct sum copies of $\bbC$).  The key feature of this von Neumann algebra is that it is naturally expressed as amalgamated free product (see Subsection \ref{subsec:graph} below).  By converting a free product of the form
	\[
		(M_{n}(\C), \phi) * \left[\underset{\beta}{\C} \oplus \underset{1-\beta}{\C}\right]
	\]
to an amalgamated free product over the diagonal of $M_{n}(\C)$ and using induction, we can realize a corner of this free product as corner of some $\cM(\Gamma, \mu)$. We then identify this corner with $\cM(\Gamma', \mu')$ for some other graph $\Gamma'$ and edge weighting $\mu'$.  Using standard free product techniques of Dykema, we are able extend the computation of $M_n(\bbC) * [\bbC\oplus \bbC]$ to free products of arbitrary finite-dimensional algebras. Along the lines of \cite{MR1201693} and \cite{MR1256179} we then develop standard embeddings of almost periodic free Araki--Woods factors, and use these to prove Theorem \ref{thm:hyperfinite2}.
 
The outline of the paper is as follows:  In Section~\ref{sec:prelim}, we establish notation  and review some preliminaries and relevant results about free products. We also recall the construction of the von Neumann algebra $\cM(\Gamma, \mu)$.  In Section~\ref{sec:fd}, we work through the computation of $M_{n}(\C) * [\C \oplus \C]$ (including $n=2$) and use this to prove Theorem~\ref{thm:finitedim2}. We also extend this to a computation of $\cB(\cH) *[\bbC\oplus \bbC]$, where $\cB(\cH)$ is equipped with an arbitrary faithful normal state, and use this to prove a version of Theorem~\ref{thm:finitedim2} that allows infinite-dimensional type $\mathrm{I}$ factors in the direct summands.  In Section~\ref{sec:std_embeddings}, we develop a hyperfinite matricial model for $(T_{H}, \vphi_{H})$ which fuses Shlyakhtenko's matricial model for $(T_{H}, \vphi_{H})$ \cite{Shl97} and Dykema's model for $(L(\F_{t}),\tau)$ \cite{MR1256179}. We use this to develop the notion of a standard embedding of free Araki--Woods factors and utilize this to prove Theorem \ref{thm:hyperfinite2}.

\subsection*{Acknowledgements} 

We would like to thank Corey Jones and David Penneys for initially suggesting the study of the von Neumann algebras $\cM(\Gamma, \mu),$ therefore enabling the writing of this paper. We would also like to thank Dimitri Shlyakhtenko for many helpful conversations about free Araki--Woods factors.  This work was initiated at the Mathematical Sciences Research Institute (MSRI) Summer School on \emph{Subfactors: planar algebras, quantum symmetries, and random matrices}, and continued while Brent Nelson was visiting the Institute for Pure and Applied Mathematics (IPAM) during the Long Program on \emph{Quantitative Linear Algebra}, both of which are supported by the National Science Foundation. Brent Nelson's work was also supported by NSF grant DMS-1502822.

%%%%%%%%%%%%%%%%%%%%%%%%%%%%%%%%%%
%%%%%%%%%%%%%%%%%%%%%%%%%%%%%%%%%%
%%%%%%%%%%%%%%%%%%%%%%%%%%%%%%%%%%

\section{Preliminaries}\label{sec:prelim}

%%%%%%%%%%%%%%%%%%%%%%%%%%%%%%%%%%

\subsection{Some Notation}\label{subsec:status_quo}

Given the non-tracial nature of our analysis, it is important that we specify the positive linear functionals involved in any free product. Toward that end, we establish some common notation for positive linear functionals that will be frequently used:

	\begin{itemize}
	\item After \cite{MR1201693,Dyk97} we use the following notation:
		\begin{itemize}
		\item[$\circ$] For $t>0$ and a projection $p$ 
			\[
				\overset{p}{\underset{t}{\bbC}}:=( \bbC p, \phi),
			\]
		where $\phi$ is determined by $\phi(p)=t$. We may suppress either `$t$' or `$p$' if they are clear from context. In the context of a direct sum, if $t\leq 0$ then we mean that the summand should be omitted.
		
		\item[$\circ$] For $\alpha_{1}, \cdots, \alpha_{n}>0$ and $p_{1}, \cdots, p_{n}$ orthogonal minimal projections of $M_{n}(\C)$
			\[
				\overset{p_{1},\cdots p_{n}}{\underset{\alpha_{1},\cdots, \alpha_{n}}{M_n(\bbC)}}:= ( M_{n}(\C), \phi),
			\]
		where $\phi$ is determined by $\phi(x)=\Tr(xA)$ where $A = \sum_{i=1}^{n} \alpha_{i}p_{i}$. We may suppress any of `$\alpha_{i}$' or `$p_{i}$' if they are clear from context. 
%		If we merely wish to establish notation for the identity element $r:=p_{1}+\cdots + p_{n}$, then we may simply write $\overset{r}{\underset{s,t}{M_2(\bbC)}}$.
		
		\item[$\circ$] For $t>0$ and a von Neumann algebra $A$ with identity element $p$ and a state $\phi$
			\[
				\overset{p}{\underset{t}{(A,\phi)}} := (A,t \phi).
			\]
		We may suppress any of `$t$', `$p$', or `$\phi$' if they are clear from context (e.g. a $\mathrm{II}_1$ factor and its canonical trace).
		\end{itemize}
	The above notations allow us to concisely express direct sums with explicit (and sometimes implicit) weightings. E.g.:
		\[
			\overset{p_1}{\underset{t_1}{\bbC}}\oplus \overset{p_2,q_2}{\underset{s_2,t_2}{M_2(\bbC)}}\oplus \overset{p_3}{\underset{t_3}{(A,\vphi)}}.
		\]
	If $t_1+s_2+t_2+t_3=1$ then the associated positive linear functional on this direct sum is a state. However, it will often be notationally convenient to \textbf{not} demand such normalization. If such an unnormalized direct sum appears in a free product, we will ensure that each factor in the free product has the same total mass.
	
	\item Let $\cH$ be a separable infinite-dimensional Hilbert space and $\{e_{i,j}\}_{i,j\in \bbN_0}$ a system of matrix units for $\cB(\cH)$. For $\lambda\in(0,1)$, after \cite{Shl97} we define a normal state $\psi_\lambda\colon \cB(\cH)\to\bbC$ by
		\[
			\psi_\lambda(e_{i,j}):=\begin{cases} \lambda^i(1-\lambda) & \text{if }i=j\\ 0 & \text{otherwise}\end{cases}.
		\]
	If $\cH$ is finite-dimensional so that $\cB(\cH)\cong M_n(\bbC)$ is generated by matrix units $\{e_{i,j}\}_{i,j=0}^{n-1}$, for some $n\in \bbN$, we define a state $\psi_\lambda\colon M_n(\bbC)\to\bbC$ by
		\[
			\psi_\lambda(e_{i,j}):=\begin{cases} \lambda^i \frac{(1-\lambda)}{(1-\lambda^n)} & \text{if }i=j\\ 0 & \text{otherwise}\end{cases}.
		\]

	\item For a von Neumann algebra $A$ with a positive linear functional $\phi$ and a non-zero projection $p\in A$, denote
		\[
			\phi^p(\ \cdot\ ) := \frac{1}{\phi(p)} \phi(p\ \cdot\ p).
		\]
	\end{itemize}

%%%%%%%%%%%%%%%%%%%%%%%%%%%%%%%%%%

\subsection{Free Araki-Woods factors}

We recall the main features of Shlyakhtenko's almost periodic free Araki--Woods factors \cite{Shl97} that we will use in this paper.  See \cite{Shl97} for the general construction, and \cite{HNFreeGraph} for an overview of the construction.

If $\lambda \in (0, 1)$, then the (unique) type III$_{\lambda}$ almost periodic free Araki--Woods factor $(T_{\lambda}, \varphi_{\lambda})$ arises on the full Fock space of $\C^{2}$, $\cF(\C^{2})$.  If $\{u, v\}$ is an orthonormal basis of $\C^{2}$, then $(T_{\lambda}, \varphi_{\lambda}) \subset B(\cF(\C^{2}))$ is the von Neumann algebra generated by
\[
	y_{\lambda} = \ell(u) + \sqrt{\lambda}\ell(v)^{*}
\] 
with $\ell(\xi)$ the creation operator for $\xi \in \C^{2}$, and $\vphi_{\lambda}$ is the vacuum state.  We will call $y_{\lambda}$ a \emph{generalized circular element of parameter $\lambda$}. Using the polar decomposition of $y_{\lambda}$, it was shown that 
	\begin{align}\label{eqn:fAWf_def}
		(T_{\lambda}, \varphi_{\lambda}) \cong  (L(\Z),\tau) * (\cB(\cH), \psi_{\lambda}).
	\end{align}

For any countable, non-trivial $H < \R_{+}$ with generating set $(\lambda_{i})_{i \in I},$ $\lambda_{i} \in (0, 1)$, Shlyakhtenko showed that the free product $(\cM, \vphi) := \bigast_{i \in I}(T_{\lambda_{i}},\varphi_{\lambda_{i}})$ is a factor that is independent of the generating set of $H$ and the multiplicity of the generators in the free product, and satisfies $\cM^{\vphi} \cong L(\F_{\infty})$ (Recall that $\cM^{\vphi} = \{x \in \cM : \vphi(xy) = \vphi(yx) \text{ for all } y \in \cM\}$ is the \emph{centralizer} of $\vphi$.). Furthermore the state $\varphi$ is almost periodic and $\Delta_{\varphi}$ has point spectrum $H$.  Shlyakhtenko also showed that $(\cM, \vphi)$ is uniquely determined by $H$.

We will denote $(\cM, \vphi)$ as $(T_{H}, \vphi_{H})$, and we will call $\vphi_{H}$ the free quasi-free state on $T_{H}$.  Note that $(T_{H}, \vphi_{H})$ is of type III$_{1}$ if and only if $H$ is not cyclic.  Shlyakhtenko proved the following additional structural results of the factors $(T_{H}, \vphi_{H})$.
\begin{thm*}[\cite{Shl97}]

\begin{itemize} 

\item[]

\item $(T_{H}, \vphi_{H}) * (L(\F_{\infty}),\tau) \cong (T_{H}, \vphi_{H})$.  

\item $(T_{\lambda}, \vphi_{\lambda}) \cong (L(\Z),\tau) * (M_{n}(\C), \psi_{\lambda})$ for any $n \geq 2$. 

\item $(T_{H}, \vphi_{H}) * (T_{H'}, \vphi_{H'}) \cong (T_{G}, \vphi_{G})$ where $G = \langle H \cup H' \rangle$. 

\end{itemize}

\end{thm*}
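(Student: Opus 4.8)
The common engine is Shlyakhtenko's realization of the almost periodic free Araki--Woods factors as second quantizations. For a real Hilbert space $H_\bbR$ carrying an almost periodic orthogonal one-parameter group $(U_t)$, write $\Gamma(H_\bbR,U)''$ for the associated free Araki--Woods von Neumann algebra equipped with its vacuum (free quasi-free) state (here $\Gamma(\,\cdot\,,\,\cdot\,)''$ is the construction of \cite{Shl97}, \emph{not} a graph algebra). I would use two inputs from \cite{Shl97}: first, the \emph{additivity formula}
\[
	\Gamma(H_\bbR^1,U^1)'' * \Gamma(H_\bbR^2,U^2)'' \cong \Gamma(H_\bbR^1\oplus H_\bbR^2,\ U^1\oplus U^2)'',
\]
which holds state-preservingly for the vacuum states and is the operator-algebraic reflection of the fact that the full Fock space of an orthogonal direct sum factors as a free product; and second, the classification that, for almost periodic $(U_t)$ with nontrivial point spectrum, the isomorphism class of $(\Gamma(H_\bbR,U)'',\text{vacuum})$ depends only on the subgroup $H<\R_+$ generated by that point spectrum --- this is exactly the uniqueness and multiplicity-independence of $(T_H,\vphi_H)$ recalled above. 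I also use that $L(\F_\infty)$ is the tracial case $\Gamma(K_\bbR,\mathrm{id})''$ for $K_\bbR$ separable and infinite-dimensional (Voiculescu's free Gaussian functor producing infinitely many free semicirculars).

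Granting these, the first and third statements are immediate. For the third, write $T_H=\Gamma(H_\bbR,U)''$ and $T_{H'}=\Gamma(H_\bbR',U')''$; the additivity formula gives $T_H*T_{H'}\cong\Gamma(H_\bbR\oplus H_\bbR',U\oplus U')''$, whose vacuum modular operator has point spectrum $\langle H\cup H'\rangle=G$, so by the classification this factor is $(T_G,\vphi_G)$. The first statement is the special case in which the adjoined factor carries trivial spectral data: $L(\F_\infty)=\Gamma(K_\bbR,\mathrm{id})''$, so $T_H*L(\F_\infty)\cong\Gamma(H_\bbR\oplus K_\bbR,U\oplus\mathrm{id})''$; the summand $\mathrm{id}$ contributes only the eigenvalue $1$ to the point spectrum of the modular operator, and $1\in H$ already, so the spectral subgroup is unchanged and the classification returns $(T_H,\vphi_H)$.

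The middle statement is the delicate one, since neither $(L(\Z),\tau)$ nor $(M_n(\C),\psi_\lambda)$ is presented as a free Araki--Woods algebra and the additivity formula does not apply directly. My plan is to produce the isomorphism through the canonical generator, mirroring the derivation of (\ref{eqn:fAWf_def}). There $(T_\lambda,\vphi_\lambda)\cong(L(\Z),\tau)*(\cB(\cH),\psi_\lambda)$ arises from the polar decomposition $y_\lambda=w|y_\lambda|$ of the generalized circular generator, with $w$ a Haar unitary generating the $L(\Z)$-factor and the spectral resolution of $|y_\lambda|$ encoding $\psi_\lambda$. For finite $n$ I would feed the Haar unitary together with the $\psi_\lambda$-weighted matrix units of $M_n$ into the Fock model on $\cF(\C^2)$, arranging them into creation/annihilation operators, and then compute vacuum $*$-moments: I expect the corners $e_{ii}\big((L(\Z),\tau)*(M_n,\psi_\lambda)\big)e_{jj}$ to be generated by generalized circular elements of parameter a power of $\lambda$, and the point is to verify that assembling these corners reproduces exactly the $*$-distribution of $y_\lambda$, yielding a state-preserving $*$-isomorphism onto $(T_\lambda,\vphi_\lambda)$.

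The main obstacle is precisely this moment computation for general $n$: although the $\psi_\lambda$-centralizer of $M_n$ grows with $n$, one must show that the additional matricial degrees of freedom are absorbed so that the free product collapses to the same $T_\lambda$ for every $n\geq2$ (consistently with the case $n=\infty$ already recorded in (\ref{eqn:fAWf_def})). I would try to organize this either by an induction establishing $(L(\Z),\tau)*(M_{n+1},\psi_\lambda)\cong(L(\Z),\tau)*(M_n,\psi_\lambda)$ --- realizing $(M_n,\psi_\lambda)$ as the corner $p_n\cB(\cH)p_n$ with $p_n=\sum_{i<n}e_{ii}$, under which $\psi_\lambda$ restricts, after renormalization, to the $M_n$-version of $\psi_\lambda$, and then controlling the associated free compression --- or, more in the spirit of \cite{Shl97}, by a direct Fock-space construction of the generators. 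In either route the step demanding the most care is checking that the generators produce the \emph{entire} factor $T_\lambda$ rather than a proper subfactor, i.e. that no spectral mass is lost in passing from $\cB(\cH)$ to $M_n$.
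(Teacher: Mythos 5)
Your first and third bullets are correct and follow the same route as the cited source \cite{Shl97} (the present paper only quotes these facts, so the comparison is with Shlyakhtenko's arguments): free-product additivity of the second quantization functor, $\Gamma(H^1_\R,U^1)''*\Gamma(H^2_\R,U^2)''\cong\Gamma(H^1_\R\oplus H^2_\R,U^1\oplus U^2)''$, combined with the classification of almost periodic free Araki--Woods factors by the subgroup of $\R^+$ generated by the point spectrum, with $L(\F_\infty)\cong\Gamma(K_\R,\id)''$ contributing only the eigenvalue $1$. The genuine gap is the middle bullet, which you leave as a plan with two candidate routes, neither of which works as stated.

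First, the proposed induction through corners is miscalibrated: compressing $(L(\Z),\tau)*(\cB(\cH),\psi_\lambda)$ by $p_n=e_{0,0}+\cdots+e_{n-1,n-1}$ does \emph{not} produce $(L(\Z),\tau)*(M_n(\C),\psi_\lambda)$ --- a corner of a free product is not the free product of the corners, and the Dykema-type compression results one would need (cf.\ Lemma~\ref{lem:Dykema}) generate additional free pieces; the isomorphism $p_n\bigl(L(\Z)*\cB(\cH)\bigr)p_n\cong T_\lambda$, which does hold by Lemma~\ref{lem:antman}(1) since $p_n$ lies in the centralizer, therefore gives no information about $L(\Z)*M_n(\C)$ without already knowing the answer, and the induction is circular. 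Second, your Fock-space route is the right idea but aims at the wrong target: one should not try to reassemble the $*$-distribution of a single $y_\lambda$ inside $\cF(\C^2)$. The correct model, used verbatim in the proof of Proposition~\ref{prop:L(Z)B(H)} and insensitive to whether $\dim\cH$ is finite, is $L=\sum_{i,j=0}^{n-1}\sqrt{\alpha_i}\,\ell(\xi_{i,j})\otimes e_{i,j}$ inside $(\cB(\cF(\cK))\otimes M_n(\C),\,\omega\otimes\psi_\lambda)$ with $\alpha_i=\lambda^i(1-\lambda)/(1-\lambda^n)$; here $L+L^*$ is semicircular and $*$-free from $1\otimes M_n(\C)$, and the corner $e_{0,0}\cM e_{0,0}$ is generated by the \emph{free family} of generalized circular elements $\bigl(\sqrt{\alpha_i}\,\ell(\xi_{i,j})+\sqrt{\alpha_j}\,\ell(\xi_{j,i})^*\bigr)\otimes e_{0,0}$ with parameters $\alpha_j/\alpha_i=\lambda^{j-i}$. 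One then does not match moments with $y_\lambda$ at all: the corner is a free product of the factors $T_{\lambda^{k}}$ (and copies of $L(\Z)$), which collapses to $(T_\lambda,\vphi_\lambda)$ by exactly the uniqueness/multiplicity-independence theorem you already invoke for the other two bullets. Finally, your worry that the generators might produce only a proper subfactor is settled not by moment estimates but by central support: $(L(\Z),\tau)*(D_n,\psi_\lambda)$, $D_n$ the diagonal, is a factor by \cite{MR1201693}, so $1\otimes e_{0,0}$ has full central support and Lemma~\ref{lem:antman}(2) upgrades the corner computation to the full state-preserving isomorphism. Without these two corrections your proof of the middle bullet does not go through.
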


 The property in the first bullet point is known as \emph{free absorption}, and will be referred to as such throughout the paper. A consequence of free absorption is that $(T_{H}, \vphi_{H}) * (A, \phi) \cong (T_{H}, \vphi_{H})$ whenever $A$ is a countable direct sum of finite-dimensional von Neumann algebras, diffuse hyperfinite von Neumann algebras, and interpolated free group factors where $\phi$ is a trace. 

Moreover, one can choose $(A,\phi)$ to be $\cB(\cH)$ for $\cH$ a separable Hilbert space and $\phi$ \textbf{any} faithful normal state (\emph{cf.} Equation (\ref{eqn:fAWf_def})). Indeed, note that if $\phi$ is a faithful normal state on $\cB(\cH)$, then $\phi(x) = \Tr(xy)$ where $y$ is a positive trace-class operator with trace 1. We can therefore assume that after conjugating by a unitary, there is set of matrix units $\{e_{i,j}\}$ and $\alpha_{i}>0$ satisfying   $\phi(e_{i,j}) = \delta_{i, j}\alpha_{i}$. 

 \begin{prop}\label{prop:L(Z)B(H)}
Assume that $\cH$ is separable and that  $\cB(\cH)$ is equipped with a faithful normal non-tracial state $\phi$ satisfying $\phi(e_{i,j}) = \delta_{i, j}\alpha_{i}$ for a system of matrix units $\{e_{i,j}\}_{i,j\geq 0}$ and $\alpha_i > 0$.  Then
	\[
		(L(\Z),\tau) * (\cB(\cH), \phi) \cong (T_{H}, \vphi_{H}),
	\]
where $H<\bbR^+$ is the subgroup generated $\{\frac{\alpha_i}{\alpha_j}\colon i,j \geq 0\}$.
\end{prop}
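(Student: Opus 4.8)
The plan is to show that $\cM:=(L(\Z),\tau)*(\cB(\cH),\phi)$ is an almost periodic free Araki--Woods factor and that its associated subgroup is exactly $H$, after which Shlyakhtenko's uniqueness result (that $(T_H,\vphi_H)$ is determined by $H$) gives $\cM\cong(T_H,\vphi_H)$. The parameter group is the easy half: the free product state $\vphi=\tau*\phi$ has modular group $\sigma^\vphi_t=\sigma^\tau_t*\sigma^\phi_t$, so, as in the theorem of Dykema and Ueda recalled above, the point spectrum of $\Delta_\vphi$ is the group generated by the point spectra of $\Delta_\tau=\{1\}$ and of $\Delta_\phi=\{\alpha_i/\alpha_j:i,j\geq0\}$, which is precisely $H$. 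Moreover, since $L(\Z)$ is diffuse there are no atoms to produce a finite-dimensional summand, so $\cM$ is a factor. Thus the substance is to prove that $\cM$ is a free Araki--Woods factor at all.

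For this I would extend Shlyakhtenko's proof of \eqref{eqn:fAWf_def} from the geometric weights $\psi_\lambda$ to the arbitrary weights $\alpha_i$. Realize $\cM$ on the full Fock space underlying the free product of the GNS representations of $(L(\Z),\tau)$ and $(\cB(\cH),\phi)$, let $u$ be the free Haar unitary generating $L(\Z)$, and let $h=\sum_i\alpha_ie_{ii}$ be the density of $\phi$. The idea is to manufacture a generating family of generalized circular elements out of $u$ and the spectral data of $h$: the ``off-diagonal'' compressions $e_{ii}\,u\,e_{jj}$ take the Fock-space form $\ell(\xi)+c_{ij}\ell(\eta)^*$ with the constant $c_{ij}$ governed by the ratio $\alpha_i/\alpha_j$, exhibiting each as a generalized circular element whose parameter generates $\langle\alpha_i/\alpha_j\rangle$. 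Together with the diagonal $D=W^*(\{e_{ii}\})$ these reconstruct $u=\sum_{i,j}e_{ii}ue_{jj}$ and all of $\cB(\cH)$, hence generate $\cM$, and they furnish a free-quasi-free generating system. To organize which building blocks appear and to discard the tracial diagonal, I would use Shlyakhtenko's moves: $T_\lambda\cong(L(\Z),\tau)*(M_n(\C),\psi_\lambda)\cong(L(\Z),\tau)*(\cB(\cH),\psi_\lambda)$, the free-product law $T_H * T_{H'}\cong T_{\langle H\cup H'\rangle}$, and free absorption $(T_H,\vphi_H)*(L(\Z),\tau)\cong(T_H,\vphi_H)$ to fold the abelian/tracial and any excess free-group content into $T_H$.

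The main obstacle is that the compressions $e_{ii}ue_{jj}$ are not mutually free over $\C$; they are free only with amalgamation over the diagonal $D$, so the heart of the argument is an operator-valued (over $D$) free-probability computation confirming both the generalized-circular $*$-distributions and the exact parameters $\alpha_i/\alpha_j$. Two further technical points need care: the passage from finitely many to infinitely many eigenvalues (I would truncate to $M_n(\C)\uparrow\cB(\cH)$, using that $T_\lambda\cong(L(\Z),\tau)*(M_n(\C),\psi_\lambda)$ holds for every $n$ and that both the isomorphism type and the point spectrum are stable under the resulting inductive limit), and possible multiplicities among the $\alpha_i$, which only enlarge the tracial content that free absorption removes. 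Once $\cM$ is identified as a free Araki--Woods factor, the parameter computation of the first paragraph together with uniqueness yields $\cM\cong(T_H,\vphi_H)$.
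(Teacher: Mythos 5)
Your overall architecture (identify a corner/generating family of generalized circular elements, compute the point spectrum, invoke Shlyakhtenko's uniqueness, and use free absorption to discard the tracial diagonal) is in the right spirit, but the step you correctly identify as the heart of the argument fails as stated. The compressions $e_{i,i}ue_{j,j}$ of a Haar unitary are \emph{not} generalized circular elements, and no operator-valued computation over the diagonal $D$ will confirm the $*$-distributions you posit. Already in the tracial case this is visible: take $u$ Haar and free from $M_2(\C)$ with the normalized trace, and set $x=e_{1,1}ue_{2,2}$. Then $xx^*=e_{1,1}(ue_{2,2}u^*)e_{1,1}$, and since $ue_{2,2}u^*$ is a trace-$\frac12$ projection free from $e_{1,1}$, the element $xx^*$ is the corner compression of a free projection, whose law is the arcsine-type distribution supported in $[0,1]$ (in particular $\|x\|\leq 1$). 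If $x$ were circular (or generalized circular), $xx^*$ would have a free-Poisson-type law with strictly larger support. So the claimed Fock-space form $\ell(\xi)+c_{ij}\ell(\eta)^*$ for $e_{i,i}ue_{j,j}$ is wrong, and the proposed reconstruction of a free quasi-free generating system from the Haar unitary collapses at its core step.

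The fix — and the paper's actual route — is to generate $L(\Z)$ not by a Haar unitary but by a semicircular-type operator built directly in Shlyakhtenko's matricial model: inside $(\cB(\cF(\cK))\otimes\cB(\cH),\omega\otimes\phi)$ one sets $L=\sum_{i,j\geq 0}\sqrt{\alpha_i}\,\ell(\xi_{i,j})\otimes e_{i,j}$, and \cite{Shl97} gives that $L+L^*$ is $*$-free from $1\otimes\cB(\cH)$, so $W^*(L+L^*,1\otimes\cB(\cH))$ models the free product. The point is that the matrix entries of \emph{this} generator are, by construction, the operators $(\sqrt{\alpha_i}\ell(\xi_{i,j})+\sqrt{\alpha_j}\ell(\xi_{j,i})^*)\otimes e_{0,0}$: manifestly generalized circular with parameters $\alpha_j/\alpha_i$, and mutually free \emph{over the scalars} in the corner $e_{0,0}\cM e_{0,0}$ because the vectors $\xi_{i,j}$ are orthonormal — so the over-$D$ amalgamated computation you anticipated is never needed, and no truncation $M_n(\C)\uparrow\cB(\cH)$ or inductive limit is required. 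The corner is then $(T_H,\vphi_H)$ by \cite{Shl97}, and the passage from the corner to all of $\cM$ is not via your first-paragraph appeal to diffuseness (factoriality of non-tracial free products is not automatic): instead, $\phi$ restricted to the diagonal $D$ is a trace on an abelian algebra, so $(L(\Z),\tau)*(D,\phi|_D)$ is a factor by \cite{MR1201693}, whence $1\otimes e_{0,0}$ has full central support in the centralizer $\cM^{\omega\otimes\phi}$, and Lemma~\ref{lem:antman} upgrades the corner identification to $(\cM,\omega\otimes\phi)\cong(T_H,\vphi_H)$.
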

\begin{proof}
Let $\cK$ be a separable and infinite-dimensional Hilbert space containing an infinite orthonormal set $(\xi_{ij})_{i, j \in \N_{\geq 0}}$, and $\cF(\cK)$ the full Fock space on $\cK$.  Let $\omega$ be the state on $\cB(\cF(\cK))$ defined by  $\omega(x) = \langle x\Omega, \Omega \rangle$ with $\Omega$ the vaccum vector in $\cF(\cK)$.   Within $(\cB(\cF(\cK)) \otimes \cB(\cH), \omega \otimes \phi)$, let $\displaystyle L = \sum_{i, j \geq 0} \sqrt{\alpha_{i}}\ell(\xi_{i, j}) \otimes e_{i, j}$ with $\ell(\xi_{ij})$ the creation operator.  Then $(\cB(\cH), \phi) * L(\Z) $ is modeled by the sub von Neumann algebra $\cM \subset (B(\cF(\cK)) \otimes \cB(\cH), \omega \otimes \phi)$ generated by $L+L^{*}$ and $1 \otimes \cB(\cH),$ and $L+L^{*}$ is $*$-free from $1 \otimes \cB(\cH)$ \cite{Shl97}.  From this, we see that $e_{0,0}\cM e_{0,0} = W^{*}((\sqrt{\alpha_{i}}\ell(\xi_{ij}) + \sqrt{\alpha_{j}}\ell(\xi_{ji})^{*}) \otimes e_{0,0}\colon i,j\geq 0)$.  From \cite{Shl97}, this means that $(e_{0,0}\cM e_{0,0}, (\omega \otimes \phi)^{e_{0,0}} ) \cong (T_{H}, \vphi_{H})$.

We also note that if $D$ is the diagonal of $\cB(\cH)$, then $(L(\Z),\tau) * (D, \phi)$ is a factor by \cite{MR1201693}.  Therefore, $1\otimes e_{0,0}$ has full central support in $\cM^{\omega\otimes \phi}$.  Applying Lemma \ref{lem:antman} below finishes the proof.
\end{proof}

This proposition spawns the following useful corollaries, which we will use extensively

\begin{cor}\label{cor:ArakiB(H)}
Assume that $\cH$ is separable and that  $\cB(\cH)$ is equipped with a faithful state $\phi$ satisfying $\phi(e_{i,j}) = \delta_{i, j}\alpha_{i}$ for a system of matrix units $\{e_{i,j}\}_{i,j\geq 0}$ and $\alpha_i > 0$, and $H$ is a countable subgroup of $\R_{+}$.  Then
	\[
		(T_{H}, \vphi_{H}) * (\cB(\cH), \phi) \cong (T_{G}, \vphi_{G})
	\]
where $G$ is the group generated by $H$ and $H'$ where $H' = \langle\frac{\alpha_{i}}{\alpha_{j}} : i, j \geq 0\rangle$.
\end{cor}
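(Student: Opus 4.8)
The plan is to reduce the statement to the two structural facts already in hand: Proposition~\ref{prop:L(Z)B(H)}, which identifies $(L(\Z),\tau)*(\cB(\cH),\phi)$ with $(T_{H'},\vphi_{H'})$, and the third bullet of Shlyakhtenko's theorem, which gives $(T_H,\vphi_H)*(T_{H'},\vphi_{H'})\cong(T_G,\vphi_G)$ with $G=\langle H\cup H'\rangle$. The bridge between the two is free absorption, which lets me manufacture a free copy of $L(\Z)$ at no cost.

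Concretely, I would first use the recorded consequence of free absorption: since $L(\Z)$ is diffuse hyperfinite and $\tau$ is a trace, $(T_H,\vphi_H)\cong(T_H,\vphi_H)*(L(\Z),\tau)$. Substituting this into the left-hand side and using associativity of the reduced free product of von Neumann algebras with respect to faithful normal states, I would write
$$(T_H,\vphi_H)*(\cB(\cH),\phi)\cong(T_H,\vphi_H)*(L(\Z),\tau)*(\cB(\cH),\phi)\cong(T_H,\vphi_H)*\big[(L(\Z),\tau)*(\cB(\cH),\phi)\big].$$
Proposition~\ref{prop:L(Z)B(H)} identifies the bracketed algebra with $(T_{H'},\vphi_{H'})$, and Shlyakhtenko's free-product formula then delivers $(T_G,\vphi_G)$ with $G=\langle H\cup H'\rangle$, which is exactly the asserted group.

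The one point requiring care is that Proposition~\ref{prop:L(Z)B(H)} is phrased for non-tracial $\phi$, whereas the corollary permits $\phi$ to be a trace; this can happen only when $\cH$ is finite-dimensional and all the $\alpha_i$ coincide, in which case $H'=\langle 1\rangle$ is trivial. In that boundary case I would instead note that $(L(\Z),\tau)*(\cB(\cH),\phi)$ is a tracial interpolated free group factor by Dykema's theorem, hence absorbed by $(T_H,\vphi_H)$ via free absorption; since $G=\langle H\cup\{1\}\rangle=H$ here, the conclusion $(T_G,\vphi_G)\cong(T_H,\vphi_H)$ still holds and the formula for $G$ remains correct. I do not expect a genuine obstacle: the mathematical content has been front-loaded into Proposition~\ref{prop:L(Z)B(H)} and the cited free-product formula, so the only things to verify are the legitimacy of reassociating the free product (and that absorbing $L(\Z)$ leaves the states as the appropriate free product states) and the bookkeeping of the tracial boundary case.
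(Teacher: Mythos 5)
Your proposal is correct and is essentially identical to the paper's proof: the authors likewise absorb a free copy of $(L(\Z),\tau)$ into $(T_H,\vphi_H)$ via free absorption, identify $(\cB(\cH),\phi)*(L(\Z),\tau)$ with $(T_{H'},\vphi_{H'})$ using Proposition~\ref{prop:L(Z)B(H)}, and conclude with Shlyakhtenko's formula $(T_{H'},\vphi_{H'})*(T_H,\vphi_H)\cong(T_G,\vphi_G)$. Your extra handling of the tracial boundary case (finite-dimensional $\cH$ with equal weights, settled by Dykema's theorem plus free absorption) is a legitimate refinement the paper leaves implicit, and is resolved correctly.
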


\begin{proof}

Using Proposition~\ref{prop:L(Z)B(H)} and free absorption, we have
	\begin{align*}
		(\cB(\cH), \phi) * (T_{H}, \vphi_{H}) &\cong (\cB(\cH), \phi) * (L(\Z),\tau) * (T_{H}, \vphi_{H})\\
				&\cong (T_{H'}, \vphi_{H'}) * (T_{H}, \vphi_{H}) \cong (T_{G}, \vphi_{G})\qedhere
	\end{align*}
\end{proof}

\begin{cor}\label{cor:tensor}
Let $H < \R^{+}$ be countable and non-trivial. Let $\alpha_{0} \geq \cdots \geq \alpha_{n} \geq \cdots > 0$  have the property that $\frac{\alpha_{i}}{\alpha_{j}} \in H$ for any pair $i, j\geq 0$ (we are allowing for infinite or finite sequences). Assume that $\cH$ is separable and that  $\cB(\cH)$ is equipped with a faithful normal non-tracial state $\phi$ satisfying $\phi(e_{i,j}) = \delta_{i, j}\alpha_{i}$ for a system of matrix units $\{e_{i,j}\}_{i,j\geq 0}$. Then
	\[
		(T_{H}, \vphi_{H}) \cong (T_{H}, \vphi_{H}) \otimes (\cB(\cH), \phi).
	\]

\end{cor}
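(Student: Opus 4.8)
The plan is to reduce everything to Proposition \ref{prop:L(Z)B(H)}, in two stages: a direct argument in the ``generating'' case where the ratios $\alpha_i/\alpha_j$ already generate all of $H$, followed by a reduction of the general case to that one. I would carry out the generating case first, since the reduction depends on it.

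So suppose first that $H=\langle \alpha_i/\alpha_j : i,j\geq 0\rangle$. Set $(\cM,\vphi_\cM):=(L(\Z),\tau)*(\cB(\cH),\phi)$ and realize it, exactly as in the proof of Proposition \ref{prop:L(Z)B(H)}, inside $(\cB(\cF(\cK))\otimes\cB(\cH),\ \omega\otimes\phi)$; write $B:=1\otimes\cB(\cH)$ for the free copy of $\cB(\cH)$. The key point is that $B$ is a tensor factor of $\cM$. Indeed, the minimal matrix units $\{1\otimes e_{a,b}\}$ form a system summing to $1_\cM$, so the standard matrix-unit argument gives an algebra isomorphism $\cM\cong(B'\cap\cM)\,\overline{\otimes}\,\cB(\cH)$ with $B'\cap\cM\cong(1\otimes e_{0,0})\cM(1\otimes e_{0,0})=e_{0,0}\cM e_{0,0}$. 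By the computation in the proof of Proposition \ref{prop:L(Z)B(H)} this corner is isomorphic to $(T_H,\vphi_H)$, which is precisely where the hypothesis $\langle\alpha_i/\alpha_j\rangle=H$ is used. Since Proposition \ref{prop:L(Z)B(H)} also gives $(\cM,\vphi_\cM)\cong(T_H,\vphi_H)$ outright, combining the two isomorphisms will yield $(T_H,\vphi_H)\otimes(\cB(\cH),\phi)\cong(T_H,\vphi_H)$, once the splitting is shown to respect the states.

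To upgrade the algebraic splitting to a state-preserving one I would use that $\omega\otimes\id$ restricts to an $(\omega\otimes\phi)$-preserving conditional expectation $E_B\colon\cM\to B$ (its range is $B\subseteq\cM$, and $\phi\circ(\omega\otimes\id)=\omega\otimes\phi$). As $E_B$ is $B$-bimodular, for $x\in B'\cap\cM$ one gets $E_B(x)\in Z(B)=\C 1$, hence $E_B(x)=\vphi_\cM(x)1$, and therefore $\vphi_\cM(xy)=\vphi_\cM(x)\vphi_\cM(y)$ for all $x\in B'\cap\cM$ and $y\in B$. This independence is exactly what forces $\vphi_\cM$ to factor as the tensor product of its restrictions to $B'\cap\cM\cong T_H$ and to $B\cong\cB(\cH)$, completing the generating case. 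For the general case $H'=\langle\alpha_i/\alpha_j\rangle\subseteq H$, I would introduce an auxiliary $(\cB(\cK),\rho)$ with $\cK$ separable infinite-dimensional and with ratio group of $\rho$ equal to $H$ (such $\rho$ exists for any countable non-trivial $H<\R^+$: use $\psi_\lambda$ when $H=\langle\lambda\rangle$ is cyclic, and otherwise exploit that a non-cyclic $H$ contains arbitrarily small elements to build a summable family of eigenvalues drawn from $H$ whose ratios generate $H$). Then I would compute $(T_H,\vphi_H)\otimes(\cB(\cH),\phi)\otimes(\cB(\cK),\rho)$ two ways: regrouping the two type-$\mathrm{I}$ factors as $(\cB(\cH\otimes\cK),\phi\otimes\rho)$, whose ratio group is $\langle H',H\rangle=H$, and applying the generating case gives $(T_H,\vphi_H)$; regrouping instead as $[(T_H,\vphi_H)\otimes(\cB(\cK),\rho)]\otimes(\cB(\cH),\phi)$ and applying the generating case to the $\cB(\cK)$-factor gives $(T_H,\vphi_H)\otimes(\cB(\cH),\phi)$. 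Comparing the two expressions yields the corollary.

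The main obstacle is the generating case, and within it the passage from the purely algebraic matrix-unit tensor splitting to a \emph{state-preserving} one. The algebraic splitting $\cM\cong(B'\cap\cM)\,\overline{\otimes}\,\cB(\cH)$ is routine, but guaranteeing that $\vphi_\cM$ genuinely factorizes (so that the target is $(T_H,\vphi_H)\otimes(\cB(\cH),\phi)$ and not merely an abstractly isomorphic algebra) is the delicate step. It is here that the concrete Fock-space model is essential: it places $B'\cap\cM$ inside $\cB(\cF(\cK))\otimes 1$ and supplies the explicit modular-invariant conditional expectation $\omega\otimes\id$, which is what makes the factorization of the state transparent.
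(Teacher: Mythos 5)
Your proposal is correct, but it takes a genuinely different route from the paper. The paper's proof is three lines: it sets $(\cM,\vphi):=(T_{H},\vphi_{H})*(\cB(\cH),\phi)$, invokes Corollary~\ref{cor:ArakiB(H)} (since $H'=\langle\alpha_i/\alpha_j\rangle\leq H$, the resulting group is $\langle H,H'\rangle=H$, so $\cM\cong T_H$), applies Lemma~\ref{lem:antman} to conclude $(e_{0,0}\cM e_{0,0},\vphi^{e_{0,0}})\cong(T_H,\vphi_H)$, and then simply asserts the matrix-unit splitting $(\cM,\vphi)=(e_{0,0}\cM e_{0,0},\vphi^{e_{0,0}})\otimes(\cB(\cH),\phi)$. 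You never form the free product with $T_H$ at all: you absorb the group discrepancy at the tensor level rather than the free-product level, first settling the case $\langle\alpha_i/\alpha_j\rangle=H$ inside the Fock model of Proposition~\ref{prop:L(Z)B(H)} applied to $(L(\Z),\tau)*(\cB(\cH),\phi)$, and then bootstrapping the general case $H'\lneq H$ via an auxiliary $(\cB(\cK),\rho)$ with ratio group exactly $H$ (your existence argument for $\rho$ — $\psi_\lambda$ in the cyclic case, a summable eigenvalue family from a dense $H$ otherwise — is fine) and the two-way regrouping of $(T_H,\vphi_H)\otimes(\cB(\cH),\phi)\otimes(\cB(\cK),\rho)$, which correctly yields $(T_H,\vphi_H)\cong(T_H,\vphi_H)\otimes(\cB(\cH),\phi)$. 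What each approach buys: the paper's is shorter, leaning on free absorption (Corollary~\ref{cor:ArakiB(H)}) to handle general $H'\leq H$ in one stroke, but it leaves the state-preserving nature of the tensor splitting implicit; your version supplies exactly that missing detail — the $B$-bimodular expectation $E_B=(\omega\otimes\id)|_{\cM}$ forcing $E_B(x)\in Z(B)=\C 1$ for $x\in B'\cap\cM$, hence factorization of $\vphi_\cM$ — which is a genuine strengthening of the exposition, at the cost of the extra auxiliary-algebra step. Both proofs ultimately rest on the same two ingredients: the matrix-unit tensor decomposition over the copy of $\cB(\cH)$, and the corner computation $e_{0,0}\cM e_{0,0}\cong T_{H'}$ from the proof of Proposition~\ref{prop:L(Z)B(H)}.
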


\begin{proof}

Note from Corollary~\ref{cor:ArakiB(H)} that
	\[
		(\cM , \vphi) := (T_{H}, \vphi_{H}) * (\cB(\cH), \phi) \cong (T_{H}, \vphi_{H}).
	\]
By Lemma \ref{lem:antman} (see below), this also establishes that $(e_{0,0}\cM e_{0,0}, \varphi^{e_{0,0}}) \cong (T_{H}, \vphi_{H})$.  This completes the proof since 
	\[
		(\cM, \varphi) =(e_{0,0}\cM e_{0,0}, \varphi^{e_{0,0}}) \otimes  (\cB(\cH), \phi).\qedhere
	\]
\end{proof}

\subsection{References to existing results}\label{subsec:existing_results}

For the convenience of the reader, we state here some existing results that will be cited frequently in the present paper. Where appropriate, we have adapted the notation. In particular, for $M$ a von Neumann algebra and $p\in M$ a projection, we denote the central support of $p$ in $M$ by $z(p\colon M)$.

The first lemma concerns free products with respect to general states and follows from the same proof as \cite[Theorem 1.2]{MR1201693} (see also \cite[Proposition 5.1]{Dyk97} and \cite[Proposition 3.10]{Hou07}). In particular, we will frequently use the cases when either $\cB(\cH)=\bbC$ or $B=0$.

\begin{lem}\label{lem:Dykema}
Let $(A,\phi)$, $(B,\psi)$, and $(C,\nu)$ be von Neumann algebras equipped with faithful normal states. Let $\cH$ be a separable Hilbert space, equip $\cB(\cH)$ with a faithful normal state $\omega$, and let $p\in \cB(\cH)^{\omega}$ be a minimal projection. If
	\begin{align*}
		(M,\vphi)&:= \left[ \left\{(A,\phi)\bar{\otimes} (\cB(\cH),\omega)\right\} \oplus (B,\psi)\right] * (C,\nu)\\
		(N,\vphi)&:= \left[ (\cB(\cH),\omega) \oplus (B,\psi)\right]* (C,\nu),
	\end{align*}
then
	\[
		(pMp,\vphi^p) \cong (pNp,\varphi^p) * (A,\phi).
	\]
Moreover, $z(p\colon M)=z(p\colon N)$.
\end{lem}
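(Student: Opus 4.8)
The plan is to exhibit inside $(pMp,\vphi^p)$ a copy of $(A,\phi)$ and the compression $pNp$, and to prove that they are free and together generate $pMp$. First I would exploit the minimality of $p$: writing $\omega(\,\cdot\,)=\Tr(\,\cdot\,\rho)$ with $\rho=\sum_i\omega_i e_{ii}$ in a diagonalizing system of matrix units, the centralizer $\cB(\cH)^\omega$ is the commutant of $\rho$, so a minimal projection $p$ there is a rank-one projection of $\cB(\cH)$. Hence $p\cB(\cH)p=\bbC p$ and
\[
p\big(A\bar\otimes\cB(\cH)\big)p=A\bar\otimes\big(p\cB(\cH)p\big)\cong (A,\phi),
\]
the state check being $(\phi\otimes\omega)^p(a\otimes p)=\phi(a)$. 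Call this corner $A_0\subseteq pMp$; it is the candidate free copy of $A$. On the other side, the map $\cB(\cH)\oplus B\hookrightarrow (A\bar\otimes\cB(\cH))\oplus B$, $d\oplus b\mapsto (1_A\otimes d)\oplus b$, is unital and state-preserving, so it induces a state-preserving inclusion $N\hookrightarrow M$ carrying $p$ to $p$. Thus $pNp\subseteq pMp$ and $\vphi^p|_{pNp}$ is the compression of the state of $N$, as required.

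Next I would record generation. Using the matrix units $e_{ij}$ of $\cB(\cH)$, which lie in $N$, one recovers $a\otimes e_{ij}=(1\otimes e_{ik})(a\otimes e_{kk})(1\otimes e_{kj})$ from $A_0$ and $N$ (writing $p=1\otimes e_{kk}$), whence $A\bar\otimes\cB(\cH)\subseteq W^*(N,A_0)$ and so $M=W^*(N,A_0)$. Compressing by $p$ and using $A_0=pA_0p$ to absorb the flanking projections in any alternating word then gives $pMp=W^*(pNp,A_0)$.

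The heart of the argument — and the step I expect to be the main obstacle — is freeness of $A_0$ and $pNp$ in $(pMp,\vphi^p)$. Here I would follow the free-product computation of \cite[Theorem 1.2]{MR1201693}: realize $M$ on the free-product GNS Hilbert space of $P_M:=(A\bar\otimes\cB(\cH))\oplus B$ and $C$, and verify that every alternating product of $\vphi^p$-centered elements of $A_0$ and of $pNp$ has vanishing state. The mechanism making this work is that $A$ enters $M$ only through the diagonal corner $e_{kk}$: a centered element of $A_0$ is a centered element $a\otimes e_{kk}$ of the first free factor supported at that corner, whereas centered elements of $pNp$ involve only $\cB(\cH)$, $B$ and $C$; to move off the $kk$-corner one is forced to use a $\cB(\cH)$-matrix unit (which lives in $N$) together with the freely independent $C$, and this decoupling is exactly what converts the tensor relation between $A$ and $\cB(\cH)$ into freeness after compression. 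Carrying out the bookkeeping of these alternating moments, reducing them to the freeness of $P_M$ and $C$ already built into $M$, is the technical core.

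Finally, the central-support equality is clean given generation. From $N\subseteq M$ we get $z(p\colon N)\le z(p\colon M)$. Conversely, since $z(p\colon N)\ge p$ we have $z(p\colon N)\,a_0=a_0=a_0\,z(p\colon N)$ for $a_0\in A_0$ (using $a_0=pa_0p$), so $z(p\colon N)$ commutes with both $N$ and $A_0$ and therefore lies in $Z(M)$ by $M=W^*(N,A_0)$; being a central projection of $M$ dominating $p$, it satisfies $z(p\colon M)\le z(p\colon N)$. Hence $z(p\colon M)=z(p\colon N)$.
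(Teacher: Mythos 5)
Your proposal is correct and follows essentially the same route as the paper: the paper offers no argument of its own for this lemma, stating that it ``follows from the same proof as \cite[Theorem 1.2]{MR1201693},'' and Dykema's proof there is exactly your outline --- $p$ is rank one since minimal projections in $\cB(\cH)^\omega=\{\rho\}'\cap\cB(\cH)$ (a direct sum of type $\mathrm{I}$ factors) have rank one, the corner $p(A\bar\otimes\cB(\cH))p\cong A$ and the compression $pNp$ generate $pMp$, freeness is checked by the alternating-word moment computation you describe, and the central support equality follows from $M=W^*(N,A_0)$ together with $z(p\colon N)\in Z(M)$ and the spatial inclusion $\overline{Np\cH}\subseteq\overline{Mp\cH}$. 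The freeness bookkeeping you defer is precisely the content of the cited proof, so your attempt is faithful to (indeed more detailed than) what the paper itself provides.
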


Let $A$ and $B$ be von Neumann algebras with normal faithful states $\phi$ and $\psi$ respectively, and let $i: A \rightarrow B$ be a normal, injective von Neumann algebra homomorphism. After \cite[Definition 1.4]{Hou07}, we say that $i$ is a \emph{modular inclusion} if it is state preserving and if $i(A)$ is globally invariant under the modular group $\sigma^{\psi}$.

The next lemma was useful in helping determine the structure of the free graph von Neumann algebras studied in \cite{HNFreeGraph} (see Subsection \ref{subsec:graph}).  It will also be useful in establishing a suitable base case in our computation of free products of finite-dimensional von Neumann algebras.

\begin{lem}[{\cite[Theorems 3.1 and 4.3]{Hou07}}]\label{lem:Hou}

Suppose $\alpha$ and $\beta$ satisfy $\frac12 \leq \beta \leq \alpha < 1$ with $\alpha > \frac12$, and let $\lambda = \frac{1-\alpha}{\alpha}$

\begin{enumerate}

\item[\normalfont{(1)}] $\underset{\alpha, 1 - \alpha}{M_{2}(\C)} * \left[\underset{\beta}{\C} \oplus \underset{1-\beta}{\C}\right] \cong (T_{\lambda}, \vphi_{\lambda})$.

\item[\normalfont{(2)}] Suppose $(A, \phi)$ and $(B, \psi)$ are two von Neumann algebras with faithful normal states, and that there exist modular inclusions $\underset{\alpha, 1 - \alpha}{M_{2}(\C)} \hookrightarrow A$ and $(\underset{\beta}{\C} \oplus \underset{1-\beta}{\C}) \hookrightarrow B$.  Then
$$
(A, \phi) * (B, \psi) \cong (A, \phi) * (B, \psi)  * (T_{\lambda}, \vphi_{\lambda}).
$$

\end{enumerate}

\end{lem}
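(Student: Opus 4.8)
The plan is to prove (1) first and then deduce (2) from it by free-product manipulation. For (1), begin with the weight-matching $\underset{\alpha,1-\alpha}{M_{2}(\C)}=(M_{2}(\C),\psi_{\lambda})$: since $\lambda=\frac{1-\alpha}{\alpha}$ we have $1+\lambda=\frac1\alpha$, so $\psi_{\lambda}(e_{0,0})=\frac{1-\lambda}{1-\lambda^{2}}=\frac{1}{1+\lambda}=\alpha$ and $\psi_{\lambda}(e_{1,1})=1-\alpha$. By Shlyakhtenko's identity recalled above, $(T_{\lambda},\vphi_{\lambda})\cong (L(\Z),\tau)*(M_{2}(\C),\psi_{\lambda})$, so it suffices to produce inside $\cM:=(M_{2}(\C),\psi_{\lambda})*\left[\underset{\beta}{\C}\oplus\underset{1-\beta}{\C}\right]$ a copy of $M_{2}(\C)$ together with a free Haar unitary that jointly generate $\cM$.

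The device I would use is the off-diagonal corner of the free projection. Let $f$ be the minimal projection of mass $\beta$ generating the second factor, so that $f$ is $*$-free from $M_{2}(\C)$, and set $c:=e_{0,0}\,f\,e_{1,1}\in e_{0,0}\cM e_{1,1}$. The heart of the argument is to show that, relative to the diagonal $D=\C e_{0,0}\oplus\C e_{1,1}\subseteq \cM^{\vphi}$, the element $c$ is a generalized circular element of parameter $\lambda$; this is the precise non-tracial analogue of the classical fact that an off-diagonal corner of a projection free from $M_{2}(\C)$ is circular, and the parameter is forced to be the weight ratio $\frac{1-\alpha}{\alpha}=\lambda$. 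Granting this, $W^{*}(M_{2}(\C),c)\cong (M_{2}(\C),\psi_{\lambda})*(L(\Z),\tau)\cong (T_{\lambda},\vphi_{\lambda})$. Finally $f\in W^{*}(M_{2}(\C),c)$: the relation $f=f^{2}$ gives $e_{0,0}fe_{0,0}=(e_{0,0}fe_{0,0})^{2}+cc^{*}$, so $e_{0,0}fe_{0,0}=\tfrac12\bigl(e_{0,0}-\sqrt{e_{0,0}-4cc^{*}}\bigr)$ is recovered from $cc^{*}$ by functional calculus, and likewise for $e_{1,1}fe_{1,1}$; together with $c,c^{*}$ these reassemble $f$. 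Hence $\cM=W^{*}(M_{2}(\C),f)=W^{*}(M_{2}(\C),c)\cong (T_{\lambda},\vphi_{\lambda})$. The hypotheses $\tfrac12<\alpha$ and $\beta\leq\alpha$ are what guarantee that no finite-dimensional direct summand survives, so that the isomorphism is onto all of $T_{\lambda}$.

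For (2), I would deduce the absorption statement from (1) together with the free-absorption properties of free Araki--Woods factors. The modular inclusions give a $\phi$-preserving conditional expectation $A\to A_{0}$ and a $\psi$-preserving conditional expectation $B\to B_{0}$ onto the distinguished copies $A_{0}=\underset{\alpha,1-\alpha}{M_{2}(\C)}$ and $B_{0}=\underset{\beta}{\C}\oplus\underset{1-\beta}{\C}$ (Takesaki's theorem). Inside $(A,\phi)*(B,\psi)$ the subalgebras $A_{0}$ and $B_{0}$ remain free, so by (1) they generate a copy of $(T_{\lambda},\vphi_{\lambda})$. The plan is then to use associativity and commutativity of the reduced free product, re-bracketing so as to split this $T_{\lambda}$ off as a free factor, i.e.\ to realize $(A,\phi)*(B,\psi)\cong (T_{\lambda},\vphi_{\lambda})*(\cN,\nu)$ for a suitable $(\cN,\nu)$. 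Once this is achieved, Shlyakhtenko's identity $(T_{\lambda},\vphi_{\lambda})*(T_{\lambda},\vphi_{\lambda})\cong (T_{\lambda},\vphi_{\lambda})$ gives
\[
(A,\phi)*(B,\psi)*(T_{\lambda},\vphi_{\lambda})\cong (T_{\lambda},\vphi_{\lambda})*(T_{\lambda},\vphi_{\lambda})*(\cN,\nu)\cong (T_{\lambda},\vphi_{\lambda})*(\cN,\nu)\cong (A,\phi)*(B,\psi),
\]
which is the claim.

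I expect the main obstacle to be the core free-probability computation in (1): verifying that the corner $c=e_{0,0}fe_{1,1}$ is genuinely a generalized circular element of parameter $\lambda$. This requires either a moment computation with respect to $\vphi$ exploiting the freeness of $f$ from $M_{2}(\C)$ and the asymmetry of the weights, or a Fock-space model in which $c$ is exhibited directly in the form $\ell(\xi)+\sqrt{\lambda}\,\ell(\eta)^{*}$. The re-bracketing step in (2)---making precise the sense in which $A_{0}*B_{0}$ splits off as a free factor using only the modular conditional expectations---is the secondary technical point.
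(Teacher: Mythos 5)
The central step in your proof of (1) fails, and for a structural reason. Since the modular group of a free product state is the free product of the modular groups, and since both the diagonal $D=\C e_{0,0}\oplus\C e_{1,1}$ and the abelian algebra $\C f\oplus\C(1-f)$ lie in the centralizers of the respective states, all three generators $e_{0,0},e_{1,1},f$ are fixed by $\sigma^{\vphi}$, so $c=e_{0,0}fe_{1,1}$ lies in the centralizer $\cM^{\vphi}$. Equivalently, $c\in W^{*}(D\cup\{f\})$, which is a free product of two abelian (hence tracial) states, so $\vphi$ restricts to a \emph{trace} there; in particular $\vphi(cc^{*})=\vphi(c^{*}c)=\alpha(1-\alpha)\beta(1-\beta)>0$. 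A generalized circular element of parameter $\lambda$ is an eigenoperator of $\Delta_{\vphi}$ with eigenvalue $\lambda$ and satisfies $\vphi(yy^{*})=\lambda\,\vphi(y^{*}y)$; for $\lambda=\frac{1-\alpha}{\alpha}<1$ this is incompatible, so $c$ cannot be generalized circular of parameter $\lambda$. The ``classical fact'' you invoke is also false: off-diagonal corners of a projection (or of a Haar unitary) free from $M_{2}(\C)$ are not circular --- the correct classical input is Voiculescu's result for \emph{semicircular} elements, whose quasi-free analogue (Shlyakhtenko) is what produces generalized circular elements. Your functional-calculus recovery of $f$ breaks down as well: since $\beta\leq\alpha$ and $\alpha>\frac12$, the element $a=e_{0,0}fe_{0,0}$ has an atom at $1$ of mass $\alpha+\beta-1>0$, so $t\mapsto t-t^{2}$ is not injective on its spectrum, $a\notin W^{*}(cc^{*})$, and the branch $\frac12\bigl(e_{0,0}-\sqrt{e_{0,0}-4cc^{*}}\bigr)$ is wrong on the upper part of the spectrum. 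Even granting a good distribution for $c$, the identification $W^{*}(M_{2}(\C),c)\cong (M_{2}(\C),\psi_{\lambda})*(L(\Z),\tau)$ would require a freeness statement \emph{with amalgamation over $D$} plus an operator-valued distribution computation, not scalar freeness, since $W^{*}(c)$ is manifestly not free from $M_{2}(\C)$. The parameter $\lambda$ has to enter through the off-diagonal matrix unit $e_{0,1}$, which is a genuine $\Delta_{\vphi}$-eigenoperator, combined with $f$; this is how Houdayer's Theorem 3.1 actually proceeds, and how the complementary case $\alpha<\beta$ is handled in Theorem~\ref{thm:M2} of this paper, after rewriting the free product as the amalgamated free product $M_{2}(\C)*_{D}\bigl(D*[\C\oplus\C]\bigr)$ via Proposition~\ref{prop:amalgamate}.

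Your part (2) has a matching gap: the ``re-bracketing'' is the entire content of the theorem, not a secondary technicality. The algebra $A$ is not a plain free product of $A_{0}=M_{2}(\C)$ with anything, so associativity and commutativity of the scalar free product cannot split $W^{*}(A_{0}\cup B_{0})\cong T_{\lambda}$ off as a free factor; and positing $(A,\phi)*(B,\psi)\cong(T_{\lambda},\vphi_{\lambda})*(\cN,\nu)$ essentially assumes the absorption to be proved (a posteriori one may take $\cN=A*B$, which is circular). Note that the present paper does not reprove this lemma --- it cites \cite{Hou07} --- and Houdayer's proof of Theorem 4.3 (reused here in Corollary~\ref{cor:Houdayer}) runs instead through the modular conditional expectations and Proposition~\ref{prop:amalgamate}: one rewrites $A*B\cong A*_{A_{0}}(A_{0}*B)$ and $A_{0}*B\cong B*_{B_{0}}(B_{0}*A_{0})$, applies (1), invokes $T_{\lambda}*T_{\lambda}\cong T_{\lambda}$ compatibly with the distinguished copies of $A_{0}$ and $B_{0}$ (modular inclusions into a factor centralizer being unique up to unitary conjugation), and re-amalgamates. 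Your overall architecture --- prove (1), then deduce (2) by absorption --- matches the intended one, but both pillars must be rebuilt along these amalgamated lines.
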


The following lemma is a crucial ingredient for converting certain free products over the scalars into amalgamated free products and vice-versa. Consequently, it lets us appeal to our graph algebras in Subsection \ref{subsec:graph}.

\begin{prop}[{\cite[Proposition 4.1]{Hou07}}]\label{prop:amalgamate}

Let $(M, \phi)$ be a von Neumann algebra with a faithful normal state, and $B \subset M$ a von Neumann subalgebra with a $\phi$-preserving conditional expectation $E_{1}: M \rightarrow B$.  Let $(A, \psi)$ be another von Neumann algebra with faithful normal state, and $E_{2}: (A, \psi) * (B, \phi) \rightarrow (B, \phi)$ the canonical $\phi$-preserving conditional expectation.  Set 
$$
(\cM, E) = (M, E_{1}) \Asterisk_B ((A, \psi) * (B, \phi), E_{2}).
$$
Then if $\vphi = \phi \circ E$,
$$
(\cM, \vphi) \cong (M, \phi) * (A, \psi).
$$
\end{prop}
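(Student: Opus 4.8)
The plan is to realize both factors inside the single von Neumann algebra $\cM$ and then invoke the uniqueness of the reduced free product with respect to a faithful normal state. Inside $\cM$ I already have $M=M_1$, and via the inclusion $A\subset (A,\psi)*(B,\phi)=M_2$ a copy of $A$; since $B\subset M$, these two subalgebras generate $\cM$. It then remains to verify three facts about $\varphi=\phi\circ E$: that it is a faithful normal state whose restriction to $M$ is $\phi$ and whose restriction to $A$ is $\psi$, and, crucially, that $M$ and $A$ are \emph{free} with respect to $\varphi$. The state restrictions are immediate from the fact that the amalgamated expectation $E$ restricts to $E_1$ on $M_1$ and to $E_2$ on $M_2$: on $M$ one gets $\phi\circ E_1=\phi$, while on $M_2$ one gets $\phi\circ E_2$, the free product state of $(A,\psi)*(B,\phi)$, whose restriction to $A$ is $\psi$. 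Faithfulness and normality of $\varphi$ follow because $E$ is a faithful normal expectation (the $E_i$ are $\phi$-preserving with $\phi$ faithful, hence faithful normal) and $\phi|_B$ is faithful.

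The heart of the argument is the scalar freeness of $M$ and $A$. The main obstacle is a mismatch of centering: an element $m\in M$ with $\phi(m)=0$ need not satisfy $E_1(m)=0$, whereas the amalgamated freeness of $M_1,M_2$ over $B$ is phrased in terms of the kernels $\ker E_1$ and $\ker E_2$. I would resolve this by decomposing each $\phi$-centered $M$-factor as $m=(m-E_1(m))+E_1(m)$, writing $\mathring m:=m-E_1(m)\in\ker E_1$ and $b:=E_1(m)\in B$; since $E_1$ is $\phi$-preserving, $\phi(b)=\phi(m)=0$, so $b\in B^\circ:=B\cap\ker\phi$. Substituting this into an alternating $\varphi$-centered word in $M$ and $A$ and expanding, one is left with a sum of words whose letters come from $\ker E_1\subset M_1$ (at some $M$-slots), from $A^\circ:=A\cap\ker\psi\subset M_2$ (at the $A$-slots), and from $B^\circ\subset B$ (at the remaining $M$-slots).

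The final step is to recognize each resulting word, after merging, as an alternating $E$-centered word for the amalgamated free product, so that $E$, and hence $\varphi=\phi\circ E$, annihilates it. Since $B\subset M_2$ and each $B^\circ$-letter sits at an $M$-slot whose neighbours are $A^\circ$-letters (which lie in $M_2$), every maximal run of consecutive letters lying in $M_2$ can be multiplied out into a single element of $M_2$; what survives is a word that strictly alternates between $\ker E_1\subset M_1$ and these merged elements of $M_2$. The key sub-fact is that each merged $M_2$-element lies in $\ker E_2$: it is a reduced word of $(A,\psi)*(B,\phi)$ built from centered letters in $A^\circ$ and $B^\circ$ and containing at least one $A^\circ$-letter, and such words have vanishing $B$-valued expectation, as one checks from $\phi(E_2(\,\cdot\,)b)=\phi\circ E_2(\,\cdot\,b)$ together with the freeness of $A$ and $B$. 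With every letter now centered for the appropriate $E_i$ and strictly alternating between $M_1$ and $M_2$, the defining moment condition of the amalgamated free product gives $E=0$ on each term, so $\varphi$ vanishes on the original word.

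This establishes that $M$ and $A$ are $\varphi$-free, and the uniqueness of the reduced free product then yields $(\cM,\varphi)\cong (M,\phi)*(A,\psi)$. I expect the merging-and-centering bookkeeping in the last step to be the main point requiring care, in particular confirming that no stray single-$B^\circ$ block can survive the reduction (which forces one to treat the trivial length-one word separately and to use that $M$-slots are never adjacent).
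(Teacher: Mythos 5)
Your proposal is correct and is essentially the argument behind the cited result: the paper itself gives no proof here but quotes \cite[Proposition 4.1]{Hou07}, whose proof proceeds exactly as you do --- recenter each $\phi$-centered $M$-letter as $(m-E_1(m))+E_1(m)$, observe that maximal runs of $A^\circ$- and $B^\circ$-letters merge into reduced words of $(A,\psi)*(B,\phi)$ lying in $\ker E_2$ (via $\phi(E_2(\,\cdot\,)b)=\phi\circ E_2(\,\cdot\,b)$ and faithfulness of $\phi$ on $B$), and conclude by amalgamated freeness and uniqueness of the reduced free product. Your bookkeeping caveats (the length-one word, non-adjacency of $M$-slots, boundary $B^\circ$-letters) are exactly the points the original proof handles, so nothing is missing.
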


The following result appears in \cite{HNFreeGraph}. This lemma, combined with Lemma \ref{lem:Dykema} enables us to compute free products by examining suitable compressions.  It will be used extensively.

\begin{lem}[{\cite[Lemmas 3.1 and 3.2]{HNFreeGraph}}]\label{lem:antman}
{\color{white}spacer}
\begin{enumerate}
\item[\normalfont{(1)}] Let $H$ be a countable multiplicative subgroup of $\R^{+}$ and let $p \in (T_{H})^{\vphi_H}$ be a nonzero projection.  Then 
	\[
		\left(pT_{H}p, \vphi_{H}^{p}\right) \cong (T_{H}, \varphi_{H}).
	\]

\item[\normalfont{(2)}] Let $M$ be a von Neumann algebra with almost-periodic faithful normal state $\vphi$. Let $p\in M^\vphi$ a projection such that $(pMp, \vphi^p)\cong (T_H,\vphi_H)$ for some non-trivial, countable subgroup $H$ of $\R^+$, and such that $z:=z(p\colon M^\vphi)=z(p\colon M)$. Then
	\[
		(z M,\vphi^{z})\cong (T_H,\vphi_H).
	\]
In particular, if $z(p\colon M^\vphi)=1$ then $(M,\vphi)\cong (T_H,\vphi_H)$.

\end{enumerate}
\end{lem}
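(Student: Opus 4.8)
The plan is to prove part (1) via Shlyakhtenko's matricial model on the full Fock space, and then to deduce part (2) from part (1) by a matrix-unit argument carried out inside the centralizer of $\vphi$. The two facts doing the heavy lifting throughout are that the relevant centralizers are interpolated free group factors (so in particular $\mathrm{II}_1$ factors enjoying the compression formula $L(\F_\infty)^t\cong L(\F_\infty)$), and that a projection in the centralizer gives rise to matrix units realizing the ambient algebra as an honest matrix amplification of a corner.

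For part (1), recall that $(T_H)^{\vphi_H}\cong L(\F_\infty)$ is a $\mathrm{II}_1$ factor on which $\vphi_H$ restricts to the trace. Hence any two projections $p,q\in(T_H)^{\vphi_H}$ with $\vphi_H(p)=\vphi_H(q)$ are conjugate by a partial isometry $u$ in the centralizer, and $x\mapsto uxu^*$ is a $\vphi_H$-preserving isomorphism $(pT_Hp,\vphi_H^p)\to(qT_Hq,\vphi_H^q)$; thus the compressed pair depends only on $t=\vphi_H(p)$. To identify it with $(T_H,\vphi_H)$ I would realize $T_H$ on the full Fock space as in \cite{Shl97} and compute the compression by a centralizer projection $p$ sitting in the matrix part of the model directly: Shlyakhtenko's computation presents $pT_Hp$ as a von Neumann algebra generated by a free family of generalized circular elements whose parameters are ratios of eigenvalues of $\Delta_{\vphi_H}$, hence range exactly over $H$, and the compressed vacuum is the corresponding free quasi-free state. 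His identification then yields $(pT_Hp,\vphi_H^p)\cong(T_H,\vphi_H)$ for every nonzero $p\in(T_H)^{\vphi_H}$.

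For part (2), since $z=z(p\colon M^\vphi)=z(p\colon M)$ is central in $M$ and lies in $M^\vphi$, I pass to $(zM,\vphi^z)$, inside which $(zM)^{\vphi^z}=zM^\vphi$ and $p$ has full central support both in $zM$ and in $zM^\vphi$; this reduces to the case $z=1$. Now $pM^\vphi p=(pMp)^{\vphi^p}\cong L(\F_\infty)$ is a factor and $p$ has full central support in $M^\vphi$, so $x\mapsto xp$ embeds $Z(M^\vphi)$ into $Z(pM^\vphi p)=\C p$; hence $M^\vphi$ is a factor, finite with trace $\vphi|_{M^\vphi}$, and of type $\mathrm{II}_1$ since its corner is diffuse. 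In this $\mathrm{II}_1$ factor I choose, for $m$ large enough that $1/m\le\vphi(p)$, orthogonal projections $e_1,\dots,e_m\in M^\vphi$ summing to $1$, each of trace $1/m$, with $e_1\le p$; the equivalences among them give a copy of $M_m(\C)$ in the centralizer and a state-preserving isomorphism $(M,\vphi)\cong(M_m(\C),\tr_m)\otimes(e_1Me_1,\vphi^{e_1})$. Because $e_1\le p$ we have $e_1Me_1=e_1(pMp)e_1$ with $e_1\in(pMp)^{\vphi^p}$, so part (1) applied inside $(pMp,\vphi^p)\cong(T_H,\vphi_H)$ gives $(e_1Me_1,\vphi^{e_1})\cong(T_H,\vphi_H)$. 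Finally the same matrix-unit construction performed inside $T_H$, whose centralizer is again $L(\F_\infty)$, combined with part (1) shows $(M_m(\C),\tr_m)\otimes(T_H,\vphi_H)\cong(T_H,\vphi_H)$, whence $(zM,\vphi^z)\cong(T_H,\vphi_H)$.

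I expect the main obstacle to be the explicit Fock-space computation underlying part (1): verifying that compressing $T_H$ by an arbitrary centralizer projection reproduces a \emph{free} family of generalized circular elements whose parameters generate exactly $H$, and that the compressed vacuum is the associated free quasi-free state. Once part (1) is in hand, part (2) is soft and relies only on the factoriality of $M^\vphi$, the formula $L(\F_\infty)^t\cong L(\F_\infty)$, and the matrix amplifications; the one routine point needing care there is that the $\vphi$-equivalence of the $e_i$ forces the matrix factor to carry the \emph{normalized} trace $\tr_m$, so that the tensor decompositions are genuinely state-preserving.
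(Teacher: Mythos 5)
You should first note a point of bookkeeping: the paper does not prove Lemma~\ref{lem:antman} at all --- it is imported verbatim from \cite{HNFreeGraph} (their Lemmas 3.1 and 3.2), so there is no in-paper argument to compare against; the relevant comparison is with the machinery of that source, which this paper partially redevelops in Section~\ref{sec:std_embeddings}. Judged on its own terms, your part (2) is complete and correct, and matches the soft style the authors use everywhere: $(zM)^{\vphi^z}=zM^{\vphi}$ and $pM^{\vphi}p=(pMp)^{\vphi^p}$ hold because $z,p\in M^{\vphi}$ and the centralizer of a faithful normal state is the fixed-point algebra of its modular group; $Z(pNp)=Z(N)p$ turns $z(p\colon M^{\vphi})=1$ into factoriality of $M^{\vphi}$; the cross terms $\vphi(e_i x e_j)$, $i\neq j$, vanish since $e_j\in M^{\vphi}$, which is exactly what makes your decomposition $(M,\vphi)\cong (M_m(\bbC),\tr_m)\otimes(e_1Me_1,\vphi^{e_1})$ state-preserving (you were right to insist on the normalized trace); and $(M_m(\bbC),\tr_m)\otimes(T_H,\vphi_H)\cong(T_H,\vphi_H)$ follows from repeating the matrix-unit construction inside $(T_H)^{\vphi_H}\cong L(\F_\infty)$ together with part (1).

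The one place the proposal under-delivers is part (1), and you correctly flagged it yourself. Reducing to dependence on $t=\vphi_H(p)$ alone via a partial isometry in the centralizer is fine (conjugation by $u\in M^{\vphi}$ is state-preserving), but ``Shlyakhtenko's computation'' in \cite{Shl97} does not literally cover an arbitrary centralizer projection: a projection of irrational trace is not visible in any finite matricial model, so one must (i) conjugate $p$ into a hyperfinite subalgebra, (ii) treat rational traces by compressing the finite matrix models and passing to limits in moments, and (iii) verify generation of the corner --- the compressed words produce elements of the form $u_j^*y_iu_k$, and when $1/t\notin\bbN$ one needs an amplification plus a cutting-and-pasting of generalized circular elements to reassemble a free generating family, free from the compressed hyperfinite algebra, whose parameters still generate exactly $H$; one then concludes via $pT_Hp\cong (pRp)*\bigast_i(T_{\lambda_i},\vphi_{\lambda_i})\cong T_H$ using free absorption. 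This is precisely the content of Lemmas~\ref{lem:hyperfinite_disintegration} and \ref{lem:cut_and_paste} and Proposition~\ref{prop:hyperfinitematrixmodel} of the present paper, and it is how \cite{HNFreeGraph} proves the lemma; so your plan is the right one, but as written those steps are deferred rather than proved.
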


%%%%%%%%%%%%%%%%%%%%%%%%%%%%%%%%%%
\subsection{Non-tracial graph algebras}\label{subsec:graph}

In \cite{HNFreeGraph}, a von Neumann algebra was constructed from a weighted graph. We will outline the construction here. 

To begin, we let $\Gamma$ be a finite directed graph with vertex set $V$ and edge set $E$ with source and target maps $s$ and $t$ respectively.  There is an involution on $E$, denoted $\op$, which satisfies $s(e^{\op}) = t(e)$ and $t(e^{\op}) = s(e)$ for all $e \in E$.  If $e$ is a self-loop based at $v \in V$, then it is possible to have $e^{\op} = e$, but we do not require this.  Denote the space of loops in $\Gamma$ by $\Lambda_\Gamma$.

We will also assume that $\Gamma$ comes equipped with an edge weighting $\mu: E \rightarrow \R^{+}$ such that $\mu(e)\mu(e^{\op})=1$ for all $e\in E$. For $\sigma=e_1\cdots e_n\in \Lambda_\Gamma$, denote $\mu(\sigma)=\mu(e_1)\cdots \mu(e_n)$. In order to ensure that the von Neumann algebra we construct is non-tracial, we will assume:
	\[
		\exists \sigma\in \Lambda_\Gamma \colon \mu(\sigma)\neq 1.
	\]

We define $A := \ell^{\infty}(V)$ and we let $p_v$ denote the indicator function on $v\in V$.  In \cite{HNFreeGraph}, the authors showed that there is a Fock space representation of a  C*-algebra $\cS(\Gamma, \mu)$ generated by $A$ and elements $(Y_{e})_{e \in E}$ satisfying $p_{v}Y_{e}p_{w} = \delta_{v, s(e)}\cdot \delta_{w, t(e)}Y_{e}$ and $Y_{e}^{*} = \frac{1}{\sqrt{\mu(e)}}Y_{e^{\op}}$.  In addition, this Fock space representation produced a faithful conditional expectation $\bbE: \cS(\Gamma, \mu) \rightarrow A$ under which the C*-algebras C$^{*}(A, Y_{f},Y_{f^{op}})$ as $f$ ranges through all pairs $(e, e^{\op})$ are free with amalgamation over $A$ under $\bbE$.

The key observation about the elements $Y_{e}$ that we will use in this paper is their distribution.   Specifically, if we let $\phi$ be any faithful positive linear functional on $\cS(\Gamma,\mu)$ that is preserved by $\bbE$, and let $(\cM(\Gamma,\mu), \phi)$ denote the von Neumann algebra generated by $\cS(\Gamma,\mu)$ via the GNS representation associated to $\phi$, then as a subalgebra of $(\cM(\Gamma,\mu), \phi)$,
	\[
		(p_{s(e)}W^*(Y_{e}Y_{e}^{*})p_{s(e)}, \phi^p) \cong \begin{cases}
(L(\Z),\tau) &\text{ if } \mu(e) \geq 1\\
\underset{\phi(p_{s(e)})\mu(e)}{(L(\Z),\tau)} \oplus \underset{\phi(p_{s(e)})(1 - \mu(e))}{\C} &\text{ if } \mu(e) < 1
\end{cases}.
	\]
If we let $Y_{e} = u_{e}|Y_{e}|$ be the polar decomposition, then $u_{e}u_{e}^{*} = p_{s(e)}$ if and only if $\mu(e) \geq 1$ and $u_{e}^{*}u_{e} = p_{t(e)}$ if and only if $\mu(e) \leq 1$.\\

The main result of \cite{HNFreeGraph} is identifying $\cM(\Gamma, \mu)$ with an almost periodic free Araki-Woods factor under an appropriate positive linear functional, $\varphi$.   To construct $\vphi$, we let $\Gamma_{\Tr}$ be a subgraph of $\Gamma$ maximal subject to the condition
	\[
		\mu(\sigma) = 1\qquad \forall \sigma\in \Lambda_{\Gamma_{\Tr}}.
	\]
Note that while $\Gamma_{\Tr}$ need not be unique, the condition $\mu(e)\mu(e^{\op})=1$ implies it will always contain every vertex of $V$.  We define $\vphi$ on $A$ as follows:. Let $*$ be a fixed vertex of $V$, pick $\alpha \in \R^{+}$ and declare $\vphi(p_{*}) = \alpha$.  For any $v \in V$, then define
	\[
		\vphi(p_{v}) = \mu(e_{1})\cdots\mu(e_{n})\alpha
	\]
where $e_{1}\cdots e_{n}$ is any path in $\Gamma_{\Tr}$ with source $*$ and target $v$.  We extend $\vphi$ to be defined on $\cM(\Gamma, \mu)$ by pre-composing with $\bbE$.  Let $\Delta_{\vphi}$ be the modular operator of $\vphi$. By \cite{HNFreeGraph}, we have the following:

\begin{thm}\label{thm:freegraph}
\begin{enumerate}[(1)] \item[]
\item $\vphi$ is an almost periodic positive linear functional on $\cM(\Gamma, \mu)$.

\item $\cM(\Gamma_{\Tr}, \mu) \subset \cM(\Gamma, \mu)^{\vphi}$.

\item Each $Y_{e}$ is an eigenoperator  of $\Delta_{\vphi}$ with eigenvalue $\mu(e)\mu(e_{1})\cdots\mu(e_{n})$ such that $e_{1}\cdots e_{n}$ is a path in $\Gamma_{\Tr}$ with source $t(e)$ and target $s(e)$.

\item Suppose that $H:=\< \mu(\sigma)\colon \sigma \in \Lambda_\Gamma\> <\bbR^+$ is non-trivial. Then
	\[
		(\cM(\Gamma, \mu), \vphi) \cong (T_H,\vphi_H)\oplus \bigoplus_{v\in V} \overset{r_v}{\bbC},
	\]
where $r_v\leq p_v$ is non-zero if and only if $\displaystyle \sum_{\substack{e\in E\\ s(e)=v}} \mu(e)<1$, in which case
	\[
		\vphi(r_v)=\vphi(p_v)\left[ 1- \sum_{\substack{e\in E\\ s(e)=v}} \mu(e)\right].
	\]
In particular, if
	\[
		\sum_{\substack{e\in E\\ s(e)=v}} \mu(e) \geq 1
	\]
for all $v\in V$, then $(\cM(\Gamma, \mu), \vphi)\cong (T_H,\vphi_H)$.

\end{enumerate}
\end{thm}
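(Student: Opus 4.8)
My plan is to separate the modular-theoretic content, parts (1)--(3), from the structural identification in part (4). For (1)--(3) I would compute the modular automorphism group $\sigma^{\vphi}$ directly. Since $\vphi=\vphi|_A\circ\bbE$ and $A=\ell^\infty(V)$ is abelian, $\sigma^\vphi$ fixes $A$ pointwise, so everything reduces to understanding $\sigma^\vphi(Y_e)$. Working in the Fock space model of \cite{HNFreeGraph}, where each $Y_e$ is assembled from a creation operator and a scaled annihilation operator and the weights implementing $\vphi|_A$ are diagonal over $V$, the KMS condition forces $\sigma^\vphi_t(Y_e)=\lambda_e^{it}Y_e$ with $\lambda_e=\mu(e)\,\vphi(p_{s(e)})/\vphi(p_{t(e)})$. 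By the definition of $\vphi$ on $A$, the ratio $\vphi(p_{s(e)})/\vphi(p_{t(e)})$ equals $\mu(e_1)\cdots\mu(e_n)$ for any path $e_1\cdots e_n$ in $\Gamma_{\Tr}$ from $t(e)$ to $s(e)$ --- well defined precisely because every loop of $\Gamma_{\Tr}$ has $\mu$-weight $1$ --- which is the eigenvalue asserted in (3). Part (2) is the special case $e\in\Gamma_{\Tr}$: taking the path to be $e^{\op}$ gives $\lambda_e=\mu(e)\mu(e^{\op})=1$, so every generator of $\cM(\Gamma_{\Tr},\mu)$ is $\sigma^\vphi$-fixed and hence lies in the centralizer. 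Finally (1) holds because $\cM(\Gamma,\mu)$ is generated by $A$, on which $\Delta_\vphi$ is diagonal, together with the eigenoperators $Y_e$, so $\Delta_\vphi$ admits a complete orthogonal system of eigenvectors.

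For part (4) I would first isolate the finite-dimensional summand. Recall $\cM(\Gamma,\mu)$ is the free product with amalgamation over $A$ of the algebras $\mathrm{C}^*(A,Y_e,Y_{e^{\op}})$. Fix $v\in V$, write $Y_e=u_e|Y_e|$, and work inside $p_v\cM(\Gamma,\mu)p_v$. The stated distribution of $Y_eY_e^*$ gives $u_eu_e^*=p_v$ when $\mu(e)\ge 1$ and $\vphi^{p_v}(u_eu_e^*)=\mu(e)$ when $\mu(e)<1$, and the projections $\{u_eu_e^*\colon s(e)=v\}$ are free over $A$. The standard free-probability formula for the join of free projections then yields $\bigvee_{s(e)=v}u_eu_e^*=p_v$ exactly when $\sum_{s(e)=v}\mu(e)\ge 1$, and otherwise leaves a complementary projection $r_v\le p_v$ with $\vphi(r_v)=\vphi(p_v)\big(1-\sum_{s(e)=v}\mu(e)\big)$. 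When $r_v\neq 0$ every edge at $v$ has non-full support at $v$, so $r_v$ is orthogonal to every $u_eu_e^*$ and to every right support meeting $v$; hence $r_vY_e=Y_er_v=0$ and $r_v$ is central. These $r_v$ are exactly the summand $\bigoplus_v\overset{r_v}{\bbC}$.

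It remains to identify $\cM_0:=(1-\sum_v r_v)\,\cM(\Gamma,\mu)$ with $(T_H,\vphi_H)$, and here I would use a corner argument. Compressing $\cM_0$ to the base vertex projection $p_\star$ (minus its atom, if any), the corner is generated by the elements attached to paths that leave and return to $\star$; by the amalgamated freeness each independent return loop contributes a generalized circular element whose modular parameter, by (3), lies in $H=\langle\mu(\sigma)\colon\sigma\in\Lambda_\Gamma\rangle$, and collectively these parameters generate $H$. Matching this free family to Shlyakhtenko's generating model $\bigast_i(T_{\lambda_i},\vphi_{\lambda_i})$ for $(T_H,\vphi_H)$ \cite{Shl97}, and absorbing the tracial contribution of $\Gamma_{\Tr}$ by free absorption, identifies the corner with $(T_H,\vphi_H)$. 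Because $\Gamma$ is connected, the partial isometries $u_e$ link all vertex projections, so $p_\star$ has full central support in both $\cM_0$ and in the centralizer $\cM_0^{\vphi}$; Lemma \ref{lem:antman}(2) then promotes the corner isomorphism to $(\cM_0,\vphi)\cong(T_H,\vphi_H)$, which together with the atoms gives (4). One can alternatively run this inductively on the number of edge-pairs, using Proposition \ref{prop:amalgamate} to turn amalgamated pieces into ordinary free products, Lemma \ref{lem:Hou} for the two-vertex base case, and the free-product rule $T_H*T_{H'}\cong T_{\langle H\cup H'\rangle}$ at each step.

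The main obstacle is precisely the corner identification above: verifying, within the free product amalgamated over the multi-vertex base $A=\ell^\infty(V)$, that the return loops at $\star$ genuinely form a free family of generalized circular elements with parameters generating exactly $H$ and with no hidden relations, and that the full-central-support hypotheses of Lemma \ref{lem:antman}(2) survive after the atoms are removed. The tracial case, where all $\mu(\sigma)=1$ and $\cM(\Gamma,\mu)$ reduces to a known interpolated free group factor graph algebra, is a useful consistency check, and connectedness of $\Gamma$ is exactly what is needed to link the vertices and apply the lifting lemma.
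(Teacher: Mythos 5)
A preliminary remark: this theorem is not proved in the present paper at all --- it is quoted from \cite{HNFreeGraph}, so the only ``in-paper proof'' is the citation, and your attempt has to be measured against the argument given there. Against that standard, your treatment of (1)--(3) is essentially the right computation: $A\subset \cM(\Gamma,\mu)^{\vphi}$ because $\bbE$ is $\vphi$-preserving and $A$ is abelian, each $Y_e$ is an eigenoperator of $\Delta_\vphi$ with eigenvalue $\mu(e)\vphi(p_{s(e)})/\vphi(p_{t(e)})$, the definition of $\vphi$ on $A$ converts this ratio into the $\mu$-weight of a $\Gamma_{\Tr}$-path from $t(e)$ to $s(e)$ (well defined because $\Gamma_{\Tr}$-loops have weight $1$), and (2) is the case of the path $e^{\op}$. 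Your derivation of the atoms $r_v$ is also sound: after compressing by $p_v$ the amalgamated freeness over $A$ becomes scalar freeness (since $p_vAp_v=\bbC p_v$), the join formula for free projections gives $\vphi(r_v)=\vphi(p_v)\bigl(1-\sum_{s(e)=v}\mu(e)\bigr)_+$, and orthogonality to all left and right supports makes $r_v$ central.

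The genuine gap is the step you flag yourself: the identification of the diffuse part with $(T_H,\vphi_H)$, and the specific claim that ``each independent return loop contributes a generalized circular element'' with parameters generating $H$ and ``no hidden relations'' would fail as stated. A loop operator $Y_{e_1}\cdots Y_{e_n}$ is a \emph{product} of amalgamated-free generators and is not a generalized circular element; the correct device --- visible in this paper's own Theorem \ref{thm:induction}, where the element $u_4Y_{e_1}$ has to be analyzed separately --- is to transport generators into the corner along a spanning tree via the polar parts $u_e$. But when $\mu(e)<1$ those polar parts are proper partial isometries ($u_eu_e^*<p_{s(e)}$), so the transported elements are cut-down generalized circular elements of the form $qyq$, whose distributions carry atoms (exactly the mechanism producing the $r_v$'s); establishing their joint freeness and reassembling full generalized circular elements from cut pieces is precisely the content of the dedicated machinery one needs --- the analogues of Lemmas \ref{lem:hyperfinite_disintegration} and \ref{lem:cut_and_paste} and Proposition \ref{prop:hyperfinitematrixmodel}, or the inductive route through Proposition \ref{prop:amalgamate}, Lemma \ref{lem:Hou}, the absorption rule $(T_{H},\vphi_H)*(T_{H'},\vphi_{H'})\cong (T_{\langle H\cup H'\rangle},\vphi_{\langle H\cup H'\rangle})$, and Lemma \ref{lem:antman}, which is what \cite{HNFreeGraph} actually runs (and what Sections 2 and 3 of this paper imitate). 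Two smaller points in the same direction: self-loop pairs $e,e^{\op}$ based at $v$ lie in a \emph{single} amalgamated-free component, so their two supports are not free of each other and the mass count at $v$ needs the actual distribution of $Y_eY_e^*$ rather than blanket freeness; and the hypothesis of Lemma \ref{lem:antman}(2), namely $z(p_\star\colon \cM_0^{\vphi})=z(p_\star\colon\cM_0)$, does not follow from connectedness of $\Gamma$ alone --- one must observe that $\Gamma_{\Tr}$ is spanning and connected, so that partial isometries linking all vertices already lie in the centralizer.
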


We will use this graphical picture of $(T_{H}, \vphi_{H})$ as a mechanism for realizing free products of certain finite-dimensional von Neumann algebras.

%%%%%%%%%%%%%%%%%%%%%%%%%%%%%%%%%%
%%%%%%%%%%%%%%%%%%%%%%%%%%%%%%%%%%
%%%%%%%%%%%%%%%%%%%%%%%%%%%%%%%%%%
\section{Free Products of Finite-Dimensional von Neumann Algebras}\label{sec:fd}

In this section we will compute free products of arbitrary finite-dimensional von Neumann algebras (see Theorem \ref{thm:finitedim}). Our first step is to compute for $\alpha,\beta\in \left(\frac12,1\right)$
	\[
		\underset{\alpha,1-\alpha}{M_2(\C)} * \left[\underset{\beta}{\C}\oplus \underset{1-\beta}{\C}\right].
	\]
In the case $\alpha \geq \beta$, this is done in \cite[Theorem 3.1]{Hou07}. To handle the case $\alpha<\beta$, we will appeal to the graphical picture of the free Araki--Woods factors from Subsection \ref{subsec:graph}. This computation will serve as the base case for computing more general free products of the form $M_n(\C)*[\C\oplus \C]$.

\subsection{Computing $M_{2}(\C) * [\C \oplus \C]$}

\begin{thm}\label{thm:M2}

Suppose that $\frac12 < \alpha < \beta < 1$.  Define
	\[
		(\cM,\vphi) := \underset{\alpha, 1-\alpha}{\overset{e_{11}, e_{22}}{M_{2}(\C)}} * \left[\underset{\beta}{\overset{q}{\C}} \oplus \underset{1-\beta}{\C}\right].
	\]
Then for $\lambda := \frac{1-\alpha}{\alpha}$ and $\gamma := \frac{1-\beta}{\alpha} + \frac{1 - \beta}{1 - \alpha}$, one has
	\[
		(\cM,\vphi) \cong \begin{cases}
					(T_{\lambda}, \vphi_{\lambda}) &\text{ if } \gamma \geq 1\\
					\underset{\gamma}{(T_{\lambda}, \vphi_{\lambda})} \oplus \underset{\alpha(1 - \gamma), (1-\alpha)(1 - \gamma)}{\overset{\overline{e_{11}}, \overline{e_{22}}}{M_{2}(\C)}} &\text{ if } \gamma < 1
				\end{cases}.
	\]
In the second case, one has $\overline{e_{ii}} \leq e_{ii}\wedge q$ for $i \in \{1, 2\}$.

\end{thm}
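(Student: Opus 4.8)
The plan is to reduce the whole computation to that of a single corner, evaluate the corner through the graph von Neumann algebras of Subsection~\ref{subsec:graph}, and then reassemble. First I would record a \emph{product decomposition}. Since $\cM$ is a free product, its modular automorphism group restricts to that of $M_2(\C)$; as $\alpha\neq\tfrac12$ forces $M_2(\C)^{\phi}=\C e_{11}\oplus\C e_{22}$, we get $e_{11},e_{22}\in\cM^{\vphi}$, while $e_{21}$ is a $\Delta_{\vphi}$-eigenoperator with eigenvalue $\tfrac{1-\alpha}{\alpha}=\lambda$. A standard KMS computation then shows that the $*$-isomorphism $M_2(\C)\otimes e_{11}\cM e_{11}\cong\cM$, $e_{ij}\otimes c\mapsto e_{i1}ce_{1j}$, carries a product state to $\vphi$, i.e.
\[
(\cM,\vphi)\cong \big(M_2(\C),\psi_\lambda\big)\otimes\big(e_{11}\cM e_{11},\vphi^{e_{11}}\big),
\]
where $\psi_\lambda$ is the state with weights $\alpha,1-\alpha$. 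Thus it suffices to identify $C:=(e_{11}\cM e_{11},\vphi^{e_{11}})$.

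To compute $C$ I would convert to an amalgamated free product. Applying Proposition~\ref{prop:amalgamate} with $M=M_2(\C)$, $B=D$ the diagonal, and $A=\C q\oplus\C(1-q)$ yields
\[
(\cM,\vphi)\cong M_2(\C) *_{D} \big(\,[\C q\oplus\C(1-q)]*(D,\phi|_D)\,\big).
\]
The second factor is $W^*(q,D)$, the algebra generated by two free projections of traces $\beta$ and $\alpha$, which is tracial; the non-triviality is carried entirely by $M_2(\C)$. Its structure is explicit: there are atoms $e_{11}\wedge q$ (of mass $\alpha+\beta-1$) and $e_{22}\wedge q$ (of mass $\beta-\alpha$), both nonzero since $\tfrac12<\alpha<\beta$, together with a diffuse summand on which $e_{11},q$ are in general position. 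The heart of the argument is to realize $C$ as (a corner of) an explicit graph algebra $\cM(\Gamma,\mu)$: the matrix unit $e_{21}$ supplies the polar part of a weight-$\lambda$ edge while the off-diagonal part of $q$ supplies a tracial (weight-$1$) edge, so that the loop group is $H=\langle\lambda\rangle$.

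With the graph in hand, Theorem~\ref{thm:freegraph} gives $C\cong (T_\lambda,\vphi_\lambda)\oplus\overset{r}{\C}$, where the atom $r$ is nonzero precisely when the sum of the outgoing edge-weights at the relevant vertex is $<1$. I expect this sum to compute to $\gamma=\tfrac{1-\beta}{\alpha}+\tfrac{1-\beta}{1-\alpha}$ (the two summands corresponding to the two ways of returning to the $e_{11}$-corner: directly through $q$, and through $e_{21}$), with $r\leq e_{11}\wedge q$ of normalized mass $1-\gamma$. Finally I reassemble via the product decomposition. In the case $\gamma\geq1$ one has $C\cong(T_\lambda,\vphi_\lambda)$, hence $\cM\cong(M_2(\C),\psi_\lambda)\otimes(T_\lambda,\vphi_\lambda)\cong(T_\lambda,\vphi_\lambda)$ by Corollary~\ref{cor:tensor}, since the weights $\alpha,1-\alpha$ have ratio $\lambda\in H$. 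In the case $\gamma<1$ one has $\cM\cong(M_2\otimes T_\lambda)\oplus(M_2\otimes\C r)\cong(T_\lambda,\vphi_\lambda)\oplus M_2(\C)$, and from $\vphi(r)=\alpha(1-\gamma)$ together with the $M_2$-ratio one reads off the weights $\alpha(1-\gamma),(1-\alpha)(1-\gamma)$ and, by tracking the origin of $r$, the location $\overline{e_{ii}}\leq e_{ii}\wedge q$.

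The main obstacle is the middle step: pinning down the exact graph $\Gamma$ and verifying that $C$ is genuinely (a corner of) $\cM(\Gamma,\mu)$. The difficulty is that $M_2(\C)$ contributes a \emph{rigid} matrix unit $e_{21}$ whose modulus is a projection, whereas graph edges have diffuse moduli; reconciling this---by passing to a corner and matching the two-free-projection distribution of the off-diagonal of $q$, whose kernels on both sides encode the atoms $e_{ii}\wedge q$, to the graph-edge data---is where the real work lies, together with the bookkeeping showing the outgoing-weight sum is exactly $\gamma$ and that the remainder assembles into a single $M_2(\C)$ sitting beneath $q$. The case $\alpha\geq\beta$ is already covered by Lemma~\ref{lem:Hou}(1), so only the regime $\alpha<\beta$ (where both atoms survive) needs this graphical input.
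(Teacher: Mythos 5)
Your skeleton matches the paper's in several places---the amalgamation over the diagonal $D$ via Proposition~\ref{prop:amalgamate}, Dykema's computation of $[\C\oplus\C]*D$ with the atoms $q\wedge e_{11}$, $q\wedge e_{22}$ of masses $\alpha+\beta-1$, $\beta-\alpha$, and the final reassembly $(\cM,\vphi)\cong \underset{\alpha,1-\alpha}{M_2(\C)}\otimes (e_{11}\cM e_{11},\vphi^{e_{11}})$, which the paper also uses (at the end rather than the start). But your middle step is a genuine gap, and you have in effect conceded it: you never exhibit the graph $\Gamma$, and you never prove that $e_{11}\cM e_{11}$ \emph{is} a corner $p_v\cM(\Gamma,\mu)p_v$ so that Theorem~\ref{thm:freegraph}(4) can be read off. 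This identification is the entire mathematical content of the theorem in the regime $\frac12<\alpha<\beta$, which is exactly the regime Houdayer's methods do not reach. Saying that ``the outgoing-weight sum should compute to $\gamma$'' is a consistency check on the desired answer, not an argument: the graph whose vertex has outgoing weights $\frac{1-\beta}{\alpha}$ and $\frac{1-\beta}{1-\alpha}$ is (essentially) the graph $\Gamma'$ appearing in the proof of Theorem~\ref{thm:induction}, but the corner identifications there are established by machinery (edge concatenation $u_4Y_{e_1}$, the distribution computation for its modulus) whose induction has Theorem~\ref{thm:M2} as its \emph{base case}---so invoking that picture here would be circular unless you prove the identification independently.

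What the paper actually does in that middle step is different from, and stronger than, what you propose. It does not realize $\cM$ (or its corner) as a graph algebra; instead it builds a proper \emph{subalgebra} $\cN=W^*(Y_{e_1},u_2,u_3,p_0,p_1,p_2)$ of the triangle graph algebra with weights $\frac{\alpha}{1-\beta},\,1,\,\frac{1-\beta}{1-\alpha}$, using one full edge element and two polar parts (this is precisely how it reconciles the rigidity of $e_{21}$, whose modulus is a projection, with the diffuse moduli of graph edges---the obstruction you correctly flag but do not resolve). It identifies $(P\cN P,\vphi^P)\cong(\cM,\vphi)$, then passes to $P'=p_0+p_1$, uses Dykema's explicit picture of $M_2(L(\Z))\oplus\C\cong[\C\oplus\C]*[\C\oplus\C]$ together with Proposition~\ref{prop:amalgamate} to \emph{un}-amalgamate, and finally compresses by $Q=p_0+r_1$. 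The decisive observation is that this compression transforms the parameters: after normalization the resulting free product $\underset{1-\beta,\frac{\alpha}{1-\alpha}(1-\beta)}{M_2(\C)}*[\,\cdots]$ has matrix weights with ratio $\lambda$ and an abelian side admitting a trace-preserving copy of $\underset{\frac{\alpha}{1-\alpha}(1-\beta)}{\C}\oplus\underset{1-\beta}{\C}$, which lands exactly in the regime $\beta'\leq\alpha'$ of Lemma~\ref{lem:Hou}; that lemma plus free absorption and the central-support bookkeeping of Lemmas~\ref{lem:Dykema} and \ref{lem:antman} then pin down $Q\cN Q\cong(T_\lambda,\vphi_\lambda)$ (or $T_\lambda\oplus\C$ when $\gamma<1$). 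Your proposal contains no substitute for this compression-into-Houdayer's-regime idea, so as written it assumes what is to be proved. (Your peripheral steps are fine: the tensor decomposition with weights $\alpha,1-\alpha$ is a correct KMS computation, Corollary~\ref{cor:tensor} handles the reassembly since $\frac{1-\alpha}{\alpha}=\lambda\in H$, and the localization $\overline{e_{ii}}\leq e_{ii}\wedge q$ can be fixed as in the paper by noting $1-q$ lies in the diffuse summand, rather than your vaguer ``tracking the origin of $r$,'' which does not by itself show $e_{21}re_{12}\leq q$.)
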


\begin{proof}
We begin by writing $(\cM,\varphi)$ as an amalgamated free product so that we can identify it with (a corner of) a free graph von Neumann algebra. Let $D = \underset{\alpha}{\overset{e_{11}}{\C}} \oplus \underset{1-\alpha}{\overset{e_{22}}{\C}}$ be the diagonal of $M_{2}(\C)$ and let
	\begin{align*}
		E_{1} &\colon  \underset{\alpha, 1-\alpha}{\overset{e_{11}, e_{22}}{M_{2}(\C)}} \rightarrow D\\
		E_{2} &\colon \left[\underset{\beta}{\overset{q}{\C}} \oplus \underset{1-\beta}{\C}\right] * D \rightarrow D 
	\end{align*}
be the canonical state-preserving conditional expectations.  From Proposition \ref{prop:amalgamate}, we have
	\[
		(\cM,\vphi) \cong \left(\left(\underset{\alpha, 1-\alpha}{\overset{e_{11}, e_{22}}{M_{2}(\C)}}, E_{1}\right) \underset{D}{\Asterisk} \left( \left[\underset{\beta}{\overset{q}{\C}} \oplus \underset{1-\beta}{\C}\right] * D, E_{2}\right),\vphi\right).
	\]
By \cite{MR1201693}, this free product is
	\[
		\left(\underset{\alpha, 1-\alpha}{\overset{e_{11}, e_{22}}{M_{2}(\C)}}, E_{1}\right) \underset{D}{\Asterisk} \left(\underset{\beta + \alpha - 1}{\overset{q \wedge e_{11}}{\C}} \oplus \underset{(1 - \beta), (1 - \beta)}{M_{2}(L(\Z))} \oplus \underset{\beta - \alpha}{\overset{q \wedge e_{22}}{\C}}, E_{2}\right).
	\]

Let $\Gamma=(V,E)$ be the following graph with edge weighting $\mu$:
	\[
		\begin{tikzpicture}
		\node[left] at (-0.866,0.5) {\scriptsize$1$}; % vertex 1
		\draw[fill=black] (-0.866,0.5) circle (0.05);
		
		\node[above left] at (-0.5,0.866) {$e_1$}; % edges e_1 and e_1^{\op}
		\draw (0,1) arc (90:150:1);
		\draw[->] (0,1) arc (90:125:1);
		\draw (-0.866,0.5) arc (-90:-30:1);
		\draw[->] (-0.866,0.5) arc (-90:-55:1);
		
		\node[above] at (0,1) {\scriptsize$0$}; % vertex 0
		\draw[fill=black] (0,1) circle (0.05);
		
		\node[above right] at (0.5,0.866) {$e_3$}; % edges e_3 and e_3^{\op}
		\draw (0.866,0.5) arc (30:90:1);
		\draw[->] (0.866,0.5) arc (30:65:1);
		\draw (0,1) arc (-150:-90:1);
		\draw[->] (0,1) arc (-150:-115:1);
		
		\node[right] at (0.866,0.5) {\scriptsize$2$}; % vertex 2
		\draw[fill=black] (0.866,0.5) circle (0.05);
		
		\draw (-0.866,0.5) arc (-210:30:1); %edges e_2 and e_2^{\op}
		\draw[->] (-0.866,0.5) arc (-210:-85:1);
		\draw (0.866,0.5) arc(20:-200: 0.9216);
		\draw[->] (0.866,0.5) arc(20:-95:0.9216);
		\node[below] at (0, -1) {$e_2$};
		
		\node[right] at (2,0) {$\displaystyle \mu(e_1)=\frac{\alpha}{1-\beta}$};
		
		\node[right] at (5.5,0) {$\mu(e_2)=1$};
		
		\node[right] at (8,0) {$\displaystyle \mu(e_3)=\frac{1-\beta}{1-\alpha}$};

		\end{tikzpicture}
	\]
Let $\cM(\Gamma,\mu)$ be the associated graph algebra. By choosing $\Gamma_{\Tr}$ so that it contains $e_1$ and $e_3$ and declaring $\phi(p_0):=1-\beta$, we can find a faithful normal positive linear functional $\phi$ on $\cM(\Gamma,\mu)$ satisfying:
	\[
		\phi(p_1)=\alpha \qquad \phi(p_2) = 1-\alpha.
	\]
Consider the following subalgebra of $\cM(\Gamma, \mu)$:
	\[
		\cN := W^{*}(Y_{e_1}, u_{2}, u_{3}, p_{0}, p_{1}, p_{2})
	\]
where $u_{i}$ is the polar part of $Y_{e_i}$.  Set $P = p_{1} + p_{2}$ and $B = \underset{\alpha}{\overset{p_{1}}{\C}} \oplus \underset{1-\alpha}{\overset{p_{2}}{\C}}$, and let
	\[
		E_B \colon P\cM(\Gamma,\mu)P \to B
	\] 
be a $\phi^P$-preserving conditional expectation.  Recall that $Y_{e_{1}}$, $u_{2}$, and $u_{3}$ are free with amalgamation over $W^{*}(p_{0}, p_{1}, p_{2})$, and therefore
$$
(P\cN P, E_B)\cong \left( PW^{*}(u_{2}, p_{0}, p_{1}, p_{2})P,E_B \right) \Asterisk_B \left( PW^{*}(Y_{e_1}, u_{3}, p_{0}, p_{1}, p_{2})P,E_B \right)
$$
Since $\mu(e_{1})= 1$, it follows that $u_{2}^{*}u_{2} = p_{2}$ and $u_{2}u_{2}^* = p_{1}$ hence 
$$
(PW^{*}(u_{2}, p_{0}, p_{1}, p_{2})P,E_B) \cong \left(\underset{\alpha, 1-\alpha}{\overset{e_{11}, e_{22}}{M_{2}(\C)}}, E_{1}\right).
$$  
where the isomorphism sends $p_{i}$ to $e_{ii}$. Since $\mu(e_{1}) > 1$, it follows that $Y_{e_{1}}Y_{e_{1}}^{*}$ is diffuse in $p_{0}\cN p_{0}$ and $Y_{e_{1}}^{*}Y_{e_{1}}$ has an atom of size $\beta + \alpha - 1$ in $p_{1}\cN p_{1}$.  Furthermore, $u_{3}^{*}u_{3} = p_{0}$ and $u_{3}u_{3}^{*}$ is a projection of mass $1 - \beta$ under $p_{2}$.  Therefore 
$$
\left( PW^{*}(Y_{e_1}, u_{3}, p_{0}, p_{1}, p_{2})P,E_B \right) \cong \left(\underset{\beta + \alpha - 1}{\overset{q \wedge e_{11}}{\C}} \oplus \underset{(1 - \beta), (1 - \beta)}{M_{2}(L(\Z))} \oplus \underset{\beta - \alpha}{\overset{q \wedge e_{22}}{\C}}, E_{2}\right).
$$
Hence
$$
(P\cN P, E_B) \cong \left(\underset{\alpha, 1-\alpha}{\overset{e_{11}, e_{22}}{M_{2}(\C)}}, E_{1}\right) \Asterisk_D \left(\underset{\beta + \alpha - 1}{\overset{q \wedge e_{11}}{\C}} \oplus \underset{(1 - \beta), (1 - \beta)}{M_{2}(L(\Z))} \oplus \underset{\beta - \alpha}{\overset{q \wedge e_{22}}{\C}}, E_{2}\right),
$$
%	\begin{align*}
%		(P\cN P, E_B)&\cong \left( PW^{*}(u_{2}, p_{0}, p_{1}, p_{2})P,E_B \right) \Asterisk_B \left( PW^{*}(Y_{e_1}, u_{3}, p_{0}, p_{1}, p_{2})P,E_B \right)\\
%	 		&\cong\left(\underset{\alpha, 1-\alpha}{\overset{e_{11}, e_{22}}{M_{2}(\C)}}, E_{1}\right) \Asterisk_D \left(\underset{\beta + \alpha - 1}{\overset{q \wedge e_{11}}{\C}} \oplus \underset{(1 - \beta), (1 - \beta)}{M_{2}(L(\Z))} \oplus \underset{\beta - \alpha}{\overset{q \wedge e_{22}}{\C}}, E_{2}\right)
%	\end{align*}
and consequently
	\[
		(P\cN P,\phi^P) \cong (\cM, \vphi).
	\]
So it suffices to compute $(P\cN P,\phi^P)$. Observe that
	\[
		(P\cN P,\phi^P)\cong (p_1 \cN p_1,\phi^{p_1})\otimes \underset{\alpha,1-\alpha}{M_2(\C)}
	\]	
since $p_1$ and $p_2$ are equivalent in $P\cN P$, and so it further suffices to compute $(p_1 \cN p_1,\phi^{p_1})$. This will be accomplished by viewing $p_1\cN p_1$ as living under $p_0+p_1$ rather than $P$.

Set $P' = p_{0} + p_{1}$ and $B' = \underset{1-\beta}{\overset{p_{0}}{\C}} \oplus \underset{\alpha}{\overset{p_{1}}{\C}}$, and let
	\[
		E_{B'}\colon P'\cM(\Gamma,\mu) P' \to B'
	\]
be a $\phi^{P'}$-preserving conditional expectation.  Using the fact that $u_{2}u_{3}$ is a partial isometry and the decomposition of $\cN$ as an amalgamated free product, it follows that
	\begin{align*}
		(P'\cN P', E_{B'}) &\cong (P'W^{*}(u_{2}, u_{3}, p_{0}, p_{1}, p_{2})P', E_{B'}) \Asterisk_{B'} (P'W^{*}(Y_{e_1}, p_{0}, p_{1}, p_{2})P', E_{B'})\\
			&\cong \left(\underset{1-\beta, \frac{\alpha}{1-\alpha}(1-\beta)}{\overset{p_{0}, r_{1}}{M_{2}(\C)}} \oplus \underset{\alpha(1 - \frac{1-\beta}{1-\alpha})}{\overset{p_{1} - r_{1}}{\C}}, E_{B'}\right) \Asterisk_{B'} \left(\underset{1-\beta, 1-\beta}{M_{2}(L(\Z))} \oplus \underset{\alpha - (1-\beta)}{\C}, E_{B'}\right),
	\end{align*}   
where in both factors of the free product, $p_{0}$ and $p_{1}$ are realized by:
	\[
		p_{0} = \begin{pmatrix} 1 & 0 \\ 0 & 0 \end{pmatrix} \oplus 0 \qquad\text{ and }\qquad p_{1} = \begin{pmatrix} 0 & 0 \\ 0 & 1 \end{pmatrix} \oplus 1.
	\]
Consider the second factor in the above free product. By Dykema's picture \cite{MR1201693} of the isomorphism
	\[
		\underset{1-\beta, 1-\beta}{M_{2}(L(\Z))} \oplus \underset{\alpha - (1-\beta)}{\C}\cong \left[\underset{1-\beta}{\overset{p_{0}'}{\C}} \oplus \underset{\alpha}{\overset{p_{1}'}{\C}}\right] * \left[\underset{1-\beta}{\C} \oplus \underset{\alpha}{\C}\right].
	\]
It follows that there is an isomorphism which maps $p_{i}$ to $p_{i}'$ for $i \in \{0, 1\}$.  Therefore, by Proposition \ref{prop:amalgamate}, 
	\[
		(P'\cN P',\phi^{P'}) \cong \left[\underset{1-\beta, \frac{\alpha}{1-\alpha}(1-\beta)}{\overset{p_{0}, r_{1}}{M_{2}(\C)}} \oplus \overset{p_1-r_1}{\underset{\alpha(1 - \frac{1-\beta}{1-\alpha})}{\C}}\right]  * \left[\underset{1-\beta}{\C} \oplus \underset{\alpha}{\C}\right].
	\]
Consider the following von Neumann subalgebra of $P'\cN P'$:
	\[
		(\cK,\phi^{P'}):= \left[\underset{(1 - \beta) +\frac{\alpha}{1-\alpha}(1-\beta)}{\overset{p_{0} + r_{1}}{\C}} \oplus \overset{p_1-r_1}{\underset{\alpha(1 - \frac{1-\beta}{1-\alpha})}{\C}} \right] *  \left[\underset{1-\beta}{\C} \oplus \underset{\alpha}{\C}\right].
	\]
Let $Q:= p_0 + r_1$. Then by Lemma \ref{lem:Dykema}, we have
	\[
		( Q\cN Q,\phi^Q) \cong \underset{1-\beta, \frac{\alpha}{1-\alpha}(1-\beta)}{\overset{p_{0}, r_{1}}{M_{2}(\C)}} * (Q\cK Q, \phi^Q).
	\]\\

\noindent
\underline{\textbf{Case 1:}} Assume $\gamma \geq 1$. Note that this implies $1-\beta+\frac{\alpha}{1-\alpha}(1-\beta) \geq \alpha$. In this case, $Q$ has full central support in $\cK \subset (P'\cN P')^{\phi^{P'}}$.  Furthermore, after computing $Q\cK Q$ we see that
	\[
		(Q\cN Q,\phi^Q) \cong  \underset{1-\beta, \frac{\alpha}{1-\alpha}(1-\beta)}{\overset{p_{0}, r_{1}}{M_{2}(\C)}} * \left[\underset{ \frac{\alpha}{1-\alpha}(1-\beta)}{\C} \oplus \underset{1-\beta + \frac{\alpha}{1-\alpha}(1-\beta) - \alpha}{\C} \oplus \underset{\alpha(1 - \frac{1-\beta}{1-\alpha})}{(L(\Z),\tau)}\right].
	\]
The right hand side accepts a trace-preserving inclusion of $\underset{ \frac{\alpha}{1-\alpha}(1-\beta)}{\C} \oplus \underset{1-\beta}{\C}$, so it follows from Lemma~\ref{lem:Hou} and free absorption that 
	\[
		(Q\cN Q, \phi^Q) \cong (T_{\lambda}, \vphi_{\lambda}).
	\]
Using Lemma~\ref{lem:antman} we see that $(P'\cN P',\vphi^{P'}) \cong (T_{\lambda}, \vphi_{\lambda})$.  Applying Lemma~\ref{lem:antman} again, we obtain 
$(p_{1}\cN p_{1},\phi^{p_1}) \cong (T_{\lambda}, \vphi_{\lambda})$.  Therefore
	\[
		(\cM,\vphi) \cong (P\cN P, \phi^P) \cong (p_{1}\cN p_{1},\phi^{p_1}) \otimes \underset{\alpha, 1-\alpha}{M_{2}(\C)} \cong (T_{\lambda}, \vphi_{\lambda}).
	\]\\

\noindent
\underline{\textbf{Case 2:}} Assume $\gamma < 1$. In this case, define $\overline{p_{1}} :=  P' - z(Q\colon P'\cN P') = P' - z(Q\colon\cK)$.  Then $\overline{p_{1}} \leq p_1$ is nonzero, minimal, and has mass $\alpha\left(1 - \gamma\right)$.  Computing $Q\cK Q$ in this case yields
	\[
		(Q\cN Q, \phi^Q) \cong  \underset{1-\beta, \frac{\alpha}{1-\alpha}(1-\beta)}{\overset{p_{0}, r_{1}}{M_{2}(\C)}} * \left[\underset{\frac{\alpha}{1-\alpha}(1-\beta)}{\C} \oplus \underset{1-\beta}{(L(\Z),\tau)}\right].
	\]
The right hand side of this free product once again accepts a trace-preserving inclusion of $\underset{ \frac{\alpha}{1-\alpha}(1-\beta)}{\C} \oplus \underset{1-\beta}{\C}$.  Arguing as above gives
	\[
		(p_{1}\cN p_{1},\phi^{p_1}) \cong \underset{\alpha\gamma}{(T_{\lambda}, \vphi_{\lambda})} \oplus \underset{\alpha\left(1 - \gamma\right)}{\overset{\overline{p_1}}{\C}}, 
	\]
and tensoring gives
	\[
		(\cM,\vphi) \cong \underset{\gamma}{(T_{\lambda}, \vphi_{\lambda})} \oplus \underset{\alpha\left(1 - \gamma\right), (1-\alpha)\left(1 - \gamma\right)}{\overset{\overline{e_{11}} ,\overline{e_{22}}}{M_{2}(\C)}}. 
	\]
Note that by construction $\overline{e_{ii}} \leq e_{ii}$ for each $i$.  Also note that $1-q$ is in the diffuse summand of $\cM$, so by minimality, $\overline{e_{ii}} \leq q$.
\end{proof}

\begin{rem}
One might hope to use this graphical picture in the case $\alpha \geq \beta$ and thereby recover \cite[Theorem 3.1]{Hou07}.  However, this result was used directly in the above proof in the form of Lemma \ref{lem:Hou}.  Thus the above theorem should be seen as an extension of \cite[Theorem 3.1]{Hou07} rather than a generalization.
\end{rem}

By following the arguments of Theorem 4.3 in \cite{Hou07}, we obtain the following corollary.

\begin{cor}\label{cor:Houdayer}
Assume that  $\frac{1-\beta}{\alpha} + \frac{1 - \beta}{1 - \alpha} \geq 1$. Suppose $(A, \phi)$ and $(B, \psi)$ are two von Neumann algebras with faithful normal states, and that there exist modular inclusions $\underset{\alpha, 1 - \alpha}{M_{2}(\C)} \hookrightarrow A$ and $(\underset{\beta}{\C} \oplus \underset{1-\beta}{\C}) \hookrightarrow B$.  Then
	\[
		(A, \phi) * (B, \psi) \cong (A, \phi) * (B, \psi)  * (T_{\lambda}, \vphi_{\lambda}).
	\]
\end{cor}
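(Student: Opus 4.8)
The plan is to mimic the proof of Lemma~\ref{lem:Hou}(2) (Houdayer's Theorem~4.3(2)), substituting the more general identification furnished by Theorem~\ref{thm:M2} for Houdayer's Lemma~\ref{lem:Hou}(1). Abbreviate $N_1=\underset{\alpha,1-\alpha}{M_2(\C)}$ and $N_2=\underset{\beta}{\C}\oplus\underset{1-\beta}{\C}$. Because the inclusions $N_1\hookrightarrow A$ and $N_2\hookrightarrow B$ are modular, they are state preserving with ranges globally invariant under the modular groups, so by Takesaki's theorem there are $\phi$- and $\psi$-preserving conditional expectations $E_A\colon A\to N_1$ and $E_B\colon B\to N_2$. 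The only place where the numerical hypotheses enter is the identification $(N_1)*(N_2)\cong(T_\lambda,\vphi_\lambda)$: if $\beta\leq\alpha$ this is Lemma~\ref{lem:Hou}(1), while if $\alpha<\beta$ it is precisely Case~1 of Theorem~\ref{thm:M2}, whose hypothesis $\gamma\geq1$ is the standing assumption of the corollary. (The two cases overlap consistently, since $\beta\leq\alpha$ already forces $\gamma=\tfrac{1-\beta}{\alpha(1-\alpha)}\geq1$.)

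First I would introduce a free copy $(T_\lambda',\vphi_\lambda)$ of the target and instead show that it is absorbed, i.e. that $(A,\phi)*(B,\psi)*(T_\lambda',\vphi_\lambda)\cong(A,\phi)*(B,\psi)$. Converting to nested amalgamated free products by applying Proposition~\ref{prop:amalgamate} first over $N_1$ (via $E_A$) and then over $N_2$ (via $E_B$) yields
\begin{align*}
(A,\phi)*(B,\psi)*(T_\lambda',\vphi_\lambda)
&\cong (A,E_A)\Asterisk_{N_1}\big(N_1 * B * T_\lambda'\big)\\
&\cong (A,E_A)\Asterisk_{N_1}\Big((B,E_B)\Asterisk_{N_2}\big(N_2 * N_1 * T_\lambda'\big)\Big),
\end{align*}
where by the first paragraph the innermost subalgebra $N_2 * N_1\cong T_\lambda$, with $N_1$ and $N_2$ appearing as its two free factors.

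The crux is a \emph{compatible} free absorption: I claim $(T_\lambda)*(T_\lambda')\cong(T_\lambda,\vphi_\lambda)$ via a state-preserving isomorphism restricting to the identity on the first copy of $T_\lambda$ --- equivalently, one fixing both free factors $N_1$ and $N_2$ (these generate $T_\lambda$, so fixing both fixes the whole first copy). The underlying isomorphism is Shlyakhtenko's $T_\lambda * T_\lambda\cong T_{\langle\lambda\rangle}=T_\lambda$; the content is to arrange it to fix the distinguished factors. Here it helps that the abelian factor $N_2$ lies in the centralizer $(T_\lambda)^{\vphi_\lambda}\cong L(\F_\infty)$ (its modular group is trivial, hence so is the restriction of the free-product modular group), so two copies of $N_2$ with the same weights are unitarily conjugate by a unitary of the centralizer, and conjugation by such a unitary preserves $\vphi_\lambda$; the factor $N_1$ is then pinned down by the same bookkeeping used in Houdayer's Theorem~4.3. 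Granting this, the absorption converts $N_2 * N_1 * T_\lambda'$ into $N_2 * N_1\cong T_\lambda$ while preserving its free-product structure, so running the two applications of Proposition~\ref{prop:amalgamate} in reverse gives
\[
(A,\phi)*(B,\psi)*(T_\lambda',\vphi_\lambda)\cong (A,E_A)\Asterisk_{N_1}\Big((B,E_B)\Asterisk_{N_2}\big(N_2 * N_1\big)\Big)\cong (A,\phi)*(B,\psi).
\]

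The main obstacle is exactly this compatible absorption: bare free absorption only produces an abstract isomorphism $T_\lambda * T_\lambda\cong T_\lambda$, and one must upgrade it so that the first copy of $T_\lambda$, together with its free factors $N_1$ and $N_2$, is fixed, allowing it to be threaded back through the two amalgamated free products. This is the delicate step already carried out in Houdayer's Theorem~4.3, and allowing $\alpha<\beta$ introduces no new difficulty beyond replacing his input identification with Theorem~\ref{thm:M2}.
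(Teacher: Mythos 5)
Your proposal matches the paper's proof, which consists precisely of the one-line observation that Houdayer's argument for \cite[Theorem 4.3]{Hou07} goes through verbatim once the identification $\underset{\alpha,1-\alpha}{M_2(\C)} * [\underset{\beta}{\C}\oplus\underset{1-\beta}{\C}] \cong (T_\lambda,\vphi_\lambda)$ is supplied by Theorem~\ref{thm:M2} (Case 1, $\gamma\geq 1$) in the regime $\alpha<\beta$ not covered by Lemma~\ref{lem:Hou}(1). Your elaboration of the internals --- the double application of Proposition~\ref{prop:amalgamate} and the controlled absorption fixing $N_1$ and $N_2$ via unitary conjugation in the factorial centralizer --- is exactly the mechanism of Houdayer's proof that the paper invokes (and reuses again for Proposition~\ref{prop:absorb}), so this is essentially the same approach.
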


%%%%%%%%%%%%%%%%%%%%%%%%%%%%%%%%%%

\subsection{Computing $M_{n}(\C) * [\C \oplus \C]$  }

We will compute $M_{n}(\C) * [\C \oplus \C]$ in terms of almost periodic free Araki-Woods factors.  Before we do so, we need the following proposition.
\begin{prop}\label{prop:createfree}

Let $D = \underset{\alpha_{1}}{\overset{e_{11}}{\C}} \oplus \cdots \oplus \underset{\alpha_{n}}{\overset{e_{nn}}{\C}}$ be embedded down the diagonal of $\underset{\alpha_{1}, \dots, \alpha_{n}}{M_{n}(\C)}$ with standard matrix units $\{e_{ij}\}$.  Let $E_{1}$ be the canonical state-preserving conditional expectation $E_{1}: \underset{\alpha_{1}, \dots, \alpha_{n}}{M_{n}(\C)} \rightarrow D$.  Assume that $A$ is a von Neumann algebra containing $D$ with a conditional expectation $E_{2}: A \rightarrow D$.  Define
	\begin{align*}
		(\cM, E) &:= (\underset{\alpha_{1}, \dots, \alpha_{n}}{M_{n}(\C)}, E_{1}) \underset{D}{\Asterisk} (A, E_{2}),\\
		(\cN, E) &:= \left( \underset{\alpha_{1}, \dots, \alpha_{n}}{\overset{e_{11}, \cdots, e_{n-1, n-1}}{M_{n-1}(\C)}} \oplus \underset{\alpha_{n}}{\overset{e_{nn}}{\C}} , E_{1}\right) \underset{D}{\Asterisk} (A, E_{2}).
	\end{align*} 
Let $P = e_{n-1, n-1} + e_{n,n}$ and $D' = \underset{\alpha_{n-1}}{\overset{e_{n-1, n-1}}{\C}} \oplus \underset{\alpha_{n}}{\overset{e_{n, n}}{\C}}$.  Then
	\[
		(P\cM P, E) = \left(\overset{e_{n-1, n-1}, e_{nn}}{M_{2}(\C)}, E_{1}\right) \underset{D'}{\Asterisk} (P\cN P, E).
	\]

\end{prop}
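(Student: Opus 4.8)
The plan is to identify $(P\cM P, E)$ with the amalgamated free product on the right-hand side by verifying the three defining properties of an internal amalgamated free product over $D'$: that both algebras sit inside $P\cM P$ with the correct conditional expectations, that they generate $P\cM P$, and that they are free over $D'$. Throughout I write $B := \overset{e_{11},\dots,e_{n-1,n-1}}{M_{n-1}(\C)}\oplus \overset{e_{nn}}{\C}\subset M_n(\C)$, $v := e_{n-1,n}$, and $R := W^*(v,D')$ for the copy of $M_2(\C)$ appearing in the statement. Since $B\subset M_n(\C)$ carries a $D$-valued conditional expectation compatible with $E_1$, functoriality of the amalgamated free product yields a canonical inclusion $\cN = B\Asterisk_D A \hookrightarrow M_n(\C)\Asterisk_D A = \cM$; hence $P\cN P\subset P\cM P$ and $R\subset PM_n(\C)P\subset P\cM P$. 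As $PDP = D'$ and $E$ maps $P\cM P$ into $PDP$, the restriction of $E$ is the canonical $D'$-valued expectation, and it restricts to $E_1$ on $R$ and to the amalgamated free product expectation on $P\cN P$; so the expectation property is immediate.

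For generation, recall $M_n(\C) = W^*(B,R)$ (adjoining $v$ to $B$ recovers all matrix units via $e_{i,n} = e_{i,n-1}v$ and $e_{n,j} = v^* e_{n-1,j}$ for $i,j\le n-1$), so $\cM = W^*(\cN, R)$. Because $P\in D\subset \cN$ and $R = PRP$, any product $x = y_0 m_1 y_1\cdots m_k y_k$ with $y_i\in\cN$ and $m_i\in R$ satisfies $PxP = (Py_0P)m_1(Py_1P)\cdots m_k(Py_kP)$, obtained by inserting $P=PP$ on either side of each $m_i$ and using $m_i = Pm_iP$. Thus every such $PxP$ lies in $W^*(P\cN P, R)$; passing to strong limits via normality of $z\mapsto PzP$ gives $P\cM P = W^*(P\cN P, R)$, which is generation.

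The heart of the matter is freeness over $D'$, and this is where I expect the main obstacle. Since $E|_{P\cM P}$ is the $D'$-expectation, I must show that $E$ annihilates every alternating word whose letters lie alternately in $R\ominus D' = \spann\{v,v^*\}$ and in $P\cN P\cap\ker E$. By linearity and normality it suffices to treat letters from $P\cN P$ that are finite sums of reduced words alternating between $B\ominus D$ and $A\ominus D$ (I use here that a $B$--$A$ reduced word is also an $M_n(\C)$--$A$ reduced word, as $B\ominus D\subset M_n(\C)\ominus D$). Substituting these expansions turns the word into a sum of products alternating between $M_n(\C)\ominus D$-letters and $A\ominus D$-letters, with failures of reducedness occurring only where a letter $v^{\pm}$ abuts a $B\ominus D$-letter $\beta$. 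The key computation is that $e_{n-1,n-1}$ and $e_{nn}$ lie in \emph{different} central summands of $B$: for centered $\beta$ one has $\beta_{nn}=0$, whence
\[
	v\beta = \beta_{nn}\,v = 0 \qquad\text{and}\qquad \beta v^* = \beta_{nn}\,v^* = 0,
\]
while $\beta v = \sum_{i\le n-1}\beta_{i,n-1}e_{in}$ and $v^*\beta = \sum_{j\le n-1}\beta_{n-1,j}e_{nj}$ recombine into \emph{centered} elements of $M_n(\C)\ominus D$. Consequently each surviving term collapses, after recombination, into a genuine reduced word alternating between $M_n(\C)\ominus D$ and $A\ominus D$ of length $\ge 1$—the recombinations never produce two adjacent $M_n(\C)$-letters, nor any $D$-valued collapse—so $E$ vanishes on it by freeness of $M_n(\C)$ and $A$ over $D$.

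Summing over terms yields $E$ of the original word equal to $0$, establishing freeness over $D'$; the three properties together identify $(P\cM P, E)$ with $(R,E_1)\Asterisk_{D'}(P\cN P, E)$. The critical point—which is what makes the reduction to the known freeness of $M_n(\C)$ and $A$ over $D$ succeed—is precisely that $v$ kills $\ker E\cap B$ on one side and $v^*$ on the other, a direct consequence of $e_{n-1,n-1}$ and $e_{nn}$ lying in distinct central summands of $B$; this is exactly the mechanism preventing reduced words from collapsing into the amalgam.
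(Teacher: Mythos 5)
Your proof is correct and takes essentially the same approach as the paper's: after reducing the $P\cN P$-letters by density to compressions of reduced $B$--$A$ words and the $M_2(\C)$-letters to the off-diagonal $e_{n-1,n},e_{n,n-1}$, you regroup into alternating words in $M_n(\C)\ominus D$ and $A\ominus D$ and conclude by freeness over $D$. Your explicit computations $v\beta=\beta v^*=0$ and the recombinations $\beta v,\,v^*\beta\in M_n(\C)\ominus D$ are precisely the content behind the paper's terse ``regrouping'' step, so you have simply spelled out details the paper leaves implicit.
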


\begin{proof}
It is straightforward to see that $P\cM P$ is generated by $\overset{e_{n-1, n-1}, e_{nn}}{M_{2}(\C)}$ and $P\cN P$.  To establish freeness, let $w$ be an alternating word consisting of expectationless elements in $\overset{e_{n-1, n-1}, e_{nn}}{M_{2}(\C)}$ and $P\cN P$.  Since all expectationless elements of $\overset{e_{n-1, n-1}, e_{nn}}{M_{2}(\C)}$ are off-diagonal matrices, we may assume (after taking linear combinations) that $w$ is an alternating word in $\{e_{n-1, n}, e_{n, n-1}\}$ and $(P\cN P)^{\circ \circ}$. Here $C^{\circ \circ}$ denotes the elements of $C$ with zero expectation.   

By Kaplansky density, we may assume that every element in $(P\cN P)^{\circ \circ}$ is of the form $PxP$ where $x$ is an alternating word in  $\left( \underset{\alpha_{1}, \dots, \alpha_{n}}{\overset{e_{11}, \cdots, e_{n-1, n-1}}{M_{n-1}(\C)}} \oplus \underset{\alpha_{n}}{\overset{e_{nn}}{\C}} , E_{1}\right) ^{\circ \circ}$ and $A^{\circ \circ}$.  Again, by taking linear combinations, we can assume that every element of $\left( \underset{\alpha_{1}, \dots, \alpha_{n}}{\overset{e_{11}, \cdots, e_{n-1, n-1}}{M_{n-1}(\C)}} \oplus \underset{\alpha_{n}}{\overset{e_{nn}}{\C}} , E_{1}\right) ^{\circ \circ}$ appearing in $x$ is in $\{e_{i, j} : i \neq j \text{ and } 1 \leq i, j \leq n-1\}$.  Regrouping shows that $w$ is a linear combination of alternating words in $\{e_{i, j}: i \neq j\}$ and $A^{\circ \circ}$ which has expectation 0 by freeness.
\end{proof}

We now compute $M_{n}(\C) * [\C \oplus \C]$ via an induction argument with the base case covered by the previous subsection. As in the previous subsection, we will understand this free product as a particular subalgebra of a free graph von Neumann algebra. By concatenating a pair of edges in the graph, we can witness this subalgebra as a \emph{corner} of a different free graph von Neumann algebra, and hence can determine its isomorphism class.

\begin{thm}\label{thm:induction}
Let  $n \geq 2$ and $1 > \alpha_{1} \geq \alpha_{2} \geq \cdots \geq \alpha_{n} > 0$ with $\sum_{i=1}^{n} \alpha_{i} = 1$ and at least one strict inequality amongst the $\alpha$'s.  Let $\beta \in [\frac12, 1)$, and let $\gamma = \sum_{i=1}^{n} \frac{1-\beta}{\alpha_{i}}$.  Let $H$ be the multiplicative subgroup of $\R^{+}$ generated by $\{\alpha_{i}/\alpha_{j}\, : \, 1 \leq i, j \leq n\}.$ Then:
\begin{enumerate}[(1)]

\item 
	\[
		\underset{\alpha_{1}, \cdots, \alpha_{n}}{\overset{e_{1,1}, \cdots, e_{n,n}}{M_{n}(\C)}} * \left[ \underset{\beta}{\overset{p}{\C}} \oplus \underset{1 - \beta}{\C}\right] \cong \begin{cases}
(T_{H}, \vphi_{H}) &\text{ if } \gamma \geq 1\\
			\underset{\gamma}{(T_{H}, \vphi_{H})} \oplus \underset{\gamma_{1}, \cdots, \gamma_{n}}{\overset{\overline{e_{1,1}}, \cdots, \overline{e_{n,n}}}{M_{n}(\C)}} &\text{ if } \gamma < 1
		\end{cases}.
	\]
In the case when $\gamma < 1$, $\gamma_{i} = \alpha_{i}(1 - \gamma)$, and $\overline{e_{i,i}} \leq e_{i,i} \wedge p$.

\item  Assume that  $\gamma \geq 1$. Suppose $(A, \phi)$ and $(B, \psi)$ are two von Neumann algebras with faithful normal states, and that there exist modular inclusions $\underset{\alpha_{1}, \cdots, \alpha_{n}}{M_{n}(\C)} \hookrightarrow A$ and $(\underset{\beta}{\C} \oplus \underset{1-\beta}{\C}) \hookrightarrow B
$. Then
$$
(A, \phi) * (B, \psi) \cong (A, \phi) * (B, \psi)  * (T_{H}, \vphi_{H}).
$$

\end{enumerate}

\end{thm}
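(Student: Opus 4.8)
The plan is to prove (1) by induction on $n$ and then deduce (2) from (1) by the modular-inclusion absorption method of \cite[Theorem 4.3]{Hou07}. For the base case $n=2$, the strict inequality forces $\alpha_1>\tfrac12>\alpha_2=1-\alpha_1$. If $\alpha_1\geq\beta$, then Lemma \ref{lem:Hou}(1) gives the free product $\cong(T_\lambda,\vphi_\lambda)$ with $\lambda=(1-\alpha_1)/\alpha_1$; here $H=\langle\alpha_1/\alpha_2\rangle=\langle\lambda\rangle$, and $\gamma=\frac{1-\beta}{\alpha_1\alpha_2}\geq\frac{1}{\alpha_1}>1$, so the $\gamma\geq1$ clause is respected. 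If $\alpha_1<\beta$, this is exactly Theorem \ref{thm:M2}, whose two cases reproduce (1) verbatim.

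For the inductive step, assume (1) for $n-1$. I would first convert to an amalgamated free product over the diagonal $D=\bigoplus_i\C e_{ii}$ of $M_n(\C)$ using Proposition \ref{prop:amalgamate},
\[
\underset{\alpha_1,\dots,\alpha_n}{M_n(\C)}*\Big[\underset{\beta}{\C}\oplus\underset{1-\beta}{\C}\Big]\cong(M_n(\C),E_1)\Asterisk_D\big([\underset{\beta}{\C}\oplus\underset{1-\beta}{\C}]*D,\,E_2\big),
\]
and then apply Proposition \ref{prop:createfree} with $P=e_{n-1,n-1}+e_{nn}$ to split the $(n-1,n)$-block off as an $M_2(\C)$ amalgamated over $D'=\C e_{n-1,n-1}\oplus\C e_{nn}$ with the compression of $\cN:=(M_{n-1}(\C)\oplus\C e_{nn})\Asterisk_D(\cdots)$; by Proposition \ref{prop:amalgamate} again, $\cN$ is the $(n-1)$-type object $(M_{n-1}(\C)\oplus\C)*[\C\oplus\C]$. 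As in Theorem \ref{thm:M2}, I would realize these amalgamated free products as compressions of a graph algebra $\cM(\Gamma,\mu)$ on vertices $\{0,1,\dots,n\}$, with $\vphi(p_0)=1-\beta$ and $\vphi(p_i)=\alpha_i$, a spanning tree $\Gamma_{\Tr}$ of edges from the auxiliary vertex $0$ to each $i$ recovering the weights $\alpha_i$, and unit-weight edges producing matrix units; the non-tree loops then carry weights generating exactly $H=\langle\alpha_i/\alpha_j\rangle$. ``Concatenating a pair of edges'' refers to contracting the length-two paths through the vertex $0$: since the edge operators are free with amalgamation over $\ell^\infty(V)$, this contraction rewrites the relevant compression as a compression of a graph algebra to which Theorem \ref{thm:freegraph}(4) applies directly.

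The matrix units of $M_n(\C)\subset\cM$ are modular eigenoperators, so the state splits and $(\cM,\vphi)\cong(e_{11}\cM e_{11},\vphi^{e_{11}})\otimes\underset{\alpha_1,\dots,\alpha_n}{M_n(\C)}$, reducing everything to the corner $e_{11}\cM e_{11}$. The inductive hypothesis computes this corner for the first $n-1$ vertices; adjoining vertex $n$ free-products in the extra mass $\frac{1-\beta}{\alpha_n}$ and the new ratios $\alpha_n/\alpha_j$, and Corollary \ref{cor:ArakiB(H)} together with the free-product rule $T_{H'}*T_{H''}\cong T_{\langle H'\cup H''\rangle}$ updates the group to $H$ while keeping the corner a free Araki--Woods factor. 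With the accumulated mass $\gamma=\sum_i\frac{1-\beta}{\alpha_i}$, the condition $\sum_{s(e)=v}\mu(e)<1$ in Theorem \ref{thm:freegraph}(4) leaves a single surviving scalar summand precisely when $\gamma<1$, so the corner is $(T_H,\vphi_H)$ if $\gamma\geq1$ and $(T_H,\vphi_H)\oplus\C$ (scalar mass proportional to $1-\gamma$) if $\gamma<1$. Tensoring back and absorbing $M_n(\C)$ into $T_H$ by Corollary \ref{cor:tensor} (legitimate since $\alpha_i/\alpha_j\in H$) yields $(T_H,\vphi_H)$, respectively $(T_H,\vphi_H)\oplus\underset{\alpha_1(1-\gamma),\dots,\alpha_n(1-\gamma)}{M_n(\C)}$, with $\overline{e_{i,i}}\leq e_{i,i}\wedge p$ established as in Theorem \ref{thm:M2}; Lemma \ref{lem:antman} is used to pass between a corner and the algebra it generates. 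I expect the main difficulty to be exactly this graph bookkeeping: arranging $\Gamma$ and $\mu$ so that the loop group is $H$, the diagonal masses are the $\alpha_i$, the concatenation move is valid, and Theorem \ref{thm:freegraph}(4) returns the single threshold $\gamma$ with the correct weights $\alpha_i(1-\gamma)$.

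For (2), assume $\gamma\geq1$. By (1), the images of $M_n(\C)\subset A$ and $[\C\oplus\C]\subset B$ generate a copy of $(T_H,\vphi_H)$ inside $(A,\phi)*(B,\psi)$ carrying a $\vphi$-preserving conditional expectation, and the modular inclusions supply, via Takesaki's theorem, state-preserving expectations $A\to M_n(\C)$ and $B\to[\C\oplus\C]$. These are exactly the ingredients needed to run the argument of \cite[Theorem 4.3]{Hou07} (the argument yielding Corollary \ref{cor:Houdayer} when $n=2$): it exhibits a free copy of $T_H$ inside $(A,\phi)*(B,\psi)$ that is reabsorbed by free absorption $T_H*T_H\cong T_H$, giving $(A,\phi)*(B,\psi)\cong(A,\phi)*(B,\psi)*(T_H,\vphi_H)$.
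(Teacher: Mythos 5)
Your route is the paper's own: induction on $n$ with base case Lemma~\ref{lem:Hou}/Theorem~\ref{thm:M2}, Proposition~\ref{prop:amalgamate} to pass to amalgamation over the diagonal $D$, Proposition~\ref{prop:createfree} at $P=e_{n-1,n-1}+e_{n,n}$, graph-algebra models with an edge-concatenation move feeding into Theorem~\ref{thm:freegraph}(4), Corollary~\ref{cor:tensor} to reassemble, and Houdayer's absorption argument for (2). However, three concrete gaps remain in your inductive step. First, your scheme ``prove (1) by induction, then deduce (2)'' is too weak as stated: the paper's proof of (1) at level $n$ invokes the inductive hypothesis of \emph{(2)} at level $n-1$. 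The reason is that $\cN=(M_{n-1}(\C)\oplus\C)*[\C\oplus\C]$ is not literally an instance of (1)$_{n-1}$; one must compress by $Q=e_{1,1}+\cdots+e_{n-1,n-1}$ (Lemma~\ref{lem:Dykema}), compute the resulting $[\C\oplus\C]*[\C\oplus\C]$-type corner explicitly, and split into cases according to $\alpha_1+\cdots+\alpha_{n-1}\gtrless\beta$ and $\gamma'=\sum_{i=1}^{n-1}\frac{1-\beta}{\alpha_i}\gtrless 1$; in several of these cases (e.g.\ when $\gamma'\geq 1$, or when $1-\beta>\alpha_1+\cdots+\alpha_{n-1}-(1-\beta)$, where one partitions the identity of an $L(\Z)$ summand to create a modular inclusion of $\underset{\tilde\beta}{\C}\oplus\underset{\tilde\beta}{\C}$) the identification of $Q\cN Q$ with $T_{H'}$ goes through the absorption statement (2)$_{n-1}$, not (1)$_{n-1}$. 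The fix is cheap---run the induction on (1) and (2) simultaneously, since (2)$_k$ does follow from (1)$_k$ by Houdayer's method, as you yourself argue---but your sketch never supplies it. Second, your choice $\cN=(M_{n-1}(\C)\oplus\C e_{n,n})*[\C\oplus\C]$ silently assumes the strict inequality among the weights survives in indices $1,\dots,n-1$; when $\alpha_1=\alpha_2=\cdots=\alpha_{n-1}>\alpha_n$ it does not, and the paper must instead cut off the $e_{1,1}$ corner (keeping $M_{n-1}(\C)$ on indices $2,\dots,n$), with a separate support argument using $\alpha_1<\frac12$.

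Third, and most substantially, the graph you describe---$n+1$ vertices, a spanning tree out of an auxiliary vertex $0$, and ``unit-weight edges producing matrix units''---is not the paper's model and would not work as stated: realizing $(M_n(\C),E_1)\Asterisk_D([\C\oplus\C]*D,E_2)$ directly as a graph corner requires an explicit matricial picture of $[\C\oplus\C]*D$ over $D$, which exists for $\dim D=2$ (it is $\C\oplus M_2(L(\Z))\oplus\C$, the engine of Theorem~\ref{thm:M2}) but is unavailable for $\dim D=n\geq 3$---circumventing exactly this is why the induction exists. The paper's graphs have four, then three, vertices for \emph{every} $n$: the loops $\ell_{i,j}$ with $\mu(\ell_{i,j})=\alpha_i/\alpha_j$ at a single vertex encode the corner of the inductively computed $T_{H'}$, the corner $(p_0+p_3)\cM(\Gamma_0,\mu)(p_0+p_3)$ is matched with $P\cN P$ using the case-by-case output described above (the edge weights $\mu(e_1),\mu(e_2),\mu(e_3)$ change according to whether $\gamma'\geq 1$ and $\frac{1-\beta}{\alpha_n}\geq 1$), and the extra edge $e_4$ supplies the amalgamated $M_2(\C)$. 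Finally, the concatenation move you invoke needs actual verification rather than an appeal to amalgamated freeness: one computes the distribution of $u_4Y_{e_1}$ via $u_4Y_{e_1}Y_{e_1}^*u_4^*$ to see that its left support under $p_3$ is full iff $\gamma'\geq 1$ (and otherwise has mass $\alpha_n\gamma'$), and then matches $u_4Y_{e_1}$ with the edge $f_1$ of $\Gamma'$ so that the conditional expectations onto the vertex algebra are intertwined; only then does Theorem~\ref{thm:freegraph}(4) return the single threshold $\gamma=\gamma'+\frac{1-\beta}{\alpha_n}$ with summand masses $\alpha_i(1-\gamma)$. You correctly flag this bookkeeping as the main difficulty, but it is the mathematical content of the step, and your proposal leaves it open.
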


\begin{proof}
We prove this by induction on $n$, with the base case ($n=2$) handled by Lemma~\ref{lem:Hou}, or Theorem~\ref{thm:M2} and Corollary~\ref{cor:Houdayer}. Set $(\cM,\vphi)$ to be the free product in (1). Consider the following von Neumann subalgebra of $\cM$:
	\[
		(\cN, \vphi) := \begin{cases}
\left[\underset{\alpha_{1}, \dots, \alpha_{n-1}}{\overset{e_{1,1}, \cdots, e_{n-1, n-1}}{M_{n-1}(\C)}} \oplus \underset{\alpha_{n}}{\overset{e_{n,n}}{\C}} \right] *  \left[ \underset{\beta}{\overset{p}{\C}} \oplus \underset{1 - \beta}{\C}\right] &\text{ if } \alpha_{1} > \alpha_{2}\\
						&\\
					\left[\underset{\alpha_{1}}{\overset{e_{1,1}}{\C}} \oplus \underset{\alpha_{2}, \dots, \alpha_{n}}{\overset{e_{2,2}, \cdots, e_{n, n}}{M_{n-1}(\C)}}   \right] *  \left[ \underset{\beta}{\overset{p}{\C}} \oplus \underset{1 - \beta}{\C}\right] &\text{ if } \alpha_{1} = \alpha_{2}
				\end{cases}
	\]
We will compute $(\cM,\vphi)$ by way of computing $(\cN,\vphi)$.  

We will prove the inductive step when $\alpha_{1} > \alpha_{2}$, then sketch the (minor) modifications one must take into consideration when $\alpha_{1} = \alpha_{2}$.  Set $Q = e_{1,1} + \cdots + e_{n-1, n-1}$, and 
	\[
		(\cN_{0},\vphi) := \left[\underset{\alpha_{1} + \cdots + \alpha_{n-1}}{\overset{Q}{\C}} \oplus \underset{\alpha_{n}}{\overset{e_{n,n}}{\C}}\right] *  \left[ \underset{\beta}{\overset{p}{\C}} \oplus \underset{1 - \beta}{\C}\right].
	\]
From Lemma \ref{lem:Dykema}, we have
	\[
		(Q\cN Q, \vphi^Q) \cong (Q\cN_{0} Q, \vphi^Q) * \underset{\alpha_{1}, \dots, \alpha_{n-1}}{\overset{e_{1,1}, \cdots, e_{n-1, n-1}}{M_{n-1}(\C)}} .
	\]
To finish computing $(N,\varphi)$, we consider two cases:\\

\noindent
\underline{\textbf{Case 1:}} Assume $\alpha_{1} + \cdots + \alpha_{n-1} \geq \beta$. In this case, $Q$ has full central support in $\cN_{0}$ (hence in $\cN^{\vphi}$), and computing $Q\cN_0 Q$ yields 
	\[
		(Q\cN Q,\varphi^Q) =     \left[\underset{\alpha_{1} +\cdots + \alpha_{n-1} - (1-\beta)}{\overset{Q \wedge p}{\C}} \oplus \underset{\alpha_{n}}{(L(\Z),\tau)} \oplus \underset{\alpha_{1} +\cdots + \alpha_{n-1} - \beta}{\overset{Q \wedge (1-p)}{\C}}\right] * \underset{\alpha_{1}, \dots, \alpha_{n-1}}{\overset{e_{1,1}, \cdots, e_{n-1, n-1}}{M_{n-1}(\C)}}. 
	\]
Note that the left factor always accepts a trace-preserving inclusion of $\underset{\alpha_{1} +\cdots + \alpha_{n-1} - (1-\beta)}{\C}\oplus\underset{1-\beta}{\C}$.  If $1 - \beta > \alpha_{1} +\cdots + \alpha_{n-1} - (1-\beta)$, then it follows by appropriately partitioning the identity of $L(\Z)$ that the left factor will accept a modular inclusion of $\underset{\tilde{\beta}}{\C} \oplus \underset{\tilde{\beta}}{\C}$ where $\tilde{\beta} = \frac{1}{2}(\alpha_{1} + \cdots + \alpha_{n-1})$, and it will follow from the inductive hypothesis of (2) above, as well as free absorption that
	\[
		(Q\cN Q,\vphi^Q) \cong (T_{H'}, \vphi_{H'}),
	\]
where $H'$ the multiplicative subgroup of $\R^{+}$ generated by $\{\alpha_{i}/\alpha_{j}\, : \, 1 \leq i, j \leq n-1\}$. We will therefore assume that $1 - \beta \leq \alpha_{1} +\cdots + \alpha_{n-1} - (1-\beta)$.  Let $\gamma' = \sum_{i=1}^{n-1} \frac{1-\beta}{\alpha_{i}}$ and let $H'$ be as before. If $\gamma' \geq 1$, then it follows from the inductive hypothesis of (2) above, as well as free absorption that
	\[
		(Q\cN Q,\vphi^Q) \cong (T_{H'}, \vphi_{H'}).
	\]
If $\gamma' < 1$, then from the inductive hypothesis of (1) we have
	\[
		\left(\underset{\alpha_{1} +\cdots + \alpha_{n-1} - (1-\beta)}{\overset{Q \wedge p}{\C}} \oplus \underset{1-\beta}{\C}\right) * \underset{\alpha_{1}, \dots, \alpha_{n-1}}{\overset{e_{1,1}, \cdots, e_{n-1, n-1}}{M_{n-1}(\C)}} \cong (T_{H'}, \vphi_{H'}) \oplus \underset{\gamma'_{1}, \cdots, \gamma'_{n-1}}{\overset{q_{1}, \cdots, q_{n-1}}{M_{n-1}(\C)}}
	\]
where $\gamma'_{i} = \alpha_{i}(1 - \gamma')$ and $q_{i} \leq e_{i,i}\wedge p \leq Q\wedge p $.  It follows from this, Lemmas \ref{lem:Dykema} and \ref{lem:antman}, and free absorption that
	\begin{align*}
		\left((Q - Q\wedge p)\cN(Q - Q\wedge p), \vphi^{Q- Q\wedge p}\right) &\cong \left((L(\Z),\tau) \oplus \underset{\alpha_{1} +\cdots + \alpha_{n-1} - \beta}{\overset{Q \wedge (1-p)}{\C}}\right) * (T_{H'}, \vphi_{H'})\\
&\cong (T_{H'}, \vphi_{H'})
	\end{align*}
From the central support statement in Lemma \ref{lem:Dykema} and Lemma \ref{lem:antman} we have
	\[
		(Q\cN Q, \vphi^Q) \cong (T_{H'}, \vphi_{H'}) \oplus \underset{\gamma'_{1}, \cdots, \gamma'_{n-1}}{\overset{q_{1}, \cdots, q_{n-1}}{M_{n-1}(\C)}}.
	\]
Since $Q$ has full central support in $\cN^\vphi$ in either case, applying Lemma \ref{lem:antman} again gives
	\[
		(\cN,\vphi) \cong \begin{cases}
(T_{H'}, \vphi_{H'}) &\text{ if } \gamma' \geq 1\\
 (T_{H'}, \vphi_{H'}) \oplus \underset{\gamma'_{1}, \cdots, \gamma'_{n-1}}{\overset{q_{1}, \cdots, q_{n-1}}{M_{n-1}(\C)}} &\text{ if } \gamma' < 1
			\end{cases}.
	\]\\

\noindent
\underline{\textbf{Case 2:}} Assume $\alpha_{1} + \cdots + \alpha_{n-1} < \beta$. In this case, $1 - z(Q:\cN_{0})$ is nonzero and minimal in $\cN_{0}$, so it follows that  $1 - z(Q:\cN)$ is nonzero and minimal in $\cN$.  Furthermore, it is easy to see that $1 - z(Q, \cN_{0}) = p \wedge e_{n,n}$.  The free product for $Q\cN Q$ becomes
	\[
		(Q\cN Q,\varphi^Q) =   \left[\underset{\alpha_{1} +\cdots + \alpha_{n-1} - (1-\beta)}{\overset{Q \wedge p}{\C}} \oplus \underset{1-\beta}{(L(\Z),\tau)}\right] * \underset{\alpha_{1}, \dots, \alpha_{n-1}}{\overset{e_{1,1}, \cdots, e_{n-1, n-1}}{M_{n-1}(\C)}}. 
	\]
Letting $\gamma' = \sum_{i=1}^{n-1} \frac{1-\beta}{\alpha_{i}}$, $\gamma_{i}' = \alpha_{i}(1 - \gamma')$ and arguing just as in Case 1, we see that 
	\[
		(Q\cN Q,\vphi^Q) \cong \begin{cases}
						(T_{H'}, \vphi_{H'}) &\text{ if } \gamma' \geq 1\\
						(T_{H'}, \vphi_{H'}) \oplus \underset{\gamma'_{1}, \cdots, \gamma'_{n-1}}{\overset{q_{1}, \cdots, q_{n-1}}{M_{n-1}(\C)}} &\text{ if } \gamma' < 1
					\end{cases}. 
	\]
Using Lemma \ref{lem:antman}, it follows that
\begin{align}\label{eqn:Biden}
		(\cN,\vphi) \cong \begin{cases}
					(T_{H'}, \vphi_{H'}) \oplus \underset{\alpha_{n}(1 - \frac{1-\beta}{\alpha_{n}})}{\C} &\text{ if } \gamma' \geq 1\\
					(T_{H'}, \vphi_{H'}) \oplus \underset{\gamma'_{1}, \cdots, \gamma'_{n-1}}{\overset{q_{1}, \cdots, q_{n-1}}{M_{n-1}(\C)}} \oplus \underset{\alpha_{n}(1 - \frac{1-\beta}{\alpha_{n}})}{\C}  &\text{ if } \gamma' < 1
				\end{cases}.
\end{align}

We now proceed using our computations of $(\cN, \vphi)$ from Cases 1 and 2 above.  Set $P = e_{n-1, n-1} + e_{n, n}$.  Let $D = \underset{\alpha_{1}}{\overset{e_{1,1}}{\C}} \oplus \cdots \oplus \underset{\alpha_{n}}{\overset{e_{n,n}}{\C}}$,  and let
	\begin{align*}
		E_{1}&\colon \underset{\alpha_{1}, \dots, \alpha_{n-1}}{\overset{e_{1,1}, \cdots, e_{n-1, n-1}}{M_{n-1}(\C)}} \oplus \underset{\alpha_{n}}{\overset{e_{n,n}}{\C}} \rightarrow D\\
		E_{2}&\colon \left[\underset{\beta}{\C} \oplus \underset{1-\beta}{\C}\right] * D\to D
	\end{align*} 
be the canonical state-preserving conditional expectations. By Proposition \ref{prop:amalgamate}, 
	\[
		(\cN,E) \cong \left(\underset{\alpha_{1}, \dots, \alpha_{n-1}}{\overset{e_{1,1}, \cdots, e_{n-1, n-1}}{M_{n-1}(\C)}} \oplus \underset{\alpha_{n}}{\overset{e_{n,n}}{\C}}, E_{1} \right) \underset{D}{\Asterisk}  \left( \left[\underset{\beta}{\overset{p}{\C}} \oplus \underset{1 - \beta}{\C}\right] * D, E_{2}\right),
	\]
and
	\[
		(\cM,E) \cong \left(\underset{\alpha_{1}, \cdots, \alpha_{n}}{\overset{e_{1,1}, \cdots, e_{n,n}}{M_{n}(\C)}}, E_{1}\right) \underset{D}{\Asterisk}  \left(\left[\underset{\beta}{\overset{p}{\C}} \oplus \underset{1 - \beta}{\C}\right] * D, E_{2}\right).
	\]
It follows from Proposition \ref{prop:createfree} that
	\[
		(P\cM P,E) = \left(\underset{\alpha_{n-1}, \alpha_{n}}{\overset{e_{n-1, n-1}, e_{n,n}}{M_{2}(\C)}}, E_{1}\right) \underset{PDP}{\Asterisk} (P\cN P, E).
	\]
Note that from  Equation (\ref{eqn:Biden}) and Lemma~\ref{lem:antman},
	\[
		(P\cN P,\varphi^P) \cong \begin{cases}
(T_{H'}, \vphi_{H'}) & \text{ if } \alpha_{1} + \cdots + \alpha_{n-1} \geq \beta, \text{ and } \gamma' \geq 1\\
(T_{H'}, \vphi_{H'}) \oplus \underset{\gamma_{n-1}'}{\overset{q_{n-1}}{\C}} & \text{ if } \alpha_{1} + \cdots + \alpha_{n-1} \geq \beta, \text{ and } \gamma' < 1\\
(T_{H'}, \vphi_{H'}) \oplus \underset{\alpha_{n}(1 - \frac{1-\beta}{\alpha_{n}})}{\overset{q_{n}}{\C}} & \text{ if } \alpha_{1} + \cdots + \alpha_{n-1} < \beta, \text{ and } \gamma' \geq 1\\
(T_{H'}, \vphi_{H'}) \oplus \underset{\gamma_{n-1}'}{\overset{q_{n-1}}{\C}} \oplus \underset{\alpha_{n}(1 - \frac{1-\beta}{\alpha_{n}})}{\overset{q_{n}}{\C}} & \text{ if } \alpha_{1} + \cdots + \alpha_{n-1} < \beta, \text{ and } \gamma' < 1\\
\end{cases}
	\]
with $q_{i} \leq e_{i,i}$. Let $\Gamma=(V,E)$ be the following graph with edge weighting $\mu$:
	\[
		\begin{tikzpicture}
	
		\draw (0,0) circle (2);
		
		\draw[fill=black] (-2,0) circle (0.05);
		\node[left] at (-2,0) {\scriptsize$0$};
		
		\draw[fill=black] (-1,1.732) circle (0.05);
		\node[below right] at (-1.1,1.7) {\scriptsize$1$};
		
		\draw[fill=black] (1,1.732) circle (0.05);
		\node[above right] at (1,1.732) {\scriptsize$2$};
		
		\draw[fill=black] (2,0) circle (0.05);
		\node[right] at (2,0) {\scriptsize$3$};
		
		\draw[->] (-2,0) arc (180:148:2);	% edge e_1, e_1^{\op}
		\draw (-2,0) arc (-60:0:2);
		\draw[->] (-1,1.732) arc (0:-32:2);
		\node[above left] at (-1.9,0.4) {$e_1$};
		
		\draw[->] (-1,1.732) arc (120:88:2);	% edge e_2, e_2^{\op}
		\draw (1,1.732) arc (-60:-120:2);
		\draw[->] (1,1.732) arc (-60:-92:2);
		\node[above] at (0.5,1.9) {$e_2$};

		\draw[->] (1,1.732) arc (60:28:2);	% edge e_3, e_3^{\op}
		\draw (2,0) arc (240:180:2); 
		\draw[->] (2,0) arc (240:208:2);
		\node[above right] at (1.9,0.4) {$e_3$};
		
		\draw[->] (2,0) arc (0:-92:2);	% edge e_4, e_4^{\op}
		\draw (2,0) arc (-10:-170:2.03);
		\draw[->] (-2,0) arc (-170:-88:2.03);
		\node[below] at (0,-2) {$e_4$};

		\draw (-1,1.732) --++ (-0.866,-0.5);	% loop \ell_{1,1}
		\draw (-1.866,1.232) arc (-60:-276:0.325);
		\draw (-1, 1.732) --++ (-0.995,0.105);
		\draw[->] (-2.346,1.446) -- (-2.355,1.515);
		\node at (-2.72, 1.38) {$\ell_{1,1}$};

		\draw[dashed] (-1.995,1.9) arc (174:138:1);
		
		\draw (-1,1.732) --++ (-0.743,0.669);	% loop \ell_{i,j}
		\draw (-1.743,2.401) arc (228:12:0.325);
		\draw (-1,1.732) --++ (-0.208,0.978);
		\draw[->] (-1.688,2.924) --++ (0.0577,0.03);
		\node at (-1.92,3.22) {$\ell_{i,j}$};	
		
		\draw[dashed] (-0.63,2.646) arc (66:102:1);
		
		\draw (-1,1.732) --++ (0.866, 0.5);	% loop \ell_{n-1,n-1}
		\draw (-0.134,2.232) arc (-60:156:0.325);
		\draw (-1,1.732) --++ (0.407,0.914);
		\draw[->] (-0.079,2.755) --++ (0.0495,-0.0446);
		\node[right] at (-0.15, 2.9) {$\ell_{n-1,n-1}$};
	
		\node[right] at (3,3) {$\mu(e_{1}) = \begin{cases} 1 &\text{ if } \gamma' \geq 1\\
										\gamma' &\text{ if } \gamma' < 1 
									\end{cases}$};
		
		\node[right] at (8,3) {$\mu(e_{3}) = \begin{cases} 1 &\text{ if } \frac{1-\beta}{\alpha_{n}} \geq 1\\
										\frac{\alpha_{n}}{1-\beta} &\text{ if } \frac{1-\beta}{\alpha_{n}} < 1 
									\end{cases}$};
									
		\node[right] at (3,0.3) {$\mu(e_{2}) = \begin{cases} \frac{\alpha_{n}}{\alpha_{n-1}} &\text{ if }  \gamma' \geq 1 \text{ and } \frac{1-\beta}{\alpha_{n}} \geq 1\\
										\frac{1-\beta}{\alpha_{n-1}} &\text{ if }  \gamma' \geq 1 \text{ and } \frac{1-\beta}{\alpha_{n}} < 1\\
										\frac{\alpha_{n}}{\gamma' \alpha_{n-1}} &\text{ if }  \gamma' < 1 \text{ and } \frac{1-\beta}{\alpha_{n}} \geq 1\\
										\frac{1-\beta}{\gamma'\alpha_{n-1}} &\text{ if }  \gamma' < 1 \text{ and } \frac{1-\beta}{\alpha_{n}} < 1\\
									\end{cases}$};

		\node[right] at (11.15,0.3) {$\mu(e_4)=1$};
		
		\node[right] at (3,-2) {$\mu(\ell_{i,j})=\frac{\alpha_i}{\alpha_j}$, $\ i\neq j$, $\ 1\leq i,j\leq n-1$};
	
		\end{tikzpicture}
	\]
Notice that $\mu(e_1)\mu(e_2)\mu(e_3)\mu(e_4) = \alpha_{n}/\alpha_{n-1}$. We assign $\phi$ to $\cM(\Gamma, \mu)$ by choosing $\Gamma_{\Tr}$ so that it contains $e_1,e_2,e_3$ and declaring that $\phi(p_{0}) = \alpha_{n-1}$.  This forces $\phi(p_{3}) = \alpha_{n}$.

Let $\Gamma_0$ be the subgraph of $\Gamma$ obtained by deleting the edges $e_{4}$ and $e_{4}^{op}$.  It follows from Subsection \ref{subsec:graph} that
	\begin{align*}
		( (p_{0} + p_{3})&\cM(\Gamma_0, \mu)(p_{0} + p_{3}), \phi^{p_0+p_3} )\\
			& \cong \begin{cases}
						(T_{H'}, \vphi_{H'}) & \text{ if } \alpha_{1} + \cdots + \alpha_{n-1} \geq \beta, \text{ and } \gamma' \geq 1\\
						(T_{H'}, \vphi_{H'}) \oplus \underset{\gamma_{n-1}'}{\overset{q'}{\C}} & \text{ if } \alpha_{1} + \cdots + \alpha_{n-1} \geq \beta, \text{ and } \gamma' < 1\\
						(T_{H'}, \vphi_{H'}) \oplus \underset{\alpha_{n}(1 - \frac{1-\beta}{\alpha_{n}})}{\overset{q''}{\C}} & \text{ if } \alpha_{1} + \cdots + \alpha_{n-1} < \beta, \text{ and } \gamma' \geq 1\\
						(T_{H'}, \vphi_{H'}) \oplus \underset{\gamma_{n-1}'}{\overset{q'}{\C}}  \oplus \underset{\alpha_{n}(1 - \frac{1-\beta}{\alpha_{n}})}{\overset{q''}{\C}}& \text{ if } \alpha_{1} + \cdots + \alpha_{n-1} < \beta, \text{ and } \gamma' < 1\\
					\end{cases},
	\end{align*}
where $q' \leq p_{0}$ and $q'' \leq p_{3}$.  We immediately see that there is a state-preserving isomorphism
	\[
		(P\cN P,\vphi^P) \cong \left((p_{0} + p_{3})\cM(\Gamma_0, \mu)(p_{0} + p_{3}), \phi^{p_0+p_3}\right)
	\]
sending $e_{n-1, n-1}$ to $p_{0}$ and $e_{n,n}$ to $p_{3}$. Let $D' = \overset{p_0}{\bbC}\oplus\overset{p_3}{\bbC}$, and let $u_{4}$ be the polar part of $Y_{e_4}$.  Note that $u_{4}u_{4}^{*} = p_{3}$ and $u_{4}^{*}u_{4} = p_{0}$.  It follows that 
	\[
		(P\cM P,\vphi^P) \cong \left((p_{0} + p_{3})\cM(\Gamma_0, \mu)(p_{0} + p_{3}), \phi^{p_0+p_3}\right) \Asterisk_{D'} \underset{\alpha_{n-1}, \alpha_{n}}{\overset{p_{0}, p_{3}}{M_{2}(\C)}}
	\]
where $u_{4}$ is in the copy of $M_{2}(\C)$.  From the geometry of the graph and the fact that  $u_{4}u_{4}^{*} = p_{3}$ and $u_{4}^{*}u_{4} = p_{0}$,
	\[
		(e_{n,n}\cM e_{n,n},\vphi^{e_{n,n}}) \cong  \left( p_{3}W^{*}(u_{4}Y_{e_1}, Y_{e_2}, Y_{e_3}, (Y_{\ell_{i, j}})_{i, j}, p_{0}, p_{1}, p_{2}, p_{3})p_{3}, \phi^{p_3}\right).
	\]
The element $u_{4}Y_{e_1}$ has right support $p_{2}$.  As for the other support, note that from the distribution of $Y_{e_1}$, we see that
	\[
		\left(p_{3}W^{*}(u_{4}Y_{e_1}Y_{e_1}^{*}u_{4}^{*})p_{3},\phi^{p_3}\right) = \begin{cases}
													\underset{\alpha_{n}}{(L(\Z),\tau)} &\text{ if } \gamma' \geq 1\\
													\underset{\alpha_{n}\gamma'}{(L(\Z),\tau)} \oplus \underset{\alpha_{n}(1 - \gamma')}{\C} &\text{ if } \gamma' < 1
												\end{cases}.
	\]
Hence the left support $u_4 Y_{e_1}$ is $p_3$ if $\gamma'\geq 1$, and otherwise is a projection of mass $\alpha_n\gamma'$ under $p_3$.
	
Let $\Gamma'=(E',V')$ be the following graph with edge weighting $\mu'$:
	\[
		\begin{tikzpicture}
	
		\draw (0,0) circle (2);
		
		\draw[fill=black] (-1,1.732) circle (0.05);
		\node[below right] at (-1.1,1.7) {\scriptsize$1$};
		
		\draw[fill=black] (1,1.732) circle (0.05);
		\node[above right] at (1,1.732) {\scriptsize$2$};
		
		\draw[fill=black] (2,0) circle (0.05);
		\node[right] at (2,0) {\scriptsize$3$};
		
		\draw[->] (2,0) arc (0:-122:2);	% edge f_1, f_1^{\op}
		\draw (-1,1.732) arc (130:350:1.843);
		\draw[->] (-1,1.732) arc (130:242:1.843);
		\node[below left] at (-1,-1.732) {$f_1$};
		
		\draw[->] (-1,1.732) arc (120:88:2);	% edge e_2, e_2^{\op}
		\draw (1,1.732) arc (-60:-120:2);
		\draw[->] (1,1.732) arc (-60:-92:2);
		\node[above] at (0.5,1.9) {$e_2$};

		\draw[->] (1,1.732) arc (60:28:2);	% edge e_3, e_3^{\op}
		\draw (2,0) arc (240:180:2); 
		\draw[->] (2,0) arc (240:208:2);
		\node[above right] at (1.9,0.4) {$e_3$};		
		
		\draw (-1,1.732) --++ (-0.866,-0.5);	% loop \ell_{1,1}
		\draw (-1.866,1.232) arc (-60:-276:0.325);
		\draw (-1, 1.732) --++ (-0.995,0.105);
		\draw[->] (-2.346,1.446) -- (-2.355,1.515);
		\node at (-2.72, 1.38) {$\ell_{1,1}$};

		\draw[dashed] (-1.995,1.9) arc (174:138:1);
		
		\draw (-1,1.732) --++ (-0.743,0.669);	% loop \ell_{i,j}
		\draw (-1.743,2.401) arc (228:12:0.325);
		\draw (-1,1.732) --++ (-0.208,0.978);
		\draw[->] (-1.688,2.924) --++ (0.0577,0.03);
		\node at (-1.92,3.22) {$\ell_{i,j}$};	
		
		\draw[dashed] (-0.63,2.646) arc (66:102:1);
		
		\draw (-1,1.732) --++ (0.866, 0.5);	% loop \ell_{n-1,n-1}
		\draw (-0.134,2.232) arc (-60:156:0.325);
		\draw (-1,1.732) --++ (0.407,0.914);
		\draw[->] (-0.079,2.755) --++ (0.0495,-0.0446);
		\node[right] at (-0.15, 2.9) {$\ell_{n-1,n-1}$};
	
		\node[right] at (3,1.5) {$\mu'(f_{1}) = \begin{cases} 1 &\text{ if } \gamma' \geq 1\\
										\gamma' &\text{ if } \gamma' < 1 
									\end{cases}$};

		\node[right] at (3,0) {$\mu'(e_{2}) =\mu(e_2) $};
		
		\node[right] at (7,0) {$\mu'(e_3)=\mu(e_3)$};
		
		\node[right] at (3,-1) {$\mu'(\ell_{i,j})=\mu(\ell_{i,j})=\frac{\alpha_i}{\alpha_j}$, $\ i\neq j$, $\ 1\leq i,j\leq n-1$};
	
		\end{tikzpicture}
	\]
We assign $\phi'$ to $\cM(\Gamma', \mu')$ by choosing $\Gamma_{\Tr}$ so that it contains $e_2,e_3$ and declaring that $\phi'(p_{3}) = \alpha_{n}$. It follows that
	\begin{align*}
		\left( (p_1+p_3) W^*(Y_{f_1}, p_1,p_3) (p_1+p_3), (\phi')^{p_1+p_3}\right) &\cong \left( (p_1+p_3) W^*(u_4 Y_{e_1}, p_1, p_3) (p_1+p_3), \phi^{p_1+p_3}\right)\\
			(Y_{f_1}, p_1,p_3)&\mapsto (u_4 Y_{e_1}, p_1,p_3),
	\end{align*}
and in particular, this mapping preserves the canonical conditional expectations onto $\overset{p_1}{\bbC}\oplus \overset{p_3}{\bbC}$. Consequently,
	\begin{align*}
		(p_3 \cM(\Gamma', \mu') p_3, (\vphi')^{p_3}) &\cong (p_{3}W^{*}(u_{4}Y_{e_1}, Y_{e_2}, Y_{e_3}, (Y_{\ell_{i, j}})_{i, j}, p_{0}, p_{1}, p_{2}, p_{3})p_{3}, \vphi^{p_3}\\
				&\cong (e_{n,n}\cM e_{n,n},\vphi^{e_{n,n}}).
	\end{align*}
Recall that $H$ is the multiplicative subgroup of $\R^{+}$ generated by $\{\alpha_{i}/\alpha_{j}: 1 \leq i, j \leq n \}$ and note that from our definition of $\gamma$ above, $\gamma = \gamma' + \frac{1-\beta}{\alpha_{n}}$. It follows from Subsection \ref{subsec:graph} that 
	\[
		(e_{n,n}\cM e_{n,n},\vphi^{e_{n,n}})  \cong (p_3\cM(\Gamma', \mu')p_3, (\phi')^{p_3}) \cong \begin{cases}
																				(T_{H}, \vphi_{H}) &\text{ if } \gamma \geq 1\\
																				\underset{\alpha_{n}\gamma}{(T_{H}, \vphi_{H})} \oplus \underset{\alpha_{n}(1 - \gamma)}{\C} &\text{ if } \gamma > 1
																			\end{cases}.
	\]
Tensoring with $\overset{e_{1,1},\ldots, e_{n,n}}{\underset{\alpha_{1}, \cdots, \alpha_{n}}{M_{n}(\C)}}$ and using Corollary \ref{cor:tensor} gives the desired result, and proves (1) in the case that $\alpha_{1} > \alpha_{2}$.

If $\alpha_{1} = \alpha_{2}$, let $\cN$ be defined as at the beginning of the proof. Then $\alpha_{1} < 1/2$, so it follows that $e_{1,1} \wedge (1-q) = 0$.  This means that $e_{1,1}\cN e_{1,1}$ will contain at most one minimal projection.  This observation means that the computation for $P\cN P$ continues exactly as above, and the graphical models for $P\cN P$ and $P\cN P$ are still valid after we redefine $\gamma' = \sum_{i=2}^{n} \frac{1-\beta}{\alpha_i}$ and exchange any mention of $\alpha_{n}$ and $\alpha_{n-1}$ with $\alpha_{1}$ and $\alpha_{2}$ respectively.

The proof of (2) follows directly from the result of (1) as well as the proof of \cite[Lemma~\ref{lem:Hou}(2)]{Hou07}.
\end{proof}

\subsection{Free products of finite-dimensional von Neumann algebras}

We will use Theorem \ref{thm:induction} to establish the requisite base cases/inductive steps for computing the free product of any two finite-dimensional von Neumann algebras.    
  The following three propositions are the ``building blocks" between Theorem~\ref{thm:induction} above, and Theorem~\ref{thm:finitedim} below.

\begin{prop}\label{prop:matrix}
Let $1 > \alpha_{1} \geq \alpha_{2} \geq \cdots \geq \alpha_{n} > 0$ with $\sum_{i=1}^n \alpha_i^n =1$, and let $1 > \beta_{1} \geq \beta_{2} \geq \cdots \geq \beta_{m} > 0$ with $\sum_{j = 1}^{m}\beta_{j} = 1$.  Let
	\[
		H = \langle \{ \alpha_{i}/\alpha_{j}\, : \, 1 \leq i, j \leq n\} \cup \{ \beta_{i}/\beta_{j}\, : \, 1 \leq i, j \leq m\} \rangle < \bbR^+,
	\]
and assume $H$ is not trivial.  Then
	\[
		\underset{\alpha_{1}, \cdots, \alpha_{n}}{M_{n}(\C)} * \underset{\beta_{1}, \cdots \beta_{m}}{M_{m}(\C)} \cong (T_{H}, \vphi_{H}).
	\]
\end{prop}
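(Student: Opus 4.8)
The plan is to reduce this computation to Theorem~\ref{thm:induction}, which already identifies $M_n(\C) * [\C \oplus \C]$, and then to argue that---because both free factors are \emph{full} matrix algebras of size at least two---the finite-dimensional summand produced in the general formula of Theorem~\ref{thm:induction} is forced to vanish, leaving only $(T_H, \vphi_H)$. I would proceed by induction, peeling one matrix factor down to a two-point diagonal $[\C \oplus \C]$.

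For the inductive reduction I would first convert the free product into an amalgamated free product over a diagonal. Fixing matrix units for $M_m(\C)$ with diagonal $D_m$ and the $\psi$-preserving expectation $E_1 \colon M_m(\C) \to D_m$, Proposition~\ref{prop:amalgamate} gives $M_n(\C) * M_m(\C) \cong (M_m(\C), E_1) \Asterisk_{D_m} (M_n(\C) * D_m, E_2)$. Proposition~\ref{prop:createfree} then lets me compress by $P = f_{m-1,m-1} + f_{mm}$ and strip the last coordinate off $M_m(\C)$: the corner $P(\ \cdot\ )P$ becomes $M_2(\C) \Asterisk_{D'} (P\cN P)$, where, running Proposition~\ref{prop:amalgamate} backwards, $\cN \cong M_n(\C) * (M_{m-1}(\C) \oplus \C)$. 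Iterating this (and the symmetric move on the $M_n$ side) reduces the second factor to a two-point diagonal, at which point Theorem~\ref{thm:induction}(1), with base case Theorem~\ref{thm:M2}, applies. Throughout I would track compressions and central supports using Lemma~\ref{lem:antman} and assemble the pieces with Lemma~\ref{lem:Dykema}, exactly as in the proof of Theorem~\ref{thm:induction}; the graph model of Subsection~\ref{subsec:graph} then lets me read off the isomorphism type of each corner via Theorem~\ref{thm:freegraph}.

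A complementary and quicker way to see that $(T_H, \vphi_H)$ must appear as a free factor is to embed a \emph{modular} copy of $\C_\beta \oplus \C_{1-\beta}$ (a diagonal projection and its complement) into each of $M_n(\C)$ and $M_m(\C)$ and apply Theorem~\ref{thm:induction}(2) from both sides. Writing $H_n = \langle \alpha_i/\alpha_j\rangle$ and $H_m = \langle \beta_i/\beta_j\rangle$, this yields $M_n(\C) * M_m(\C) \cong M_n(\C) * M_m(\C) * T_{H_n}$ and, symmetrically, the same with $T_{H_m}$; since $T_{H_n} * T_{H_m} \cong T_{\langle H_n \cup H_m\rangle} = T_H$ and at least one of $H_n, H_m$ is non-trivial, one obtains $M_n(\C) * M_m(\C) \cong (M_n(\C) * M_m(\C)) * (T_H, \vphi_H)$. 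Combined with the corner computation above and free absorption, this pins the type-$\mathrm{III}$ factor part down to $(T_H, \vphi_H)$ with modular point spectrum exactly $H$.

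The main obstacle is the bookkeeping that \emph{no} finite-dimensional summand $C$ survives. In the $M_n(\C) * [\C \oplus \C]$ building block a summand appears precisely when $\gamma = \sum_i (1-\beta)/\alpha_i$ is $<1$, so I must verify that after reassembling $M_m(\C)$ from its diagonal the analogous vertex-mass inequalities $\sum_{s(e)=v}\mu(e) \geq 1$ of Theorem~\ref{thm:freegraph}(4) hold at every vertex. This is where the hypotheses $n,m \geq 2$ and $\sum_i \alpha_i = \sum_j \beta_j = 1$ enter: a full $M_m(\C)$ contributes enough off-diagonal mass at each diagonal projection to absorb the leftover scalars that $[\C \oplus \C]$ alone would leave behind. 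Making the modular-inclusion applications of Theorem~\ref{thm:induction}(2) legitimate also requires choosing $\beta \in [\tfrac12,1)$ with $\gamma \geq 1$; since the two factors may have very unbalanced weights, I would need to argue that at least one of the two sides always admits such a choice (using that $\sum_i 1/\alpha_i$ and $\sum_j 1/\beta_j$ are large), which is the delicate point in that step.
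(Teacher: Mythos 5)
Your third paragraph is in fact the paper's actual strategy---a single application of Theorem~\ref{thm:induction}(2) followed by free absorption---but the two points you leave open are precisely the content of the paper's proof, so the proposal has a genuine gap. First, the existence of an admissible $\beta$: the paper does \emph{not} apply Theorem~\ref{thm:induction}(2) ``from both sides'' (this can fail: a side whose ratio group is trivial violates the hypothesis of a strict inequality among the weights, and with unbalanced weights only one side need admit $\beta\in[\tfrac12,1)$ with $\gamma\geq 1$). Instead it splits into two cases. If only one ratio set, say $\{\alpha_i/\alpha_j\}$, is non-trivial, then $\beta_j\equiv\tfrac1m$, one takes $\beta=\lceil m/2\rceil\tfrac1m$, and the estimate
\[
\gamma \;=\; \sum_{i=1}^n\frac{1-\beta}{\alpha_i}\;\geq\;(1-\beta)\left[\frac{n-1}{\alpha_1}+\frac{1}{1-\alpha_1}\right]\;\geq\;(1-\beta)\bigl[n+2\sqrt{n(n-1)}\bigr]\;\geq\;1
\]
(using $1-\beta\geq\tfrac13$ and $n\geq 2$) legitimizes the application. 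If both sets are non-trivial, one may assume $\beta_m/\alpha_n\geq 1$ and take $\beta=\beta_1+\cdots+\beta_k$ with $k$ minimal subject to $\beta\geq\tfrac12$; minimality and $\beta_m\leq\tfrac1m$ force $k\leq m-1$, hence $1-\beta\geq\beta_m$ and $\gamma\geq(1-\beta)/\alpha_n\geq\beta_m/\alpha_n\geq 1$. You explicitly flag this verification as ``the delicate point'' but do not supply it, and it is exactly where the hypothesis that both factors are full matrix algebras with normalized states is spent.

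Second, even granting the absorption, $\cM\cong\cM*(T_{H'},\vphi_{H'})$ does not yield $\cM\cong(T_H,\vphi_H)$; the paper finishes by \emph{collapsing} the two matrix factors into the free Araki--Woods factor, $T_{H'}*M_n(\C)\cong T_{H'}$ and then $T_{H'}*M_m(\C)\cong T_H$, via Proposition~\ref{prop:L(Z)B(H)} (equivalently Corollary~\ref{cor:ArakiB(H)}) and free absorption. You instead defer this identification to your first route---the induction through amalgamated free products, Proposition~\ref{prop:createfree} compressions, and the graph model---which is not carried out: after one stripping step the relevant free product has the shape $M_n(\C)*[M_{m-1}(\C)\oplus\C]$, which is \emph{not} covered by Theorem~\ref{thm:induction} (the paper only handles such direct sums later, in Propositions~\ref{prop:matrix2} and \ref{prop:matrix3}, which you cannot invoke here without developing them), and the vanishing of the finite-dimensional summand $C$ is asserted rather than proved. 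So the correct skeleton is visible in your third paragraph, but both load-bearing steps---the two-case construction of $\beta$ and the post-absorption collapse of $M_n(\C)$ and $M_m(\C)$ into $T_H$---are missing, while your primary route is a substantially heavier program that the paper deliberately avoids for this proposition.
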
 

\begin{proof}
We will show that Theorem~\ref{thm:induction}.(2) can always be applied.\\

\noindent
\underline{\textbf{Case 1:}} Assume only one of the sets $\{ \alpha_{i}/\alpha_{j}\, : \, 1 \leq i, j \leq n\}$ or $\{ \beta_{i}/\beta_{j}\, : \, 1 \leq i, j \leq m\}$ is non-trivial. Without loss of generality, we can assume it is the former. Consequently,
	\[
		H=\<\{ \alpha_{i}/\alpha_{j}\, : \, 1 \leq i, j \leq n\}\>,
	\]
and $\beta_1=\cdots =\beta_m = \frac1m$. Set $\beta:= \lceil\frac{m}{2}\rceil \frac{1}{m} \in [\frac12, 1)$, so that $\underset{\beta_1,\ldots,\beta_m}{M_m(\bbC)}$ accepts a modular inclusion of $\underset{\beta}{\bbC}\oplus \underset{1-\beta}{\bbC}$. Note that $\alpha_n \leq 1 - \alpha_1$. It follows that
	\begin{align*}
		\gamma := \sum_{i=1}^n \frac{1-\beta}{\alpha_i} \geq (1-\beta)\left[\frac{n-1}{\alpha_1} + \frac{1}{1-\alpha_1}\right] \geq (1-\beta)[ n + 2\sqrt{n(n-1)}],
	\end{align*}
where the last inequality follows from an easy calculus exercise. Now, note that $1-\beta \geq \frac13$ (where this minimum is attained for $m=3$ and the inequalilty is strict otherwise), and since $n\geq 2$ we have $\gamma\geq 1$. Thus we may apply Theorem~\ref{thm:induction}.(2), free absorption, and Proposition \ref{prop:L(Z)B(H)} to obtain
	\begin{align*}
		\underset{\alpha_{1}, \cdots, \alpha_{n}}{M_{n}(\C)} * \underset{\beta_{1}, \cdots \beta_{m}}{M_{m}(\C)} &\cong (T_H,\vphi_H)*\underset{\beta_{1}, \cdots \beta_{m}}{M_{m}(\C)}*\underset{\alpha_{1}, \cdots, \alpha_{n}}{M_{n}(\C)}\\
			& \cong (T_H,\vphi_H) * \underset{\alpha_{1}, \cdots \alpha_{n}}{M_{n}(\C)}\\
			&\cong (T_H,\vphi_H).   
	\end{align*}\\

\noindent
\underline{\textbf{Case 2:}} Assume both sets $\{ \alpha_{i}/\alpha_{j}\, : \, 1 \leq i, j \leq n\}$ and $\{ \beta_{i}/\beta_{j}\, : \, 1 \leq i, j \leq m\}$ are non-trivial. Now, either $\frac{\beta_m}{\alpha_n} \geq 1$ or $\frac{\alpha_n}{\beta_m} \geq 1$. Without loss of generality, assume the former. Let $k\in \{1,\ldots,m\}$ be the smallest index such that $\beta_1+\cdots +\beta_k \geq \frac12$. Then $k \leq m-1$ since $\beta_m \leq \frac1m\leq \frac 12$, which implies $\beta_1+\cdots +\beta_{m-1} \geq \frac12$. Set $\beta:= \beta_1+\cdots + \beta_k\in [\frac12,1)$, so that $1-\beta\geq \beta_m$. Then
	\[
		\gamma:= \sum_{i=1}^n \frac{1-\beta}{\alpha_i} \geq \frac{1-\beta}{\alpha_n} \geq \frac{\beta_m}{\alpha_n} \geq 1.
	\]
Letting $H'=\<\{ \alpha_{i}/\alpha_{j}\, : \, 1 \leq i, j \leq n\}\>$, we may apply Theorem~\ref{thm:induction}.(2), free absorption, and Proposition~\ref{prop:L(Z)B(H)} to obtain
	\begin{align*}
		\underset{\alpha_{1}, \cdots, \alpha_{n}}{M_{n}(\C)} * \underset{\beta_{1}, \cdots \beta_{m}}{M_{m}(\C)} &\cong (T_{H',}\vphi_{H'}) *\underset{\alpha_{1}, \cdots, \alpha_{n}}{M_{n}(\C)} * \underset{\beta_{1}, \cdots \beta_{m}}{M_{m}(\C)} \\
		&\cong (T_{H',}\vphi_{H'}) * \underset{\beta_{1}, \cdots \beta_{m}}{M_{m}(\C)}\\
		&\cong (T_H, \vphi_H).\qedhere
	\end{align*}

\end{proof}

\begin{prop}\label{prop:matrixabelian}
Let $1 > \alpha_{1} \geq \alpha_{2} \geq \cdots \geq \alpha_{n} > 0$ with $\sum_{i=1}^{n} \alpha_{i} = 1$, and let $1> \beta_{1} \geq \beta_{2} \geq \cdots \geq \beta_{m}>0$ with $\sum_{j=1}^{m} \beta_{j} = 1$.  Let $\gamma = \sum_{i=1}^{n} \frac{1-\beta_{1}}{\alpha_{i}}$.  Suppose $H = \<\{ \alpha_{i}/\alpha_{j}\, : \, 1 \leq i, j \leq n\}\>$. Then
	\[
		\underset{\alpha_{1}, \cdots, \alpha_{n}}{\overset{e_{1,1}, \cdots, e_{n,n}}{M_{n}(\C)}} * \left[ \underset{\beta_{1}}{\overset{p}{\C}} \oplus \underset{\beta_{2}}{\C} \oplus \cdots \oplus\underset{\beta_{m}}{\C} \right] \cong \begin{cases}
								(A, \vphi) &\text{ if } \gamma \geq 1\\
								\underset{\gamma}{(A, \vphi)} \oplus \underset{\gamma_{1}, \cdots, \gamma_{n}}{\overset{\overline{e_{1,1}}, \cdots, \overline{e_{n,n}}}{M_{n}(\C)}} &\text{ if } \gamma < 1
							\end{cases},
	\]
where $(A, \vphi)$ is an interpolated free group factor if $\alpha_{1} = \cdots = \alpha_{n}$ and is $(T_{H}, \vphi_{H})$ if at least one inequality in the $\alpha$'s is strict. In the case when $\gamma < 1$, $\gamma_{i} = \alpha_{i}(1 - \gamma)$, and $\overline{e_{i,i}} \leq e_{i,i} \wedge p$.
\end{prop}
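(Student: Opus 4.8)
The plan is to induct on the number $m$ of atoms of the abelian factor, with Theorem~\ref{thm:induction} as the base case $m=2$ and a single atom removed at each step. First I separate off the tracial case. When $\alpha_1=\cdots=\alpha_n$ the state on $M_n(\C)$ is its trace, and since every state on an abelian algebra is a trace, the free product state is tracial; the statement is then exactly Dykema's computation \cite{MR1201693}, with the diffuse part an interpolated free group factor and the finite-dimensional remainder read off from his free-dimension formulas (and matched to the claimed $M_n(\C)$-summand when $\gamma<1$). The genuine content is the non-tracial case, where $H=\langle \alpha_i/\alpha_j\rangle$ is non-trivial and the diffuse part should be $(T_H,\vphi_H)$; this is where the induction operates.

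For $m=2$ the ordering $\beta_1\geq\beta_2$ forces $\beta_1\geq\frac12$, so $\underset{\beta_1}{\C}\oplus\underset{\beta_2}{\C}$ is precisely the two-atom algebra of Theorem~\ref{thm:induction} with distinguished mass $\beta=\beta_1\geq\frac12$, and the quantity $\gamma=\sum_i\frac{1-\beta_1}{\alpha_i}$ agrees with the $\gamma$ there; thus Theorem~\ref{thm:induction}.(1) gives the claim verbatim. For the inductive step write $\cM=M_n(\C)*\C^m$ with minimal projections $p_1,\dots,p_m$ of the abelian factor, and let $\cN=M_n(\C)*\C^{m-1}$ be the subalgebra obtained by merging the two \emph{smallest} atoms into $\tilde p:=p_{m-1}+p_m$ of mass $\beta_{m-1}+\beta_m$. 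The two algebras differ only by splitting $\tilde p$ back into $\underset{\beta_{m-1}}{\C}\oplus\underset{\beta_m}{\C}$, so Lemma~\ref{lem:Dykema} (with $\cB(\cH)=\C$ and $A=\underset{\beta_{m-1}}{\C}\oplus\underset{\beta_m}{\C}$) applies and gives
\[
	\left(\tilde p\,\cM\,\tilde p,\ \vphi^{\tilde p}\right)\cong \left(\tilde p\,\cN\,\tilde p,\ \vphi^{\tilde p}\right)*\left[\underset{\beta_{m-1}}{\C}\oplus\underset{\beta_m}{\C}\right],
\]
together with $z(\tilde p\colon\cM)=z(\tilde p\colon\cN)$. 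The inductive hypothesis identifies $\cN$, Lemma~\ref{lem:antman} moves between the corner and the full algebra, and the central-support equality lets me reassemble $\cM$.

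The observation that organizes the step is that merging the two smallest atoms can raise the maximal weight above $\beta_1$ only if $\beta_{m-1}+\beta_m>\beta_1$, and a one-line estimate then shows $1-\beta_1>\beta_1$, i.e. $\beta_1<\frac12$ and hence $\gamma=(1-\beta_1)\sum_i\frac1{\alpha_i}>1$. I therefore split into two cases. In Case~A, $\beta_{m-1}+\beta_m\leq\beta_1$: here $\beta_1$ stays the largest weight of $\C^{m-1}$, the corresponding $\tilde\gamma$ equals $\gamma$, and the inductive hypothesis gives $\cN\cong(T_H,\vphi_H)$ (if $\gamma\geq1$) or $(T_H,\vphi_H)_\gamma\oplus M_n(\C)$ (if $\gamma<1$) with the matrix summand lying under $p_1$, which is orthogonal to $\tilde p$. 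Thus $\tilde p$ sits inside the $T_H$-part, the corner $\tilde p\,\cN\,\tilde p$ is $(T_H,\vphi_H)$, and freely multiplying by the two-atom algebra and invoking free absorption keeps $\tilde p\,\cM\,\tilde p\cong(T_H,\vphi_H)$; Lemma~\ref{lem:antman} and the central-support equality then produce the asserted decomposition of $\cM$ with the matrix block untouched. Since Case~B forces $\gamma>1$, Case~A already covers \emph{every} instance with $\gamma<1$. In Case~B, $\beta_{m-1}+\beta_m>\beta_1$: now $\gamma>1$, the target is $(T_H,\vphi_H)$ with no finite-dimensional remainder, but the inductive hypothesis may put a nonzero matrix summand of $\cN$ under the new distinguished atom $\tilde p$, so $\tilde p\,\cN\,\tilde p$ can be of the form $(T_H,\vphi_H)\oplus M_n(\C)$ and the free product with the two-atom algebra must annihilate that matrix part.

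I expect Case~B to be the main obstacle: I must show that, in the relevant corner, $\left[(T_H,\vphi_H)\oplus M_n(\C)\right]*\left[\underset{\beta_{m-1}}{\C}\oplus\underset{\beta_m}{\C}\right]\cong(T_H,\vphi_H)$, which is exactly the phenomenon in Theorem~\ref{thm:M2} whereby a free product with a two-atom algebra converts a leftover matrix block into diffuse algebra once the relevant parameter exceeds $1$. I would resolve it either by rerunning that corner computation, or more cleanly by exhibiting a modular inclusion $M_n(\C)\hookrightarrow(T_H,\vphi_H)\oplus M_n(\C)$ and a two-atom modular inclusion into $\underset{\beta_{m-1}}{\C}\oplus\underset{\beta_m}{\C}$ whose parameter meets the hypothesis of Theorem~\ref{thm:induction}.(2), then collapsing the resulting $(A,\phi)*(B,\psi)*(T_H,\vphi_H)$ by free absorption and Corollary~\ref{cor:ArakiB(H)}. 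The remaining delicate point is the weight bookkeeping: propagating the values $\gamma_i=\alpha_i(1-\gamma)$ and the containment $\overline{e_{i,i}}\leq e_{i,i}\wedge p$ of the finite-dimensional summand under the distinguished projection correctly through each corner isomorphism and each application of Lemma~\ref{lem:antman}.
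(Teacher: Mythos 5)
Your induction on $m$ (merge the two smallest atoms into $\tilde p=p_{m-1}+p_m$, apply Lemma~\ref{lem:Dykema} to the corner $\tilde p\,\cM\,\tilde p$, reassemble via central supports and Lemma~\ref{lem:antman}) is a genuinely different route from the paper, which never inducts on $m$: for $\gamma\geq 1$ the paper finds a \emph{single} two-atom modular subalgebra $\underset{\beta}{\C}\oplus\underset{1-\beta}{\C}\hookrightarrow \C^m$ with $\beta\in[\frac12,\frac34]$ (either $\beta=\beta_1$, or a tail sum $\beta_k+\cdots+\beta_m$ for the largest $k$ making this $\geq\frac12$), checks $\sum_i(1-\beta)/\alpha_i\geq 1$ using $\sum_i 1/\alpha_i\geq n^2\geq 4$, and applies Theorem~\ref{thm:induction}.(2) once plus free absorption; for $\gamma<1$ it merges \emph{all} atoms below the largest, so $\cN=M_n(\C)*[\underset{\beta_1}{\C}\oplus\underset{1-\beta_1}{\C}]$ is literally Theorem~\ref{thm:induction}.(1), then splits $1-p$ back with Lemma~\ref{lem:Dykema} and absorbs $\underset{\beta_2}{\C}\oplus\cdots\oplus\underset{\beta_m}{\C}$ into the corner $(1-p)\cN(1-p)\cong (T_H,\vphi_H)$. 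Your tracial reduction, base case $m=2$, and Case~A (including the bookkeeping of $\gamma_i$ and $\overline{e_{i,i}}\leq e_{i,i}\wedge p$, which works because $z(\tilde p\colon\cM)=z(\tilde p\colon\cN)$ forces $(1-z)\cM=(1-z)\cN$) are all sound and parallel the paper's $\gamma<1$ argument.

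The gap is Case~B, which you explicitly leave open, and neither of your proposed repairs works as stated. The statement $[(T_H,\vphi_H)\oplus M_n(\C)]*[\underset{\beta_{m-1}}{\C}\oplus\underset{\beta_m}{\C}]\cong (T_H,\vphi_H)$ is false without weight hypotheses (remainders survive when the relevant parameter is below $1$, as in Proposition~\ref{prop:matrix3} --- which in any case comes \emph{after} this proposition, so invoking such results risks circularity), and your ``cleaner'' route via Theorem~\ref{thm:induction}.(2) cannot be verified in general: the only two-atom modular inclusion into the normalized corner algebra is the identity, with parameter $\beta'=\beta_{m-1}/(\beta_{m-1}+\beta_m)$, and the hypothesis $\sum_i(1-\beta')/\alpha_i\geq 1$ can fail --- e.g.\ $(\alpha_1,\alpha_2)=(0.6,0.4)$, $(\beta_1,\beta_2,\beta_3)=(0.45,0.45,0.1)$ gives $\frac{2}{11}\cdot\frac{25}{6}<1$. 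Fortunately the whole worry is vacuous: for $m\geq 3$ the two smallest atoms satisfy $\beta_{m-1}\leq\frac{1-\beta_m}{m-1}$ and $\beta_m\leq\frac1m$, whence $\tilde\beta:=\beta_{m-1}+\beta_m\leq\frac2m\leq\frac23$, so in Case~B the inductive parameter is
\[
	\tilde\gamma=\bigl(1-\tilde\beta\bigr)\sum_{i=1}^n\frac{1}{\alpha_i}\ \geq\ \frac{n^2}{3}\ \geq\ \frac43>1,
\]
and the inductive hypothesis yields $\cN\cong (T_H,\vphi_H)$ with \emph{no} matrix summand; then $\tilde p\,\cN\,\tilde p\cong(T_H,\vphi_H)$ by Lemma~\ref{lem:antman}, and only the trivial free absorption $(T_H,\vphi_H)*[\C\oplus\C]\cong(T_H,\vphi_H)$ (the state on a two-atom algebra being a trace) is needed. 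With that one estimate inserted, your induction closes and gives a correct alternative proof; without it, the proof as written does not go through.
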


\begin{proof}

The case when $\alpha_{1} = \cdots = \alpha_{n}$ is handled by \cite{MR1201693}.  We therefore assume that at least one of the inequalities are strict. Moreover, note that the case $m = 2$ follows from  Theorem~\ref{thm:induction}.(1). 

If $\gamma \geq 1$, then we claim there is a modular inclusion
	\[
		\underset{\beta}{\C} \oplus \underset{1-\beta}{\C} \hookrightarrow  \underset{\beta_{1}}{\overset{p}{\C}} \oplus \underset{\beta_{2}}{\C} \oplus \cdots \oplus\underset{\beta_{m}}{\C} ,
	\]
where $\beta \in [\frac12, 1)$ and $\sum_{i=1}^{n} \frac{1-\beta}{\alpha_{i}} \geq 1$.  Indeed, if $\beta_1\geq \frac12$ then simply set $\beta:=\beta_1$. Otherwise, let $k\in \{1,\ldots,m\}$ be the largest index such that $\beta_k+\cdots +\beta_m\geq \frac12$, and set $\beta:=\beta_k+\cdots + \beta_m$. Note that $k\geq 2$, since $\beta_2+\cdots+\beta_m = 1-\beta_1 > \frac12$. Since $\sum_{i=1}^{n} \frac{1}{\alpha_i} \geq n^2 \geq 4$, it suffices to show $\beta\leq \frac34$. If $\beta> \frac34$, then $\beta_1 \leq 1-\beta < \frac14$ so that $\beta_k < \frac14$. But then $\beta_{k+1}+\cdots +\beta_m = \beta- \beta_k > \frac34 - \frac14 =\frac12$, contradicting our choice of $k$. Thus the claim holds. 

From Theorem~\ref{thm:induction}.(2), free absorption, and Proposition~\ref{prop:L(Z)B(H)} we have
\begin{align*}
	\underset{\alpha_{1}, \cdots, \alpha_{n}}{\overset{e_{1,1}, \cdots, e_{n,n}}{M_{n}(\C)}} * \left[ \underset{\beta_{1}}{\overset{p}{\C}} \oplus \underset{\beta_{2}}{\C} \oplus \cdots \oplus\underset{\beta_{m}}{\C} \right] &\cong \underset{\alpha_{1}, \cdots, \alpha_{n}}{\overset{e_{1,1}, \cdots, e_{n,n}}{M_{n}(\C)}} * \left[ \underset{\beta_{1}}{\overset{p}{\C}} \oplus \underset{\beta_{2}}{\C} \oplus \cdots \oplus\underset{\beta_{m}}{\C} \right] * (T_{H}, \vphi_{H})\\
		&\cong  \underset{\alpha_{1}, \cdots, \alpha_{n}}{\overset{e_{1,1}, \cdots, e_{n,n}}{M_{n}(\C)}} * (T_{H}, \vphi_{H})\\
		&\cong (T_{H}, \vphi_{H}).
	\end{align*}

Now, if $\gamma  < 1$, let $(M,\vphi)$ be the free product in the statement of the proposition. By Theorem~\ref{thm:induction}.(1), 
	\[
		(N,\vphi):=\underset{\alpha_{1}, \cdots, \alpha_{n}}{\overset{e_{1,1}, \cdots, e_{n,n}}{M_{n}(\C)}} * \left[ \underset{\beta_{1}}{\overset{p}{\C}} \oplus \underset{1-\beta_{1}}{\overset{1-p}{\C}} \right] \cong \underset{\gamma}{(T_{H}, \vphi_{H})} \oplus \underset{\gamma_{1}, \cdots, \gamma_{n}}{\overset{\overline{e_{1,1}}, \cdots, \overline{e_{n,n}}}{M_{n}(\C)}} 
	\]
with $\gamma_{i} = \alpha_{i}(1 - \gamma)$, and $\overline{e_{i,i}} \leq e_{i,i} \wedge p$.  From Lemmas~\ref{lem:Dykema} and \ref{lem:antman},
	\[
		\left((1-p)\cM(1-p), \vphi^{1-p}\right) = ((1-p)\cN(1-p), \vphi^{1-p}) * \left[  \underset{\beta_{2}}{\C} \oplus \cdots \oplus\underset{\beta_{m}}{\C} \right] \cong (T_{H}, \vphi_{H})
	\]
We have $z(1-p\colon \cN^\vphi) = z( 1-p\colon \cN) = z(1-p\colon M)$. As $\cN^{\vphi} \subset \cM^{\vphi}$, it follows that $z(1-p\colon \cM^\vphi)=z(1-p\colon \cN^\vphi)$ and so from Lemma~\ref{lem:antman} we have that 
	\[
		(\cM,\vphi) \cong \underset{\gamma}{(T_{H}, \vphi_{H})} \oplus \underset{\gamma_{1}, \cdots, \gamma_{n}}{\overset{\overline{e_{1,1}}, \cdots, \overline{e_{n,n}}}{M_{n}(\C)}}
	\]
as claimed.
\end{proof}

\begin{prop}\label{prop:matrix1}
Let $1>\alpha_{1} \geq \cdots \geq \alpha_{n}>0$ and $H$ be as in the statement of Proposition~\ref{prop:matrixabelian}.  Let  $1 > \beta \geq 0$.  Let $(B, \phi)$ be either $(L(\F_{t}),\tau)$ for $t \in [1, \infty]$ or $(T_{H'}, \vphi_{H'})$ for some countable, non-trivial $H'$.  Let $\gamma = \sum_{i=1}^{n} \frac{1-\beta}{\alpha_{i}}$.  Then
	\[
		\underset{\alpha_{1}, \cdots, \alpha_{n}}{\overset{e_{1,1}, \cdots, e_{n,n}}{M_{n}(\C)}} * \left[  \underset{\beta}{\C}\oplus \underset{1-\beta}{\overset{q}{(B, \phi)}} \right] \cong
					\begin{cases}
						(A, \vphi) &\text{ if } \gamma \geq 1\\
						\underset{\gamma}{(A, \vphi)} \oplus \underset{\gamma_{1}, \cdots, \gamma_{n}}{\overset{\overline{e_{1,1}}, \dots, \overline{e_{n,n}}}{M_{n}(\C)}} &\text{ if } \gamma < 1
					\end{cases},
	\]
where $\gamma_{i} = \alpha_{i}(1 - \gamma)$, $\overline{e_{i,i}} \leq e_{i,i} \wedge (1-q)$. Here $(A, \varphi) \cong (L(\F_{s}), \tau)$ for some $s>1$ if $H$ is trivial and $B$ is tracial, and  otherwise $(A, \varphi) \cong (T_{G}, \vphi_{G})$ where $G$ is the group generated by $H$ and $H'$.  If $(B,\phi)$ is tracial, we interpret $H'$ to be $\{1\}$.
\end{prop}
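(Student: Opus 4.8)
The plan is to peel the factor $(B,\phi)$ off using Lemma~\ref{lem:Dykema}, reducing to the already-computed free product $N := \underset{\alpha_1,\dots,\alpha_n}{M_n(\C)} * [\,\underset{\beta}{\C} \oplus \underset{1-\beta}{\C}\,]$ of Theorem~\ref{thm:induction}, and then to reattach $B$ through a free product and identify the result using Shlyakhtenko's formula $(T_H,\vphi_H)*(T_{H'},\vphi_{H'}) \cong (T_G,\vphi_G)$ together with free absorption. Concretely, applying Lemma~\ref{lem:Dykema} in the degenerate case $\cB(\cH)=\C$, with the amalgamated factor played by $(B,\phi)$, the distinguished scalar summand played by $\underset{\beta}{\C}$, and $(C,\nu) = \underset{\alpha_1,\dots,\alpha_n}{M_n(\C)}$, yields $(q\cM q,\vphi^q) \cong (qNq,\vphi^q) * (B,\phi)$ and $z(q:\cM)=z(q:N)$, where $q$ is the identity of the $B$-summand (so $\vphi(q)=1-\beta$ and $1-q$ is the $\underset{\beta}{\C}$ projection). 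Throughout, the purely tracial situation ($H$ trivial and $B$ tracial) is handled by Dykema's free-dimension computations, so I may assume $H$ is non-trivial or $B$ is non-tracial, in which case the target is $(T_G,\vphi_G)$.

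For the case $\gamma \ge 1$ the value of $\beta$ may be small, so I first adjoin a copy of $(T_H,\vphi_H)$ for free rather than peeling immediately (this avoids producing a spurious matrix summand in the corner). Since $B$ is diffuse, its centralizer $B^\phi$ is diffuse, so I can split $q$ inside $B^\phi$ to produce a modular inclusion $\underset{\tilde\beta}{\C}\oplus\underset{1-\tilde\beta}{\C}\hookrightarrow [\underset{\beta}{\C}\oplus\underset{1-\beta}{(B,\phi)}]$ with $\tilde\beta\in[\tfrac12,1)$ and $\sum_i (1-\tilde\beta)/\alpha_i\ge 1$ (take $\tilde\beta=\beta$ if $\beta\ge\tfrac12$ and $\tilde\beta=\tfrac12$ otherwise; the bound $\sum_i 1/\alpha_i\ge n^2\ge 4$ makes this work). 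Theorem~\ref{thm:induction}(2) then gives $\cM\cong\cM*(T_H,\vphi_H)$, and after reordering the free product and absorbing $\underset{\alpha_1,\dots,\alpha_n}{M_n(\C)}$ into $(T_H,\vphi_H)$ via Corollary~\ref{cor:ArakiB(H)}, I reduce to $(T_H,\vphi_H)*[\underset{\beta}{\C}\oplus\underset{1-\beta}{(B,\phi)}]$. If $B$ is tracial this bracket is tracial (an abelian summand plus a free group factor), hence absorbed by free absorption to give $(T_H,\vphi_H)=(T_G,\vphi_G)$; if $B=T_{H'}$ I peel it off once more with Lemma~\ref{lem:Dykema}, use that $(T_H,\vphi_H)*(\text{$2$-dim abelian})\cong(T_H,\vphi_H)$ is a factor, and conclude via $(T_H,\vphi_H)*(T_{H'},\vphi_{H'})\cong(T_G,\vphi_G)$ and Lemma~\ref{lem:antman}(2).

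For the case $\gamma<1$ one first notes $\gamma=(1-\beta)\sum_i 1/\alpha_i<1$ forces $1-\beta<\tfrac14$, i.e. $\beta>\tfrac34\ge\tfrac12$, so Theorem~\ref{thm:induction}(1) applies directly to $N$ and yields $N\cong\underset{\gamma}{(T_H,\vphi_H)}\oplus\underset{\alpha_i(1-\gamma)}{M_n(\C)}$ with the matrix summand sitting under the distinguished projection $1-q$. Since $q$ has mass $1-\beta\le\gamma$ and is disjoint from the matrix part, $qNq\cong(T_H,\vphi_H)$, so the peeling identity gives $q\cM q\cong(T_H,\vphi_H)*(B,\phi)\cong(T_G,\vphi_G)$. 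I then lift to $\cM$ exactly as in Proposition~\ref{prop:matrixabelian}: from Lemma~\ref{lem:Dykema} and Theorem~\ref{thm:induction} one has $z(q:N^\vphi)=z(q:N)=z(q:\cM)$, and $N^\vphi\subset\cM^\vphi$ forces $z(q:\cM^\vphi)=z(q:N^\vphi)$; Lemma~\ref{lem:antman}(2) then identifies the central summand $z(q:\cM)\cM$ with $(T_G,\vphi_G)$, while the complementary central projection $1-z(q:\cM)=1-z(q:N)\le 1-q$ carries exactly the matrix summand $\underset{\alpha_i(1-\gamma)}{M_n(\C)}$ inherited from $N$ (the replacement of $\underset{1-\beta}{\C}$ by $B$ is invisible off the central support of $q$), with $\overline{e_{i,i}}\le e_{i,i}\wedge(1-q)$.

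The main obstacle, and the reason for treating $\gamma\ge 1$ and $\gamma<1$ by different orderings of these moves, is the appearance of a matrix summand in the corner $qNq$ when $\beta$ is small: peeling $B$ before disposing of $M_n(\C)$ would leave me to identify $(T_H\oplus M_n(\C))*(B,\phi)$ as a factor, which is awkward. Adjoining $(T_H,\vphi_H)$ first (legitimate precisely because $\gamma\ge1$ supplies the needed modular inclusion) dissolves $M_n(\C)$ before any compression and sidesteps this. The remaining delicate point is the central-support bookkeeping needed to transfer the corner computation back to $\cM$ and to certify that the matrix summand is inherited unchanged from $N$; this is routine given the equality $z(q:\cM)=z(q:N)$ from Lemma~\ref{lem:Dykema}, but must be carried out carefully, together with the degenerate cases where $H$ is trivial (using Dykema for tracial $B$, and $L(\F_s)*(T_{H'},\vphi_{H'})\cong(T_{H'},\vphi_{H'})$ for $B=T_{H'}$).
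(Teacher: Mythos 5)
Your argument is essentially correct and, in the hard regime, genuinely reorganizes the paper's proof. The paper introduces $\delta=\sum_{i}\beta/\alpha_{i}$ and splits into three cases according to the positions of $\gamma$ and $\delta$ relative to $1$; its delicate case is $\delta<1\leq\gamma$, where the matrix summand of $\cN=M_{n}(\C)*[\underset{\beta}{\C}\oplus\underset{1-\beta}{\C}]$ sits \emph{under} $q$, so that peeling $B$ off via Lemma~\ref{lem:Dykema} leaves a free product of the form $[T_{H}\oplus M_{n}(\C)]*(B,\phi)$. The paper resolves this by an auxiliary two-step compression through the subalgebras $\cM_{1}=[\overset{h}{\C}\oplus\overset{q-h}{\C}]*(B,\phi)$ and $\cM_{2}=[\overset{h}{\C}\oplus M_{n}(\C)]*(B,\phi)$, absorbing the matrix algebra via Proposition~\ref{prop:L(Z)B(H)}/Corollary~\ref{cor:ArakiB(H)} and climbing back up with Lemma~\ref{lem:antman} twice. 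Your adjoin-first move --- using the diffuseness of $B^{\phi}$ to split $q$ and manufacture the modular inclusion needed for Theorem~\ref{thm:induction}(2), then dissolving $M_{n}(\C)$ into the adjoined $(T_{H},\vphi_{H})$ \emph{before} any compression --- sidesteps that entire case: every corner you subsequently compress to is a factor, and your $\gamma<1$ branch then mirrors the paper's Case~2 (including the central-support bookkeeping $z(q\colon\cN^{\vphi})=z(q\colon\cN)=z(q\colon\cM)$, which is exactly the paper's argument). This is a real simplification when $H$ is non-trivial; the paper's route, by contrast, needs no modular-inclusion construction and treats trivial $H$ uniformly.

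That last point is where your write-up has a concrete hole: when $H$ is trivial (all $\alpha_{i}=1/n$), $B=(T_{H'},\vphi_{H'})$, and $0<\beta<1/n^{2}$ --- so $\delta=n^{2}\beta<1$ while $\gamma=n^{2}(1-\beta)\geq1$ automatically --- the adjoin-first trick is unavailable, since Theorem~\ref{thm:induction} requires a strict inequality among the $\alpha_{i}$'s and there is no non-trivial $T_{H}$ to adjoin. Peeling then gives $q\cM q\cong(q\cN q,\vphi^{q})*(T_{H'},\vphi_{H'})$ where, by Dykema's tracial computation, $q\cN q$ is \emph{not} an interpolated free group factor but rather $L(\F_{s})\oplus M_{n}(\C)$ with the matrix summand under $q$; your cited absorption $L(\F_{s})*(T_{H'},\vphi_{H'})\cong(T_{H'},\vphi_{H'})$ does not apply as stated. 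The repair is short: since $H$ is trivial the whole corner $q\cN q$ is tracial, and the direct-sum form of free absorption (namely $(T_{H'},\vphi_{H'})*(A,\tau)\cong(T_{H'},\vphi_{H'})$ for $A$ a tracial direct sum of finite-dimensional algebras and interpolated free group factors, as recorded after Shlyakhtenko's theorem in Section~1) gives $q\cM q\cong(T_{H'},\vphi_{H'})=(T_{G},\vphi_{G})$ directly, after which Lemma~\ref{lem:antman} lifts as usual --- this is precisely how the paper dispatches the $H$-trivial branch of its Case~3. You should also state explicitly that $\beta=0$ is handled outright by Proposition~\ref{prop:L(Z)B(H)} or Corollary~\ref{cor:ArakiB(H)}, as the paper does before assuming $\beta>0$. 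With these two additions your proof is complete.
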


\begin{proof}
The case where $H$ and $H'$ are both trivial is handled in \cite{MR1201693}.  We will therefore assume that at least one of $H$ or $H'$ is non-trivial. If $\beta=0$, then this follows from Proposition~\ref{prop:L(Z)B(H)} or Corollary~\ref{cor:ArakiB(H)}. Thus we further assume $\beta>0$.

Let
	\begin{align*}
		(\cM, \vphi) &:= \underset{\alpha_{1}, \cdots, \alpha_{n}}{\overset{e_{1,1}, \cdots, e_{n,n}}{M_{n}(\C)}} * \left[ \underset{\beta}{\C}\oplus \underset{1-\beta}{\overset{q}{(B, \phi)}}\right]\\
		(\cN, \vphi) &:= \underset{\alpha_{1}, \cdots, \alpha_{n}}{\overset{e_{1,1}, \cdots, e_{n,n}}{M_{n}(\C)}} * \left[ \underset{\beta}{\C} \oplus \overset{q}{\underset{1 -\beta}{\C}}\right].
	\end{align*}
Let $\delta:= \sum_{i=1}^n \frac{\beta}{\alpha_i}$. Since $\gamma+\delta  = \sum_{i=1}^n \frac{1}{\alpha_i} \geq 4$, we cannot have $\gamma,\delta<1$. We consider three possible cases:\\

\noindent
\underline{\textbf{Case 1:}} Assume $\gamma ,\delta \geq 1$. In this case, $(\cN,\vphi)$ is either  $(L(\F_{s}),\tau)$ for some $s > 1$ if $H$ is trivial \cite{MR1201693}, or is $(T_{H}, \vphi_{H})$ from Theorem~\ref{thm:induction}.  Applying Lemmas~\ref{lem:Dykema} and \ref{lem:antman} , gives $(q\cM q, \vphi^q) \cong (T_{G}, \vphi_{G})$. Noting that $z(q: \cN^{\vphi}) = 1$ and $\cN^{\vphi}\subset \cM^{\vphi}$, we obtain $z(q\colon \cM^\vphi)=1$. We then apply Lemma~\ref{lem:antman} again to obtain $(\cM,\varphi) \cong (T_{G}, \vphi_{G})$.\\

\noindent
\underline{\textbf{Case 2:}} Assume $\gamma < 1 \leq \delta$. In this case,
	\[
		(\cN,\vphi) \cong \begin{cases}
\underset{\gamma}{\overset{p}{(L(\F_{s}),\tau)}} \oplus \underset{\gamma_{1}, \cdots, \gamma_{n}}{\overset{\overline{e_{1,1}}, \dots, \overline{e_{n,n}}}{M_{n}(\C)}} &\text{ if } H = \{1\}\\
\underset{\gamma}{\overset{p}{(T_{H}, \vphi_{H})}} \oplus \underset{\gamma_{1}, \cdots, \gamma_{n}}{\overset{\overline{e_{1,1}}, \dots, \overline{e_{n,n}}}{M_{n}(\C)}} &\text{ if } H \neq \{1\}
\end{cases}
	\]
for some $s > 1$ and $\overline{e_{i,i}} \leq e_{ii} \wedge (1-q)$.  Applying Lemmas~\ref{lem:Dykema} and \ref{lem:antman} , gives $(q\cM q, \vphi) \cong (T_{G}, \vphi_{G})$. Noting that $z(q: \cN^{\vphi}) = z(q: \cN) = z(q: \cM) = p$, and $\cN^{\vphi} \subset\cM^{\vphi}$, we apply Lemma~\ref{lem:antman} again to obtain $\cM \cong \underset{\gamma}{(T_{G}, \vphi_{G})} \oplus \underset{\gamma_{1}, \cdots, \gamma_{n}}{\overset{\overline{e_{11}}, \dots, \overline{e_{nn}}}{M_{n}(\C)}}$.\\

\noindent
\underline{\textbf{Case 3:}}  Assume $\delta<1\leq \gamma$. In this case,
	\[
		(\cN,\vphi) \cong \begin{cases}
						\overset{p}{\underset{\delta}{(L(F_s),\tau)}} \oplus \underset{\delta_{1}, \cdots, \delta_{n}}{\overset{\overline{e_{1,1}}, \dots, \overline{e_{n,n}}}{M_{n}(\C)}} &\text{ if } H = \{1\}\\
						\overset{p}{\underset{\delta}{(T_{H}, \vphi_{H})}} \oplus \underset{\delta_{1}, \cdots, \delta_{n}}{\overset{ \overline{e_{1,1}}, \dots, \overline{e_{n,n}}}{M_{n}(\C)}} &\text{ if } H \neq \{1\}
					\end{cases}
	\]
for some $s> 1$, $\delta_{i}= \alpha_{i}(1 - \delta)$, and $\overline{e_{i,i}} \leq e_{i,i} \wedge q$. In either case, we have $z(q\colon \cN^\vphi)=1$, and hence $z(q\colon \cM^\vphi)=1$ since $\cN^\vphi\subset \cM^\vphi$.  By Lemma~\ref{lem:Dykema}
	\[
		(q\cM q,\vphi^q)  \cong (q\cN q, \vphi^q) * (B, \phi)
	\]
If $H$ is trivial, then $H'$ is not. So free absorption gives $(q\cM q, \vphi^q) \cong (T_{G}, \vphi_{G})$, and applying Lemma~\ref{lem:antman} as above gives $(\cM, \vphi) \cong (T_{G}, \vphi_{G})$. 

If $H$ is non-trivial, then by Lemma~\ref{lem:antman}
	\[
		(q\cM q,\vphi^q) \cong \left[ \overset{h}{(T_{H}, \vphi_{H})}\oplus \underset{\delta_{1}, \cdots, \delta_{n}}{\overset{\overline{e_{1,1}}, \dots, \overline{e_{n,n}}}{M_{n}(\C)}}\right] * (B, \phi)
	\]
To compute this, we study the folowoing subalgebras of $q\cM q$:
	\begin{align*}
		(\cM_{1},\vphi^q) &= \left[ \overset{h}{\C} \oplus \overset{q - h}{\C} \right] * (B, \phi)\\
		(\cM_{2},\vphi^q) &= \left[ \overset{h}{\C} \oplus \underset{\delta_{1}, \cdots, \delta_{n}}{\overset{\overline{e_{1,1}}, \dots, \overline{e_{n,n}}}{M_{n}(\C)}}\right] * (B, \phi)
	\end{align*}
Note that $(\cM_{1},\vphi^q) \cong (L(\F_{r}),\tau)$ for some $r > 1$ if $H'$ is trivial, and otherwise is $(T_{H'}, \vphi_{H'})$. Applying Lemmas~\ref{lem:Dykema} and \ref{lem:antman}, we compute
	\[
		((q-h)\cM_2 (q-h),\vphi^{q-h}) \cong ( (q-h)\cM_1 (q-h),\vphi^{q-h}) * \underset{\delta_{1}, \cdots, \delta_{n}}{\overset{\overline{e_{1,1}}, \dots, \overline{e_{n,n}}}{M_{n}(\C)}} \cong (T_G,\vphi_G).
	\]
The last isomorphism follows from either Proposition~\ref{prop:L(Z)B(H)} or Corollary~\ref{cor:ArakiB(H)}, depending on whether or not $H'$ is trivial (which dictated the form of $(\cM_1,\vphi^q)$). Now, $z(q-h\colon \cM_1^{\vphi^q})=q$ since $\cM_1^{\vphi^q}$ is always a factor, and $\cM_1^{\vphi^q}\subset \cM_2^{\vphi^q}$. Thus Lemma~\ref{lem:antman} implies $(\cM_2,\vphi^q)\cong (T_G,\vphi_G)$. Applying Lemmas~\ref{lem:Dykema} and \ref{lem:antman} again gives
	\[
		(h\cM h, \vphi^h) \cong (h \cM_2 h ,\vphi^h) * (T_H,\vphi_H) \cong (T_G,\vphi_G).
	\]
Since $z(h\colon \cM_2^{\vphi^q})=q$ and $\cM_2^{\vphi^q}\subset (q \cM q)^{\vphi^q}$, Lemma~\ref{lem:antman} implies $(q\cM q,\vphi^q)\cong (T_G,\vphi_G)$. Finally, recall $z(q\colon \cM^\vphi)=1$ so that $(\cM,  \vphi) \cong (T_{G}, \vphi_{G})$ as desired.
\end{proof}

\begin{prop}\label{prop:matrix2}

Let $1>\alpha_{1} \geq \cdots \geq \alpha_{n}>0$ and $H$ be as in the statement of Proposition~\ref{prop:matrixabelian}.  Let $1>\beta_{1} \geq \cdots \geq \beta_{m} > 0$, $m \geq 2$, and $\beta > 0$ satisfy $\beta+\sum_{j=1}^m \beta_j=1$.  Let $(B, \phi)$ be either $(L(\F_{t}),\tau)$ for $t \in [1, \infty]$ or $(T_{H'}, \vphi_{H'})$ for some non-trivial $H'$.  Let $H''$ be the group generated by $\{ \beta_{i}/\beta_{j} : 1 \leq i, j \leq m\}.$  Let $G$ be the group generated by $H$, $H'$, and $H''$ (where we declare $H'$ to be trivial if $B = L(\F_{t})$).  We have
	\[
		\underset{\alpha_{1}, \cdots, \alpha_{n}}{\overset{e_{1,1}, \cdots, e_{n,n}}{M_{n}(\C)}} * \left[ \underset{\beta_{1}, \cdots, \beta_{m}}{M_{m}(\C)}\oplus \underset{\beta}{\overset{q}{(B, \phi)}}\right] \cong \begin{cases}
			(T_G,\vphi_G) & \text{if $G$ is non-trivial}\\
			(L(\F_{s}),\tau)  &\text{otherwise, for some }s>1
		\end{cases}.
	\]
\end{prop}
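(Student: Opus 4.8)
The plan is to mirror the proof of Proposition~\ref{prop:matrix1}, but with the single summand $(B,\phi)$ replaced by the pair $M_m(\C)\oplus(B,\phi)$: I first strip off the matrix block $M_m(\C)$ via Lemma~\ref{lem:Dykema} so as to land inside the already-established Proposition~\ref{prop:matrix1}, and then reinsert $M_m(\C)$ by a free product. First dispose of the degenerate case: if $G$ is trivial then $H$, $H'$, $H''$ are all trivial, every state in sight is a trace, and $\cM\cong(L(\F_s),\tau)$ is the classical computation of \cite{MR1201693}. So assume $G$ is non-trivial and let $\cM$ denote the free product in the statement.

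Set $\delta:=\sum_{i=1}^n \beta/\alpha_i=\beta\sum_i\alpha_i^{-1}\geq 4\beta>\beta$, so in particular $\delta>\beta$. Applying Lemma~\ref{lem:Dykema} with $A=M_m(\C)$, $\cB(\cH)=\C$ (whose minimal projection is $1$), $B=(B,\phi)$ and $C=M_n(\C)$, and writing $r:=1_{M_m}$ for the support projection of the $M_m$-block, gives
\[
	(r\cM r,\vphi^r)\cong (r\cM' r,\vphi^r)*M_m(\C),\qquad z(r\colon\cM)=z(r\colon\cM'),
\]
where $\cM':=\underset{\alpha_1,\dots,\alpha_n}{M_n(\C)}*\left[\underset{1-\beta}{\C}\oplus\underset{\beta}{(B,\phi)}\right]$. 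Since $\C\,1_{M_m}\hookrightarrow M_m(\C)$ is state preserving, $\cM'\subset\cM$ and hence $(\cM')^\vphi\subset\cM^\vphi$. The governing parameter of Proposition~\ref{prop:matrix1} for $\cM'$ is exactly $\delta$, so that proposition yields $\cM'\cong(T_{G'},\vphi_{G'})$ when $\delta\geq 1$ and $\cM'\cong(T_{G'},\vphi_{G'})\oplus M_n(\C)$ when $\delta<1$, where $G'=\langle H\cup H'\rangle$ (and $T_{G'}$ is read as an interpolated free group factor when $G'$ is trivial).

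If $\delta\geq 1$ then $\cM'$ is a factor, so $r\cM' r\cong T_{G'}$ by Lemma~\ref{lem:antman}(1), and Corollary~\ref{cor:ArakiB(H)} (or Proposition~\ref{prop:L(Z)B(H)} when $G'$ is trivial) gives $r\cM r\cong T_{G'}*M_m(\C)\cong T_{\langle G'\cup H''\rangle}=T_G$. As $\cM'$ and its centralizer are factors and $z(r\colon\cM)=z(r\colon\cM')=1$, the same central-support bookkeeping as in Proposition~\ref{prop:matrix1} (using $(\cM')^\vphi\subset\cM^\vphi$) forces $z(r\colon\cM^\vphi)=1$, and Lemma~\ref{lem:antman}(2) yields $\cM\cong T_G$. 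If $\delta<1$, then because $\delta>\beta$ the projection $r$ dominates the residual $M_n$-summand of $\cM'$ and also meets the $T_{G'}$-summand, so $z(r\colon\cM')=z(r\colon(\cM')^\vphi)=1$ and $r\cM' r\cong T_{G'}\oplus M_n(\C)$; thus $r\cM r\cong\left[T_{G'}\oplus M_n(\C)\right]*M_m(\C)$, and it remains only to identify this with $T_G$, after which the identical central-support argument closes the case.

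The main obstacle is therefore the auxiliary identity $\left[T_J\oplus M_k(\C)\right]*M_l(\C)\cong T_{\langle J\cup H_l\rangle}$ for $k,l\geq 2$, where $J$ already contains the weight-ratio group of $M_k$ (here $J=G'$, $M_k=M_n$, $M_l=M_m$, and $\langle J\cup H''\rangle=G$). I would prove it by iterating the same peeling: Lemma~\ref{lem:Dykema} strips $M_k$ and reduces a corner to $M_l*[\C\oplus T_J]$, which is governed by Proposition~\ref{prop:matrix1} and equals either $T_{\langle J\cup H_l\rangle}$ (in which case Corollary~\ref{cor:ArakiB(H)}, Lemma~\ref{lem:antman}, and the central-support bookkeeping finish as above) or $T_{\langle J\cup H_l\rangle}\oplus M_l(\C)$. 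In the latter case one recurses with the two matrix factors interchanged and $J$ replaced by $\langle J\cup H_l\rangle$, so the group stabilizes after the first step. The delicate point is termination, and here is the key observation that resolves it: in the residual case the normalized mass of the free Araki--Woods summand produced by Proposition~\ref{prop:matrix1} is its parameter $\sum_j(\text{residual mass})/(\text{matrix weights})=\mu\sum_j (b_j)^{-1}\geq \mu\cdot l^2\geq 4\mu$, where $\mu$ is the previous free Araki--Woods mass; since this mass is bounded by $1$ and grows by a factor at least $4$ at each step, the residual matrix summand must disappear after finitely many iterations, at which point the process closes exactly as in the case $\delta\geq 1$.
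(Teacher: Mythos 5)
Your proof is correct, but it takes a genuinely different route from the paper's. The paper first pulls a copy of $(T_{H},\vphi_{H})$ out of the free product by free absorption (Theorem~\ref{thm:induction}.(2), via hand-built modular inclusions of $\C\oplus\C$ obtained by splitting masses), and then works through a case tree: $H$ non-trivial with $\sum_i (1-\beta_1)/\alpha_i \geq 1$ or $<1$, and $H$ trivial with three further subcases; in the delicate residual case (its Case 1.b) the argument terminates in a \emph{single} step by proving the inequality $\beta_1 > \alpha_1 \geq \gamma_1$, which forces $\sum_j ((1-\beta)-\gamma_1)/\beta_j \geq 1$ and lets $M_m(\C)$ absorb the leftover $M_n(\C)$ summand through another modular inclusion. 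You instead peel $M_m(\C)$ off with Lemma~\ref{lem:Dykema} so that the corner is governed by Proposition~\ref{prop:matrix1} with parameter $\delta = \sum_i \beta/\alpha_i$, and you resolve the residual case by a self-similar recursion on $[T_J \oplus M_k(\C)] * M_l(\C)$ with the matrix sizes swapping, terminated by the AM--HM estimate $\sum_j b_j^{-1} \geq l^2 \geq 4$, which forces geometric growth of the normalized free Araki--Woods mass. This buys uniformity: you avoid both the trivial-$H$ case split (Proposition~\ref{prop:matrix1} already covers tracial inputs) and the ad hoc modular-inclusion constructions, at the price of a multi-step induction with central-support bookkeeping repeated at every level --- which you do track correctly: $z(r\colon \cM) = z(r\colon \cM')$ from Lemma~\ref{lem:Dykema}, monotonicity of central supports under the modular inclusion $(\cM')^{\vphi} \subset \cM^{\vphi}$, then Lemma~\ref{lem:antman}.(2); and the group indeed stabilizes at $G$ after one step since $H_k \subseteq J$ is preserved by the recursion. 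Two harmless slips: the per-step growth factor of the mass is $\mu' = (\gamma_N - \mu)/(1-\mu) \geq 3\mu/(1-\mu) > 3\mu$, i.e.\ at least $3$, not $4$ (still geometric, so termination stands); and when $G'$ is trivial the identification $L(\F_s) * M_m(\C) \cong T_{H''}$ is really the $\beta = 0$ case of Proposition~\ref{prop:matrix1} (or Corollary~\ref{cor:ArakiB(H)} after absorption) rather than Proposition~\ref{prop:L(Z)B(H)} verbatim --- a looseness the paper itself indulges in at the analogous points.
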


\begin{proof}

The case where $H$, $H'$ and $H''$ are all trivial is handled by \cite{MR1201693}, so we can assume that at least one of $H$, $H"$, or $H''$ is non-trivial. Let
	\[
		(\cM,\vphi):=\underset{\alpha_{1}, \cdots, \alpha_{n}}{\overset{e_{1,1}, \cdots, e_{n,n}}{M_{n}(\C)}} * \left[ \underset{\beta_{1}, \cdots, \beta_{m}}{M_{m}(\C)}\oplus \underset{\beta}{\overset{q}{(B, \phi)}}\right].
	\]\\  

\noindent
\underline{\textbf{Case 1:}} Assume $H$ is non-trivial. Denote
	\[
		(C, \psi) := \underset{\beta_{1}, \cdots, \beta_{m}}{M_{m}(\C)}\oplus \underset{\beta}{\overset{q}{(B, \phi)}}.
	\]

\underline{\textbf{Case 1.a:}} Assume $\sum_{i=1}^n \frac{1-\beta_1}{\alpha_i} \geq 1$. We claim there exists $\beta' \in [1/2, 1)$ satisfying $\sum_{i=1}^{n} \frac{1-\beta'}{\alpha_{i}} \geq 1$ and a modular inclusion
	\[
		\underset{\beta'}{\C} \oplus \underset{1-\beta'}{\C} \hookrightarrow (C, \psi).
	\]
Indeed, since $B$ is diffuse, for some $m'\geq 1$ we can split $q$ into orthogonal projections $q_1,\ldots, q_{m'}$ with masses $\psi(q_j)\leq \beta_m$ and then we simply proceed as in the proof of Proposition~\ref{prop:matrixabelian}. Consequently, Theorem~\ref{thm:induction}.(2) and Corollary~\ref{cor:ArakiB(H)} yield
	\[
		(\cM,\vphi) =\underset{\alpha_{1}, \cdots, \alpha_{n}}{\overset{e_{1,1}, \cdots, e_{n,n}}{M_{n}(\C)}} * (C, \psi) \cong (T_{H}, \vphi_{H}) * \underset{\alpha_{1}, \cdots, \alpha_{n}}{\overset{e_{1,1}, \cdots, e_{n,n}}{M_{n}(\C)}} * (C, \psi) \cong (T_{H}, \vphi_{H}) * (C, \psi)
	\]
We compute this latter free product by considering the following subalgebras of $\cM$
	\begin{align*}
 		(\cN_{1},\vphi) &:= (T_{H}, \vphi_{H}) * \left[\underset{1-\beta}{\C}\oplus \underset{\beta}{\overset{q}{\C}}\right]\\
		(\cN_{2},\vphi) &:= (T_{H}, \vphi_{H}) * \left[ \underset{1-\beta}{\C} \oplus \underset{\beta}{\overset{q}{(B, \phi)}}  \right].
	\end{align*}
So by Lemmas~\ref{lem:Dykema} and \ref{lem:antman} we have
	\[
		(q\cN_2q,\vphi^q) \cong (q\cN_1 q, \vphi^q) * (B,\phi) \cong (T_H,\vphi_H) * (B,\phi)\cong  (T_{\<H,H'\>}, \vphi_{\<H,H'\>}),
	\]
where we appeal to free absorption for the last isomorphism if $H'$ is trivial. Since $z(q\colon \cN_2^\vphi) = z(q\colon \cN_1^\vphi)=1$ by virtue of $\cN_1^\vphi\subset \cN_2^\vphi$, it follows from Lemma~\ref{lem:antman} that $(\cN_2,\vphi)\cong (T_{\<H,H'\>}, \vphi_{\<H,H'\>})$. Appealing to the same lemma and using Corollary~\ref{cor:ArakiB(H)} we have
	\begin{align*}
		( (1-q) \cM (1-q), \vphi^{1-q})&\cong ( (1-q) \cN_2 (1-q), \vphi^{(1-q)}) * \underset{\beta_{1}, \cdots, \beta_{m}}{M_{m}(\C)}\\
				&\cong (T_{\<H,H'\>}, \vphi_{\<H,H'\>}) * \underset{\beta_{1}, \cdots, \beta_{m}}{M_{m}(\C)}\\
				& \cong (T_G,\vphi_G).
	\end{align*}
Finally, $z( 1-q\colon \cM) = z( 1-q\colon \cN_2^\vphi)=1$ by virtue of $\cN_2^\vphi \subset \cM^\vphi$, and hence Lemma~\ref{lem:antman} yields the claimed isomorphism class for $(\cM,\vphi)$.

\underline{\textbf{Case 1.b:}} Assume $\sum_{i=1}^{n} \frac{1-\beta_{1}}{\alpha_{i}} < 1$. Then $\sum_{i=1}^n \frac{1- (1-\beta)}{\alpha_i}=\sum_{i=1}^{n} \frac{\beta}{\alpha_{i}} < 1$. Since $\sum_{i=}^n \frac{1}{\alpha_i} \geq 4$, it follows that $\beta<\frac{1}{4}$ and in particular $1-\beta \geq \frac{1}{2}$. Consider the following subalgebras of $\cM$:
	\begin{align*}
		 (\cM_{1},\vphi) &:= \underset{\alpha_{1}, \cdots, \alpha_{n}}{\overset{e_{1,1}, \cdots, e_{n,n}}{M_{n}(\C)}}  * \left[\underset{1-\beta}{\C} \oplus \underset{\beta}{\overset{q}{\C}}  \right]\\
		(\cM_{2},\vphi) &:= \underset{\alpha_{1}, \cdots, \alpha_{n}}{\overset{e_{1,1}, \cdots, e_{n,n}}{M_{n}(\C)}}  * \left[\underset{1-\beta}{\C} \oplus \underset{\beta}{\overset{q}{(B, \phi)}} \right].
\end{align*}
From Theorem~\ref{thm:induction}.(1) we have
	\[
		(\cM_1,\vphi)\cong (T_H,\vphi_H) \oplus \underset{\gamma_{1}, \cdots, \gamma_{n}}{M_{n}(\C)},
	\]
where $\gamma_{i} = \alpha_{i}(1 - \sum_{j=1}^n \frac{1 -(1-\beta)}{\alpha_j})$ and the copy of $M_{n}(\C)$ lies under $1-q$. Using free absorption and tracking the central support of $q$, it follows from Lemmas~\ref{lem:Dykema} and \ref{lem:antman} that
	\[
		(\cM_{2},\vphi) \cong (T_{\<H,H'\>}, \vphi_{\<H,H'\>}) \oplus \underset{\gamma_{1}, \cdots, \gamma_{n}}{M_{n}(\C)},
	\]
Applying Lemmas~\ref{lem:Dykema} and \ref{lem:antman} again, we see that
	\[
		((1-q)\cM(1-q),\vphi^{1-q}) \cong  \left[\underset{1 - \beta - (\gamma_{1} + \cdots + \gamma_{n})}{(T_{\<H,H'\>}, \vphi_{\<H,H'\>})} \oplus \underset{\gamma_{1}, \cdots, \gamma_{n}}{M_{n}(\C)}\right] * \underset{\beta_{1}, \cdots, \beta_{m}}{M_{m}(\C)}.
	\]
Now, the assumption $\sum_{i=1}^n \frac{1-\beta_1}{\alpha_i} <1$ implies that $\beta_{1} > \alpha_{1}\geq \gamma_1$. Indeed, the second inequality is immediate from the definition of $\gamma_1$, and to see the first simply observe  
	\[
		1> \sum_{i=1}^n \frac{1-\beta_1}{\alpha_i} \geq \frac{1-\beta_1}{\alpha_2} \geq \frac{1-\beta_1}{1-\alpha_1}.
	\]
Using the same reasoning, it must therefore be the case that $\sum_{j=1}^m \frac{(1-\beta)-\gamma_1}{\beta_j} \geq 1$. Proceeding as in Case 1.a, we can find $\beta'\in [\frac12(1-\beta), 1-\beta)$ satisfying $\sum_{j=1}^m \frac{(1-\beta)-\beta'}{\beta_j} \geq 1$ and a modular inclusion
	\[
		\underset{\beta'}{\C} \oplus \underset{(1 - \beta) - \beta'}{\C} \hookrightarrow \underset{1 - \beta - (\gamma_{1} + \cdots + \gamma_{n})}{(T_{\<H,H'\>}, \vphi_{\<H,H'\>})} \oplus \underset{\gamma_{1}, \cdots, \gamma_{n}}{M_{n}(\C)},
	\]
and we can show that $((1-q)\cM(1-q),\vphi^{1-q}) \cong (T_{G}, \vphi_{G})$.  Noting that $z(1-q\colon \cM_{1}^{\vphi}) = 1$ and $\cM_1^{\vphi}\subset \cM^\vphi$, we may apply Lemma~\ref{lem:antman} to obtain the desired isomorphism result.\\

\noindent
\underline{\textbf{Case 2:}} Assume $H$ is trivial. Then $\alpha_{1} = \cdots = \alpha_{n} = 1/n$ and $\sum_{i=1}^n \frac{1}{\alpha_i} = n^2$. Clearly we never have $1-\beta, \beta <\frac{1}{n^2}$, so we consider three cases:

\underline{\textbf{Case 2.a:}} Assume $1-\beta,\beta \geq \frac{1}{n^2}$. Let $\cM_1$ and $\cM_2$ be as in Case 1.b. By \cite{MR1201693}, $(\cM_1,\phi)$ is an interpolated free group factor equipped with its trace. In particular, $z(q\colon \cM_1^\vphi)=z(1-q\colon \cM_1^\vphi)=1$. If $H'$ is trivial, then by Lemma~\ref{lem:Dykema} $(\cM_2,\vphi)$ is also an interpolated free group factor equipped with its trace. Otherwise, using Lemma~\ref{lem:Dykema}, free absorption, and Lemma~\ref{lem:antman} we see that $(\cM_2,\vphi)\cong (T_{H'},\vphi_{H'})$. The same approach applied to $(1-q)\cM (1-q)$ reveals the desired isomorphism class for $(\cM,\vphi)$.

\underline{\textbf{Case 2.b:}} Assume that $1-\beta < \frac{1}{n^{2}} \leq \beta$. Let $\cM_1$ and $\cM_2$ be as in Case 1.b. By \cite{MR1201693},
	\[
		(\cM_1,\vphi) \cong (L(\bbF_r),\tau)\oplus \underset{\frac{1}{n} - n(1-\beta), \cdots, \frac{1}{n} - n(1-\beta)}{M_{n}(\C)} ,
	\]
for some $r>1$ and the copy of $M_n(\bbC)$ lies under $q$. Note that $z(q\colon \cM_{1}^\vphi) = 1$, and from Lemma~\ref{lem:Dykema} (and possibly free absorption)
	\[
		(q\cM_{2} q,\vphi^q) \cong \begin{cases} (T_{H'}, \vphi_{H'}) &\text{if $H'$ is non-trivial}\\
								(L(\F_{r'}),\tau) &\text{otherwise, for some }r'>1.
						\end{cases}
	\]
It follows that 
	\[
		\left( (1-q)\cM_{2}(1-q),\vphi^{1-q}\right) \cong \begin{cases}(T_{H'}, \vphi_{H'}) &\text{if $H'$ is non-trivial}\\
								(L(\F_{r''}),\tau) &\text{otherwise, for some }r''>1,
						\end{cases}
	\]
where in the former case we have applied Lemma~\ref{lem:antman} twice, and in the latter case we simply note that $q$ has full central support in $\cM_1$ and hence in $\cM_2$.  Applying Lemma~\ref{lem:Dykema} gives $((1-q)\cM(1-q),\vphi^{1-q}) \cong (T_{G}, \vphi_{G})$ (recall $G$ is assumed to be non-trivial), and applying Lemma~\ref{lem:antman} once more gives the desired isomorphism.

\underline{\textbf{Case 2.c:}} Assume that $\beta < \frac{1}{n^{2}}\leq 1-\beta$. Let $\cM_1$ be as in Case 1.b, but now consider
	\[
		(\cM_3,\vphi):= \underset{\alpha_{1}, \cdots, \alpha_{n}}{\overset{e_{1,1}, \cdots, e_{n,n}}{M_{n}(\C)}}  * \left[ \underset{\beta_{1}, \cdots, \beta_{m}}{M_{m}(\C)} \oplus \underset{\beta}{\overset{q}{\C}} \right].
	\]
By \cite{MR1201693}
	\[
		(\cM_1,\vphi)\cong 	(L(\bbF_r),\tau) \oplus \underset{\frac{1}{n} - n\beta, \cdots, \frac{1}{n} - n\beta}{M_{n}(\C)},
	\]
for some $r> 1$ and the copy of $M_{n}(\C)$ is under $1 - q$. Thus $z(1-q\colon \cM_1^\vphi)=1$.  Since all weights on the $M_{n}(\C)$ in $\cM_{1}$ are identical, it follows that there exists $\beta'\in [\frac12(1-\beta), 1-\beta)$ satisfying $\sum_{k=1}^{m} \frac{(1 - \beta) - \beta'}{\beta_{k}} \geq 1$ and a modular inclusion
	\[
		\underset{\beta'}{\C} \oplus \underset{(1 - \beta) - \beta'}{\C} \hookrightarrow ( (1-q)\cM_1(1-q), \vphi^{1-q}).
	\]

By Lemma~\ref{lem:Dykema} we have
	\[
		((1-q)\cM_3(1-q),\vphi^{1-q}) \cong ((1-q)\cM_1(1-q),\phi^{1-q}) * \underset{\beta_{1}, \cdots, \beta_{m}}{M_{m}(\C)}
	\]
It follows that
	\[
		((1-q)\cM_3(1-q),\vphi^{1-q})  \cong \begin{cases} (T_{H''}, \vphi_{H''}) &\text{if $H''$ is non-trivial}\\
										(L(\F_{r'}),\tau) &\text{otherwise, for some $r'>1$}
								\end{cases}
	\]
Indeed, the former case follows by Theorem~\ref{thm:induction}.(2), Corollary~\ref{cor:ArakiB(H)}, and free absorption, while the latter case follows from \cite{MR1201693}. From here we proceed exactly as in Case 2.b, and obtain the desired isomorphism class for $(\cM,\vphi)$.
\end{proof}

In a similar manner, one can prove the following:

\begin{prop}\label{prop:matrix3}
Let $1>\alpha_{1} \geq \cdots \geq \alpha_{n}>0$ and $H$ be as in the statement of Proposition~\ref{prop:matrixabelian}.  Let $1 >  \beta_{1} \geq \cdots \geq \beta_{k} > 0,$ $1>\beta_{i,1} \geq \cdots \geq \beta_{i,m_{i}} > 0$, $m_{i} \geq 2$ for all $i$, and $\beta > 0$ satisfy $\beta+\sum_{i=1}^k \beta_i + \sum_{i=1}^\ell \sum_{j=1}^{m_{i}} \beta_{j, m_{j}}=1$.  Let $(B, \phi)$ be either $(L(\F_{t}),\tau)$ for $t \in [1, \infty]$ or $(T_{H'}, \vphi_{H'})$ for some non-trivial $H'$.  Let $H''$ be the group generated by $\{ \beta_{i, j}/\beta_{i, k} : 1 \leq i \leq \ell, \, 1 \leq j, k, \leq m_{i}\}$. Set $\gamma = \sum_{i=1}^{n} \frac{1 - \beta_{1}}{\alpha_{i}}$ and let $G$ be the group generated by $H$, $H'$, and $H''$ (where we declare $H'$ to be trivial if $B = L(\F_{t})$).  We have
	\[
		\underset{\alpha_{1}, \cdots, \alpha_{n}}{\overset{e_{1,1}, \cdots, e_{n,n}}{M_{n}(\C)}} * \left[\bigoplus_{i=1}^{k} \underset{\beta{i}}{\overset{q_{i}}{\C}} \oplus \bigoplus_{i=1}^{\ell}  \underset{\beta_{i,1}, \cdots, \beta_{i,m_{i}}}{M_{m_{i}}(\C)}\oplus \underset{\beta}{\overset{q}{(B, \phi)}}\right] \cong \begin{cases}
			(A, \varphi) & \text{if $\gamma \geq 1$}\\
			(A, \varphi) \oplus \underset{\overline{\gamma}_{1}, \cdots, \overline{\gamma}_{n}}{\overset{\overline{e}_{1,1}, \cdots, \overline{e}_{n,n}}{M_{n}(\C)}}  &\text{if $\gamma < 1$}
		\end{cases}
	\]
where $(A, \varphi)$ is an interpolated free group factor if $G$ is trivial, and is $(T_{G}, \vphi_{G})$ otherwise.  Here, $\overline{\gamma_{i}} = \alpha_{i}(1 - \gamma)$, and $\overline{e}_{i,i} \leq q_{1} \wedge e_{i,i}$.
\end{prop}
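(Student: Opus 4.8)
The plan is to mirror the proof of Proposition~\ref{prop:matrix2}, organizing the argument around the dichotomy $\gamma \geq 1$ versus $\gamma < 1$ and around the triviality of the various groups. If $G$ is trivial, then $H$, $H'$, $H''$ are all trivial and $B$ is tracial, so the whole free product is a free product of tracial finite-dimensional algebras and interpolated free group factors, and the conclusion is Dykema's \cite{MR1201693}. I would therefore assume $G$ is non-trivial and abbreviate the second free factor as $(C,\psi)$, with $q$ denoting the support projection of the $(B,\phi)$ summand.

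For the case $\gamma \geq 1$ (and $H$ non-trivial), the first step is to produce a modular inclusion $\underset{\beta'}{\C}\oplus\underset{1-\beta'}{\C}\hookrightarrow (C,\psi)$ with $\beta'\in[\frac{1}{2},\frac{3}{4}]$ and $\sum_{i=1}^n\frac{1-\beta'}{\alpha_i}\geq 1$; since $\sum_{i=1}^n\frac{1}{\alpha_i}\geq n^2\geq 4$ the second condition only demands $\beta'\leq\frac{3}{4}$, and such a centralizer projection is assembled from $q$ (subdivided using the diffuseness of $B$), the diagonals of the matrix summands, and the one-dimensional summands exactly as in Proposition~\ref{prop:matrixabelian}, the hypothesis $\gamma\geq 1$ guaranteeing enough available centralizer mass. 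Theorem~\ref{thm:induction}.(2) then yields $(\cM,\vphi)\cong(\cM,\vphi)*(T_H,\vphi_H)$; using associativity of the free product together with $M_n(\C)*(T_H,\vphi_H)\cong(T_H,\vphi_H)$ (Corollary~\ref{cor:ArakiB(H)}), this reduces the problem to showing $(T_H,\vphi_H)*(C,\psi)\cong(T_G,\vphi_G)$. I would establish the latter by absorbing the summands of $C$ one at a time through the compression technique of Lemmas~\ref{lem:Dykema} and \ref{lem:antman}: the one-dimensional summands together form an abelian (hence tracial) algebra and are removed by free absorption, the matrix summands are removed by Corollary~\ref{cor:ArakiB(H)} (contributing $H''$), and $B$ is removed by Corollary~\ref{cor:ArakiB(H)} or free absorption (contributing $H'$), so that the point-spectrum groups aggregate to $G=\langle H,H',H''\rangle$. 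When $H$ is trivial one instead follows the subcase analysis of Case~2 of Proposition~\ref{prop:matrix2}, obtaining an interpolated free group factor precisely when $G$ is trivial.

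For the case $\gamma < 1$, a matrix summand survives under the largest one-dimensional projection $q_1$. Here I would first apply Theorem~\ref{thm:induction}.(1) to the subalgebra $(\cN,\vphi):=M_n(\C)*[\underset{\beta_1}{\overset{q_1}{\C}}\oplus\underset{1-\beta_1}{\C}]\subset\cM$, obtaining $\underset{\gamma}{(T_H,\vphi_H)}\oplus\underset{\overline{\gamma}_1,\ldots,\overline{\gamma}_n}{M_n(\C)}$ with $\overline{\gamma}_i=\alpha_i(1-\gamma)$ and $\overline{e}_{i,i}\leq e_{i,i}\wedge q_1$. Since $\sum_i\overline{e}_{i,i}\leq q_1$, compressing by $1-q_1$ lands inside the diffuse summand, so $(1-q_1)\cN(1-q_1)\cong(T_H,\vphi_H)$ by Lemma~\ref{lem:antman}; applying Lemma~\ref{lem:Dykema} then gives $(1-q_1)\cM(1-q_1)\cong(T_H,\vphi_H)*[\,\text{the part of }C\text{ under }1-q_1\,]$, and absorbing these remaining summands as in the previous paragraph identifies the diffuse part as $(A,\vphi)=(T_G,\vphi_G)$. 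Tracking the central support via $z(1-q_1\colon\cN^\vphi)=z(1-q_1\colon\cM^\vphi)$ and Lemma~\ref{lem:antman} assembles the claimed decomposition $(A,\vphi)\oplus M_n(\C)$, the surviving matrix sitting under $q_1$ with $\overline{e}_{i,i}\leq q_1\wedge e_{i,i}$.

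The main obstacle is the iterated absorption: because a direct sum is not a free product of its summands, each summand of $C$ must be peeled off by passing to a corner, and at every such step one must verify that the compressing projection has full central support both in the ambient algebra and in the centralizer so that Lemma~\ref{lem:antman} applies. The cleanest way to control this — as in Proposition~\ref{prop:matrix2} — is to introduce intermediate subalgebras obtained by replacing all but one summand of $C$ by a single projection, compute the relevant compressions there first, and only then reassemble; careful bookkeeping is also needed to confirm that the accumulated point-spectrum group is exactly $G=\langle H,H',H''\rangle$ across all the trivial and non-trivial subcases.
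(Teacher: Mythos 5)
Your overall architecture is the one the paper intends (the paper gives no separate proof, remarking only that Proposition~\ref{prop:matrix3} follows ``in a similar manner'' to Propositions~\ref{prop:matrix}--\ref{prop:matrix2}, i.e.\ by the same peeling scheme with Lemmas~\ref{lem:Dykema} and~\ref{lem:antman}, Theorem~\ref{thm:induction}, Corollary~\ref{cor:ArakiB(H)}, and free absorption), but your Case $\gamma\geq 1$ has a genuine gap: the modular inclusion with $\beta'\in[\frac12,\frac34]$ that you assert ``is assembled\dots exactly as in Proposition~\ref{prop:matrixabelian}'' need not exist, and $\gamma\geq 1$ does \emph{not} guarantee it. The hypothesis $\gamma\geq 1$ constrains only the largest \emph{one-dimensional} weight $\beta_1$, whereas a \emph{matrix} summand of the second factor may carry a diagonal atom of mass greater than $\frac34$; such an atom is minimal in the centralizer, so it can neither be split nor avoided. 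Concretely, take $n=2$, $(\alpha_1,\alpha_2)=(0.7,0.3)$ and
	\[
		(C,\psi)=\underset{0.05}{\C}\oplus \underset{0.9,\,0.03}{M_2(\C)}\oplus \underset{0.02}{(B,\phi)}.
	\]
Then $\gamma=0.95\left(\tfrac{1}{0.7}+\tfrac{1}{0.3}\right)\approx 4.5\geq 1$, yet every centralizer projection of $C$ has mass at most $0.1$ or at least $0.9$, and for $\beta'\geq 0.9$ one has $\sum_i\frac{1-\beta'}{\alpha_i}\leq 0.1\cdot 4.77<1$. So Theorem~\ref{thm:induction}.(2) cannot be applied with the left factor $M_n(\C)$ in this configuration at all --- both your $[\frac12,\frac34]$ window and the weaker requirement $\sum_i \frac{1-\beta'}{\alpha_i}\geq 1$ fail --- and this happens with $H$ non-trivial, so your fallback remark (which only covers ``$H$ trivial'' via Case~2 of Proposition~\ref{prop:matrix2}) does not rescue it.

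The missing ingredient is the analogue of Case~1.b of Proposition~\ref{prop:matrix2}, in which the roles of the two free factors are swapped. One first compresses by the central support $q_M$ of the offending matrix summand: applying Theorem~\ref{thm:induction}.(1) to $\cN_0:=M_n(\C)*[\,\C_{q_M}\oplus\C_{1-q_M}\,]$ (here the relevant parameter is small precisely because $1-\psi(q_M)$ is small, so a copy of $M_n(\C)$ survives under $q_M$) and then using Lemma~\ref{lem:Dykema} one obtains
	\[
		q_M\cM q_M\cong \left[\,(T_H,\vphi_H)\oplus M_n(\C)\,\right]*M_{m_i}(\C)
	\]
with appropriate weights. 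Now Theorem~\ref{thm:induction}.(2) is applied with $M_{m_i}(\C)$ playing the role of the matrix factor and the two-point modular inclusion assembled inside the direct-sum factor, whose diffuse $(T_H,\vphi_H)$-part lets one tune $\beta'$ freely; the needed inequality $\sum_j \frac{(1-\beta)-\beta'}{\beta_{i,j}}\geq 1$ follows exactly as in the paper's ``$\beta_1>\alpha_1\geq \gamma_1$'' argument, and an atom of mass $>\frac34$ forces unequal weights in that summand, so $H''$ is automatically non-trivial there. (A smaller point in the same spirit: in your $\gamma<1$ case you invoke Theorem~\ref{thm:induction}.(1), which requires a strict inequality among the $\alpha_i$; when $H$ is trivial you must substitute \cite{MR1201693}/Proposition~\ref{prop:matrixabelian} there as well, not only in the $\gamma\geq 1$ discussion.) With this subcase added, the rest of your peeling and central-support bookkeeping goes through as in the paper.
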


\begin{thm}\label{thm:finitedim}
Let $(A, \phi)$ and $(B, \psi)$ be finite-dimensional von Neumann algebras (both with dimension at least two) equipped with faithful states $\phi$ and $\psi$.  Assume that at least one of $\phi$ or $\psi$ is not a trace, and that up to unitary conjugation,
	\[
		(A, \phi) = \bigoplus_{i = 1}^{n} \underset{\alpha_{i,1},\cdots, \alpha_{i, k_{i}}}{\overset{p_{i,1}, \cdots, p_{i, k_{i}}}{M_{k_{i}}(\C)}}  \qquad\text{and}\qquad (B, \psi) = \bigoplus_{j = 1}^{m} \underset{\beta_{j,1},\cdots, \beta_{j, \ell_{j}}}{\overset{q_{j,1}, \cdots, q_{j, \ell_{j}}}{M_{\ell_{j}}(\C)}}.
	\]
Let $H$ be the group generated by the point spectra of $\Delta_{\phi}$ and $\Delta_{\psi}$.  Then 
$$
(A, \phi) * (B, \psi) = (T_{H}, \vphi_{H}) \oplus C
$$
where $C$ is finite-dimensional (possibly zero).  The central summands of $C$ are determined exactly as in \cite{Dyk97} as follows:  $C$ can only be nonzero if either $k_{i} = 1$ for some $i$ or  $\ell_{j} = 1$ for some $j$.  If $k_{i} = 1$ for some $i$, then a nonzero central summand appears appears if and only if there is an index $j$, satisfying $\gamma := \sum_{l = 1}^{\ell_{j}} \frac{1 - \alpha_{i, 1}}{\beta_{j, l}} < 1$.  This central summand is 
	\[
		\underset{\gamma_1, \cdots, \gamma_{\ell_{j}}}{\overset{\overline{q_{1}}, \cdots \overline{q_{\ell_{j}}}}{M_{\ell_{j}}(\C)}}
	\]
where $\gamma_{k} = \beta_{j, k}(1 - \gamma)$ and $\overline{q_{k}} \leq p_{i, 1} \wedge q_{j, k}$.  An analogous remark holds for $\ell_{j} = 1$.
\end{thm}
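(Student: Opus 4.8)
The plan is to deduce the theorem from the ``building block'' Propositions \ref{prop:matrix}, \ref{prop:matrixabelian}, \ref{prop:matrix1}, \ref{prop:matrix2}, and \ref{prop:matrix3} by an induction on the number of summands of $A$ and $B$, with the two reassembly tools being Lemma \ref{lem:Dykema} (to pass to a corner and peel off a free factor) and Lemma \ref{lem:antman} (to pass from a corner back to the whole algebra). The guiding principle is that every diffuse piece produced along the way is a (possibly tracial) free Araki--Woods factor, so that free absorption, together with the fusion rule $(T_H,\vphi_H)*(T_{H'},\vphi_{H'})\cong(T_{\<H\cup H'\>},\vphi_{\<H\cup H'\>})$ and the absorption $(T_H,\vphi_H)*(L(\F_\infty),\tau)\cong(T_H,\vphi_H)$, merges all of them into a single $(T_H,\vphi_H)$. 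Here $H$ is the group generated by the point spectra of $\Delta_\phi$ and $\Delta_\psi$, i.e.\ by all intra-block ratios $\alpha_{i,l}/\alpha_{i,l'}$ and $\beta_{j,s}/\beta_{j,s'}$, and the non-tracial hypothesis guarantees $H$ is non-trivial.

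First I would record the base of the induction. When $A=M_{k_1}(\C)$ is a single matrix block, the free product $M_{k_1}(\C)*B$ is exactly the situation governed by the building blocks: Proposition \ref{prop:matrix} when $B$ is a single matrix block, Proposition \ref{prop:matrixabelian} when $B$ is abelian, and Propositions \ref{prop:matrix1}, \ref{prop:matrix2}, \ref{prop:matrix3} in general (the extra diffuse summand allowed there being supplied, once the first block of $B$ has been processed into a diffuse factor, by Proposition \ref{prop:L(Z)B(H)} and Corollary \ref{cor:ArakiB(H)}). This single-block case is itself handled by a parallel induction on the number of summands of $B$. These propositions already output the desired form $(T_G,\vphi_G)\oplus(\text{finite-dimensional})$ and, crucially, pin down the finite-dimensional remainder.

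For the inductive step I would peel the matrix summands of $A$ off one at a time. The mechanism is Lemma \ref{lem:Dykema}: after passing to a suitable corner $p\cM p$, with $p\in\cM^\vphi$ dictated by the block structure of $A$, that lemma exhibits $p\cM p$ as a free product of a single block of $A$ with a compression of an algebra of strictly smaller complexity, to which the inductive hypothesis or a building block applies. The central-support clause of Lemma \ref{lem:Dykema}, together with $\cN^\vphi\subset\cM^\vphi$, supplies the equality $z(p\colon\cM^\vphi)=z(p\colon\cM)$ needed to invoke Lemma \ref{lem:antman} and transport the corner computation back to $\cM$. The combinatorial output is then read off exactly as in \cite{Dyk97}: every diffuse contribution (whether an interpolated free group factor or a $T_G$) is absorbed into the global $(T_H,\vphi_H)$, while a nonzero central summand of $C$ survives only from the interaction of a one-dimensional block of one algebra with a (possibly one-dimensional) matrix block of the other. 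In that case the local picture is precisely Theorem \ref{thm:induction}.(1) with $\gamma:=\sum_{l}\frac{1-\alpha_{i,1}}{\beta_{j,l}}<1$, yielding the block $M_{\ell_j}(\C)$ with weights $\gamma_k=\beta_{j,k}(1-\gamma)$ and projections $\overline{q_k}\le p_{i,1}\wedge q_{j,k}$, and symmetrically for $\ell_j=1$.

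The main obstacle I anticipate is the bookkeeping rather than any single hard estimate. One must organize the induction so that at each stage the accumulated diffuse part is genuinely a $T_G$ for the correct partial group $G$, so that free absorption and the fusion rule apply and ultimately produce the full group $H$ rather than a proper subgroup, and so that the finite-dimensional remainders arising from distinct scalar-versus-matrix interactions are produced under the correct, mutually orthogonal projections $\overline{q_k}\le p_{i,1}\wedge q_{j,k}$ without interfering with one another. Verifying the central-support equalities $z(\cdot\colon\cM^\vphi)=z(\cdot\colon\cM)$ at every application of Lemma \ref{lem:antman}, and checking the mass inequalities ($\gamma\ge1$ versus $\gamma<1$) that decide whether each remainder block appears, is where the real care is needed.
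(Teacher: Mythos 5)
Your proposal is correct and takes essentially the same approach as the paper: the paper's proof likewise proceeds by induction on the central summands, starting from $(Z(A),\phi)*(Z(B),\psi)$ (computed via \cite{MR1201693}, Proposition~\ref{prop:matrix}, or Proposition~\ref{prop:matrixabelian}) and inflating one matrix block at a time along a chain of intermediate algebras $\cM_{i,j}$, using Lemma~\ref{lem:Dykema} to realize each corner as a free product with the new block, Propositions~\ref{prop:matrix1}, \ref{prop:matrix2}, and \ref{prop:matrix3} to evaluate it, and the central-support tracking together with Lemma~\ref{lem:antman} to return to the full algebra. Your ``peeling'' of blocks is this same chain read in the opposite direction, and your account of free absorption, the fusion rule, the equalities $z(\,\cdot\,\colon\cM^\vphi)=z(\,\cdot\,\colon\cM)$, and the $\gamma<1$ remainders matches the paper's bookkeeping.
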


\begin{proof}
Let $Z(A)$ and $Z(B)$ denote the centers of $A$ and $B$ respectively.  If $Z(A)$ and $Z(B)$ are both one-dimensional, then this result is simply Proposition~\ref{prop:matrix}.  If only one of the two is one-dimensional, say $Z(A)$, then first compute $(A,\phi) * (Z(B),\psi)$ using Proposition~\ref{prop:matrixabelian}.  Let $B$ have minimal central projections $q_{1}, \cdots, q_{m}$.  For $j=0,1,\ldots, m$, set
	\[
		(\cM_j,\varphi):= (A,\phi) * \left[\bigoplus_{k=1}^j \underset{\beta_{k,1},\cdots, \beta_{k, \ell_{k}}}{\overset{q_{k,1}, \cdots, q_{k, \ell_{k}}}{M_{\ell_{k}}(\C)}}\oplus \overset{q_{j+1}}{\bbC}\oplus \cdots \oplus \overset{q_m}{\bbC}\right],
	\]
Thus we have a chain of inclusions:
	\[
		A*Z(B)=\cM_0\subset \cM_1\subset \cdots \subset \cM_m = A*B .
	\]
At each step, we use Lemmas~\ref{lem:Dykema} and \ref{lem:antman} and either Proposition~\ref{prop:matrix1} or \ref{prop:matrix2} to compute $(q_{j}\cM_{j}q_{j},\vphi^{q_j})$.  Lemma~\ref{lem:Dykema} keeps track of the central support of $q_{j}$, which by induction will be in $(A * B)^{\vphi}$,  and Lemma~\ref{lem:antman} (or the amplification formula) determines each $(\cM_{j},\vphi)$, including $(A,\phi)*(B,\psi)$.

If $Z(A)$ and $Z(B)$ both have dimension at least two, let their minimal projections be $p_{1}, \cdots, p_{n}$, and $q_{1}, \cdots, q_{m}$, respectively. We first compute $(Z(A),\phi)*(Z(B),\psi)$ using  \cite{MR1201693}. Then for $i=0,1,\ldots, n$ and $j=0,1,\ldots, m$, set
	\[
		(\cM_{i,j},\vphi) := \left[\bigoplus_{\ell = 1}^{i} \underset{\alpha_{\ell,1},\cdots, \alpha_{\ell, k_{\ell}}}{\overset{p_{\ell,1}, \cdots, p_{\ell, k_{\ell}}}{M_{k_{i}}(\C)}} \oplus \overset{p_{i+1}}{\bbC}\oplus\cdots\oplus \overset{p_n}{\bbC}\right] * \left[\bigoplus_{k=1}^j \underset{\beta_{k,1},\cdots, \beta_{k, \ell_{k}}}{\overset{q_{k,1}, \cdots, q_{k, \ell_{k}}}{M_{\ell_{k}}(\C)}}\oplus \overset{q_{j+1}}{\bbC}\oplus \cdots \oplus \overset{q_m}{\bbC}\right],
	\]
Here we have many chains of inclusions to potentially examine, \emph{e.g.}
	\[
		Z(A)*Z(B) = \cM_{0,0} \subset \cM_{1,0}\subset \cdots \subset \cM_{n,0}\subset \cM_{n,1}\subset \cdots \subset \cM_{n,m}=A*B.
	\]
We may consider any chain which increments exactly one index by one at each step. We again use Lemmas~\ref{lem:Dykema} and \ref{lem:antman} and either Proposition~\ref{prop:matrix1}, \ref{prop:matrix2}, or \ref{prop:matrix3} to compute $(q_{j}\cM_{i, j}q_{j},\vphi^{q_j})$ and/or $(p_{i}\cM_{i, j}p_{i},\vphi^{p_i})$ inductively.  Lemma~\ref{lem:Dykema} keeps track of the central support of $q_{i}$, which by induction will be in $(A * B)^{\vphi}$, Lemma~\ref{lem:antman} (or the amplification formula), determines each $(\cM_{i, j},\vphi)$, including hence $(A,\phi)*(B,\psi)$.
\end{proof}

%%%%%%%%%%%%%%%%%%%%%%%%%%%%%%%%%%

\subsection{Computing $\cB(\cH) *[\bbC\oplus \bbC]$}

We can fairly easily extend the above result to allow finite direct sums of all separable type $\mathrm{I}$ factors, but to do so we must first compute $\cB(\cH) * [\bbC\oplus \bbC]$ when $\cB(\cH)$ is equipped with an arbitrary faithful normal state $\phi$. We require slightly more general modular inclusions than we have considered so far.

\begin{prop}
Let $i_{1}, i_{2}: \underset{\alpha_{1}, \cdots, \alpha_{n}}{\overset{e_{11}, \cdots, e_{nn}}{M_{n}(\C)}} \oplus \underset{\gamma}{\overset{p}{\C}} \rightarrow (A, \phi)$ be two modular inclusions, and assume that $A^{\phi}$ is a factor.  Then there exists a unitary $u \in A^{\phi}$ conjugating $i_{1}$ to $i_{2}$
\end{prop}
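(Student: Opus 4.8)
The plan is to build the conjugating unitary in two stages: first match the minimal projections of the two inclusions inside the centralizer, then match the off-diagonal matrix units. Write $M := M_n(\C)\oplus \C p$, with matrix units $\{e_{ij}\}$ for the $M_n(\C)$ summand, minimal central projection $p$, and weights $\phi\circ i_k(e_{ii})=\alpha_i$, $\phi\circ i_k(p)=\gamma$. Since each $i_k$ is a modular inclusion, $i_k(M)$ is globally $\sigma^\phi$-invariant, so by Takesaki's theorem there is a $\phi$-preserving conditional expectation onto it and $\sigma^\phi$ restricts to the modular group of $\phi\circ i_k$; as $i_k$ is state-preserving, this is the explicit modular flow on $M$. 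Consequently $i_k(e_{ii})$ and $i_k(p)$ lie in $A^\phi$, while each off-diagonal $i_k(e_{ij})$ ($i\neq j$) is an eigenoperator of $\Delta_\phi$ with eigenvalue $\alpha_i/\alpha_j$. Because a $*$-homomorphism out of $M$ is determined by the images of $p$, the diagonal $\{e_{ii}\}$, and the first row $\{e_{1j}\}_{j\geq 2}$ (using $e_{ij}=e_{1i}^*e_{1j}$), it suffices to produce a unitary $u\in A^\phi$ conjugating $i_1$ to $i_2$ on these generators.

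For the first stage, recall that $\phi$ restricts to a faithful normal trace on $A^\phi$, so the hypothesis that $A^\phi$ is a factor forces it to be finite (type $\mathrm{I}_n$ or $\mathrm{II}_1$) and supplies the usual comparison theory for projections. The families $\{i_1(e_{ii})\}_i\cup\{i_1(p)\}$ and $\{i_2(e_{ii})\}_i\cup\{i_2(p)\}$ are orthogonal families of projections in $A^\phi$ summing to $1$ (each $i_k$ is unital, being state-preserving with $\phi$ faithful) whose corresponding traces agree. Choosing partial isometries in $A^\phi$ implementing these Murray--von Neumann equivalences and summing them yields a unitary $u_0\in A^\phi$ with $u_0 i_1(e_{ii})u_0^*=i_2(e_{ii})$ and $u_0 i_1(p)u_0^*=i_2(p)$. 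Replacing $i_1$ by $\Ad(u_0)\circ i_1$---which is again a modular inclusion with the same eigenvalue data, since $u_0\in A^\phi$ commutes with $\sigma^\phi$---we may assume $i_1$ and $i_2$ agree on every $e_{ii}$ and on $p$.

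For the second stage, with the diagonal now common, set $w_1:=e_{11}$, $w_p:=p$, and for $j\geq 2$
\[
	w_j := i_2(e_{1j})^*\, i_1(e_{1j}).
\]
The source and range projections of $i_1(e_{1j})$ and $i_2(e_{1j})$ are the common projections $e_{jj}$ and $e_{11}$, so each $w_j$ is a unitary of the corner $e_{jj}Ae_{jj}$; moreover $i_2(e_{1j})^*$ has modular eigenvalue $\alpha_j/\alpha_1$ and $i_1(e_{1j})$ has eigenvalue $\alpha_1/\alpha_j$, so $w_j\in e_{jj}A^\phi e_{jj}$. Then $u_1:=e_{11}+\sum_{j\geq 2}w_j+p$ is a unitary in $A^\phi$ commuting with the diagonal and with $p$, and a direct computation using $i_1(e_{1j})i_1(e_{1j})^*=e_{11}$ gives $u_1 i_1(e_{1j})u_1^*=i_2(e_{1j})$ for all $j$. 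Since $u_1$ fixes each $e_{ii}$ and $p$ and conjugates the first row correctly, it conjugates $i_1$ to $i_2$ on all of $M$, and $u:=u_1u_0\in A^\phi$ is the required unitary. The one point demanding care---and the crux of the argument---is the eigenvalue bookkeeping in the second stage: it is precisely the cancellation $\tfrac{\alpha_j}{\alpha_1}\cdot\tfrac{\alpha_1}{\alpha_j}=1$ that places the intertwiners $w_j$ inside the centralizer $A^\phi$ rather than merely in $A$, which is exactly what the statement demands. Verifying that $\sigma^\phi$ genuinely acts on each $i_k(M)$ as the modular flow of $M$ (via Takesaki) is the structural input that makes this bookkeeping available.
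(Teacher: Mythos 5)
Your proof is correct and takes essentially the same route as the paper's: both arguments produce the unitary from partial isometries in the finite factor $A^{\phi}$ matching projections of equal $\phi$-trace, with the modular eigenvalue cancellation $\tfrac{\alpha_j}{\alpha_1}\cdot\tfrac{\alpha_1}{\alpha_j}=1$ being what places the intertwiners in the centralizer. The paper merely compresses your two stages into the single formula $u = i_{2}(p)wi_{1}(p) + \sum_{i=1}^{n} i_{2}(e_{i1})vi_{1}(e_{1i})$, using one partial isometry $v$ for $e_{11}$ and one $w$ for $p$, leaving as ``easy to check'' the verifications you carry out explicitly.
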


\begin{proof}
Since $A^{\phi}$ is a factor and the inclusions are modular (in particular state-preserving), there exists partial isometries $v,w\in A^\phi$ satisfying
\begin{align*}
v^{*}v &= i_{1}(e_{11}) \qquad vv^{*} = i_{2}(e_{11})\\
w^{*}w &= i_{1}(p) \qquad\ ww^{*} = i_{2}(p)
\end{align*}
It is easy to check that $u = i_{2}(p)wi_{1}(p) + \sum_{i=1}^{n} i_{2}(e_{i1})vi_{1}(e_{1i})$ does the job.
\end{proof}

The next proposition follows directly from the proof of \cite[Theorem 4.3]{Hou07}.

\begin{prop}\label{prop:absorb}
Assume that $1 \geq \alpha_{1} \geq \alpha_{2} \geq \cdots \geq \alpha_{n} > 0$ with at least one inequality amongst the $\alpha$'s strict and let $H = \langle \frac{\alpha_{i}}{\alpha_{j}} : 1 \leq i, j \leq n\rangle$. Let $\gamma>0$. Let  $\beta \in (0,1)$ be such that 
	\[
		\left[\underset{\alpha_{1}, \cdots, \alpha_{n}}{M_{n}(\C)} \oplus \underset{\gamma}{\C} \right] * \left[ \underset{\beta}{\C} \oplus \underset{1 - \beta}{\C} \right]
	\]
is a factor (necessarily $(T_{H}, \vphi_{H})$).  If $(A, \phi)$ and $(B, \psi)$ are von Neumann algebras with faithful states accepting the modular inclusions
$$
\underset{\alpha_{1}, \cdots, \alpha_{n}}{M_{n}(\C)} \oplus \underset{\gamma}{\C}  \hookrightarrow (A, \phi) \qquad\text{ and }\qquad \underset{\beta}{\C} \oplus \underset{1 - \beta}{\C} \rightarrow (B, \psi),
$$
then $(A, \phi) * (B, \psi) \cong (A, \phi) * (B, \psi) * (T_{H}, \vphi_{H})$.
\end{prop}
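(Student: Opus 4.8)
The plan is to port, to this slightly more general setting, the amalgamated–free–product absorption argument underlying \cite[Theorem~4.3]{Hou07}. Write
$$N_1 := \underset{\alpha_1,\dots,\alpha_n}{M_n(\C)}\oplus\underset{\gamma}{\C} \qquad\text{and}\qquad N_2 := \underset{\beta}{\C}\oplus\underset{1-\beta}{\C}$$
for the two modularly included algebras, with inclusions $\iota_A\colon N_1\hookrightarrow A$ and $\iota_B\colon N_2\hookrightarrow B$. Since the inclusions are modular and $N_1,N_2$ are finite-dimensional, Takesaki's theorem supplies $\phi$- and $\psi$-preserving conditional expectations $E_A\colon A\to N_1$ and $E_B\colon B\to N_2$. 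The factor hypothesis together with Theorem~\ref{thm:finitedim} identifies $N_1*N_2\cong (T_H,\vphi_H)$: indeed $N_1*N_2$ is a free product of two finite-dimensional algebras, the point spectrum of the state on $N_1$ generates $H=\langle\alpha_i/\alpha_j\rangle$ and that on the abelian $N_2$ is trivial, and the assumption that the free product is a factor forces the finite-dimensional summand $C$ of Theorem~\ref{thm:finitedim} to vanish. This justifies the parenthetical remark and fixes $H$. The goal is then the absorption relation $(A,\phi)*(B,\psi)\cong (A,\phi)*(B,\psi)*(T_H,\vphi_H)$.

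First I would convert the free product into an $N_1$-amalgamated free product using Proposition~\ref{prop:amalgamate}. Taking $M=A$, $B_0=N_1$ with $E_1=E_A$, and $A_0=B$ yields
$$(A,\phi)*(B,\psi)\cong \big(A,E_A\big)\underset{N_1}{\Asterisk}\big((B,\psi)*(N_1,\phi|_{N_1}),E_2\big).$$
The point is that inside the second leg $C:=(B,\psi)*(N_1,\phi|_{N_1})$ the free copy of $N_1$ and the modular copy $\iota_B(N_2)\subset B$ are free, hence generate a copy of $N_1*N_2\cong(T_H,\vphi_H)$; moreover $C$ carries the canonical conditional expectation $E_2\colon C\to N_1$ onto the free factor. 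Thus $C$ contains $T_H$ sitting over the amalgam $N_1$ in a self-similar fashion. The heart of the argument is to promote free absorption $T_H*T_H\cong T_H$ (from the structural results of \cite{Shl97}) to a \emph{relative} statement: that $C$ absorbs a free copy of $(T_H,\vphi_H)$ compatibly with $E_2$, via a state-preserving isomorphism that fixes $N_1$ and intertwines the expectations onto $N_1$. One then feeds this through the associativity of the amalgamated free product, $A\mathbin{\Asterisk_{N_1}}(C\mathbin{\Asterisk_{N_1}}Z)\cong(A\mathbin{\Asterisk_{N_1}}C)\mathbin{\Asterisk_{N_1}}Z$, and checks that the amalgamated piece $Z$ contributes a genuinely (scalar-)free copy of $T_H$ in $A*B$. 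As in the proofs of Theorem~\ref{thm:M2} and Theorem~\ref{thm:induction}, Lemma~\ref{lem:antman} and the central-support bookkeeping of Lemma~\ref{lem:Dykema} are used to pass absorption statements between corners and the full algebra.

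I expect the relative-absorption step to be the main obstacle. Two points need care. The minor one is the extra summand $\underset{\gamma}{\C}$ in $N_1$: compared with the single matrix algebra treated in Theorem~\ref{thm:induction}(2), one must verify that this abelian corner does not disrupt the argument, now that the hypothesis is the factor condition for $N_1*N_2$ rather than the inequality $\gamma\geq 1$; here the preceding proposition on uniqueness of such modular inclusions up to a centralizer unitary lets one assume $\iota_A$ is in the standard model form wherever a factor centralizer is available. The genuinely delicate point is making the relative absorption rigorous. This is exactly what is ported from \cite[Theorem~4.3]{Hou07}: the modular generators are eigenoperators of $\Delta$ with eigenvalues in $H$, and it is this eigenoperator structure that allows a free copy of $T_H$ to be absorbed without enlarging the point spectrum. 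The crux is verifying that this mechanism survives the replacement of $M_n(\C)$ by $M_n(\C)\oplus\C$, i.e. that the vacuum/modular data of the free copy of $N_1$ in $C$ still matches that needed to run Houdayer's absorption across the amalgam over $N_1$.
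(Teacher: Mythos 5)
Your proposal is correct and takes essentially the same route as the paper, whose entire proof of Proposition~\ref{prop:absorb} is the remark that it ``follows directly from the proof of \cite[Theorem 4.3]{Hou07}'': like the authors, you convert to amalgamated free products via Proposition~\ref{prop:amalgamate}, use factoriality together with Theorem~\ref{thm:finitedim} to identify the free product of the modular subalgebras with $(T_H,\vphi_H)$, and run Houdayer's absorption across the amalgam, with the preceding proposition (uniqueness of modular inclusions of $M_n(\C)\oplus\C$ up to a centralizer unitary) supplying exactly the new ingredient needed to replace $M_2(\C)$ by $M_n(\C)\oplus\underset{\gamma}{\C}$. The delicate ``relative absorption'' step you flag is precisely what the citation to \cite{Hou07} is meant to cover, so your sketch is faithful to the intended argument.
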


Assume that $\cH$ is a separable infinite-dimensional Hilbert space, and $\phi$ a faithful normal state on $\cB(\cH)$. Recall that we can assume that---after conjugating by a unitary---there is a set of matrix units $\{e_{i,j}\}$ satisfying $\phi(e_{i,j}) = \delta_{i, j}\alpha_{i}$, where $\alpha_{i}>0$  and $\sum_{i=1}^\infty \alpha_i=1$.

\begin{thm}\label{thm:B(H)}
Let $\cH$ be a separable  infinite-dimensional Hilbert space. Let $\phi$ be a faithful normal state on $\cB(\cH)$ such that $\phi(e_{i,j})=\delta_{i,j}\alpha_i$ for matrix units $\{e_{i,j}\}_{i,j\in \bbN}$ and $\alpha_i>0$. If $\beta \in (0,1)$,  then 
	\[
		(\cB(\cH), \phi) * \left[\underset{\beta}{\C} \oplus \underset{1-\beta}{\C}\right] \cong (T_{H}, \vphi_{H})
	\]
where $H = \langle \frac{\alpha_{i}}{\alpha_{j}} : i, j \in \N \rangle$.
\end{thm}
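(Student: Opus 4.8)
The plan is to deduce the isomorphism formally from a Houdayer-type free absorption (Proposition~\ref{prop:absorb}) once a single finite truncation of $\cB(\cH)$ has been understood, and then to ``upgrade'' the resulting small subgroup to all of $H$ using Corollary~\ref{cor:ArakiB(H)}. After relabelling the orthonormal basis I may assume $\alpha_0\geq\alpha_1\geq\cdots\to 0$ with $\sum_{i\geq0}\alpha_i=1$; write $\cM:=(\cB(\cH),\phi)*\left[\underset{\beta}{\C}\oplus\underset{1-\beta}{\C}\right]$. The first step is to build a modular inclusion of a finite-dimensional algebra into $\cB(\cH)$. Setting $P_n=e_{00}+\cdots+e_{n-1,n-1}$ and $\gamma_n=\sum_{i\geq n}\alpha_i>0$, the unital subalgebra
\[
	\underset{\alpha_0,\ldots,\alpha_{n-1}}{M_n(\C)}\oplus\underset{\gamma_n}{\C}\;=\;P_n\cB(\cH)P_n\oplus\C(1-P_n)\;\subset\;(\cB(\cH),\phi)
\]
is state preserving and globally invariant under $\sigma^\phi$ (because $\sigma_t^\phi(e_{ij})=(\alpha_i/\alpha_j)^{it}e_{ij}$ and $1-P_n$ is $\sigma^\phi$-fixed), hence is a modular inclusion; the identity gives the modular inclusion $\underset{\beta}{\C}\oplus\underset{1-\beta}{\C}\hookrightarrow\left[\underset{\beta}{\C}\oplus\underset{1-\beta}{\C}\right]$.

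The one genuinely computational step is to verify that for all large $n$ the finite free product
\[
	\left[\underset{\alpha_0,\ldots,\alpha_{n-1}}{M_n(\C)}\oplus\underset{\gamma_n}{\C}\right]*\left[\underset{\beta}{\C}\oplus\underset{1-\beta}{\C}\right]
\]
is a factor, hence equals $(T_{H_n},\vphi_{H_n})$ with $H_n=\langle\alpha_i/\alpha_j:0\leq i,j\leq n-1\rangle$. I would read this off from the explicit description of the finite-dimensional remainder $C$ in Theorem~\ref{thm:finitedim}: a central summand can arise only from a one-dimensional block, and every relevant inequality fails once $n$ is large. The block $\underset{\gamma_n}{\C}$ contributes nothing since $\gamma_n\to0$ forces $\min\!\left(\tfrac{1-\gamma_n}{\beta},\tfrac{1-\gamma_n}{1-\beta}\right)\geq1$, while the blocks $\underset{\beta}{\C},\underset{1-\beta}{\C}$ contribute nothing because $\sum_{i<n}\tfrac1{\alpha_i}\geq n^2$ (Cauchy--Schwarz, using $\sum_{i<n}\alpha_i\leq1$) makes both $\sum_{i<n}\tfrac{1-\beta}{\alpha_i}$ and $\sum_{i<n}\tfrac{\beta}{\alpha_i}$ exceed $1$. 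Since $\alpha_i\to0$ there is a strict inequality among $\alpha_0,\ldots,\alpha_{n-1}$, so $H_n$ is non-trivial; I then fix one such $n$.

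With these pieces in hand the argument concludes formally. Proposition~\ref{prop:absorb} yields the absorption
\[
	\cM\;\cong\;\cM*(T_{H_n},\vphi_{H_n})\;\cong\;(T_{H_n},\vphi_{H_n})*(\cB(\cH),\phi)*\left[\underset{\beta}{\C}\oplus\underset{1-\beta}{\C}\right].
\]
Corollary~\ref{cor:ArakiB(H)} then upgrades the subgroup: the group generated by $H_n$ and $H'=\langle\alpha_i/\alpha_j:i,j\geq0\rangle=H$ is exactly $H$, so $(T_{H_n},\vphi_{H_n})*(\cB(\cH),\phi)\cong(T_H,\vphi_H)$ and therefore $\cM\cong(T_H,\vphi_H)*\left[\underset{\beta}{\C}\oplus\underset{1-\beta}{\C}\right]$. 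Finally $\underset{\beta}{\C}\oplus\underset{1-\beta}{\C}$ is abelian, so its state is a trace, and the consequence of free absorption recorded after Shlyakhtenko's structural theorem gives $(T_H,\vphi_H)*\left[\underset{\beta}{\C}\oplus\underset{1-\beta}{\C}\right]\cong(T_H,\vphi_H)$, as desired.

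I expect the main obstacle to be the factoriality computation of the truncated free product: one must ensure the finite-dimensional remainder predicted by Theorem~\ref{thm:finitedim} genuinely vanishes, which is precisely why the truncation is pushed far out so that $\gamma_n$ is tiny while $\sum_{i<n}1/\alpha_i$ is enormous; the remaining manipulations are routine bookkeeping with free products. It is worth stressing conceptually why the answer is independent of $\beta$: the two-point algebra is commutative, so it contributes only the trivial group to the modular point spectrum, and the full group $H$ is supplied by $\cB(\cH)$ through Corollary~\ref{cor:ArakiB(H)}, overriding the merely finite $H_n$ produced by the absorption step.
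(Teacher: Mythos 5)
Your proof is correct and follows essentially the same route as the paper's: a modular inclusion of the truncation $\underset{\alpha_0,\ldots,\alpha_{n-1}}{M_n(\C)}\oplus\underset{\gamma_n}{\C}\hookrightarrow(\cB(\cH),\phi)$, Proposition~\ref{prop:absorb} to freely absorb $(T_{H_n},\vphi_{H_n})$, and then Proposition~\ref{prop:L(Z)B(H)}/Corollary~\ref{cor:ArakiB(H)} together with free absorption to upgrade to the full group $H$. The only differences are cosmetic: you verify explicitly via Theorem~\ref{thm:finitedim} that the truncated free product is a factor for large $n$ (a point the paper leaves as ``choose $n$ large enough''), and you absorb the tracial two-point algebra at the very end, whereas the paper replaces $\left[\underset{\beta}{\C}\oplus\underset{1-\beta}{\C}\right]*(T_{H'},\vphi_{H'})$ by $L(\Z)*(T_{H'},\vphi_{H'})$ mid-computation.
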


\begin{proof}
For each $n\in \N$, $(\cB(\cH), \phi)$ accepts a modular inclusion of
	\[
		(A,\psi) := \underset{\alpha_{1}, \cdots, \alpha_{n}}{\overset{e_{11}, \cdots, e_{nn}}{M_{n}(\C)}} \oplus \underset{\gamma}{\C},
	\]
where $\gamma = 1 - (\alpha_1+\cdots + \alpha_n)$. Choose $n$ large enough so that 
	\[
		(A, \psi) *  \left[ \underset{\beta}{\C} \oplus \underset{1 - \beta}{\C} \right]
	\]
is a factor and so that for some $i, j \leq n$, $\frac{\alpha_{i}}{\alpha_{j}} \neq 1$.  Let $H' = \langle \frac{\alpha_{i}}{\alpha_{j}} : 1 \leq i, j \leq n \rangle$. It follows from Propositions \ref{prop:absorb} and \ref{prop:L(Z)B(H)} that 
\begin{align*}
(\cB(\cH), \phi) * \left[\underset{\beta}{\C} \oplus \underset{1-\beta}{\C}\right] &\cong (\cB(\cH), \phi) * \left[\underset{\beta}{\C} \oplus \underset{1-\beta}{\C}\right] * (T_{H'}, \vphi_{H'})\\
&\cong (\cB(\cH), \phi) * L(\Z) * (T_{H'}, \vphi_{H'})\\
&\cong (T_{H}, \vphi_{H}) * (T_{H'}, \vphi_{H'})\\
&\cong (T_{H}, \vphi_{H})
\end{align*}
as claimed.
\end{proof}

%%%%%%%%%%%%%%%%%%%%%%%%%%%%%%%%%%

\subsection{Free products of finite direct sums of type I factors}

Using the results of Proposition \ref{prop:absorb} and Theorem \ref{thm:B(H)},  one can prove the following proposition in exactly the same manner as Propositions \ref{prop:matrix}, \ref{prop:matrixabelian}, \ref{prop:matrix1}, and \ref{prop:matrix2}.

\begin{prop}\label{prop:B(H)steps}

\begin{enumerate}

\item[]

\item[\normalfont{(i)}] Let $A$ and $B$ be separable type I factors, at least one of which is infinite-dimensional, equipped with faithful normal states, $\phi$ and $\psi$ respectively.  Then 
$$
(A, \phi) * (B, \psi) \cong (T_{H}, \vphi_{H})
$$
where $H$ is the group generated by the point spectra of $\Delta_{\phi}$ and $\Delta_{\psi}$.

\item[\normalfont{(ii)}] Let $\cH$ be a separable and infinite-dimensional Hilbert space, and let $\phi$ be a faithful normal state on $\cB(\cH)$.  Then
$$
(\cB(\cH), \phi) * \left[\underset{\alpha_{1}}{\C} \oplus \cdots \oplus \underset{\alpha_{n}}{\C}\right] \cong (T_{H}, \vphi_{H})
$$
where $H$ is the group generated by the point spectrum of $\Delta_{\phi}$.

\item[\normalfont{(iii)}] Let $\cH$ be a separable and infinite-dimensional Hilbert space, and let $\phi$ be a faithful normal state on $\cB(\cH)$. Let $(B,\psi)$ be either $(L(\F_{t}),\tau)$ for $t \geq 1$ or $(T_{H},\vphi_H)$ for some non-trivial $H$.  Let $1>\alpha_1\geq \alpha_2\geq \cdots \geq \alpha_n >0$, $n\geq 1$ , and $\alpha>0$.  Then
	\[
		(\cB(\cH), \phi) * \left[ \underset{\alpha_{1}, \cdots, \alpha_{n}}{M_{n}(\C)} \oplus \underset{\alpha}{(B,\psi)}\right] \cong (T_{G}, \vphi_{G})
	\]
where $G$ is the group generated by $\Delta_{\phi}$, $\Delta_\psi$, and $\<\frac{\alpha_i}{\alpha_j}\colon 1\leq i,j\leq n\>$.

\end{enumerate}
\end{prop}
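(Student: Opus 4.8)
The plan is to transfer, essentially verbatim, the proofs of Propositions~\ref{prop:matrix}, \ref{prop:matrixabelian}, \ref{prop:matrix1}, and \ref{prop:matrix2}, replacing the two finite-dimensional inputs on which they rely---Theorem~\ref{thm:induction}.(2) (matricial absorption of a free Araki--Woods factor) and the base computation $M_n(\C)*[\C\oplus\C]$---by their $\cB(\cH)$-analogues, Proposition~\ref{prop:absorb} and Theorem~\ref{thm:B(H)}. The decisive simplification is that an infinite-dimensional $\cB(\cH)$ carries no normal trace, so $H_\phi:=\langle \alpha_i/\alpha_j\rangle$ (the point spectrum of $\Delta_\phi$) is automatically non-trivial, and the eigenvalue sum $\sum_k \alpha_k^{-1}$ of the density of $\phi$ is infinite. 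Hence every ``$\gamma\geq 1$'' hypothesis in the cited proofs holds automatically, no residual finite-dimensional summand survives, and we never fall into the interpolated-free-group-factor alternative: the answer is always the single factor $(T_G,\vphi_G)$.

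For (i) and (ii) the chain is short. In each case the right-hand algebra $X$ admits a modular inclusion $\underset{\beta}{\C}\oplus\underset{1-\beta}{\C}\hookrightarrow X$ with $\beta\in[\tfrac12,1)$: for (i) because a type~$\mathrm I$ factor of dimension $\geq 2$ contains two complementary centralizer projections, for (ii) by clustering the $n\geq 2$ atoms of $\C^n$. On the $\cB(\cH)$ side one has the modular inclusion $M_N(\C)\oplus\C\hookrightarrow\cB(\cH)$ furnished by a finite corner and its complementary tail; taking $N$ large makes $H':=\langle\alpha_i/\alpha_j:i,j\leq N\rangle$ non-trivial and the seed $[M_N(\C)\oplus\C]*[\underset{\beta}{\C}\oplus\underset{1-\beta}{\C}]$ a factor. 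Proposition~\ref{prop:absorb} then gives
\[
(\cB(\cH),\phi)*X\cong (\cB(\cH),\phi)*X*(T_{H'},\vphi_{H'}),
\]
and regrouping together with Corollary~\ref{cor:ArakiB(H)} upgrades the absorbed group to the full point spectrum, $(\cB(\cH),\phi)*(T_{H'},\vphi_{H'})\cong(T_{H_\phi},\vphi_{H_\phi})$. For (ii), $X=\C^n$ is abelian, hence tracial, so free absorption yields $(T_{H_\phi})*\C^n\cong(T_{H_\phi})$ with $H=H_\phi$. For (i), a second application of Corollary~\ref{cor:ArakiB(H)} (or free absorption when $\psi$ is a trace) gives $(T_{H_\phi})*(B,\psi)\cong (T_{\langle H_\phi,H_\psi\rangle})=(T_H,\vphi_H)$.

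Part (iii) is the analogue of Propositions~\ref{prop:matrix1} and \ref{prop:matrix2} and needs the full compression bookkeeping. Write $C=\underset{\alpha_1,\dots,\alpha_n}{M_n(\C)}\oplus\underset{\alpha}{\overset{q}{(B,\psi)}}$, so that $q$ is the central projection onto the $(B,\psi)$-summand and $1-q$ onto $M_n(\C)$. Exactly as above (Proposition~\ref{prop:absorb} followed by Corollary~\ref{cor:ArakiB(H)}, using a modular $\C\oplus\C\hookrightarrow C$ obtained by cutting the diffuse $B$ with centralizer projections) one reduces to computing $(T_{H_\phi})*C$. Following Case~1.a of Proposition~\ref{prop:matrix2}, introduce
\[
\cN_1:=(T_{H_\phi},\vphi_{H_\phi})*\left[\underset{1-\alpha}{\C}\oplus\underset{\alpha}{\overset{q}{\C}}\right],\qquad \cN_2:=(T_{H_\phi},\vphi_{H_\phi})*\left[\underset{1-\alpha}{\C}\oplus\underset{\alpha}{\overset{q}{(B,\psi)}}\right].
\]
Lemma~\ref{lem:Dykema} gives $q\cN_2q\cong q\cN_1q*(B,\psi)$, while $\cN_1\cong(T_{H_\phi})$ by free absorption of the abelian $\C\oplus\C$ and hence $q\cN_1q\cong(T_{H_\phi})$ by Lemma~\ref{lem:antman}(1); therefore $q\cN_2q\cong(T_{H_\phi})*(B,\psi)\cong(T_{\langle H_\phi,H_\psi\rangle})$ (free absorption if $B=L(\F_t)$, the free Araki--Woods fusion rule if $B=T_{H'}$), and Lemma~\ref{lem:antman}(2) promotes this to $\cN_2\cong(T_{\langle H_\phi,H_\psi\rangle})$. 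Cutting $(T_{H_\phi})*C$ by $1-q$ and freely adjoining the $M_n(\C)$-summand via Lemma~\ref{lem:Dykema} gives $(1-q)\big[(T_{H_\phi})*C\big](1-q)\cong (1-q)\cN_2(1-q)*M_n(\C)\cong (T_{\langle H_\phi,H_\psi\rangle})*M_n(\C)\cong (T_G,\vphi_G)$ by Corollary~\ref{cor:ArakiB(H)}, and tracking the central support of $1-q$ through Lemmas~\ref{lem:Dykema} and \ref{lem:antman} reassembles the whole algebra as $(T_G,\vphi_G)$, with $G=\langle H_\phi,H_\psi,\langle\alpha_i/\alpha_j:i,j\leq n\rangle\rangle$.

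The main obstacle lies entirely in part (iii): correctly instantiating Lemma~\ref{lem:Dykema} (in its degenerate form, where the ambient $\cB(\cH)$ is taken one-dimensional) so that the corners $q\,\cdot\,q$ and $(1-q)\,\cdot\,(1-q)$ split off precisely the intended free factor, and then tracking the central supports $z(q\colon\cdot)$ and $z(1-q\colon\cdot)$ through Lemmas~\ref{lem:Dykema} and \ref{lem:antman} so that the compressed isomorphisms can be reassembled into a statement about the full algebra. A secondary and more routine nuisance, shared by all three parts, is the parameter chase verifying that the modular inclusions demanded by Proposition~\ref{prop:absorb}---a corner $M_N(\C)\oplus\C$ on the $\cB(\cH)$ side and a two-point $\C\oplus\C$ on the other---can be chosen so that the seed free product is genuinely a factor; here the infinite-dimensionality of $\cH$ is exactly what supplies enough room to make these choices.
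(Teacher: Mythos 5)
Your proposal is correct and is essentially the paper's own argument: the paper proves Proposition~\ref{prop:B(H)steps} simply by remarking that it follows ``in exactly the same manner as Propositions~\ref{prop:matrix}, \ref{prop:matrixabelian}, \ref{prop:matrix1}, and \ref{prop:matrix2}'' once one substitutes Proposition~\ref{prop:absorb} and Theorem~\ref{thm:B(H)} for the finite-dimensional inputs (Theorem~\ref{thm:induction}.(2) and the $M_n(\C)*[\C\oplus\C]$ base case), which is precisely the substitution you carry out. Your additional observation --- that faithfulness and normality of $\phi$ on an infinite-dimensional $\cB(\cH)$ force $H_\phi$ to be non-trivial and $\sum_k \alpha_k^{-1}=\infty$, so every ``$\gamma\geq 1$'' hypothesis holds automatically and neither a finite-dimensional summand nor the interpolated-free-group-factor alternative can occur --- correctly supplies the one detail the paper leaves implicit.
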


The next theorem follows from Proposition \ref{prop:B(H)steps} and the proof of Theorem \ref{thm:finitedim}.

\begin{thm}\label{thm:fdB(H)}
Let $(A, \phi)$ and $(B, \psi)$ be finite direct sums of separable type I factors with faithful normal states $\phi$ and $\psi$ respectively, both of which are at least two-dimensional.  Assume that at least one of $\phi$ or $\psi$ is not a trace, and that up to unitary conjugation,
	\[
		(A, \phi) = \bigoplus_{i = 1}^{n_{0}} \underset{\alpha_{i,1},\cdots, \alpha_{i, k_{i}}}{\overset{p_{i,1}, \cdots, p_{i, k_{i}}}{M_{k_{i}}(\C)}} \oplus \bigoplus_{i = 1}^{n_{1}} (\cB(\cH_{i}), \phi_{i})\qquad \text{and}\qquad (B, \psi) = \bigoplus_{j = 1}^{m_{0}} \underset{\beta_{j,1},\cdots, \beta_{j, \ell_{j}}}{\overset{q_{j,1}, \cdots, q_{j, \ell_{j}}}{M_{\ell_{j}}(\C)}} \oplus \bigoplus_{j = 1}^{m_{1}} (\cB(\cK_{j}), \psi_{i}).
	\]
where the $\cH_{i}$ and $\cK_{j}$ are infinite-dimensional.  Let $G$ be the group generated by the point spectra of $\Delta_{\phi}$ and $\Delta_{\psi}$.  Then 
	\[
		(A, \phi) * (B, \psi) = (T_{G}, \vphi_{G}) \oplus C,
	\]
where $C$ is finite-dimensional (possibly zero), and is determined exactly as in Theorem \ref{thm:finitedim}.

\end{thm}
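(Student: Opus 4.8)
The plan is to run the very same inductive bookkeeping as in the proof of Theorem~\ref{thm:finitedim}, using Proposition~\ref{prop:B(H)steps} in place of the finite-dimensional building blocks (Propositions~\ref{prop:matrix1}, \ref{prop:matrix2}, \ref{prop:matrix3}) precisely whenever the summand being activated is one of the infinite-dimensional factors $\cB(\cH_i)$ or $\cB(\cK_j)$. Concretely, I would first compute $(Z(A),\phi)*(Z(B),\psi)$ via Dykema's tracial result \cite{MR1201693}, then introduce the chains of von Neumann algebras $(\cM_{i,j},\vphi)$ exactly as in that proof, where each step replaces one scalar placeholder $\overset{p}{\bbC}$ (respectively $\overset{q}{\bbC}$) by the corresponding genuine summand---now allowing this summand to be either a matrix algebra $M_{k_i}(\C)$ or an infinite-dimensional factor $\cB(\cH_i)$. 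At each increment I would compute the relevant corner $(p_i\cM_{i,j}p_i,\vphi^{p_i})$ or $(q_j\cM_{i,j}q_j,\vphi^{q_j})$ using Lemmas~\ref{lem:Dykema} and \ref{lem:antman}, appealing to Proposition~\ref{prop:matrix1}, \ref{prop:matrix2}, or \ref{prop:matrix3} when the activated block is finite-dimensional, and to part (i), (ii), or (iii) of Proposition~\ref{prop:B(H)steps} when it is infinite-dimensional. Lemma~\ref{lem:Dykema} propagates the central supports (which remain in $(A*B)^\vphi$ by induction), and Lemma~\ref{lem:antman} assembles each $(\cM_{i,j},\vphi)$ from its corner.

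The structural point making this work cleanly is that an infinite-dimensional $\cB(\cH_i)$ can never contribute to the finite-dimensional remainder $C$. Indeed, writing $\phi_i(e_{k,k})=\alpha_{i,k}$ one has $\sum_k 1/\alpha_{i,k}=\infty$, so every ``$\gamma$''-type quantity arising from such a factor is infinite; consequently, the dichotomies in Propositions~\ref{prop:matrix1}--\ref{prop:matrix3} always land in the $\gamma\geq 1$ (pure factor) branch, which is exactly what Proposition~\ref{prop:B(H)steps} records---each of its outputs is $(T_H,\vphi_H)$ or $(T_G,\vphi_G)$ with no finite-dimensional summand. Thus any index $i$ with $\cH_i$ infinite-dimensional forces the corresponding corner to be a free Araki--Woods factor and feeds only into the factor part $(T_G,\vphi_G)$, while contributing $H'=\langle \alpha_{i,k}/\alpha_{i,l}\rangle$ to the point-spectrum group $G$. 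Hence the combinatorial recipe for the central summands of $C$ is governed solely by the finite matrix blocks $M_{k_i}(\C)$ and the one-dimensional summands, and is therefore read off exactly as in Theorem~\ref{thm:finitedim}.

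Executing this, the three regimes of Theorem~\ref{thm:finitedim} carry over: if both centers are one-dimensional the result is Proposition~\ref{prop:B(H)steps}(i) (or Proposition~\ref{prop:matrix} when both factors are finite-dimensional); if exactly one center is one-dimensional one first forms the free product of $A$ with $Z(B)$ and then activates the $B$-summands one at a time; and if both centers have dimension at least two one uses an arbitrary chain incrementing a single index per step. At each node of the chain, the corner either is already a (free group or free Araki--Woods) factor, to which one appeals via free absorption and Proposition~\ref{prop:B(H)steps}, or carries a residual matrix summand which, being attached only to finite-dimensional data, evolves exactly as in Theorem~\ref{thm:finitedim}. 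Tracking $G$ as the group generated by all accumulated point spectra of $\Delta_{\phi}$ and $\Delta_{\psi}$ then yields $(A,\phi)*(B,\psi)\cong (T_G,\vphi_G)\oplus C$.

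The main obstacle I anticipate is purely organizational rather than conceptual: one must verify that the chain of inclusions can always be ordered so that at every increment the freshly activated summand, together with the already-computed partial free product, matches the hypotheses of one of the available building blocks---in particular that the partner algebra has the form $(L(\F_t),\tau)$ or $(T_{H'},\vphi_{H'})$ (possibly direct-summed with a matrix algebra) demanded by Proposition~\ref{prop:B(H)steps}(iii). This is exactly the same scheduling issue handled in Theorem~\ref{thm:finitedim}, and the additional infinite-dimensional cases are strictly easier because they short-circuit the $\gamma<1$ branch; the only genuine checking is that the modular-inclusion hypotheses of Proposition~\ref{prop:B(H)steps} are met at each step and that no infinite-dimensional block is ever forced into a compression that would produce a spurious finite-dimensional piece, which the preceding paragraph rules out.
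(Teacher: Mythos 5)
Your proposal is correct and matches the paper's argument, which simply invokes Proposition~\ref{prop:B(H)steps} together with the inductive chain scheme from the proof of Theorem~\ref{thm:finitedim}. Your explicit observation that an infinite-dimensional $\cB(\cH_i)$ never feeds the remainder $C$ (since the associated $\gamma$-type sums diverge, forcing the factor branch) is exactly the point the paper leaves implicit when asserting $C$ is determined as in Theorem~\ref{thm:finitedim}.
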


\section{Standard embeddings and inductive limits}\label{sec:std_embeddings}

In \cite{Dyk97}, the notion of ``free subcomplimentation" of $(A, \vphi) \hookrightarrow (B, \vphi)$ was defined and used to prove Dykema's theorem on the structure of free products of inductive limits of finite-dimensional von Neumann algebras.  In order to identify the type III summand of such a free product with an almost periodic free Araki--Woods factor, the notion of a standard embedding along the lines of \cite{MR1201693} is needed for almost periodic free Araki--Woods factors.

%%%%%%%%%%%%%%%%%%%%%%%%%%%%%%%%%%

\subsection{Hyperfinite matricial models}

\begin{lem}\label{lem:hyperfinite_disintegration}
Let $R$ be the hyperfinite $\mathrm{II}_{1}$ factor, and let $\{y_i\}_{i\in \bbN}$ be a free family of generalized circular elements free from $R$. Let $y_i$ have parameter $\lambda_i \leq 1$, $H=\< \lambda_i\colon i\in \bbN\>$, and define $(M, \vphi) = (W^{*}(R \cup \{y_{i}\}_{i\in\bbN}), \tau * \vphi_H)$.  Suppose $p_{1}, \cdots, p_{n}$ are equivalent projections in $R$, and for $j= 1,\ldots, n$ let $u_{j} \in R$ satisfy $u_{1} = p_{1}$, $u_{j}^{*}u_{j} = p_{1}$, and $u_{j}u_{j}^{*} = p_{j}$.  Then $\{u_{j}^{*}y_iu_{k}\colon i\in\bbN,\ j,k=1,\ldots, n\}$ is a free family of generalized circular elements (with respective parameters $\lambda_i$) in $(p_{1}\cM p_{1}, \vphi^{p_1})$, which is also free from $p_1Rp_1$. 
\end{lem}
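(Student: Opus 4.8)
The plan is to realize the assertion as an instance of the principle that the matrix entries of a (generalized) circular element, cut by a free system of matrix units, again form a free (generalized) circular family. First I would record the matrix-unit bookkeeping. Set $e_{jk} := u_j u_k^*$; since $u_1 = p_1$, $u_j^* u_j = p_1$, and $u_j u_j^* = p_j$, the family $\{e_{jk}\}_{j,k=1}^n$ is a system of matrix units spanning a copy of $M_n(\C)$ inside $R$ with $e_{jj} = p_j$ and $P := \sum_j e_{jj} = \sum_j p_j$. A direct check gives $u_j^* y_i u_k = e_{1j}\, y_i\, e_{k1}$, so that $p_1 \cM p_1 = e_{11}\cM e_{11}$ and, via the matrix units, $P\cM P \cong M_n(\C) \otimes (p_1 \cM p_1)$, with $P y_i P$ corresponding to $\sum_{j,k} e_{jk}\otimes (u_j^* y_i u_k)$. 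Thus the lemma is precisely the statement that the ``entries'' of $P y_i P$ form a free family of generalized circular elements, free from $p_1 R p_1$.

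Since $\cM = R * W^*(\{y_i\})$ with $M_n(\C)\subset R$ free from the $y_i$, everything is determined by the joint $*$-distribution of $\{y_i\}$ relative to $R$. I would compute this on a full Fock space, modelling $y_i = \ell(\xi_i) + \sqrt{\lambda_i}\,\ell(\eta_i)^*$ for mutually orthonormal vectors $\{\xi_i,\eta_i\}_{i}$, with $R$ free from the $y_i$ and $\vphi = \tau * \vphi_H$ the vacuum/free-product state. The second-order structure is then a short computation: writing $a = u_j^* y_i u_k$ and centering the $R$-valued insertions $p_k,p_j$ (legal because $\tau(p_k)=\tau(p_j)=\tau(p_1)$ by equivalence), freeness of $R$ from $\{y_i\}$ collapses all cross terms and yields $\vphi(a a^*) = \lambda_i\,\tau(p_1)^2$ and $\vphi(a^* a) = \tau(p_1)^2$, whence $\vphi^{p_1}(aa^*) = \lambda_i \tau(p_1)$ and $\vphi^{p_1}(a^*a) = \tau(p_1)$. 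In particular the parameter $\vphi^{p_1}(aa^*)/\vphi^{p_1}(a^*a) = \lambda_i$ is preserved, and $a$ has variance $\tau(p_1)$; up to this overall scale this is exactly the covariance of a generalized circular element of parameter $\lambda_i$.

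The substance is to upgrade these second moments to the full conclusion, i.e.\ that each $u_j^* y_i u_k$ is generalized circular of parameter $\lambda_i$ and that the entire family, together with $p_1 R p_1$, is free. I would do this by showing that the only nonvanishing mixed $*$-cumulants among $\{u_j^* y_i u_k\}$ and $p_1 R p_1$ are the second-order ones above, and that these vanish across distinct indices $i$ and across distinct entries $(j,k)$. The mechanism is the standard ``centered alternating word collapses'' argument: any $\vphi^{p_1}$-alternating word in centered elements of $W^*(\{u_j^* y_i u_k\})$ and of $p_1 R p_1$ lifts, after expanding the $u_j^* y_i u_k$ and absorbing the connecting partial isometries $u_k u_{j'}^*\in R$, to a word alternating between centered $\{y_i\}$-letters and $R$-letters; since each $y_i$ carries only second-order cumulants and $R$ is free from $\{y_i\}$, freeness forces its value to $0$. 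Carrying this out simultaneously establishes freeness of the family, freeness from $p_1 R p_1$, and the generalized-circular form of each entry.

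The main obstacle is the single-generator case: that the $n^2$ entries $\{u_j^* y_i u_k\}_{j,k}$ of one $y_i$ are \emph{mutually} free, not merely identically distributed. This is the Voiculescu--Shlyakhtenko matricial phenomenon, and the care needed is that the connectors $u_j,u_k$ are non-central partial isometries, so one cannot simply center them; the reduction must track how the matrix-unit factors thread through an alternating word and verify that they never permit two $y_i$-letters to fuse into a higher cumulant. Once the single-generator case is in hand, freeness across distinct $i$ is immediate from mutual freeness of the $y_i$ and their freeness from $R$, and freeness from $p_1 R p_1$ follows from the same word analysis since $p_1 R p_1 \subset R$.
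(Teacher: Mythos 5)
Your proposal is correct in substance but takes a genuinely different route from the paper. The paper's proof contains no cumulant combinatorics at all: it invokes Shlyakhtenko's matricial model, writing $Y_i^{(d)} = \frac{1}{\sqrt{b^d}}\sum_{j,k}\bigl[\ell(\xi^i_{j,k}) + \sqrt{\lambda_i}\,\ell(\eta^i_{k,j})^*\bigr]\otimes e_{j,k}$ in $\cB(\cF(\cH))\otimes M_{b^d}(\bbC)$, where the compressed entries $(U_j^{(d)})^* Y_i^{(d)} U_k^{(d)}$ are \emph{manifestly} a free generalized circular family, free from the compressed matrix algebra, because distinct entries are built from mutually orthogonal vectors of $\cH$ --- the ``matricial phenomenon'' you correctly isolate as the main obstacle comes for free in the model. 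The lemma is then transferred to $R$ by approximation: for $\tau(p_1)=a/b$ rational one exhausts $R$ by $\bigotimes_d M_b(\bbC)$ and uses convergence of moments; for irrational $\tau(p_1)$ one approximates the $u_j$ $*$-strongly by partial isometries of rational trace. Your route stays intrinsic: matrix units $e_{jk}=u_ju_k^*$ free from $\{y_i\}$ (note you need the $p_j$ mutually orthogonal for these to be matrix units; this is implicit in the statement and explicit in the paper's proof), vanishing of all $*$-cumulants of the $y_i$ of order $\neq 2$, and the index-matching computation for compressions by matrix units, essentially Nica--Speicher's compression theorem. This does work --- e.g.\ in $\vphi(aa^*bb^*)$ with $a=p_1yp_1$, $b=p_1yu_2$, the nested pairing dies because its Kreweras block carries $\tau(p_1p_2)=0$, exactly the bookkeeping you describe --- and your second-moment computations are right; but be aware that since the matrix units sit \emph{inside} $R$ rather than being free from it, the freeness-from-$p_1Rp_1$ clause cannot be quoted from the compression theorem verbatim and requires running the alternating-word analysis with generic trace-centered $R$-letters interspersed. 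The trade-off: your argument avoids the rational/irrational two-step approximation entirely, while the paper's avoids all cumulant tracking and reuses the same matricial model in the subsequent cut-and-paste lemma, so its approximation infrastructure is amortized.
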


\begin{proof}
We first recall Shlyakhtenko's matricial model (see \cite[Section 5]{Shl97}). Let $\cH$ be an infinite-dimensional Hilbert space, let $\cF(\cH)$ denote the full Fock space over $\cH$, and let $N\in \N$. Equip
	\[
		\cB(\cF(\cH))\otimes M_N(\bbC)
	\]
with the state $\omega\otimes \tr$. Let $\{\xi^i_{j,k}, \eta^i_{j,k}\colon i\in \bbN,\ j,k=1,\ldots, N\}$ be an orthonormal family in $\cH$. For each $i\in \bbN$ define
	\[
		Y_i:= \frac{1}{\sqrt{N}}\sum_{j,k=1}^N \left[ \ell(\xi^i_{j,k}) + \sqrt{\lambda_i} \ell(\eta^i_{k,j})^*\right]\otimes e_{j,k}
	\]
Then $\{Y_i\}_{i\in \bbN}$ is a free family and free from $M_N(\bbC)$ with respect to $\omega\otimes\tr$, and $(Y_i, \omega\otimes \tr)\sim_d (y_i, \vphi_H)$ where $\sim_d$ means equality in moments.

We first assume $\tau(p_1) = \frac{a}{b} \in \Q\setminus\{1\}$. Let
	\[
		P_1 := e_{1,1} + \cdots + e_{a,a} \in M_b(\bbC)
	\]
Denote $U_1:=P_1$ and for $j=2,\ldots, n$ set
	\[
		U_j:= e_{(j-1)a+1,1}+e_{(j-1)a+2, 2} + \cdots + e_{ja,a}.
	\]
Note that $U_j^*U_j=P_1$, and $\{U_jU_j^*\}_{j=1}^n$ are orthogonal. 

Now, for $d\in \bbN$, let $\{Y_i^{(d)}\}_{i\in \bbN}$ be as above for $N=b^d$. Let $P_1^{(d)}= P_1\otimes I_{b^{d-1}} \in M_{b^d}(\bbC)$, and similarly define $U_j^{(d)}$ for $j=1,\ldots, n$. By mutual orthogonality of $\{\xi_{j,k}^i, \eta_{j,k}^i\}$, we have that $\{(U_j^{(d)})^* Y_i^{(d)} U_k^{(d)}\colon i\in \bbN,\ j,k=1,\ldots, n\}$ is a free family of generalized circular elements (with respective parameters $\lambda_i$), which is also free from $P_1^{(d)} M_{b^d}(\bbC)P_1^{(d)}$.
Let $A \in M_{b^d}(\bbC)$.  It follows that $(P_{1}AP_{1}, \{(U_j^{(d)})^* Y_i^{(d)} U_k^{(d)}\colon i\in \bbN,\ j,k=1,\ldots, n\})$ converges in moments to $(p_{1}Ap_{1}, \{u_j^* y_i u_k\colon i\in \bbN,\ j,k=1,\ldots, n\})$ in $(p_{1}Mp_{1}, \varphi^{p_{1}})$ where we picture $p_{1}Ap_{1} \in \bigotimes_{i=1}^{\infty} M_{b}(\C),$ an ultra-weakly dense $*$-subalgebra of $R$.  By freeness and ultra-weak continuity,   it follows that $p_1Rp_1$ is  free from $\{u_j^* y_i u_k\colon i\in \bbN,\ j,k=1,\ldots, n\}$ in $(p_1\cM p_1,\vphi^{p_1})$, and moreover, the latter set is a free family of generalized circular elements with respective parameters $\lambda_i$.

Next, we assume $\tau(p_1)$ is irrational. Let $\{q_m\}_{m\in \bbN}\subset \bbQ$ be a sequence converging to $\tau(p_1)$ from below. For $j=1,\ldots, n$, let $(u_j^{(m)})_{m\in\bbN}\subset R$ be a sequence converging $*$-strongly to $u_j$ and satisfying: 
	\begin{itemize}
	\item $\tau((u_j^{(m)})^*u_j^{(m)})=q_m$;
	\item $p_1^{(m)}:=(u_1^{(m)})^* u_1^{(m)} = \cdots = (u_n^{(m)})^*u_n^{(m)}$;
	\item $u_1^{(m)}(u_1^{(m)})^*, \ldots, u_n^{(m)}(u_n^{(m)})^*$ are mutually orthogonal.
	\end{itemize}
Then from our treatment of the rational case, we have that for each $m\in \bbN$, $\{ (u_j^{(m)})^* y_i u_k^{(m)}\colon i\in \bbN,\ j,k=1,\ldots,n\}$ is a free family of generalized circular elements (with respective parameters $\lambda_i$) which are free from $p_1^{(m)} R p_1^{(m)}$ in $( p_1^{(m)} \cM p_1^{(m)}, \vphi^{p_1^{(m)}})$. Since $*$-strong convergence implies convergence in moments, we obtain the desired result.
\end{proof}

\begin{lem}\label{lem:cut_and_paste}
Let $R$ be the hyperfinite II$_{1}$ factor, $(y_1, y_2)$ a free pair of generalized circular elements of parameter $\lambda \leq 1$ that free from $R$, and $p$ a nonzero projection in $R$.  Then:

\begin{itemize}

\item $py_1+ (1-p)y_2$ is a generalized circular element of parameter $\lambda$ and free from $R$;

\item $y_1p + y_2(1-p)$ is a generalized circular element of parameter $\lambda$ and free from $R$;

\item $py_1p + (y_2 - py_2p)$ is a generalized circular element of parameter $\lambda$ and free from $R$.

\end{itemize}
\end{lem}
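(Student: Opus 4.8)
The plan is to verify each of the three bullet points by checking that the proposed element is a generalized circular element of parameter $\lambda$ and is free from $R$. Recall that a generalized circular element of parameter $\lambda$ has the same $*$-distribution as $y_\lambda = \ell(u) + \sqrt{\lambda}\,\ell(v)^*$ on the Fock space of $\mathbb{C}^2$, and that Shlyakhtenko's matricial model (recalled in the proof of Lemma \ref{lem:hyperfinite_disintegration}) expresses such an element on a full Fock space. The cleanest approach is to work directly inside the Fock space / matricial model: write each $y_i$ in the form $\ell(\xi_i) + \sqrt{\lambda}\,\ell(\eta_i)^*$ for orthonormal vectors $\xi_i,\eta_i$ (with the $\{\xi_1,\eta_1,\xi_2,\eta_2\}$ all mutually orthogonal, reflecting the freeness of $y_1,y_2$), and then compute the $*$-distribution of the combination together with its joint distribution with $R$.

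First I would treat the first bullet. Setting $y := py_1 + (1-p)y_2$, I would compute $y = p\ell(\xi_1) + (1-p)\ell(\xi_2) + \sqrt\lambda\bigl(p\ell(\eta_1)^* + (1-p)\ell(\eta_2)^*\bigr)$. The key identity to exploit is that, since $\xi_1 \perp \xi_2$ and $\eta_1\perp\eta_2$, the ``creation part'' $p\ell(\xi_1) + (1-p)\ell(\xi_2)$ and the ``annihilation part'' behave exactly like a single creation/annihilation pair after accounting for the projection $p\in R$: the point is that freeness of $\{y_1,y_2\}$ from $R$ makes the relevant vectors $\xi_i,\eta_i$ orthogonal to the subspace on which $R$ acts, so the cross terms vanish in moments. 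Concretely, one checks that $y$ has the same joint $*$-moments with any word in $R$ as a single generalized circular $y_\lambda$ free from $R$ does; this reduces to the combinatorial computation that the nonvanishing nested-pairing contributions are unaffected by inserting the orthogonal projections $p$ and $1-p$, because $p + (1-p) = 1$ restores the full creation operator whenever the indices of a creation and its matching annihilation agree. The second bullet, $y_1 p + y_2(1-p)$, is handled identically by symmetry (it is essentially the adjoint-side version, multiplying on the right rather than the left).

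The third bullet, $z := py_1p + (y_2 - py_2p)$, is slightly more delicate since it mixes a two-sided compression of $y_1$ with a ``complementary corner'' of $y_2$. I would write $z = py_1p + (1-p)y_2 + py_2(1-p)$ (using $y_2 - py_2p = (1-p)y_2p + y_2(1-p) = (1-p)y_2 + py_2(1-p)$, after regrouping) and verify that the creation part $p\ell(\xi_1)p + (1-p)\ell(\xi_2) + p\ell(\xi_2)(1-p)$ again assembles, modulo orthogonality of $\xi_1,\xi_2$, into something distributionally equivalent to a single creation operator compressed appropriately, with the matching annihilation part scaled by $\sqrt\lambda$. The main obstacle will be making rigorous the claim that these recombined creation/annihilation operators genuinely have the generalized-circular $*$-distribution and are jointly free from $R$, rather than merely having the right second moments; I expect the cleanest way to close this gap is to pass to the matricial model and apply Lemma \ref{lem:hyperfinite_disintegration}, which already furnishes free families of generalized circular elements obtained from cutting $y_i$ by partial isometries $u_j^*(\cdot)u_k$ in $R$. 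Indeed, each of the three combinations can be realized as a finite sum of such cut-down pieces $u_j^* y_i u_k$ for suitable partial isometries associated to $p$ and $1-p$, so the statement should follow by expressing $p$ (and $1-p$) via matrix units in a finite-dimensional subfactor of $R$ and invoking the freeness and generalized-circularity established in that lemma, then passing to the limit via $*$-strong approximation exactly as in its proof.
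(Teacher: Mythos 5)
Your third paragraph lands on essentially the paper's strategy: pass to Shlyakhtenko's matricial model, handle $\tau(p)=\frac{a}{b}$ rational by realizing $p$ as the diagonal projection $P_1\otimes I_{b^{d-1}}\in M_{b^d}(\bbC)\subset R$, and handle irrational $\tau(p)$ by $*$-strong approximation exactly as in the proof of Lemma \ref{lem:hyperfinite_disintegration}. The one real divergence is that the paper does \emph{not} route through the conclusion of Lemma \ref{lem:hyperfinite_disintegration}: it simply observes that, since all the vectors $\xi^i_{j,k},\eta^i_{j,k}$ are mutually orthonormal, the pasted operator $P_1^{(d)}Y_1^{(d)}+(1-P_1^{(d)})Y_2^{(d)}$ is again \emph{literally} of matricial-model form (each matrix entry carries its own private orthonormal creation/annihilation pair), hence is a generalized circular element of parameter $\lambda$ free from $M_{b^d}(\bbC)$, and then lets $d\to\infty$ using density of $\bigcup_d M_{b^d}(\bbC)$ in $R$.

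Two cautions about your version. First, the heuristic of your opening paragraphs is not a valid mechanism as stated: in a bare Fock representation of $y_1,y_2$ there is no compatible action of $R$ for which ``$\xi_i,\eta_i$ orthogonal to the subspace on which $R$ acts'' is meaningful, and pasting does not ``restore the full creation operator''---$py_1+(1-p)y_2$ is not of the form $\ell(\xi)+\sqrt{\lambda}\,\ell(\eta)^*$ for any single pair of vectors; the correct statement lives at the matrix level as above. Since you pivot to the matricial model anyway, this is only presentational. Second, and more substantively: Lemma \ref{lem:hyperfinite_disintegration} gives only the \emph{disintegration} direction, namely that the cut-downs $u_j^*y_iu_k$ form a free generalized circular family free from the corner $p_1Rp_1$. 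Your bullets additionally require the converse \emph{reassembly}: that $x=\sum_{j,k}u_j z_{j,k}u_k^*$, for such an array $\{z_{j,k}\}$, is generalized circular and free from all of $R$. This is not contained in that lemma and must be supplied. It does follow from a short bookkeeping argument: using $u_k^*u_j=\delta_{j,k}p_1$ and $r=\sum_{j,k}u_j(u_j^*ru_k)u_k^*$ for $r\in R$, every joint $*$-moment of $(R,x)$ collapses to joint moments of $\left(p_1Rp_1,\{z_{j,k}\}\right)$; since your array (a subfamily of the cut-downs of $y_1,y_2$, selected per row, column, or corner according to the bullet) has the same joint distribution with $p_1Rp_1$ as the array $\{u_j^*yu_k\}$ coming from a single fresh generalized circular $y$ free from $R$---both being free generalized circular families of parameter $\lambda$ free from the corner---one may swap arrays, and the latter reassembles to $y$ itself. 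With that step written out your plan closes; the paper's direct verification in the model sidesteps it entirely.
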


\begin{proof}
We will treat the first claim, as the others follow by the same techniques. As in the previous lemma, we first consider the case when $\tau(p)=\frac{a}{b}\in \bbQ\setminus\{1\}$. Let
	\[
		P:= e_{1,1}+\cdots + e_{a,a}.
	\]
For $d\in \bbN$, let $\{Y_1^{(d)}, Y_2^{(d)}\}$ be as in Shlyakhtenko's matricial model for $N=b^d$:
	\[
		Y_i^{(d)} = \frac{1}{\sqrt{b^d}} \sum_{j,k=1}^{b^d} \left[ \ell(\xi^i_{j,k}) + \sqrt{\lambda} \ell(\eta^i_{j,k})^*\right]\otimes e_{j,k}
	\]
Let $P_1^{(d)} = P_1\otimes I_{b^{d-1}}\in M_{b^d}(\bbC)$. Then by mutual orthogonality of the $\{\xi_{j,k}^i, \eta_{j,k}^i\}$ we have that $P_1^{(d)} Y_1^{(d)} + (1- P_1^{(d)}) Y_2^{(d)}$ is a generalized circular element of parameter $\lambda$ which is free from $M_{b^d}(\bbC)$. Let $A \in M_{b^{d}}(\C)$.  It follows that the moments of $(A, P_1^{(d)} Y_1^{(d)} + (1- P_1^{(d)}) Y_2^{(d)})$ converge to the moments of $(A, p y_1 + (1-p)y_2)$ where as above, we picture $p$ and $A$ in $\bigotimes_{n=1}^{\infty} M_{b}(\C)$. By ultra-weak continuity, it follows that $py_1+(1-p)y_2$ is a generalized circular element of parameter $\lambda$ which is free from $R$. For the case $\tau(p)$ is irrational, we proceed exactly as in the previous lemma.
\end{proof}

\begin{prop}\label{prop:hyperfinitematrixmodel}
Let $R$ be the hyperfinite $\mathrm{II}_1$ factor, and let $p$ be a non-trivial projection in $R$. Let $y$ be a generalized circular element with parameter $\lambda\in (0,1)$, free from $R$. Then
	\[
		W^*(R\cup \{pyp\}) \cong W^*(R \cup \{y\}).
	\]
Moreover, this isomorphism is state-preserving and restricts to the identity on $R$.
\end{prop}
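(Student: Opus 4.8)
The plan is to construct, inside $N := W^*(R\cup\{pyp\})$, a single generalized circular element $\hat y$ of parameter $\lambda$ that is free from $R$ and satisfies $W^*(R\cup\{\hat y\}) = N$. Once this is achieved the proposition follows at once: the pair $(R,\hat y)$ has the same joint $*$-distribution as $(R,y)$ (in both cases $R$ is the hyperfinite $\mathrm{II}_1$ factor with its trace and the second generator is a generalized circular element of parameter $\lambda$ free from it), so by uniqueness of the reduced free product amalgamated over $\C$ there is a state-preserving isomorphism $W^*(R\cup\{\hat y\})\to W^*(R\cup\{y\})$ sending $\hat y\mapsto y$ and restricting to the identity on $R$.

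To locate the ingredients for $\hat y$ I first pass to the corner. Applying Lemma~\ref{lem:hyperfinite_disintegration} with $n=1$ (that is, $u_1 = p_1 = p$) shows that $pyp$ is a generalized circular element of parameter $\lambda$ in $(pMp,\vphi^p)$ that is free from $pRp$. Since every element of $pNp$ is a word in $pRp$ and $pyp$, we get $pNp = W^*(pRp\cup\{pyp\})$, which is the reduced free product $(pRp)*(W^*(pyp))$ with $W^*(pyp)\cong (T_\lambda,\vphi_\lambda)$. As $pRp$ is again the hyperfinite $\mathrm{II}_1$ factor, free absorption gives $(pNp,\vphi^p)\cong (T_\lambda,\vphi_\lambda)$; more to the point, because $(T_\lambda,\vphi_\lambda)$ is the free product of arbitrarily many copies of itself (Shlyakhtenko), there is a state-preserving isomorphism fixing $pRp$ exhibiting inside $pNp$ a free family $\{\hat y_{j,k}\}$ of generalized circular elements of parameter $\lambda$, each free from $pRp$, with $W^*(pRp\cup\{\hat y_{j,k}\}) = pNp$.

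Now suppose first that $\tau(p) = 1/n$, and fix matrix units $\{e_{j,k}\}_{j,k=1}^n\subseteq R$ with $e_{1,1}=p$ and $\sum_j e_{j,j}=1$; take the family above indexed by $j,k=1,\dots,n$, so it has $n^2$ members. Define $\hat y := \sum_{j,k=1}^n e_{j,1}\,\hat y_{j,k}\,e_{1,k}$. Running the matricial-model computation of Lemma~\ref{lem:hyperfinite_disintegration} in reverse shows that $\hat y$ is a generalized circular element of parameter $\lambda$ free from $R$: the joint distribution of $\bigl(R,\{\hat y_{j,k}\}\bigr)$ coincides with that of $\bigl(R,\{e_{1,j}\,w\,e_{k,1}\}\bigr)$ for a genuine generalized circular $w$ free from $R$ (both families being free from $pRp$ and sitting in the $(1,1)$-corner, and $R$ being generated by $pRp$ and the matrix units), while $\hat y$ is the same function of these data as $w=\sum_{j,k}e_{j,1}(e_{1,j}we_{k,1})e_{1,k}$. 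Moreover $\hat y_{j,k}=e_{1,j}\,\hat y\,e_{k,1}$, so we recover the whole family from $\hat y$ and $R$, whence $W^*(R\cup\{\hat y\})=W^*(R\cup pNp)=N$, the last equality because $p$ has central support $1$ in $N$ (as $R$ is a factor). This produces the required $\hat y$. I note that the isomorphism class $N\cong (T_\lambda,\vphi_\lambda)$ can also be read off abstractly from the corner computation together with Lemma~\ref{lem:antman}(2) (using $z(p\colon N^\vphi)=z(p\colon N)=1$), but that route does not by itself deliver the identity-on-$R$ refinement, which is why the explicit $\hat y$ is needed.

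The main obstacle is the passage to a projection $p$ of arbitrary (in particular irrational) trace, where $1$ cannot be partitioned into copies of $p$, so the square family $\{\hat y_{j,k}\}$ and the integrating matrix units are unavailable. I would handle this exactly as in the proofs of Lemmas~\ref{lem:hyperfinite_disintegration} and~\ref{lem:cut_and_paste}: replace the square family by a rectangular one built from partial isometries $v_j\in R$ with $v_j^*v_j\leq p$ and $\sum_j v_jv_j^*=1$, approximating $p$ and the $v_j$ $*$-strongly by elements adapted to a rational matrix subfactor and using convergence in moments to carry the generalized-circular and freeness conclusions to the limit. Checking that the integrated element $\hat y$ remains generalized circular and free from $R$, and still generates $N$ together with $R$, through this rectangular/approximation step is the one genuinely technical point; everything else is formal given the corner computation above.
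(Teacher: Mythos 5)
Your argument for the case $\tau(p)=1/n$ is correct, and it is essentially the paper's own mechanism run in reverse: the corner identification $pNp=W^*(pRp\cup\{pyp\})\cong (pRp)*(T_\lambda,\vphi_\lambda)$ via Lemma~\ref{lem:hyperfinite_disintegration} with $n=1$, the self-similarity $(T_\lambda,\vphi_\lambda)\cong\bigast(T_\lambda,\vphi_\lambda)$ applied so as to fix $pRp$, and the matrix-unit integration $\hat y=\sum_{j,k}e_{j,1}\hat y_{j,k}e_{1,k}$ (the paper's proof integrates the same way, via $\tilde\gamma(x)=\sum_{j,k}u_j\gamma(u_j^*xu_k)u_k^*$, and likewise invokes Lemma~\ref{lem:hyperfinite_disintegration} ``in reverse'' to realize a corner element as $pZp$). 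Your distribution-matching step is fine, though you should record that $R$ lies in the centralizer of the free product state, so matrix units can be cycled and every mixed moment of $(R,\{\hat y_{j,k}\})$ really does reduce to $\vphi^p$-moments of $(pRp,\{\hat y_{j,k}\})$.

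The genuine gap is the general-trace case, which is the actual content of the proposition, and your closing sketch reaches for the wrong tool. When $1=p_1+\cdots+p_a+p_{a+1}$ with $p_{a+1}$ only subequivalent to $p$, the integrated family unavoidably involves elements cut by $q_1=v_{a+1}^*v_{a+1}\lneq p$; dually, inside the corner you must exhibit a family having the distribution of such a \emph{cut} family that nevertheless still generates $pNp$ together with $pRp$ (naively cutting a full free family by $q_1$ destroys generation, since $(p-q_1)$-pieces are lost). The $*$-strong/moment approximation used in Lemmas~\ref{lem:hyperfinite_disintegration} and~\ref{lem:cut_and_paste} proves purely \emph{distributional} statements about given elements; generation of a von Neumann algebra is not continuous in moments, so ``carrying the conclusions to the limit'' cannot deliver the assertion ``still generates $N$ together with $R$'' --- precisely the point you flag as open. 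The paper resolves it with two exact (not approximate) devices your sketch omits: (i) an amplification $R^t$ in which the defective partial isometry extends to $u_{a+1}^{(t)}$ with full initial support $p_1$, so each cut element is literally a $q_1$-compression of a full generalized circular over $R^t$ (Equations~(\ref{eqn:relating_amp_down})); and (ii) a Hilbert-hotel cut-and-paste over a countably infinite family with an index shift, $z^{\ell}_{s,t}=y^{\ell}_{s,t}q_1+y^{\ell}_{s,t+1}(p_1-q_1)$ and its $r$- and $c$-analogues, which by Lemma~\ref{lem:cut_and_paste} is again a free family of \emph{full} generalized circulars free from $p_1Rp_1$, and from which the original elements are recovered inductively by cutting with $q_1$ --- so the generated algebra is preserved identically. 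Without these two ideas (or substitutes for them), your rectangular-family plan stalls exactly at the step you identified, so the proposal as written does not prove the proposition beyond $\tau(p)=1/n$.
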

\begin{proof}
We first note that if $\{y_i\}_{i\in \bbN}$ is a countably infinite family of freely independent generalized circular elements all of parameter $\lambda$, then
	\[
		W^*(R \cup \{y_i\}_{i\in \bbN} )\cong W^*( R \cup \{y\}),
	\]
on account of $\displaystyle(T_\lambda,\vphi_\lambda)\cong \bigast_{i\in \bbN} (T_\lambda,\phi_\lambda)$, and of course this isomorphism is the identity on $R$.

Let $a\in \bbN$ such that $a < \frac{1}{\tau(p)} \leq a+1$. Let $p_1,\ldots, p_{a+1}$ be orthogonal projections in $R$ summing to 1 such that $p_1:=p$, $p_2,\ldots, p_a$ are equivalent to $p_1$, and $p_{a+1}\preceq p_1$. Let $u_1,\ldots, u_{a+1}\in R$ be partial isometries implementing these (sub)equivalences such that $u_j^*u_j \leq p_1$. Observe that
	\[
		\cM:=W^*(R \cup \{y_i\}_{i\in \bbN} ) = W^*(R \cup \{u_j^* y_i u_k\colon i\in \bbN,\ j,k=1,\ldots, a+1\}).
	\]
By Lemma~\ref{lem:hyperfinite_disintegration}, $\{u_j^* y_i u_k\colon i\in \bbN,\ j,k=1,\ldots, a\}$ is a free family of generalized circular elements with parameter $\lambda$, which is free from $p_1 Rp_1$.

Let $q_1:= u_{a+1}^*u_{a+1} \leq p_1$. Let
	\[
		t:=\frac{(a+1)\tau(p_1)}{a\tau(p_1) + \tau(q_1)}.
	\]
Consider the amplification $R^t$ of $R$. Let $P\in R^t$ be a projection such that we can identify $R$ with $PR^tP$. By Lemma~\ref{lem:hyperfinite_disintegration}, we can find a free family $\{ y_i^{(t)}\}_{i\in \bbN}$ of generalized circular elements of parameter $\lambda$ that are free from $R^t$, and which satisfy $Py_i^{(t)} P= y_i$ for each $i\in \bbN$. Let $p_{a+1}^{(t)} \in R^{t}$  satisfy $p_{a+1}^{(t)} = 1 -( p_1+\cdots +p_a)$. By our choice of $t$, $p_{a+1}^{(t)}$ is equivalent to $p_1$, so let $u_{a+1}^{(t)}\in R^t$ be a partial isometry implementing this equivalence (with $(u_{a+1}^{(t)})^* u_{a+1}^{(t)} =p_1$). Moreover, we can choose this partial isometry so that $u_{a+1}^{(t)} q_1= u_{a+1}$. Consequently, for $i\in \bbN$ and $j,k=1,\ldots, n$ we have
	\begin{align}\label{eqn:relating_amp_down}
		u_{a+1}^* y_i u_k &= q_1 (u_{a+1}^{(t)})^* y_i^{(t)} u_k\nonumber\\
		u_j^* y_i u_{a+1} & = u_j^* y_i^{(t)} u_{a+1}^{(t)} q_1\\
		u_{a+1}^* y_i u_{a+1} & = q_1(u_{a+1}^{(t)})^* y_i^{(t)} u_{a+1}^{(t)} q_1\nonumber
	\end{align}
Note that by Lemma~\ref{lem:hyperfinite_disintegration},
	\[
		\{u_j^* y_i^{(t)} u_k, (u_{a+1}^{(t)})^*y_i^{(t)}u_k, u_j^* y_i^{(t)} u_{a+1}^{(t)}, (u_{a+1}^{(t)})^* y_i^{(t)} u_{a+1}^{(t)}\colon i\in \bbN,\ j,k=1,\ldots, a\}
	\]
is a free family of generalized circular elements of parameters $\lambda$ that is free from $p_1 R^t p_1 = p_1Rp_1$ in $(p_1\cM p_1, \vphi^{p_1})$.

Now, let
	\begin{align*}
		\alpha \colon \bbN\times \bbN\times \{\ell, r,c\} &\to \bbN\times \{1,\ldots, a\} \times \{1,\ldots, a\}\\
		\beta \colon \bbN &\to \bbN\times \{1,\ldots, a\}
	\end{align*}
be bijections. If $\alpha(s,t,\epsilon) = (i,j,k)$ for $s,t\in \bbN$ and $\epsilon\in \{\ell,r,c\}$, then set
	\[
		y^\epsilon_{s,t}:= u_j^* y_i u_k.
	\]
If $\beta(s) = (i,j)$ for $s\in \bbN$ set
	\[
		y^\ell_{s,0}:= u_j^* y_i u_{a+1} \qquad\text{and}\qquad y^r_{s,0}:= u_{a+1}^* y_i u_j.
	\]
Finally, set $y^c_{s,0}:= u_{a+1}^* y_s u_{a+1}$ for each $s\in \bbN$. With this notation, we therefore have
	\[
		\cM = W^*(R, \{ y^{\epsilon}_{s,t} \colon \epsilon\in \{\ell,r,c\},\ s\in\bbN,\ t\in\bbN_0\}).
	\]
For $(s,t,\epsilon)\in \bbN\times\bbN_0\times\{\ell,r,c\}$, define
	\[
		z_{s,t}^\epsilon :=\begin{cases} y^\ell_{s,t}q_1+ y^\ell_{s,t+1}(p_1 - q_1) &\text{if }\epsilon=\ell\\
								q_1y_{s,r}^r + (p_1-q_1)y_{s,t+1}^r &\text{if }\epsilon=r\\
								q_1y^c_{s,t} q_1 + \left[ y_{s,t+1}^c - q_1 y_{s,t+1}^c q_1\right] &\text{if }\epsilon=c\end{cases}.
	\]
Using Equations~(\ref{eqn:relating_amp_down}) and Lemma~\ref{lem:cut_and_paste}, it follows that $\{z_{s,t}^\epsilon \colon \epsilon\in \{\ell,r,c\},\ s\in\bbN,\ t\in\bbN_0\}$ is a free family of generalized circular elements with parameter $\lambda$ which are free from $p_1 Rp_1$ in $(p_1 \cM p_1, \vphi^{p_1})$. We also note that
	\[
		\cM = W^*( R, \{z_{s,t}^\epsilon \colon \epsilon\in \{\ell,r,c\},\ s\in\bbN,\ t\in\bbN_0\}).
	\]
Now, observe that by freeness and $\displaystyle(T_\lambda,\vphi_\lambda)\cong \bigast_{i\in \bbN} (T_\lambda,\phi_\lambda)$ 
	\[
		p_1 \cM p_1 = W^*( p_1 Rp_1, \{z_{s,t}^\epsilon \colon \epsilon\in \{\ell,r,c\},\ s\in\bbN,\ t\in\bbN_0\}) \cong W^*( p_1 Rp_1, z),
	\]
where $z=p_1zp_1$ is a generalized circular element of parameter $\lambda$ that is free from $p_1 Rp_1$, and this state-preserving isomorphism restricts to the identity on $p_1 Rp_1$.  Call this isomorphism $\gamma$.  We then define $\tilde{\gamma} \colon \cM \to W^*(R, z)$ by
	\[
		\tilde{\gamma}(x) = \sum_{j,k=1}^n u_j \gamma(u_j^* x u_k) u_k^*.
	\]
It is easy to check that this is state-preserving and restricts to the identity on $R$. Finally, to complete the proof, we appeal to Lemma~\ref{lem:hyperfinite_disintegration} once more in order to realize $z= p_1 Z p_1$ for some generalized circular element with parameter $\lambda$ which is free from $R$. Hence $\cM \cong W^*(R, p_1 Zp_1)$ via a state-preserving isomorphism that restricts to the identity on $R$.
\end{proof}

Suppose $H < \R_{+}$ is non-trivial and has two generating sets: $(\lambda_{i})_{i \in I}$ and $(\lambda_{i'})_{i \in I'}$.  Let $(y_i)_{i\in I}$ be a free generalized circular family free from $R$  with each $y_{i}$ having parameter $\lambda_{i}$.  Similarly,  let $(y_i')_{i'\in I'}$ be a free generalized circular family free from $R$  with each $y_{i'}$ having parameter $\lambda_{i'}$. Since there is an isomorphism $W^{*}(R \cup (y_{i})_{i \in I}) \cong W^{*}(R \cup (y_{i'})_{i' \in I'})$ which is the identity on $R$, we obtain the following corollary:

\begin{cor}\label{cor:changegen}
Suppose $H < \R_{+}$ is non-trivial and has two generating sets: $(\lambda_{i})_{i \in I}$ and $(\lambda_{i'})_{i \in I'}$. 
Let $J$ be a set disjoint from $I$ and $I'$,  let $\lambda_{j} \in (0, 1)$ for each $j \in J$, and let $H'$ be the group generated by $H$ and $(\lambda_{j})_{j \in J}$.
Let $(y_i)_{i\in I \cup J}$ be a free circular family free from $R$  with each $y_{i}$ having parameter $\lambda_{i}$. 
 Similarly,  let $(y_i')_{i'\in I'\cup J}$ be a free circular family free from $R$  with each $y_{i'}$ having parameter $\lambda_{i'}$. Let $(p_k)_{k\in I\cup I'\cup J}$ be a family of nonzero projections in $R$.  Then $W^{*}(R \cup (p_{i}y_{i}p_{i})_{i \in I \cup J}) \cong W^{*}(R \cup (p_{i'}y_{i'}p_{i'})_{i' \in I' \cup J})$ via a mapping which is the identity on $W^{*}(R, (p_{j}y_{j}p_{j})_{j \in J})$.

\end{cor}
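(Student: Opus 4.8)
The plan is to reduce everything to two moves---changing the generating set of $H$ and compressing the generalized circular generators---while keeping the subalgebra $B:=W^{*}(R,(p_{j}y_{j}p_{j})_{j\in J})$ fixed throughout. Write $N:=W^{*}(R\cup (y_{j})_{j\in J})$, so that $B\subseteq N$. Since $(y_{k})_{k\in I\cup J}$ is a free family which is free from $R$, grouping the $I$-indexed generators against $R$ together with the $J$-indexed generators shows that $T:=W^{*}((y_{i})_{i\in I})$ is free from $N$, hence from $B$; likewise $T':=W^{*}((y_{i'})_{i'\in I'})$ is free from $B$. By Shlyakhtenko's theorem both $T$ and $T'$ are state-preservingly isomorphic to $(T_{H},\vphi_{H})$, since $(\lambda_{i})_{i\in I}$ and $(\lambda_{i'})_{i'\in I'}$ both generate $H$. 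I will produce the claimed isomorphism as a composite $(\Theta')^{-1}\circ\Phi\circ\Theta$, where each of $\Theta$, $\Phi$, $\Theta'$ is state-preserving and restricts to the identity on $B$; since the source and target are $W^{*}(B\cup (p_{i}y_{i}p_{i})_{i\in I})$ and $W^{*}(B\cup (p_{i'}y_{i'}p_{i'})_{i'\in I'})$, this gives exactly the asserted isomorphism fixing $W^{*}(R,(p_{j}y_{j}p_{j})_{j\in J})$.

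The middle isomorphism $\Phi$ is the easy step. Because $T$ and $T'$ are each free from $B$, the reduced free product structure identifies $W^{*}(B\cup (y_{i})_{i\in I})=B*T$ and $W^{*}(B\cup (y_{i'})_{i'\in I'})=B*T'$. A state-preserving isomorphism $T\to T'$ exists by the previous paragraph, and the universal property of the reduced free product extends it to a state-preserving isomorphism
\[
\Phi\colon B*T\longrightarrow B*T'
\]
that is the identity on $B$. This is precisely the argument of the paragraph preceding this corollary, now run over the base $B$ rather than over $R$.

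The outer isomorphisms $\Theta$ and $\Theta'$ are the crux, and it suffices to construct, fixing $B$,
\[
\Theta\colon W^{*}(B\cup (p_{i}y_{i}p_{i})_{i\in I})\longrightarrow W^{*}(B\cup (y_{i})_{i\in I}),
\]
together with the symmetric $\Theta'$. The plan is to uncompress the generators one index at a time: after enumerating $I$ and uncompressing $y_{i_{1}},\dots,y_{i_{r-1}}$, the element $y_{i_{r}}$ is still free from the current base $\tilde B:=W^{*}\big(B\cup\{y_{i_{1}},\dots,y_{i_{r-1}}\}\cup (p_{i}y_{i}p_{i})_{i\notin\{i_{1},\dots,i_{r}\}}\big)$, since $\tilde B\subseteq W^{*}(N\cup (y_{i})_{i\neq i_{r}})$ and $y_{i_{r}}$ is free from this latter algebra. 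Each single uncompression is governed by a \emph{relative} form of Proposition~\ref{prop:hyperfinitematrixmodel}: if $R\subseteq\tilde B$, a single generalized circular element $y$ of parameter $\lambda\in(0,1)$ is free from $\tilde B$, and $p\in R$ is a nonzero projection, then $W^{*}(\tilde B\cup\{pyp\})\cong W^{*}(\tilde B\cup\{y\})$ via a state-preserving isomorphism fixing $\tilde B$. Granting this, composing the uncompressions yields $\Theta$ (and $\Theta'$) fixing $B$, which completes the proof. The main obstacle is proving this relative proposition. I would re-run the argument of Proposition~\ref{prop:hyperfinitematrixmodel} essentially verbatim, the projections and partial isometries still being drawn from the hyperfinite factor $R\subseteq\tilde B$ and its amplifications; the one genuinely new point is that Lemmas~\ref{lem:hyperfinite_disintegration} and \ref{lem:cut_and_paste} must be established with $\tilde B$ in place of $R$---that is, the disintegrated elements $u_{k}^{*}yu_{\ell}$ must be shown to be free from $p_{1}\tilde Bp_{1}$ rather than only from $p_{1}Rp_{1}$. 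This stronger freeness should follow from the freeness of $y$ from all of $\tilde B$, fed through the matricial approximation of $R$ used in the proof of Lemma~\ref{lem:hyperfinite_disintegration} (where only the $R$-part is discretized and the remainder of $\tilde B$ rides along via freeness). Finally, any generators of parameter $\lambda=1$ (genuine circular elements) are handled in the same manner using Dykema's tracial standard-embedding technology.
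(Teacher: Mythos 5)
Your proposal is correct and takes essentially the same route as the paper, which presents this corollary as an immediate consequence of Proposition~\ref{prop:hyperfinitematrixmodel} together with the observation that $W^{*}(R \cup (y_{i})_{i \in I}) \cong W^{*}(R \cup (y_{i'})_{i' \in I'})$ via an isomorphism fixing $R$ (uniqueness of $(T_{H},\vphi_{H})$ plus freeness) --- exactly your composite $(\Theta')^{-1}\circ\Phi\circ\Theta$. The relative uncompression you identify as the crux is precisely the strengthening the paper itself invokes later (``From the proof of Proposition~\ref{prop:hyperfinitematrixmodel}\dots'' in the proof of Proposition~\ref{prop:changegen}), and it holds as you sketch; in fact it can be obtained without rewriting the lemmas, by applying Lemma~\ref{lem:hyperfinite_disintegration} to the whole family $(y_{k})_{k\in I\cup J}$ simultaneously and regrouping, so that the cut pieces of a given $y_{i}$ are free from the algebra generated by $p_{1}Rp_{1}$ and the pieces of all remaining generators, which contains the compression of your base $\tilde B$.
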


Recall from \cite{MR1256179} that Dykema's construction of the interpolated free group factors $L(\F_{t})$ consisted of the following ingredients
\begin{itemize}

\item The hyperfinite $\mathrm{II}_{1}$ factor $R$;

\item A family of projections $(p_{s})_{s \in S}$ in $R$;

\item A free semicircular family $(x_{s})_{s \in S}$.

\end{itemize}
The interpolated free group factor with parameter $t$ is given by
	\[
		L(\F_{t}) := W^{*}(R\cup (p_{s}x_{s}p_{s})_{s \in S})
	\]
where $t = 1 + \sum_{s \in S} \tau(p_{s})^{2}$. It should be noted that if $(x_{s'})_{s' \in S'}$ is another free semicircular family free from $R$ and $(p_{s'})_{s' \in S}$ are any projections in $R$ satisfying $t = 1 + \sum_{s' \in S'} \tau(p_{s'})^{2}$, then $W^{*}(R, (p_{s}x_{s}p_{s})_{s \in S})$ and $W^{*}(R, (p_{s'}x_{s'}p_{s'})_{s' \in S'})$ are isomorphic via an isomorphism which is the identity on $R$.

The presence of generalized circular elements with non-trivial parameters eliminates (via free absorption) the need for a fixed value of $ \sum_{s' \in S'} \tau(p_{s'})^{2}$.  We thus have the following:

\begin{prop}\label{prop:changegen}
Let $I$ and $I'$  be disjoint sets which are finite or countable,  $\lambda_{i} \in (0, 1)$ for each $i \in I \cup I' $, and $H:=\langle \lambda_{i} : i \in I \rangle = \langle \lambda_{i'} : i' \in I' \rangle$.  Let $S,$ $S'$, and $T$ be finite or countable (possibly empty) disjoint sets.  Assume that we are given the following:
	\begin{itemize}
	\item Families of nonzero projections $(p_{s})_{s \in S}$, $(p_{s'})_{s' \in S'}$, $(p_{t})_{t\in T}$, $(q_{i})_{i \in I}$, and $(q_{i'})_{i' \in I'}$ in $R$ ;
	
	\item A free semicircular family $(x_{s})_{s \in S}$ free from $R$;
	
	\item A free semicircular family $(x_{s'})_{s \in S'}$ free from $R$;
	
	\item A free semicircular family $(x_t)_{t\in T}$ free from $R\cup (x_s)_{s\in S}\cup (x_{s'})_{s'\in S'}$;

	\item A free family $(y_{i})_{i \in I}$ of generalized circular elements with respective parameters $\lambda_i$ that is free from $R \cup (x_{s})_{s \in S \cup T}$;

	\item A free family  $(y_{i'})_{i' \in I'}$ of generalized circular elements with respective parameters $\lambda_{i'}$ that is free from $R \cup(x_{s'})_{s' \in S' \cup T}$.

	\end{itemize}
Then	
	\[
		W^*(R\cup(p_s x_s p_s)_{s\in S\cup T}\cup (q_i y_i q_i)_{i\in I}) \cong W^*(R\cup( p_{s'}x_{s'}p_{s'})_{s'\in S'\cup T}\cup (q_{i'} y_{i'} q_{i'})_{i'\in I'})
	\]
via a state-preserving isomorphism which is the identity on $W^{*}(R \cup (p_{t}x_{t}p_{t})_{t \in T})$.

\end{prop}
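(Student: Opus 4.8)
The plan is to fix the common subalgebra $N:=W^*(R\cup(p_tx_tp_t)_{t\in T})$ by peeling it off as a free factor and then changing only the remaining generators. First I would record that, by the freeness hypotheses, $(x_t)_{t\in T}$ is free from $R\cup(x_s)_{s\in S}\cup(y_i)_{i\in I}$ (and dually on the primed side); a standard regrouping (amalgamated freeness over $R$ out of freeness over $\C$) then yields
\[
W^*(R\cup(p_sx_sp_s)_{s\in S\cup T}\cup(q_iy_iq_i)_{i\in I})=N\underset{R}{\Asterisk}P,\qquad P:=W^*(R\cup(p_sx_sp_s)_{s\in S}\cup(q_iy_iq_i)_{i\in I}),
\]
and likewise the primed algebra equals $N\underset{R}{\Asterisk}P'$ with the obvious $P'$. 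By the universal property of the amalgamated free product, any state-preserving isomorphism $\theta\colon P\to P'$ restricting to the identity on $R$ extends to $\id_N\underset{R}{\Asterisk}\theta$, an isomorphism of the two large algebras that is the identity on $N$. Thus it suffices to produce such a $\theta$.

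Next I would change the generalized circular part. Since $(p_sx_sp_s)_{s\in S}$ is free from $(q_iy_iq_i)_{i\in I}$ over $R$, we have $P=W^*(R\cup(p_sx_sp_s)_{s\in S})\underset{R}{\Asterisk}W^*(R\cup(q_iy_iq_i)_{i\in I})$. Applying Corollary~\ref{cor:changegen} with $J=\emptyset$ gives a state-preserving, $R$-fixing isomorphism $W^*(R\cup(q_iy_iq_i)_{i\in I})\cong W^*(R\cup(q_{i'}y_{i'}q_{i'})_{i'\in I'})$ (legitimate because $\langle\lambda_i\rangle=H=\langle\lambda_{i'}\rangle$), which I extend by the identity on the $S$-factor via the universal property again. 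This reduces matters to an $R$-fixing isomorphism
\[
W^*(R\cup(p_sx_sp_s)_{s\in S}\cup(q_{i'}y_{i'}q_{i'})_{i'\in I'})\cong W^*(R\cup(p_{s'}x_{s'}p_{s'})_{s'\in S'}\cup(q_{i'}y_{i'}q_{i'})_{i'\in I'})
\]
that changes only the semicircular data.

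For this last step I would invoke Dykema's reindexing of free semicircular generators, which exchanges $(p_sx_sp_s)_{s\in S}$ for $(p_{s'}x_{s'}p_{s'})_{s'\in S'}$ by an $R$-fixing isomorphism whenever the free-group parameters agree, i.e. $1+\sum_{s\in S}\tau(p_s)^2=1+\sum_{s'\in S'}\tau(p_{s'})^2$, and then extend it by the identity on the generalized circular factor. When the parameters disagree, I would adjoin to the smaller side auxiliary free semicircular elements (with projections) to equalize them and afterward remove them; the removal is legitimate precisely because some $\lambda_{i'}<1$ is present, so the ambient algebra is a free Araki--Woods factor and free absorption $(T_H,\vphi_H)*(L(\F_t),\tau)\cong(T_H,\vphi_H)$ renders the value of $1+\sum\tau(p_s)^2$ irrelevant. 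Composing the three reindexings produces $\theta$, and the first paragraph then delivers the desired $N$-fixing isomorphism.

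The main obstacle is exactly this absorption step: abstract free absorption is an isomorphism over $\C$ and says nothing about fixing $R$, whereas the entire argument requires every isomorphism to restrict to the identity on $R$ (so as to survive amalgamation with $N$). To obtain an $R$-fixing absorption I would not use the abstract statement but the concrete hyperfinite matricial model underlying Lemmas~\ref{lem:hyperfinite_disintegration} and \ref{lem:cut_and_paste} and Proposition~\ref{prop:hyperfinitematrixmodel}: realize the auxiliary semicircular and the generalized circular generators simultaneously in Shlyakhtenko's Fock-space model over a common amplification of $R$, and use the cut-and-paste manipulations there to construct the absorbing isomorphism explicitly as the identity on $R$. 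This is the same mechanism by which Proposition~\ref{prop:hyperfinitematrixmodel} and Corollary~\ref{cor:changegen} achieve $R$-fixing isomorphisms; carrying the bookkeeping through for a \emph{mixture} of semicircular and generalized circular generators, rather than generators of a single type, is the only genuinely new technical point.
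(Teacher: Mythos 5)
Your outer scaffolding is sound: splitting off $N=W^*(R\cup(p_tx_tp_t)_{t\in T})$ as an amalgamated free factor over $R$ (legitimate, since scalar freeness of $(x_t)_{t\in T}$ from $R\cup(x_s)_{s\in S}\cup(y_i)_{i\in I}$ gives freeness of $N$ and $P$ with amalgamation over $R$, and a state-preserving $R$-fixing $\theta$ automatically intertwines the unique state-preserving expectations), and changing the generalized circular family via Corollary~\ref{cor:changegen} with $J=\emptyset$, are both fine and close in spirit to what the paper does. But your Step C is where the proof stops being a proof, and you say so yourself: the entire content of the proposition is that the semicircular ``free dimension'' $1+\sum_{s\in S}\tau(p_s)^2$ need not match $1+\sum_{s'\in S'}\tau(p_{s'})^2$, so the unequal-parameter case is the whole theorem, and your proposed repair --- redoing the Fock-space cut-and-paste bookkeeping for a \emph{mixture} of semicircular and generalized circular generators so that absorption becomes $R$-fixing --- is a substantial unproved lemma (essentially a new variant of Proposition~\ref{prop:hyperfinitematrixmodel}), not a routine verification. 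As written, the argument has a hole exactly at the step that matters.

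The paper fills this hole without any mixed-generator analysis, by a different order of operations: \emph{uncompress first, absorb second}. From the proof of Proposition~\ref{prop:hyperfinitematrixmodel} one gets a state-preserving isomorphism replacing each $q_iy_iq_i$ by the full generalized circular element $y_i$, and this isomorphism is the identity on all of $W^*(R\cup(p_sx_sp_s)_{s\in S\cup T})$, not just on $R$. After uncompression, $(y_i)_{i\in I}$ generates a copy of $(T_H,\vphi_H)$ that is \emph{scalar-free} from everything else, so free absorption $(T_H,\vphi_H)\cong(T_H,\vphi_H)*(L(\F_\infty),\tau)$ can be applied to that free factor alone and extended by the identity elsewhere --- this is precisely how the absorption you worried about becomes $R$-fixing for free. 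One thereby adjoins a countably infinite free semicircular family $(x_{t'})_{t'\in T'}$; Dykema's purely tracial cutting-and-pasting \cite{MR1256179} then swallows $(p_sx_sp_s)_{s\in S}$ into $(x_{t'})_{t'\in T'}$ by an isomorphism fixing $W^*(R\cup(p_tx_tp_t)_{t\in T}\cup(y_i)_{i\in I})$ (rendering the value of $1+\sum_{s\in S}\tau(p_s)^2$ irrelevant), and uniqueness of $(T_H,\vphi_H)$ swaps $(y_i)_{i\in I}$ for $(y_{i'})_{i'\in I'}$ while fixing the rest; performing the mirror-image sequence on the primed side and composing yields the desired isomorphism fixing $W^*(R\cup(p_tx_tp_t)_{t\in T})$. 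So the one missing idea in your write-up is the uncompression trick of Proposition~\ref{prop:hyperfinitematrixmodel}; once it is deployed first, your ``only genuinely new technical point'' disappears entirely. (Note also that both arguments tacitly require $H$ nontrivial, i.e.\ $I,I'\neq\emptyset$; otherwise the statement would force $L(\F_t)\cong L(\F_{t'})$ for distinct parameters.)
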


\begin{proof}

From the proof of Proposition \ref{prop:hyperfinitematrixmodel}, we see that there is a state-preserving isomorphism 
$$
W^{*}(R \cup (p_{t}x_{t}p_{t})_{t \in T} \cup (p_{s}x_{s}p_{s})_{s \in S} \cup (q_{i}y_{i}q_{i})_{i \in I}) \cong W^{*}(R \cup (p_{t}x_{t}p_{t})_{t \in T} \cup (p_{s}x_{s}p_{s})_{s \in S} \cup (y_{i})_{i \in I}).
$$
which is the identity on $W^{*}(R \cup (p_{t}x_{t}p_{t})_{t \in T} \cup (p_{s}x_{s}p_{s})_{s \in S})$.  Let $T'$ be a countably infinite set disjoint from $S$, $S'$, and $T$, and $(x_{t'})_{t' \in T'}$ a free semicircular family free from $R \cup (x_{t})_{t \in T} \cup (x_{s})_{s \in S} \cup (x_{s'})_{s' \in S'} \cup (y_{i})_{i \in I} \cup (y_{i'})_{i' \in I'}$.  By free absorption, there is a state-preserving isomorphism 
$$
W^{*}(R \cup (p_{t}x_{t}p_{t})_{t \in T} \cup (p_{s}x_{s}p_{s})_{s \in S} \cup (y_{i})_{i \in I}) \cong W^{*}(R \cup (p_{t}x_{t}p_{t})_{t \in T} \cup (p_{s}x_{s}p_{s})_{s \in S} \cup (y_{i})_{i \in I} \cup (x_{t'})_{t' \in T'})
$$
which is the identity on $W^{*}(R \cup (p_{t}x_{t}p_{t})_{t \in T}\cup (p_{s}x_{s}p_{s})_{s \in S})$.  Via Dykema's cutting and pasting argument \cite{MR1256179}, there is a state-preserving isomorphism
$$
W^{*}(R \cup (p_{t}x_{t}p_{t})_{t \in T} \cup (p_{s}x_{s}p_{s})_{s \in S} \cup (y_{i})_{i \in I} \cup (x_{t'})_{t' \in T'}) \cong W^{*}(R \cup (p_{t}x_{t}p_{t})_{t \in T} \cup (y_{i})_{i \in I} \cup (x_{t'})_{t' \in T'}) 
$$
which is the identity on $W^{*}(R \cup (p_{t}x_{t}p_{t})_{t \in T}\cup (y_{i})_{i \in I})$.  Finally, by uniqueness of $(T_{H}, \vphi_{H})$, there is a a state-preserving isomorphism 
$$
W^{*}(R \cup (p_{t}x_{t}p_{t})_{t \in T} \cup (y_{i})_{i \in I} \cup (x_{t'})_{t' \in T'}) \cong W^{*}(R \cup (p_{t}x_{t}p_{t})_{t \in T} \cup (y_{i'})_{i' \in I'}, (x_{t'})_{t' \in T'}) 
$$
which is the identity on $W^{*}(R \cup (p_{t}x_{t}p_{t})_{t \in T} \cup (x_{t'})_{t' \in T'})$.  Composing these isomorphisms gives a state-preserving isomorphism
$$
W^{*}(R \cup (p_{t}x_{t}p_{t})_{t \in T} \cup (p_{s}x_{s}p_{s})_{s \in S} \cup (q_{i}y_{i}q_{i})_{i \in I}) \cong W^{*}(R \cup (p_{t}x_{t}p_{t})_{t \in T} \cup (y_{i'})_{i' \in I'} \cup (x_{t'})_{t' \in T'}) 
$$
which is the identity on $W^{*}(R \cup (p_{t}x_{t}p_{t})_{t \in T})$.  Similarly, there is a state-preserving isomorphism
$$
W^{*}(R \cup (p_{t}x_{t}p_{t})_{t\in T} \cup (p_{s'}x_{s'}p_{s'})_{s' \in S'} \cup (q_{i'}y_{i'}q_{i'})_{i' \in I'}) \cong W^{*}(R \cup (p_{t}x_{t}p_{t})_{t \in T} \cup (y_{i'})_{i' \in I'} \cup (x_{t'})_{t' \in T'}) 
$$
which is the identity on $W^{*}(R \cup (p_{t}x_{t}p_{t})_{t \in T})$.  This completes the proof.
\end{proof}

\begin{rem}
Note that $W^{*}(R, (p_{t}x_{t}p_{t})_{t \in T}, (p_{s}x_{s}p_{s})_{s \in S}, (q_{i}y_{i}q_{i})_{i \in I})$ equipped with the free product state $\tau *\vphi_H$ is isomorphic to $(T_{H}, \vphi_{H})$.
\end{rem}

Using freeness of the $x$'s from the $y$'s and Corollary \ref{cor:changegen}, we can upgrade Proposition \ref{prop:changegen} to the following corollary which will be useful for our notions of standard embeddings below.

\begin{cor}\label{cor:changegen2}
Let $I$, $I'$, and $J$  be disjoint sets which are finite or countable, and let  $\lambda_{i} \in (0, 1)$ for each $i \in I\cup I'\cup J$ and $H:=\langle \lambda_{i} : i \in I \cup J \rangle = \langle \lambda_{i'} : i' \in I' \cup J \rangle$.  Let $S,$ $S'$, and $T$ be finite or countable (possibly empty) disjoint sets.  Assume that we are given the following:
	\begin{itemize}
	\item Families of nonzero projections $(p_{s})_{s \in S}$, $(p_{s'})_{s' \in S'}$, $(p_{t})_{t\in T}$, $(q_{i})_{i \in I}$, $(q_{i'})_{i' \in I'}$, and $(q_j)_{j\in J}$ in $R$; 

	\item A free semicircular family $(x_{s})_{s \in S}$ free from $R$;

	\item A free semicircular family $(x_{s'})_{s \in S'}$ free from $R$;
	
	\item A free semicircular family $(x_t)_{t\in T}$ free from $R\cup (x_s)_{s\in S}\cup (x_{s'})_{s'\in S'}$;
		
	\item A free family  $(y_{i})_{i \in I}$ of generalized circular elements with respective parameters $\lambda_i$ that is free from $R \cup (x_{s})_{s \in S}\cup (x_t)_{t\in T}$;
	
	\item A free family  $(y_{i'})_{i' \in I'}$ of generalized circular elements with respective parameters $\lambda_{i'}$ that is free from $R \cup (x_{s'})_{s' \in S'}\cup (x_t)_{t\in T}$;
	
	\item A free family  $(y_{j})_{j \in J}$ of generalized circular elements with respective parameters $\lambda_j$ that is free from $R \cup (x_{s})_{s \in S}\cup (x_{s'})_{s'\in S'}\cup(x_t)_{t\in T}\cup (y_i)_{i\in I}\cup (y_{i'})_{i'\in I'}$;

	\end{itemize}
Then
	\[
		W^{*}(R \cup (p_{s}x_{s}p_{s})_{s \in S\cup T} \cup (q_{i}y_{i}q_{i})_{i \in I \cup I''}) \cong W^{*}(R \cup (p_{s'}x_{s'}p_{s'})_{s' \in S'\cup  T} \cup (q_{i'}y_{i'}q_{i'})_{i' \in I' \cup I''})
	\]
via a state-preserving isomorphism which is the identity on $W^{*}(R \cup (p_{t}x_{t}p_{t})_{t \in T} \cup (q_{j}y_{j}q_{j})_{j \in J})$.
\end{cor}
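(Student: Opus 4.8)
The plan is to mirror the proof of Proposition~\ref{prop:changegen}, carrying the fixed generalized circular family $(q_j y_j q_j)_{j \in J}$ along at every stage, and to replace the final ``uniqueness of $(T_H,\vphi_H)$'' step by an application of Corollary~\ref{cor:changegen}. Write $\mathcal F := W^*(R \cup (p_t x_t p_t)_{t\in T}\cup (q_j y_j q_j)_{j\in J})$ for the subalgebra that must be fixed; by the freeness hypotheses $\mathcal F$ is free (over $R$, in the appropriate amalgamated sense) from each of the families $(y_i)_{i\in I}$ and $(y_{i'})_{i'\in I'}$.

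First I would uncompress the two ``movable'' generalized circular families. Applying the proof of Proposition~\ref{prop:hyperfinitematrixmodel} one generator at a time (using freeness to hold the remaining generators fixed, exactly as in the opening step of Proposition~\ref{prop:changegen}), I replace each $q_i y_i q_i$ by $y_i$ and each $q_{i'} y_{i'} q_{i'}$ by $y_{i'}$. This yields state-preserving isomorphisms, each restricting to the identity on $W^*(R \cup (p_s x_s p_s)_{s\in S\cup T}\cup (q_j y_j q_j)_{j\in J}) \supseteq \mathcal F$ (respectively with $S'$), so that the left-hand algebra becomes $W^*(R\cup (p_s x_s p_s)_{s\in S\cup T}\cup (q_j y_j q_j)_{j\in J}\cup (y_i)_{i\in I})$ and similarly on the right.

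Next I would process the semicircular data exactly as in Proposition~\ref{prop:changegen}: introduce a countably infinite fresh free semicircular family $(x_{t'})_{t'\in T'}$, add it by free absorption while fixing $W^*(R\cup (p_s x_s p_s)_{s\in S\cup T}\cup (q_j y_j q_j)_{j\in J})$, and then use Dykema's cutting-and-pasting argument \cite{MR1256179} to absorb the compressed family $(p_s x_s p_s)_{s\in S}$ into $(x_{t'})_{t'\in T'}$, this time fixing $W^*(R\cup (p_t x_t p_t)_{t\in T}\cup (q_j y_j q_j)_{j\in J}\cup (y_i)_{i\in I})$. After these moves the left-hand algebra is $W^*(R\cup (p_t x_t p_t)_{t\in T}\cup (q_j y_j q_j)_{j\in J}\cup (y_i)_{i\in I}\cup (x_{t'})_{t'\in T'})$ and the right-hand algebra is the same with $(y_i)_{i\in I}$ replaced by $(y_{i'})_{i'\in I'}$, both identifications fixing $\mathcal F$.

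Finally I must pass from the $I$-presentation to the $I'$-presentation while fixing $\mathcal F$; this is the main obstacle. Corollary~\ref{cor:changegen} performs exactly this kind of change of generating set fixing a designated generalized circular family, but it applies only when $\langle \lambda_i : i\in I\rangle = \langle \lambda_{i'} : i'\in I'\rangle$, whereas here only the full groups $\langle \lambda_i : i\in I\cup J\rangle = \langle \lambda_{i'}: i'\in I'\cup J\rangle = H$ are assumed equal. To bridge this gap I would first append to each side a fresh free generalized circular copy of the $J$-family (parameters $\lambda_j$, hence lying in $H$), absorbed by free absorption while fixing the base; the $I$- and $I'$-parameter groups then both become $H$, so Corollary~\ref{cor:changegen} applies to the enlarged families and produces a state-preserving isomorphism fixing $W^*(R\cup (q_j y_j q_j)_{j\in J})$. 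Using freeness of the semicircular families $(p_t x_t p_t)_{t\in T}$ and $(x_{t'})_{t'\in T'}$ from the generalized circular families over $R$, this isomorphism extends to one fixing all of $\mathcal F$ together with $(x_{t'})_{t'\in T'}$, and the fresh $J$-copies are then removed by free absorption. Composing the isomorphisms from all stages, each of which fixes $\mathcal F$, yields the desired state-preserving isomorphism of the left- and right-hand algebras restricting to the identity on $\mathcal F = W^*(R \cup (p_t x_t p_t)_{t\in T}\cup (q_j y_j q_j)_{j\in J})$.
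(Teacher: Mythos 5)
Your first three paragraphs track the paper's own (very terse) derivation: the paper proves this corollary by exactly the route you describe, namely rerunning the chain of isomorphisms from Proposition~\ref{prop:changegen} with the $J$-family carried along, handling the semicircular data by free absorption and Dykema's cutting-and-pasting, and extending over the $x$'s by freeness of the $x$'s from the $y$'s; your uncompression step and your semicircular moves are correct. You have also correctly spotted the delicate point: Corollary~\ref{cor:changegen} as stated requires $\langle \lambda_i : i\in I\rangle = \langle \lambda_{i'} : i'\in I'\rangle$, whereas here only $\langle \lambda_i : i\in I\cup J\rangle = \langle \lambda_{i'} : i'\in I'\cup J\rangle$ is assumed. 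The problem is that your bridge across this mismatch does not work.

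The appending step --- adjoining a fresh free copy $(y_j^{(2)})_{j\in J}$ ``by free absorption while fixing the base'' --- is unjustified, and it is the crux of the whole argument. At that stage your algebra decomposes as $\cB\vee\cC$, where $\cB$ contains everything that must be fixed (in particular the compressions $q_jy_jq_j$) and the complement $\cC = W^*((y_i)_{i\in I}\cup(x_{t'})_{t'\in T'})\cong (T_{H_I},\vphi_{H_I})$, $H_I:=\langle\lambda_i : i\in I\rangle$, is free from $\cB$; after appending, the free complement relative to $\cB$ becomes isomorphic to $(T_H,\vphi_H)$. Free absorption only supplies state-preserving isomorphisms of the form $\mathrm{id}_{\cB}*\theta$, which requires the two complements to be isomorphic; since the group is a complete invariant of an almost periodic free Araki--Woods factor with its free quasi-free state, this fails whenever $H_I\subsetneq H$ --- which is precisely the situation your bridge is supposed to handle. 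Any isomorphism fixing $\cB$ pointwise must genuinely re-extract the $J$-parameters from the fixed generators $q_jy_jq_j$, and that assertion is the special case $I'=I\sqcup J$ of the corollary being proved; your appending step (and the mirror-image ``removal by free absorption'' at the end) is therefore circular. As a sanity check that no soft absorption argument can close this gap: with $S=S'=T=\emptyset$ and $I=\emptyset$ the left algebra coincides with the subalgebra to be fixed, so no isomorphism fixing it can surject onto the strictly larger right-hand algebra; the passage from $H_I$ to $H$ must instead be paid for with the concrete matricial machinery --- the cutting, splitting, and disintegration arguments of Lemmas~\ref{lem:hyperfinite_disintegration} and \ref{lem:cut_and_paste} as deployed in the proof of Proposition~\ref{prop:hyperfinitematrixmodel} --- applied to the given generators, not with abstract free absorption.
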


%%%%%%%%%%%%%%%%%%%%%%%%%%%%%%%%%%

\subsection{Standard embeddings}

With our hyperfinite matricial model for $(T_{H}, \vphi_{H})$, we will proceed as in Section 4 of \cite{MR1201693} and develop the notion of a ``standard embedding" of almost periodic free Araki--Woods factors.  After the definition, the proofs of Propositions \ref{prop:std_embedding_inductive_limit}, \ref{prop:std_embedding_compression}, and \ref{prop:std_embedding_free_product} follow the proofs in Section 4 of \cite{MR1201693} very closely .

\begin{defn}\label{def:standard}
Let $H$ and $H'$ be non-trivial countable subgroups of $\R^{+}$ with $H\leq H'$. We say that a modular inclusion $\alpha: (T_{H}, \vphi_{H}) \rightarrow (T_{H'}, \vphi_{H'})$ is a \textbf{standard embedding}  if there exist:   
\begin{itemize}

\item  Sets $I \subset I'$,  $\lambda_{i} \in (0, 1)$ for all $i \in I'$ such that $H = \langle \lambda_{i} : i \in I \rangle$, and $H' = \langle \lambda_{i'} : i' \in I' \rangle$;

\item  Sets $S \subset S'$, families of nonzero projections $(p_{s})_{s \in S'}$ and $(q_{i})_{i \in I'}$ in the hyperfinite II$_{1}$ factor $R$;

\item A free family  $\{x_{s},y_{i}\colon s\in S',\ i\in I'\}$ free from $R$ where $x_{s}$ is a semicircular operator, and $y_{i}$ is a generalized circular element with parameter $\lambda_{i}$;

\item  State preserving isomorphisms 
\begin{align*}
\beta: (T_{H}, \vphi_{H}) &\rightarrow W^{*}(R \cup (p_{s}x_{s}p_{s})_{s \in S} \cup (q_{i}y_{i}q_{i})_{i \in I})\\
\gamma: (T_{H'}, \vphi_{H'}) &\rightarrow W^{*}(R \cup (p_{s}x_{s}p_{s})_{s \in S'} \cup (q_{i}y_{i}q_{i})_{i \in I'})
\end{align*}
so that $\gamma \circ \alpha \circ \beta^{-1}$ is the canonical inclusion.
\end{itemize}

If $H = \{1\}$ (i.e. $I$ is empty), then we replace $(T_{H}, \vphi_{H})$ with $L(\F_{t}, \tau)$.  If $I'$ is also empty then this is just a repetition of the definition of standard embedding in  \cite{MR1201693}. 
\end{defn}

Before proving properties about standard embeddings, we need to show that the notion is independent of the generating sets of $H$ and $H'$, the projections $p_{s}$ and $q_i$, the semicircular
operators $x_s$, and the generalized circular elements $y_i$.

\begin{prop}\label{prop:changestandard}
Let $H$ and $H'$ be countable, non-trivial subgroups of $\R^{+}$, and suppose that $\alpha: (T_{H}, \vphi_{H}) \rightarrow (T_{H'}, \vphi_{H'})$ is a standard embedding. Let $I\subset I'$, 
$(\lambda_i)_{i\in I'}\subset (0,1)$, $S\subset S'$, $(p_s)_{s\in S'}$, $(q_i)_{i\in I'}$, $(x_s)_{s\in S'}$, $(y_i)_{i\in I'}$, $\beta$, and $\gamma$ be as in Definition~\ref{def:standard}. Suppose: 
	\begin{itemize}
	\item $J\subset J'$ are sets disjoint from $I'$, and $(\lambda_j)_{j\in J'}\subset (0,1)$ are such that $H=\<\lambda_j\colon j\in J\>$ and $H'=\<\lambda_{j'} \colon j'\in J'\>$;
	
	\item $T\subset T'$ are sets disjoint from $S'$, and $(p_t)_{t\in T'}$ and $(q_j)_{j\in J'}$ are families of nonzero projections in $R$;
	
	\item $\{x_t, y_j\colon t\in T',\ j\in J'\}$ is free from $R$ where $x_t$ is semicircular and $y_j$ is a generalized circular element with parameter $\lambda_j$.
	\end{itemize}
Then there exist state-preserving isomorphisms
	\begin{align*}
		\delta: (T_{H}, \vphi_{H}) &\rightarrow W^{*}(R \cup (p_{t}x_{t}p_{t})_{t \in T} \cup (q_{j}y_{j}q_{j})_{j \in J})\\
		\epsilon: (T_{H'}, \vphi_{H'}) &\rightarrow W^{*}(R \cup (p_{t}x_{t}p_{t})_{t \in T'} \cup (q_{j}y_{j}q_{j})_{j \in J'})
	\end{align*}
so that $\epsilon \circ \alpha \circ \delta^{-1}$ is the canonical inclusion.
\end{prop}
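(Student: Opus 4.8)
The plan is to reduce the statement to the construction of a single compatible pair of isomorphisms, and then to build that pair from the isomorphisms furnished by Corollary~\ref{cor:changegen2} together with functoriality of the amalgamated free product. Write
\[
	M_H := W^{*}(R \cup (p_s x_s p_s)_{s\in S}\cup (q_i y_i q_i)_{i\in I}), \qquad M_{H'} := W^{*}(R \cup (p_s x_s p_s)_{s\in S'}\cup (q_i y_i q_i)_{i\in I'})
\]
for the old models, so that $\gamma\circ\alpha\circ\beta^{-1}=\iota_M$ is the canonical inclusion $\iota_M\colon M_H\hookrightarrow M_{H'}$, and write $N_H,N_{H'}$ for the analogous new models built from the $(p_t,q_j,x_t,y_j)$ data, with canonical inclusion $\iota_N\colon N_H\hookrightarrow N_{H'}$. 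First I would observe that it suffices to produce state-preserving isomorphisms $\Phi\colon M_H\to N_H$ and $\Psi\colon M_{H'}\to N_{H'}$ satisfying $\Psi\circ\iota_M=\iota_N\circ\Phi$: setting $\delta:=\Phi\circ\beta$ and $\epsilon:=\Psi\circ\gamma$ then gives
\[
	\epsilon\circ\alpha\circ\delta^{-1}=\Psi\circ\gamma\circ\alpha\circ\beta^{-1}\circ\Phi^{-1}=\Psi\circ\iota_M\circ\Phi^{-1}=\iota_N\circ\Phi\circ\Phi^{-1}=\iota_N,
\]
which is the canonical inclusion, as required.

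To build the compatible pair I would first enlarge the ambient algebra to contain a \emph{fresh} family $\{\hat x_t,\hat y_j\colon t\in T'\setminus T,\ j\in J'\setminus J\}$ of the same type as the new extension generators ($\hat x_t$ semicircular, $\hat y_j$ generalized circular of parameter $\lambda_j$, with the same projections $p_t,q_j$), chosen free from $M_{H'}$ with amalgamation over $R$. Since the old data form a single free family, the level-$H$ subalgebra $M_H$ is generated by a free sub-collection and is free from both the old extension and the fresh generators. Because the old extension generators and the fresh generators each complete the old level-$H$ data to a generating family for $H'$, Corollary~\ref{cor:changegen2}, applied with $M_H$ as the designated fixed subalgebra, yields a state-preserving isomorphism
\[
	\kappa\colon M_{H'}\longrightarrow M'_{H'}:=W^{*}\big(R\cup(p_s x_s p_s)_{s\in S}\cup(q_i y_i q_i)_{i\in I}\cup(p_t\hat x_t p_t)_{t\in T'\setminus T}\cup(q_j\hat y_j q_j)_{j\in J'\setminus J}\big)
\]
that restricts to the identity on $M_H$.

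Next I would exploit the amalgamated decompositions $M'_{H'}=M_H\Asterisk_R B_{\mathrm{fresh}}$ and $N_{H'}=N_H\Asterisk_R B_{\mathrm{act}}$, where $B_{\mathrm{fresh}}:=W^{*}(R\cup(p_t\hat x_t p_t)_{t}\cup(q_j\hat y_j q_j)_{j})$ and $B_{\mathrm{act}}:=W^{*}(R\cup(p_t x_t p_t)_{t}\cup(q_j y_j q_j)_{j})$ (indices in $T'\setminus T$, $J'\setminus J$) are built over the \emph{same} index sets and the \emph{same} projections, differing only in the choice of free representatives. Using Corollary~\ref{cor:changegen2} with the level-$H$ generating set changed from $(S,I)$ to $(T,J)$ and $R$ held fixed, I obtain a state-preserving isomorphism $\theta\colon M_H\to N_H$ fixing $R$; applying the same corollary to the extension pieces (same indices, fresh versus actual representatives) gives a state-preserving isomorphism $\rho\colon B_{\mathrm{fresh}}\to B_{\mathrm{act}}$ fixing $R$. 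Since $\theta$ and $\rho$ are state-preserving and both fix $R$, functoriality of the reduced amalgamated free product assembles them into a state-preserving isomorphism $\Theta:=\theta\Asterisk_R\rho\colon M'_{H'}\to N_{H'}$ with $\Theta|_{M_H}=\theta$. Putting $\Psi:=\Theta\circ\kappa$ and $\Phi:=\theta$ then gives $\Psi|_{M_H}=\Theta\circ\kappa|_{M_H}=\Theta|_{M_H}=\theta=\Phi$, which is the desired compatibility.

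The hard part is the two-level coherence, and this is exactly why the intermediate algebra $M'_{H'}$ is needed. Corollary~\ref{cor:changegen2} only guarantees that its isomorphisms fix a prescribed common subalgebra, so in order to force $\Psi$ to respect the inclusions I must keep the level-$H$ subalgebra as the fixed anchor throughout. This is delicate because the old and new level-$H$ data use different generators and are only assumed free from $R$ (not jointly free), while the extension subgroups $\langle\lambda_i\colon i\in I'\setminus I\rangle$ and $\langle\lambda_j\colon j\in J'\setminus J\rangle$ need not coincide, so the old and new extension pieces cannot be matched directly. Passing first to a fresh copy of the new extension while fixing $M_H$ resolves both difficulties simultaneously: it replaces the old extension piece by one carrying the correct subgroup, so that the two extension pieces become isomorphic over $R$, yet leaves $M_H$ untouched so that the assembled $\Psi$ restricts to the level-$H$ isomorphism $\theta$. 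The only external input is functoriality of the amalgamated free product, which is standard.
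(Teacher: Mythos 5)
Your proposal is correct, and its first half coincides with the paper's proof: both reduce the statement to producing a commuting square of state-preserving isomorphisms $\Phi,\Psi$ between the old and new models (with $\delta:=\Phi\circ\beta$, $\epsilon:=\Psi\circ\gamma$), and both begin by applying Corollary~\ref{cor:changegen2} with the level-$H$ model held fixed to replace the extension generators indexed by $S'\setminus S$ and $I'\setminus I$ with generators indexed by $T'\setminus T$ and $J'\setminus J$ (your $\kappa$ is the paper's $\theta_1$). Where you diverge is the second step: the paper applies Corollary~\ref{cor:changegen2} a second time, now fixing the new extension part and changing the level-$H$ data from $(S,I)$ to $(T,J)$, and then simply defines $\Phi$ as the restriction of $\Psi=\theta_2\circ\theta_1$ to $M_H$, asserting commutativity ``by construction''; you instead decompose $M'_{H'}=M_H\Asterisk_R B_{\mathrm{fresh}}$ and $N_{H'}=N_H\Asterisk_R B_{\mathrm{act}}$ and glue separate isomorphisms $\Theta=\theta\Asterisk_R\rho$. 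Your route buys something real: the corollary as stated only guarantees an isomorphism of the ambient algebras that is the identity on the designated fixed part, and does not by itself say where the \emph{changing} submodel $W^*(R\cup(p_sx_sp_s)_{s\in S}\cup(q_iy_iq_i)_{i\in I})$ lands, which is exactly what the commutativity of the square requires; your amalgamated-free-product assembly makes $\Psi|_{M_H}=\theta$ explicit rather than asserted, at the cost of invoking functoriality of reduced amalgamated free products. Two small points to tighten. First, your fresh family should be taken $*$-free (with respect to the state) from $M_{H'}$, as in Definition~\ref{def:standard}; the phrase ``free from $M_{H'}$ with amalgamation over $R$'' misstates this---freeness with amalgamation over $R$ of the generated subalgebras $M_H$ and $B_{\mathrm{fresh}}$ is then a consequence of the joint (scalar) freeness of $R$, the old family, and the fresh family. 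Second, to apply functoriality you need $\theta$ and $\rho$ to intertwine the conditional expectations onto $R$, not merely the states; this does hold, since $R\subset\cM^{\vphi}$ so the $\vphi$-preserving expectation onto $R$ is unique, and a state-preserving isomorphism fixing $R$ automatically intertwines it---but it should be said. (Both your argument and the paper's also tacitly realize the old and new families in a common ambient algebra with the requisite joint freeness before invoking Corollary~\ref{cor:changegen2}; since all hypotheses are distributional this is harmless, but is worth a sentence.)
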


\begin{proof}
It suffices to show there exist state-preserving isomorphisms $\Phi$ and $\Psi$ so that the following diagram commutes:
	\[
		\begin{tikzpicture}
		\node at (-3, 3) {$W^{*}(R \cup (p_{s}x_{s}p_{s})_{s \in S} \cup (q_{i}y_{i}q_{i})_{i \in I})$};
		\draw[right hook->] (0, 3) -- (3, 3);
		\node at (1.5, 3.5) {$i_{1}$};
		\node at (6.1, 3) {$W^{*}(R \cup (p_{s}x_{s}p_{s})_{s \in S'} \cup (q_{i}y_{i}q_{i})_{i \in I'})$};
		\draw[->] (-3, 2.5)--(-3, .5);
		\node at (-3.5, 1.5) {$\Phi$};
		\node at (-3, 0) {$W^{*}(R \cup (p_{t}x_{t}p_{t})_{t \in T} \cup (q_{j}y_{j}q_{j})_{j \in J})$};
		\draw[right hook->] (0, 0) -- (3, 0);
		\node at (1.5, .5) {$i_{2}$};
		\node at (6.1, 0) {$W^{*}(R \cup (p_{t}x_{t}p_{t})_{t \in T'}\cup (q_{j}y_{j}q_{j})_{j \in J'})$};
		\draw[->] (6, 2.5)--(6, .5);
		\node at (6.5, 1.5) {$\Psi$};
		\end{tikzpicture}
	\]
where $i_{i}$ and $i_{2}$ are the canonical inclusions. Indeed, given such isomorphisms we simply take $\delta:= \Phi\circ \beta$ and $\epsilon:= \Psi\circ \gamma$.  

Using Corollary \ref{cor:changegen2}, there is a state-preserving isomorphism
	\begin{align*}
		\theta_{1} \colon  W^{*} &(R \cup (p_{s}x_{s}p_{s})_{s \in S'} \cup (q_{i}y_{i}q_{i})_{i \in I'})\\
				&\downarrow\\
			W^{*} &(R \cup (p_{s}x_{s}p_{s})_{s \in S}\cup (p_{t}x_{t}p_{t})_{t \in T'\setminus T} \cup (q_{i}y_{i}q_{i})_{i \in I} \cup (q_{j}y_{j}q_{j})_{j \in J'\setminus J}),
	\end{align*}
which is the identity on $W^{*}(R \cup (p_{s}x_{s}p_{s})_{s \in S} \cup (q_{i}y_{i}q_{i})_{i \in I})$.  There is also a state-preserving isomorphism
	\begin{align*}
		\theta_{2}: W^{*} &(R \cup (p_{s}x_{s}p_{s})_{s \in S} \cup (p_{t}x_{t}p_{t})_{t \in T'\setminus T} \cup (q_{i}y_{i}q_{i})_{i \in I} \cup (q_{j}y_{j}q_{j})_{j \in J'\setminus J})\\
			\downarrow &\\ 
		W^{*}&(R \cup (p_{t}x_{t}p_{t})_{t \in T'}\cup (q_{j}y_{j}q_{j})_{j \in J'})
	\end{align*}
which is the identity on $W^{*}(R \cup (p_{t'}x_{t'}p_{t'})_{t' \in T'\setminus T} \cup (q_{j'}y_{j'}q_{j'})_{j \in J'\setminus J})$,  Set $\Psi := \theta_{2} \circ \theta_{1}$, and set $\Phi$ to be the restriction of $\Psi$ to  $W^{*}(R \cup (p_{s}x_{s}p_{s})_{s \in S} \cup (q_{i}y_{i}q_{i})_{i \in I})$.  By construction, the above diagram commutes.
\end{proof}

\begin{prop}\label{prop:std_embedding_inductive_limit}
\begin{enumerate}
\item[]

\item[\normalfont{(i)}] Let $H_{1} \leq H_{2} \leq H_{3}$ be countable, non-trivial subgroups of $\R_{+}$.  If $\alpha_{1}: (T_{H_{1}}, \vphi_{H_{1}}) \rightarrow  (T_{H_{2}}, \vphi_{H_{2}})$ and $\alpha_{2}: (T_{H_{2}}, \vphi_{H_{2}}) \rightarrow  (T_{H_{3}}, \vphi_{H_{3}})$ are standard embeddings, then $\alpha_{2} \circ \alpha_{1}$ is a standard embedding.

\item[\normalfont{(ii)}] Let $H_{1} \leq H_{2} \leq H_{3} \leq \cdots \leq H_{n} \leq \cdots$ be countable, non-trivial subgroups of $\R^{+}$, and $\displaystyle H = \bigcup_{n \in \N} H_{n}$.  For each $n \in \N$, suppose that $\alpha_{n}: (T_{H_{n}}, \vphi_{H_{n}}) \rightarrow (T_{H_{n+1}}, \vphi_{H_{n+1}})$ is a standard embedding.  Let $(\cM, \vphi)$ be the inductive limit von Neumann algebra of the $[(T_{H_{n}}, \vphi_{H_{n}}), \alpha_{n}]$.  Then $(\cM, \vphi) \cong (T_{H}, \vphi_{H})$.
\end{enumerate}
\end{prop}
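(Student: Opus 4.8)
The plan is to leverage Proposition~\ref{prop:changestandard}, which frees us to re-present any given standard embedding using whatever compatible defining data we wish, together with the structural fact recorded in the remark following Proposition~\ref{prop:changegen}: namely that $W^{*}(R \cup (p_s x_s p_s)_{s \in S} \cup (q_i y_i q_i)_{i \in I})$, equipped with $\tau * \vphi_H$, is exactly $(T_H, \vphi_H)$ whenever $H = \langle \lambda_i : i \in I\rangle$. Throughout I will fix a single copy of the hyperfinite $\mathrm{II}_1$ factor $R$ and arrange all models inside one free product over $R$.

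For part (i), I would first apply Proposition~\ref{prop:changestandard} to $\alpha_2$ in order to re-present it so that the isomorphism it uses on its source $(T_{H_2}, \vphi_{H_2})$ coincides with the isomorphism $\gamma_1$ that $\alpha_1$ uses on its target. After this adjustment we obtain a single increasing chain of index sets $I_1 \subset I_2 \subset I_3$ and $S_1 \subset S_2 \subset S_3$, a fixed free family $\{x_s, y_i\}$ free from $R$, and state-preserving isomorphisms $\beta_1$, $\beta_2 = \gamma_1$, $\gamma_2$ onto the nested subalgebras $N_1 \subset N_2 \subset N_3$ of $W^{*}(R \cup \{x_s, y_i\})$, with each witnessing map realizing $\alpha_k$ as the canonical inclusion. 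Then
	\[
		\gamma_2 \circ (\alpha_2 \circ \alpha_1) \circ \beta_1^{-1} = (\gamma_2 \circ \alpha_2 \circ \beta_2^{-1}) \circ (\beta_2 \circ \alpha_1 \circ \beta_1^{-1}),
	\]
and since $\beta_2 = \gamma_1$ both parenthesized factors are canonical inclusions, so their composite is the canonical inclusion $N_1 \hookrightarrow N_3$. This exhibits $\alpha_2 \circ \alpha_1$ as a standard embedding with data $I_1 \subset I_3$, $S_1 \subset S_3$ and witnessing isomorphisms $\beta_1, \gamma_2$.

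For part (ii), I would build a single coherent ambient model into which every $(T_{H_n}, \vphi_{H_n})$ embeds as a canonical subalgebra. Realize $\alpha_1$ via its defining data, giving $\beta_1 \colon (T_{H_1}, \vphi_{H_1}) \to N_1$ and $\gamma_1 \colon (T_{H_2}, \vphi_{H_2}) \to N_2$ with $N_1 \subset N_2$ canonical. Proceeding inductively, suppose $\alpha_1, \dots, \alpha_{n-1}$ have been realized as canonical inclusions $N_1 \subset \cdots \subset N_n$ inside $W^{*}(R \cup \{x_s, y_i\})$, with state-preserving isomorphisms $\gamma_{k} \colon (T_{H_{k+1}}, \vphi_{H_{k+1}}) \to N_{k+1}$. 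Applying Proposition~\ref{prop:changestandard} to $\alpha_n$, I would re-present it so that its source isomorphism equals $\gamma_{n-1}$, whence its source model is exactly $N_n$; its target isomorphism then furnishes $N_{n+1} \supset N_n$ realizing $\alpha_n$ as the canonical inclusion, after enlarging the index sets by finitely or countably many new generators and projections in an increasing fashion.

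Setting $S = \bigcup_n S_n$ and $I = \bigcup_n I_n$, the union $\bigcup_n N_n$ is $*$-strongly dense in $W^{*}(R \cup (p_s x_s p_s)_{s \in S} \cup (q_i y_i q_i)_{i \in I})$, and because all structure maps are state-preserving this algebra, with $\tau * \vphi_H$, is precisely the inductive limit $(\cM, \vphi)$. Since $\{x_s, y_i\}$ is free from $R$ and $H = \bigcup_n H_n = \bigcup_n \langle \lambda_i : i \in I_n\rangle = \langle \lambda_i : i \in I\rangle$, the remark following Proposition~\ref{prop:changegen} identifies this algebra with $(T_H, \vphi_H)$, as desired. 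The main obstacle is the coherence of the inductive construction: each invocation of Proposition~\ref{prop:changestandard} rewrites a presentation, so one must verify that a single fixed $R$ and a single increasing free family $\{x_s, y_i\}$ suffice simultaneously at every stage, and that re-presenting $\alpha_n$ to match $N_n$ genuinely extends the already-constructed chain rather than disturbing it. This bookkeeping is exactly where we follow the inductive-limit argument of \cite{MR1201693} closely.
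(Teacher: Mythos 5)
Your overall architecture is the same as the paper's: realize $\alpha_1$ and $\alpha_2$ (resp.\ the chain $\alpha_n$) inside a common ambient model, match them along $(T_{H_2},\vphi_{H_2})$, compose canonical inclusions, and for (ii) identify the inductive limit of the increasing chain $N_1\subset N_2\subset\cdots$ with $W^*(R\cup(p_sx_sp_s)_{s\in S}\cup(q_iy_iq_i)_{i\in I})\cong (T_H,\vphi_H)$, $S=\bigcup_n S_n$, $I=\bigcup_n I_n$, using the remark after Proposition~\ref{prop:changegen} and $H=\bigcup_n H_n=\langle\lambda_i:i\in I\rangle$. That final identification, and the observation that $\bigcup_n N_n$ generates the full model since each $N_n\supset R$, are correct and are how the paper (following \cite[Proposition 4.3(ii)]{MR1201693}) concludes (ii).

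However, there is a genuine gap at the pivotal step, repeated in both (i) and the inductive step of (ii): you invoke Proposition~\ref{prop:changestandard} to ``re-present $\alpha_2$ so that the isomorphism it uses on its source coincides with $\gamma_1$.'' Proposition~\ref{prop:changestandard} does not license this: its conclusion is existential in the witnessing isomorphisms --- given new compatible data it produces \emph{some} $\delta,\epsilon$ with $\epsilon\circ\alpha_2\circ\delta^{-1}$ canonical --- and nothing allows you to prescribe $\delta=\gamma_1$. If you apply it with the new data taken to be that of $\alpha_1$'s target model $N_2$, you obtain a state-preserving isomorphism $\delta\colon (T_{H_2},\vphi_{H_2})\to N_2$, and then $\epsilon\circ\alpha_2\circ\gamma_1^{-1}$ equals the canonical inclusion composed with the state-preserving automorphism $\delta\circ\gamma_1^{-1}$ of $N_2$, which need not be the identity nor extend along the embedding in a presentation-preserving way. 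Closing this gap is precisely the technical content of the paper's proof of (i): it keeps the two witnesses $\gamma$ (target of $\alpha_1$) and $\overline{\gamma}$ (source of $\alpha_2$) distinct, arranges in advance (this is the only thing Proposition~\ref{prop:changestandard} is used for) that all the semicircular and generalized circular families are mutually free within one ambient algebra, and then transports $\alpha_2$'s presentation by $\gamma\circ\overline{\gamma}^{-1}$, extended to the larger model by sending the additional free generators $x_t$ ($t\in T'\setminus T$) and $\tilde{y}_i$ ($i\in I_3\setminus I_2$) to themselves. The transported projections $\overline{p_t}=(\gamma\circ\overline{\gamma}^{-1})(p_t)$ then land in the centralizer $W^*(R\cup(p_sx_sp_s)_{s\in S'}\cup(y_i)_{i\in I_2})^{\vphi}$ rather than in $R$; since that centralizer is a factor, one chooses unitaries $u_t$ in it with $u_t^*\overline{p_t}u_t\in R$ and verifies that $R$, $(x_s)_{s\in S'}$, $(u_t^*x_tu_t)_{t\in T'\setminus T}$ remains a free family, free from the generalized circular families, so the composite presentation is again of standard form. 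Your proposal compresses all of this into a single unjustified appeal to Proposition~\ref{prop:changestandard}; with that step proved as above, your composition identity and the inductive-limit bookkeeping in (ii) do go through.
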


\begin{proof}
(i): Based on Proposition \ref{prop:changestandard} and its proof, we may assume that there are countable sets $I_{1} \subset I_{2} \subset I_{3}$,  $S \subset S'$, $T \subset T'$ with $S'$ and $T'$ disjoint, and $\lambda_{i} \in (0, 1)$ for each $i \in I_{3}$ satisfying $\langle \lambda_{i} : i \in I_{j}\rangle = H_{j}$.  We can further assume that there are semicircular families $(x_{s})_{s \in S'}$ and $(x_{t'})_{t' \in T"}$ both free from $R$; free families $(y_{i})_{i \in I_{2}}$ and $(\tilde{y_{i}})_{i \in I_{3}}$ of generalized circular elements with respective parameters $\lambda_{i}$ that are free from each other, $(x_{s})_{s \in S'}$, $(x_{t})_{t \in T}$, and $R$; families of projections $(p_{s})_{s \in S'}$ and $(p_{t})_{t \in T'}$ in $R$; and state-preserving isomorphisms
\begin{align*}
	\beta: (T_{H_{1}}, \vphi_{H_{1}}) &\rightarrow W^{*}(R \cup (p_{s}x_{s}p_{s})_{s \in S} \cup (y_{i})_{i \in I_{1}})\\
	\gamma: (T_{H_{2}}, \vphi_{H_{2}}) &\rightarrow W^{*}(R \cup (p_{s}x_{s}p_{s})_{s \in S'} \cup (y_{i})_{i \in I_{2}})\\
	\overline{\gamma}: (T_{H_{2}}, \vphi_{H_{2}}) &\rightarrow W^{*}(R \cup (p_{t}x_{t}p_{t})_{t \in T} \cup (\tilde{y_{i}})_{i \in I_{2}})\\
	\delta: (T_{H_{3}}, \vphi_{H_{3}}) &\rightarrow W^{*}(R \cup (p_{t}x_{t}p_{t})_{t \in T'} \cup (\tilde{y_{i}})_{i \in I_{3}})
\end{align*}
so that $\gamma \circ \alpha_{1} \circ \beta^{-1}$ and $\delta \circ \alpha_{2} \circ \overline{\gamma}^{-1}$ are the canonical inclusions. Utilizing $\gamma \circ \overline{\gamma}^{-1}$, we obtain an isomorphism
	\[
		\overline{\delta}: (T_{H_{3}}, \vphi_{H_{3}}) \rightarrow W^{*}(R \cup (p_{s}x_{s}p_{s})_{s \in S'} \cup (\overline{p_{t}}x_{t}\overline{p_{t}})_{t \in T'\setminus T} \cup (y_{i})_{i \in I_{2}} \cup (\tilde{y_{i}})_{i \in I_{3}\setminus I_{2}}),
	\]
where $\overline{p_{t}} \in  W^{*}(R \cup (p_{s}x_{s}p_{s})_{s \in S'} \cup (y_{i})_{i \in I_{2}} )^{\vphi}$ and $\alpha_{2} \circ \alpha_{1} = \overline{\delta}^{-1} \circ i \circ \beta$ (here $i$ is the canonical inclusion).  Since $W^{*}(R\cup (p_{s}x_{s}p_{s})_{s \in S'} \cup (y_{i})_{i \in I_{2}}, )^{\vphi}$ is a factor, choose a unitary
	\[
		u_{t} \in W^{*}(R \cup (p_{s}x_{s}p_{s})_{s \in S'} \cup (y_{i})_{i \in I_{2}}, )^{\vphi}
	\]
so that $u_{t}^{*}\overline{p_{t}}u_{t} \in R$ for each $t \in T'\setminus T$.  Then $R$, $(x_{s})_{s \in S'}$, and $(u^{*}_{t}x_{t}u_{t})_{t \in T'\setminus T}$ is a free family and is free from $(y_{i})_{i \in I_{2}}$ and $(\tilde{y_{i}})_{i \in I_{3}}$.  This means that $\overline{\delta}$ is valued in
	\[
		 W^{*}(R \cup (p_{s}x_{s}p_{s})_{s \in S'}\cup ((u_{t}^{*}\overline{p_{t}}u_{t})(u_{t}^{*}x_{t}u_{t})(u^{*}_{t}\overline{p_{t}}u_{t})_{t \in T'\setminus T} \cup (y_{i})_{i \in I_{2}} \cup (\tilde{y_{i}})_{i \in I_{3}\setminus I_{2}})
	\]
so that $\alpha_{2} \circ \alpha_{1}$ is standard.

(ii): This follows directly from the above proof and (ii) of \cite[Proposition 4.3]{MR1201693}
\end{proof}

\begin{prop}\label{prop:std_embedding_compression}
Let $H \leq H'$ be countable, non-trivial subgroups of $\R^{+}$, $\alpha: (T_{H}, \vphi_{H}) \rightarrow (T_{H'}, \vphi_{H'})$ a modular inclusion, and $p \in T_{H}^{\vphi_{H}}$ a nonzero projection.  Then $\alpha$ is a standard embedding if and only if $\alpha\mid_{pT_{H}p}$ is a standard embedding.  \end{prop}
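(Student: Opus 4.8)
The plan is to follow the proof of the corresponding compression result for standard embeddings of interpolated free group factors in Section~4 of \cite{MR1201693}, transported to the present setting by means of the hyperfinite matricial model built up in Lemmas~\ref{lem:hyperfinite_disintegration} and~\ref{lem:cut_and_paste} and Proposition~\ref{prop:hyperfinitematrixmodel}. First I would record two preliminary observations. Since $\alpha$ is a modular inclusion it intertwines the modular automorphism groups, so $\alpha(p)\in T_{H'}^{\vphi_{H'}}$; together with Lemma~\ref{lem:antman}(1) this yields $(pT_Hp,\vphi_H^p)\cong(T_H,\vphi_H)$ and $(\alpha(p)T_{H'}\alpha(p),\vphi_{H'}^{\alpha(p)})\cong(T_{H'},\vphi_{H'})$, so that $\alpha|_{pT_Hp}$ is a bona fide modular inclusion of the same type of algebras. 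Secondly, standardness of $\alpha|_{pT_Hp}$ depends only on the equivalence class of $p$ in the centralizer $T_H^{\vphi_H}$: for a unitary $u\in T_H^{\vphi_H}$, the maps $\Ad(u)$ and $\Ad(\alpha(u))$ are state-preserving isomorphisms conjugating $\alpha|_{pT_Hp}$ to $\alpha|_{upu^*T_Hupu^*}$, so one is standard if and only if the other is.

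For the forward implication, suppose $\alpha$ is standard, with model data $R$, $S\subset S'$, $I\subset I'$, $(p_s)$, $(q_i)$, $(x_s)$, $(y_i)$ and isomorphisms $\beta,\gamma$. In the model, $R$ is contained in the centralizer, a copy of $L(\F_\infty)$; since this centralizer is a factor, $\beta(p)$ is equivalent within it to a projection of $R$, and by the second preliminary observation (applied via $\beta$) I may assume $\beta(p)=p_0\in R$. Transporting through $\beta$ and $\gamma$, the restriction $\alpha|_{pT_Hp}$ becomes the compression by $p_0\in R$ of the canonical inclusion between the two standard configurations. I would then compress $R$ to the hyperfinite $\mathrm{II}_1$ factor $p_0Rp_0$ and invoke the compression formulas of the matricial model: Dykema's cutting and pasting \cite{MR1256179} for the semicircular generators $p_sx_sp_s$, and Lemmas~\ref{lem:hyperfinite_disintegration} and~\ref{lem:cut_and_paste} together with Proposition~\ref{prop:hyperfinitematrixmodel} for the generalized circular generators $q_iy_iq_i$. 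These realize each compressed configuration again as a standard configuration over $p_0Rp_0$, with the generators indexed by $S,I$ remaining among those indexed by $S',I'$. Applying the identical cutting procedure to domain and codomain preserves the nesting, so the compressed inclusion is again canonical and $\alpha|_{pT_Hp}$ is standard.

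Conversely, suppose $\alpha|_{pT_Hp}$ is standard, with a model over a hyperfinite $\mathrm{II}_1$ factor $R_0$ and nested index sets $S\subset S'$, $I\subset I'$. Here I would reverse the compression by amplifying. Write $R_0=p_0Rp_0$ for a hyperfinite $\mathrm{II}_1$ factor $R$ and a projection $p_0\in R$ with $\tau_R(p_0)=\vphi_H(p)$, and realize the amplifications $pT_Hp\subset T_H$ and $\alpha(p)T_{H'}\alpha(p)\subset T_{H'}$ via partial isometries in the respective centralizers matched to partial isometries in $R$ amplifying $p_0Rp_0$ to $R$. Using Lemma~\ref{lem:hyperfinite_disintegration} in the amplifying direction—exactly as in the proof of Proposition~\ref{prop:hyperfinitematrixmodel}—each compressed generator over $R_0$ is realized as the $p_0$-corner of a generator over $R$. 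Carrying this out compatibly for the small and the big index sets produces model isomorphisms $\beta,\gamma$ for $T_H$ and $T_{H'}$ with $\gamma\circ\alpha\circ\beta^{-1}$ the canonical inclusion, so $\alpha$ is standard. The freedom needed to align the various choices of generating sets, projections, and generators is supplied by Propositions~\ref{prop:changegen} and~\ref{prop:changestandard} and Corollary~\ref{cor:changegen2}.

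I expect the converse (amplification) direction to be the main obstacle. The difficulty is not the amplification of a single configuration but carrying it out simultaneously for the domain and codomain over one common pair $(R,p_0)$, so that the nested structure certifying the canonical inclusion survives. Matching the partial isometries that implement the amplifications $pT_Hp\subset T_H$ and $\alpha(p)T_{H'}\alpha(p)\subset T_{H'}$ with a single amplification $p_0Rp_0\subset R$ is the delicate bookkeeping, and it is precisely here that the independence of the standard-embedding notion from its defining data, established in Proposition~\ref{prop:changestandard}, is indispensable.
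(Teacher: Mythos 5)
Your forward direction is essentially the paper's argument, though the paper executes it more economically: after conjugating by unitaries in the centralizer so that $\beta(p),\gamma(p)\in R$, it does \emph{not} need any cutting-and-pasting of the generators $p_sx_sp_s$ and $q_iy_iq_i$ under compression. Instead, invoking the independence of the standard-embedding data (Proposition~\ref{prop:changestandard}, resting on free absorption via Corollary~\ref{cor:changegen2}), it assumes from the outset that $p_s\leq p$ and $q_i\leq p$ for all $s\in S'$, $i\in I'$; then compressing by $p$ visibly carries the canonical inclusion over $R$ to the canonical inclusion over $pRp$, with nothing to re-normalize. Your appeal to Lemmas~\ref{lem:hyperfinite_disintegration} and~\ref{lem:cut_and_paste} would also work here, but it is machinery the WLOG renders unnecessary.

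Where you genuinely diverge is the converse, and this is where your proposal as written has a hole you yourself flag: the simultaneous amplification of the nested domain and codomain configurations over a common pair $(R,p_0)$, with partial isometries matched so that the canonical inclusion survives, is precisely the content of the claim, and your sketch does not carry it out (it is completable, e.g.\ by choosing the amplifying partial isometries $u_j$ in $T_H^{\vphi_H}$ and using $\alpha(u_j)$ in the codomain together with the patching map $\tilde\gamma(x)=\sum_{j,k}u_j\gamma(u_j^*xu_k)u_k^*$ from the proof of Proposition~\ref{prop:hyperfinitematrixmodel}, but none of this is in your text). The paper sidesteps the amplification problem entirely with one observation you missed: if $\alpha$ is standard then so is
\[
(\alpha\otimes\id)\colon (T_H,\vphi_H)\otimes \underset{\frac1n,\dots,\frac1n}{M_n(\C)} \longrightarrow (T_{H'},\vphi_{H'})\otimes \underset{\frac1n,\dots,\frac1n}{M_n(\C)},
\]
and since any amplification of $\alpha|_{pT_Hp}$ --- in particular $\alpha$ itself --- is conjugate to a compression of $\alpha|_{pT_Hp}\otimes\id$ by a projection in the centralizer, the converse reduces to the forward (compression) direction, exactly as in Dykema's tracial argument in \cite{MR1201693}. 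So only one implication ever needs a real proof. I would recommend restructuring along these lines: your direct amplification buys nothing over the tensor trick and is where all the unresolved bookkeeping lives.
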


\begin{proof}
It is straightforward to see that if $\alpha$ is a standard embedding, then
$$
(\alpha \otimes \id): (T_{H}, \vphi_{H}) \otimes \underset{\frac{1}{n}, \dots, \frac{1}{n}}{M_{n}(\C)} \rightarrow (T_{H'}, \vphi_{H'}) \otimes \underset{\frac{1}{n}, \dots, \frac{1}{n}}{M_{n}(\C)}
$$
is a standard embedding.  It therefore suffices to prove that $\alpha$ being a standard embedding implies $\alpha\mid_{pT_{H}p}$ is a standard embedding.  Let $I \subset I'$, $S \subset S'$, $(p_{s})_{s \in S'}$, $(q_{i})_{i \in I'}$, $(x_{s})_{s \in S'}$, $(\lambda_{i})_{i \in I'}$, $(y_{i})_{i \in I'}$, $\beta$, and $\gamma$ be as in Definition \ref{def:standard}.  Without loss of generality, we can assume that (after conjugating by unitaries in the centralizer) $\beta(p) \in R$ and $\gamma(p) \in R$.  We will simply call these images $p$.  We are allowed to assume that $p_{s} \leq p$ and $q_{i} \leq p$ for all $s \in S'$.  $\alpha\mid_{pT_{H}p}$ is therefore conjugate to the canonical inclusion
	\[
		W^{*}(pRp\cup (p_{s}x_{s}p_{s})_{s \in S}\cup (q_{i}y_{i}q_{i})_{i \in I}) \hookrightarrow W^{*}(pRp \cup (p_{s}x_{s}p_{s})_{s \in S'} \cup (q_{i}y_{i}q_{i})_{i \in I'})
	\]
establishing that $\alpha\mid_{pT_{H}p}$ is a standard embedding.
\end{proof}

\begin{prop}\label{prop:std_embedding_free_product}
Let $H < \R_{+}$ be a countable, non-trivial subgroup. The following are standard embeddings:
\begin{enumerate}

\item[\normalfont{(i)}] The canonical inclusion $i: (A, \phi) \hookrightarrow  (A, \phi) * (B, \psi)$ where $A$ is either an interpolated free group factor with trace $\phi$, or an almost periodic free Araki--Woods factor with free quasi-free state $\phi$, and $B$ is either an interpolated free group factor with trace $\psi$, or an almost periodic free Araki--Woods factor with free quasi-free state $\psi$

\item[\normalfont{(ii)}] the canonical inclusion $j:  (T_{H}, \vphi_{H}) \hookrightarrow (T_{H}, \vphi_{H}) * (B, \psi)$ where $B$ is finite-dimensional or $\cB(\cH)$ for $\cH$ separable and infinite-dimensional and $\psi$ a faithful normal state.

\end{enumerate}
\end{prop}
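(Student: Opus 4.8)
The plan is to mirror the arguments of Section~4 of \cite{MR1201693}, replacing Dykema's free-semicircular model for $L(\F_t)$ with the hyperfinite matricial model for $(T_H,\vphi_H)$ developed above (the Remark following Proposition~\ref{prop:changegen}, together with Corollary~\ref{cor:changegen2} and Proposition~\ref{prop:changestandard}). Throughout, the strategy is to realize every factor appearing in the free product as $W^*(R\cup(p_sx_sp_s)_{s}\cup(q_iy_iq_i)_{i})$ over a \emph{common} hyperfinite $\mathrm{II}_1$ factor $R$, with the $x_s$ semicircular, the $y_i$ generalized circular, and $\{x_s,y_i\}$ free from $R$; the content of both statements is then to check that the canonical inclusion is, after conjugating by a state-preserving isomorphism, the inclusion of one such standard model into a larger one sharing the same $R$, so that Definition~\ref{def:standard} applies. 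Proposition~\ref{prop:changestandard} frees us from tracking the particular generating sets, projections, and generators, so it suffices to produce any one such pair of realizations.

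For (i), I would first realize $A$ and $B$ in standard form over free copies $R_A$ and $R_B$ of $R$, say $A=W^*(R_A\cup\Sigma_A)$ and $B=W^*(R_B\cup\Sigma_B)$, with $\Sigma_A$ free from $R_A$ and $\Sigma_B$ free from $R_B$ (when $A$ or $B$ is an interpolated free group factor the corresponding family consists only of compressed semicirculars, per the $H=\{1\}$ clause of Definition~\ref{def:standard}). In $A*B$ the copies $R_A,R_B$ are free, and $\Sigma_A,\Sigma_B$ are free from one another and from the respective $R$'s. The crux is to merge $R_A$ and $R_B$ into a single hyperfinite $R$: the free product $R_A*R_B$ can be re-expressed over a single $R$ together with extra free semicircular generators, and by applying Lemma~\ref{lem:hyperfinite_disintegration}, the cut-and-paste Lemma~\ref{lem:cut_and_paste}, and Corollary~\ref{cor:changegen2} one rewrites $A*B$ as $W^*(R\cup\Sigma)$ with $\Sigma\supseteq\Sigma_A$ and with $\Sigma$ still a free family of compressed semicircular and generalized circular elements free from the merged $R$. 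As the $A$-generators form a sub-collection sharing the same $R$, the inclusion $A\hookrightarrow A*B$ is standard.

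For (ii), I would reduce to (i) using free absorption. After diagonalizing $\psi$ so that $\psi(e_{i,j})=\delta_{i,j}\alpha_i$, Proposition~\ref{prop:L(Z)B(H)} (when $B=\cB(\cH)$) or the computations of Section~\ref{sec:fd} (when $B$ is finite-dimensional) give $(L(\F_\infty),\tau)*(B,\psi)\cong(T_{H_B},\vphi_{H_B})$, to be read as $(L(\F_\infty),\tau)$ when $\psi$ is a trace. Using free absorption to write $(T_H,\vphi_H)=\mathcal{A}*\mathcal{L}$ with $\mathcal{A}\cong(T_H,\vphi_H)$ and $\mathcal{L}\cong(L(\F_\infty),\tau)$ free, associativity of the free product yields
\[
(T_H,\vphi_H)*(B,\psi)\cong\mathcal{A}*\big(\mathcal{L}*(B,\psi)\big)\cong(T_H,\vphi_H)*(T_{H_B},\vphi_{H_B}),
\]
and under this identification $j$ becomes the free product of $\id_{\mathcal{A}}$ with the canonical inclusion $\mathcal{L}\hookrightarrow\mathcal{L}*(B,\psi)$. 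It then remains to see that this last map is standard and that free-producting it with the identity on $\mathcal{A}$ preserves standardness; both are obtained by the common-$R$ matricial argument of part~(i), with $B$ now incorporated via Shlyakhtenko's matricial model exactly as in Lemma~\ref{lem:hyperfinite_disintegration}.

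The main obstacle in both parts is precisely the merging step: after absorbing a second hyperfinite factor (or the factor $B$) into a single $R$, the previously free families $\Sigma_A$ and $\Sigma_B$ must be re-expressed as compressed semicircular and generalized circular elements that remain \emph{free} from the merged $R$ and from each other. This is the technical role played by Lemmas~\ref{lem:hyperfinite_disintegration} and \ref{lem:cut_and_paste} and by Corollary~\ref{cor:changegen2}, and verifying that the re-encoding can be carried out simultaneously for all generators while preserving the ambient state is where the argument demands the most care. Once this is in place, Propositions~\ref{prop:std_embedding_inductive_limit} and \ref{prop:std_embedding_compression} may be used freely to recombine the resulting standard embeddings.
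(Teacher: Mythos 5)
Your part~(i) is essentially the paper's argument: realize $A$ and $B$ over free copies $R_A$, $R_B$ of the hyperfinite $\mathrm{II}_1$ factor and merge the two into a single base. The merge is exactly \cite[Corollary~3.6]{MR1256179} ($W^*(R_A\cup R_B)=W^*(R_A\cup\{x\})$ for a semicircular $x$ free from $R_A$), and the re-encoding step you worry about is handled more simply than you suggest: one conjugates the $B$-side projections by unitaries in the merged $\mathrm{II}_1$ factor (which lies in the centralizer) so that they land in $R_A$; this preserves the distributions and the freeness of the conjugated semicircular and generalized circular elements from the merged base, so the target is visibly in standard form over $R_A$ with the $A$-generators as a sub-collection. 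Lemmas~\ref{lem:hyperfinite_disintegration} and \ref{lem:cut_and_paste} are not really the tools here; they concern compressions by partial isometries in $R$, not the merge.

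Part~(ii), however, has a genuine gap. You reduce $j$ to $\id_{\mathcal{A}}*\iota$, where $\iota\colon \mathcal{L}\hookrightarrow \mathcal{L}*(B,\psi)$ with $\mathcal{L}\cong L(\F_\infty)$, and you assert that standardness of $\iota$ is ``obtained by the common-$R$ matricial argument of part~(i), with $B$ incorporated via Shlyakhtenko's matricial model exactly as in Lemma~\ref{lem:hyperfinite_disintegration}.'' This does not go through: unlike the situation in~(i), the algebra $B$ (finite-dimensional, or $\cB(\cH)$ with an arbitrary faithful normal state) carries no hyperfinite $\mathrm{II}_1$ standard model to merge over a common $R$, and in Shlyakhtenko's model the generalized circular elements encoding $(B,\psi)$ only materialize after compressing by a projection such as $e_{0,0}\in B$ (cf.\ the proof of Proposition~\ref{prop:L(Z)B(H)}). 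Such a projection does not lie in the source $\mathcal{L}$, so compressing by it destroys precisely the structure you need --- the image of $\mathcal{L}$ exhibited as a sub-collection of generators over the same $R$. In effect, the standardness of $\iota$ is an instance of statement~(ii) itself (with $T_H$ replaced by its tracial analogue), so your reduction is circular at this point. The paper's proof pivots differently: using $(T_H,\vphi_H)\cong (T_H,\vphi_H)\otimes M_n(\C)$, it compresses by a minimal projection $p$ of $M_n(\C)$ lying in the \emph{source's} centralizer, applies Lemma~\ref{lem:Dykema} to identify
\[
p\bigl[(T_H,\vphi_H)*(B,\psi)\bigr]p \cong p\left[\underset{\frac1n,\ldots,\frac1n}{M_n(\C)}*(B,\psi)\right]p \;*\; pT_Hp,
\]
where for $n$ large the first free factor is an interpolated free group factor or a free Araki--Woods factor by the computations of Section~\ref{sec:fd}, so that part~(i) applies to $j\mid_{pT_Hp}$; Proposition~\ref{prop:std_embedding_compression} then upgrades this to standardness of $j$. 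Your proposal would be repaired by replacing the $\mathcal{A}*\mathcal{L}$ decomposition with this compression argument (a lesser issue: your identification $(L(\F_\infty),\tau)*(B,\psi)\cong (T_{H_B},\vphi_{H_B})$ for finite-dimensional non-tracial $B$ is also not literally among the Section~\ref{sec:fd} computations, though it is derivable from them; the paper's route avoids needing it in this form).
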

\begin{proof}
(i) Let $R$ and $R'$ be two free copies of the hyperfinite II$_{1}$ factor and $S$, $S'$, $I$ and $I'$ disjoint sets. ($I$ (resp. $I'$) will be empty if $A$ (resp. $B$) is an interpolated free group factor.)  Suppose $\lambda_{i} \in (0, 1)$ for all $i \in I$, $\lambda_{i'} \in (0, 1)$ for all $i' \in I'$, and that $H = \langle \lambda_{i}: i \in I' \rangle$. Let $(x_{i})_{i \in I}$, $(x_{i'})_{i' \in I'}$, be free families of semicircular elements, free from each other and $R \cup R'$.  Let $(p_{s})_{s \in S}$, and $(q_{i})_{i \in I}$ be families of projections in $R$, and $(p_{s'})_{s' \in S'}$, and $(q_{i'})_{i' \in I'}$ be families of projections in $R'$. Finally, let $(y_{i})_{i \in I}$ and $(y_{i'})_{i' \in I'}$ be free families of generalized circular elements, free from each other, $(x_{i})_{i \in I}$, $(x_{i'})_{i' \in I'}$, and $R \cup R'$.  Assume that $y_{i}$ has parameter $\lambda_{i}$ for $i \in I \cup I'$.   We need to show that the inclusion
	\[
		W^{*}(R \cup (p_{s}x_{s}p_{s})_{s \in S} \cup (q_{i}y_{i}y_{i})_{i \in I}) \rightarrow W^{*}(R \cup R' \cup (p_{s}x_{s}p_{s})_{s \in S\cup S'} \cup (q_{i}y_{i}y_{i})_{i \in I\cup I'})
	\]
is standard.  Note by \cite[Corollary 3.6]{MR1256179} that there is a semicircular element $x \in W^{*}(R \cup R')$ which is free from $R$ and satisfies $W^{*}(R \cup \{x\}) = W^{*}(R \cup R')$.  Let $(u_{s'})_{s' \in S'}$ and $(v_{i'})_{i' \in I'}$ be families of unitaries in $W^{*}(R \cup \{x\})$ satisfying $\overline{p_{s'}} := u _{s'}^{*}p_{s'}u_{s'} \in R$ and $\overline{q_{i'}} := v_{i'}^{*}q_{i'}v_{i} \in R$. let $\overline{x_{s'}} = u_{s'}^{*}x_{s'}u_{s'}$ and $\overline{y_{i'}} = v_{i'}^{*}y_{i'}v_{i}$.  Therefore, the inclusion above can be realized as the canonical inclusion
	\begin{align*}
		W^{*}&(R \cup (p_{s}x_{s}p_{s})_{s \in S} \cup (q_{i}y_{i}y_{i})_{i \in I})\\
			\hookdownarrow &\\
		W^{*}&(R \cup R' \cup (p_{s}x_{s}p_{s})_{s \in S} \cup (\overline{p_{s'}}\cdot\overline{x_{s'}}\cdot\overline{p_{s'}})_{s' \in S'} \cup (q_{i}y_{i}y_{i})_{i \in I}, (\overline{q_{i'}}\cdot\overline{y_{i'}}\cdot\overline{y_{i'}})_{i' \in I'})
	\end{align*}
which is standard.

(ii) Choose $n$ large enough so that
	\[
		\underset{\frac{1}{n}, \cdots, \frac{1}{n}}{M_{n}(\C)} * (B, \psi)
	\] 
is a factor (necessarily $(L(\F_{t}),\tau)$ or $(T_{H'}, \vphi_{H'}))$.  Recall that $(T_H,\vphi_H)\cong (T_H,\vphi_H)\otimes \underset{\frac{1}{n}, \cdots, \frac{1}{n}}{M_{n}(\C)}$ and let $p$ be a minimal projection in $M_{n}(\C)$.  Note from Lemma~\ref{lem:Dykema} that
	\begin{align*}
		p(T_{H}, \vphi_{H}) * (B, \psi))p &=  p \left[ (T_H,\vphi_H)\otimes \underset{\frac{1}{n}, \cdots, \frac{1}{n}}{M_{n}(\C)}\right] * (B,\psi) p\\
										&= p\left[\underset{\frac{1}{n}, \cdots, \frac{1}{n}}{M_{n}(\C)} * (B, \psi)\right]p * p(T_H, \vphi_H)p.
	\end{align*}
It follows from (i) that $j\mid_{pT_{H}p}$ is standard, hence $j$ is standard from Proposition \ref{prop:std_embedding_compression}.
\end{proof}

%%%%%%%%%%%%%%%%%%%%%%%%%%%%%%%%%%

\subsection{Free products of some hyperfinite and other von Neumann algebras}

We will use standard the embedding techniques above to extend Theorem \ref{thm:fdB(H)}.

\begin{thm}\label{thm:hyperfinite}
Let $(A, \phi)$ and $(B, \psi)$ be von Neumann algebras with at least one of $\phi$ or $\psi$ not a trace.  Assume further that $\dim(A), \dim(B) \geq 2$ and $A$ and $B$  are countable direct sums of algebras of the following types:
\begin{itemize}

\item Separable type I factors with faithful normal states;

\item Diffuse von Neumann algebras of the form $\displaystyle \overline{\bigotimes_{n=1}^{\infty} (F_{i}, \phi_{i}) }$ where each $F_{i}$ is finite-dimensional and the state is the tensor product of the $\phi_{i}$;

\item $(M, \gamma) \otimes (L(\F_{t}),\tau)$ with $M$ a separable type I factor, finite or infinite-dimensional;

\item $(N, \gamma') \otimes (T_{G}, \vphi_{G})$ with $N$ a separable type I factor, finite or infinite-dimensional, and $G$ a countable, non-trivial subgroup of $\R_{+}$.
\end{itemize}
\noindent
Let $(\cM, \vphi) = (A, \phi) * (B, \psi)$.  Then $(\cM, \vphi) \cong (T_{H}, \vphi_{H}) \oplus C$ where $H$ is the group generated by the point spectra of $\Delta_{\phi}$ and $\Delta_{\psi}$, and $C$ is finite-dimensional and is determined exactly as in Theorem \ref{thm:finitedim}.
\end{thm}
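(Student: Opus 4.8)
The plan is to follow the inductive-limit strategy of Dykema \cite{MR1201693, Dyk97}, using the standard embeddings developed in Section~\ref{sec:std_embeddings} to control the type III summand in the limit.

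First I would realize $(A,\phi)$ and $(B,\psi)$ as inductive limits of finite direct sums of elementary algebras. Writing $A=\bigoplus_k A^{(k)}$ as a countable direct sum of the four listed types, I approximate each summand by an increasing sequence of subalgebras through state-preserving modular inclusions: a separable type I factor $\cB(\cH)$ is the limit of the corners $M_n(\C)\oplus \C$ (cf. the proof of Theorem~\ref{thm:B(H)}); a diffuse $\overline{\bigotimes_i (F_i,\phi_i)}$ is the limit of the finite tensor products $F_1\otimes\cdots\otimes F_n$; and a summand $M_k(\C)\otimes (L(\F_t),\tau)$ or $M_k(\C)\otimes (T_G,\vphi_G)$ arises by replacing the type I tensor factor with its finite corners. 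Lumping the tail $\bigoplus_{k>n}A^{(k)}$ into a single copy of $\C$ weighted by its total mass (its support projection is central, hence fixed by $\sigma^{\phi}$, so the inclusion is modular), I obtain $A=\varinjlim A_n$ with each $A_n$ a finite direct sum of matrix algebras together with finitely many summands of the form $M_k(\C)\otimes L(\F_t)$ or $M_k(\C)\otimes T_G$; the same is done for $B$. Since $(A,\phi)*(B,\psi)$ is generated by $A$ and $B$, one has $(A,\phi)*(B,\psi)=\varinjlim (A_n,\phi)*(B_n,\psi)$.

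Next I would compute each $(A_n,\phi)*(B_n,\psi)$. The tensor summands are reduced to ordinary $L(\F_t)$ (with its trace) and $T_G$ summands: compressing by a minimal projection $e$ of the centralizer $M_k(\C)^{\gamma}$ of a type I tensor factor and applying Lemmas~\ref{lem:Dykema} and \ref{lem:antman} frees the tracial $L(\F_t)$ (resp. $T_G$) factor into a free product, while the matrix algebra $M_k(\C)$, which carries the point spectrum of $\gamma$ (resp. $\gamma'$), is retained in the role played by $\cB(\cH)$ in Lemma~\ref{lem:Dykema}. After these compressions, $(A_n,\phi)*(B_n,\psi)$ is a free product of finite direct sums of matrix algebras, interpolated free group factors, and free Araki--Woods factors, which is evaluated exactly as in Theorem~\ref{thm:finitedim} and Theorem~\ref{thm:fdB(H)} by the chain-of-inclusions argument built on Propositions~\ref{prop:matrix1}, \ref{prop:matrix2}, and \ref{prop:matrix3}. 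This yields $(A_n,\phi)*(B_n,\psi)\cong (T_{H_n},\vphi_{H_n})\oplus C_n$, where $H_n$ is generated by the point spectra of $\phi|_{A_n}$ and $\psi|_{B_n}$ and $C_n$ is finite dimensional.

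Finally I would pass to the limit. The connecting inclusion $(A_n,\phi)*(B_n,\psi)\hookrightarrow (A_{n+1},\phi)*(B_{n+1},\psi)$ is assembled from canonical inclusions of free products (as the inclusions $A_n\hookrightarrow A_{n+1}$ and $B_n\hookrightarrow B_{n+1}$ enlarge and refine the summands) together with compressions; by Propositions~\ref{prop:std_embedding_free_product} and \ref{prop:std_embedding_compression} and Proposition~\ref{prop:std_embedding_inductive_limit}(i), its restriction to the type III summand is a standard embedding $(T_{H_n},\vphi_{H_n})\hookrightarrow (T_{H_{n+1}},\vphi_{H_{n+1}})$. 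Proposition~\ref{prop:std_embedding_inductive_limit}(ii) then identifies $\varinjlim (T_{H_n},\vphi_{H_n})$ with $(T_H,\vphi_H)$, where $H=\bigcup_n H_n$ is the group generated by the point spectra of $\Delta_\phi$ and $\Delta_\psi$. The finite-dimensional parts $C_n$ can only come from $M_1(\C)$ summands whose mass is sufficiently concentrated --- the diffuse and free-factor summands having no minimal projections --- so only finitely many contribute, and, with the central supports tracked by Lemma~\ref{lem:Dykema}, the $C_n$ stabilize to a finite-dimensional $C$ determined exactly as in Theorem~\ref{thm:finitedim}. I expect the main obstacle to be this last step: checking that the connecting maps restrict to genuine standard embeddings on the type III summands (decomposing each into the standard pieces of Proposition~\ref{prop:std_embedding_free_product} and matching the finite-dimensional corners along the way), and confirming that the finite-dimensional summands stabilize rather than leak into the type III part.
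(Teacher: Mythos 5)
Your overall strategy --- truncate to finite stages, compute each stage with the finite-dimensional/type I machinery of Theorems~\ref{thm:finitedim} and \ref{thm:fdB(H)}, and pass to the inductive limit via standard embeddings --- is the same as the paper's, with one organizational difference: you keep the $L(\F_t)$ and $T_G$ tensor factors inside every approximating algebra and propose to compute each finite stage directly (via the Lemma~\ref{lem:Dykema} compression by a minimal projection of the type I tensor factor's centralizer, which is indeed the right mechanism), whereas the paper first replaces each summand $(M,\gamma)\otimes (L(\F_t),\tau)$ or $(N,\gamma')\otimes(T_G,\vphi_G)$ by a matrix algebra $M_{n_i}(\C)$ carrying a \emph{tracial} state, invokes Lemma~\ref{lem:inf_direct_sum_M_n} once, and then tensors back up one factor at a time using $M_n(\C)\otimes (L(\F_{1+n^{-2}(t-1)}),\tau)\cong (L(\F_t),\tau)$ and $M_n(\C)\otimes(T_G,\vphi_G)\cong(T_G,\vphi_G)$, checking standardness of each tensoring step with Lemma~\ref{lem:Dykema} and Propositions~\ref{prop:std_embedding_compression} and \ref{prop:std_embedding_free_product}. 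Your route would in addition require versions of Propositions~\ref{prop:matrix1}--\ref{prop:matrix3} accommodating several diffuse summands on both sides of the free product at a single stage; this is plausible but not covered verbatim by the stated results, and the paper's tensor-up device is designed precisely to avoid it.

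The genuine gap is the step you yourself flag: stabilization of the finite-dimensional summands. With your exhaustion, the tail $\bigoplus_{k>n}A^{(k)}$ is lumped into a single atom whose mass is the entire tail mass, and at early stages this can be large; paired against a sufficiently concentrated atom or matrix summand of $B_n$, it can satisfy the condition $\gamma<1$ of Theorem~\ref{thm:finitedim} and produce a spurious finite-dimensional summand of $(A_n,\phi)*(B_n,\psi)$ that is absent from the limit. When that happens, the connecting map does not decompose as (standard embedding)$\,\oplus\,(\mathrm{id}_C)$ --- part of $C_n$ merges into the type III summand at a later stage --- and Proposition~\ref{prop:std_embedding_inductive_limit}(ii) is no longer applicable as stated. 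The paper forestalls this in Lemma~\ref{lem:inf_direct_sum_M_n}: it first observes that only finitely many indices (the sets $I_0$, $J_0$) can ever contribute to $C$, and then chooses the truncation level $K$ so that the tail identities $p_A,p_B$ satisfy $\phi(p_A)<1-\psi(q_B)$ and $\psi(p_B)<1-\phi(q_A)$, forcing every tail atom to lie in the diffuse summand at every stage, so that the finite-dimensional part equals $C$ from the very first stage and never changes. (It also increases $K$ if necessary so that the base stage contains a non-tracial summand; otherwise some early $H_n$ would be trivial, the early stages would be interpolated free group factors, and Proposition~\ref{prop:std_embedding_inductive_limit}, stated for non-trivial $H_n$, would not apply.) Your proof needs this quantitative choice of truncation --- or else a separate argument that transient atoms being absorbed into the III part still yields standard embeddings in the limit --- before the inductive-limit step closes.
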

Note that the class of von Neumann algebras in the second bullet point contains all hyperfinite, diffuse, finite von Neumann algebras as well as the Powers factors $(R_{\lambda}, \phi_{\lambda})$ of type III$_{\lambda},$ and tensor products of Powers factors.  Furthermore observe that the class of von Neumann algebras in the last two bullet points contains the interpolated free group factors with traces and separable free Araki--Woods factors with free quasi-free states, but is larger in general as tensor products are allowed where $\gamma$ is not a trace and the point spectrum of $\Delta_{\gamma'}$ need not be a subset of $G$.

Before we prove the above theorem, we first use standard embeddings to upgrade Theorem~\ref{thm:fdB(H)} to handle infinite direct sums:

\begin{lem}\label{lem:inf_direct_sum_M_n}
Let $(A, \phi)$ and $(B, \psi)$ be von Neumann algebras with faithful states $\phi$ and $\psi$, respectively, of the following form (up to unitary conjugation):
	\[
		(A, \phi) = \bigoplus_{i = 1}^{\infty} \underset{\alpha_{i,1},\cdots, \alpha_{i, k_{i}}}{\overset{p_{i,1}, \cdots, p_{i, k_{i}}}{M_{k_{i}}(\C)}} \oplus \bigoplus_{i=1}^{\infty} \underset{\gamma_{i}}{\overset{\overline{p}_{i}}{(B(\cH_{i}), \phi_{i})}}\qquad\text{and}\qquad (B, \psi) = \bigoplus_{j = 1}^{\infty} \underset{\beta_{j,1},\cdots, \beta_{j, \ell_{j}}}{\overset{q_{j,1}, \cdots, q_{j, \ell_{j}}}{M_{\ell_{j}}(\C)}}  \oplus \bigoplus_{j=1}^{\infty} \underset{\delta_{j}}{\overset{\overline{q}_{j}}{(B(\cK_{j}), \psi_{j})}}.
	\]
Where the $\cH_{i}$ and $\cK_{j}$ are separable infinite-dimensional Hilbert spaces. We assume at least one of $\phi$ or $\psi$ is not a trace, that both $A$ and $B$ are at least two-dimensional, and allow either to be finite-dimensional by having the corresponding weights be zero. Let $G$ be the group generated by the point spectra of $\Delta_{\phi}$ and $\Delta_{\psi}$.  Then 
$$
(A, \phi) * (B, \psi) = (T_{G}, \vphi_{G}) \oplus C
$$
where $C$ is finite-dimensional (possibly zero) and is determined exactly as in Theorem~\ref{thm:finitedim}.
\end{lem}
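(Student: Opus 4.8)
The plan is to realize $(A,\phi)*(B,\psi)$ as an inductive limit of free products of \emph{finite} direct sums of type I factors, apply Theorem~\ref{thm:fdB(H)} at each finite stage, and then identify the limit using the standard embedding machinery of the previous subsection. First I would fix increasing filtrations. Writing $A=\bigoplus_i M_{k_i}(\C)\oplus\bigoplus_i \cB(\cH_i)$ with central supports summing to $1$, let $r_N\in A$ be the projection onto all but the first $N$ summands of each type and set
$$A_N:=\bigoplus_{i=1}^N M_{k_i}(\C)\oplus\bigoplus_{i=1}^N \cB(\cH_i)\oplus \C r_N,$$
a finite direct sum of separable type I factors (the tail collapsed to scalars). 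One checks $A_N\subset A_{N+1}$ and $\overline{\bigcup_N A_N}=A$; define $B_N$ analogously. Since subalgebras of free algebras are free, $\cM_N:=W^*(A_N\cup B_N)\cong (A_N,\phi)*(B_N,\psi)$ and $\cM=\overline{\bigcup_N \cM_N}$. By Theorem~\ref{thm:fdB(H)}, $(\cM_N,\vphi|_{\cM_N})\cong (T_{G_N},\vphi_{G_N})\oplus C_N$ with $G_N$ the group generated by the point spectra of $\Delta_{\phi|_{A_N}}$ and $\Delta_{\psi|_{B_N}}$ and $C_N$ finite-dimensional. Here $G_N\nearrow G$ (collapsing a tail to scalars contributes only the eigenvalue $1$), and since at least one state is non-tracial, $G_N$ is non-trivial for all $N$ past some $N_0$, so the limit may be taken over $N\ge N_0$.

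Next I would show $C_N$ stabilizes. According to the description in Theorem~\ref{thm:finitedim}, a finite-dimensional summand arising from a one-dimensional central summand of weight $\alpha$ paired with a block $M_{\ell_j}(\C)$ requires $\gamma_j=\sum_l (1-\alpha)/\beta_{j,l}<1$; since the block masses are summable the weights $\beta_{j,l}\to 0$, forcing $\gamma_j\to\infty$, so only finitely many such pairs can contribute, and all of them are already present in $\cM_{N_1}$ for some finite $N_1$. Hence $C_N=C$ for $N\ge N_1$; its minimal central projections are central in every $\cM_N$ ($N\ge N_1$), hence central in $\cM$, and the central projection $z_N$ onto $T_{G_N}$ equals a fixed $z$ for $N\ge N_1$, with $z\cM=z\cM z=\varinjlim z\cM_N z$ and $(1-z)\cM=C$.

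The heart of the matter—and the step I expect to be the main obstacle—is showing that the connecting inclusions $\iota_N\colon z\cM_N z=(T_{G_N},\vphi_{G_N})\hookrightarrow (T_{G_{N+1}},\vphi_{G_{N+1}})=z\cM_{N+1}z$ are standard embeddings in the sense of Definition~\ref{def:standard}. The passage from $\cM_N$ to $\cM_{N+1}$ only refines the scalar tail summands $\C r_N$ of $A_N$ and $B_N$ into genuine type I blocks; tracing this through the inductive computation of Theorem~\ref{thm:fdB(H)} (the chain $\cM_{i,j}$ built via Lemmas~\ref{lem:Dykema} and \ref{lem:antman} together with Propositions~\ref{prop:matrix1}--\ref{prop:matrix3}), each elementary step enlarges the current free Araki--Woods factor by a free product with a finite-dimensional algebra or a copy of $\cB(\cH)$. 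By Proposition~\ref{prop:std_embedding_free_product} each such step is a standard embedding; by Proposition~\ref{prop:std_embedding_compression} standardness is unaffected by the compressions used to isolate the diffuse corner; and by Proposition~\ref{prop:std_embedding_inductive_limit}(i) the finite composite $\iota_N$ is again standard. Granting this, Proposition~\ref{prop:std_embedding_inductive_limit}(ii) identifies $z\cM z=\varinjlim (T_{G_N},\vphi_{G_N})$ with $(T_G,\vphi_G)$, whence $(\cM,\vphi)\cong (T_G,\vphi_G)\oplus C$ as claimed.
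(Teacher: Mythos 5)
Your proposal is correct and follows essentially the same route as the paper's proof: truncate $A$ and $B$ with the tails collapsed to scalar summands, apply the finite-direct-sum theorem (Theorem~\ref{thm:fdB(H)}) at each stage, decompose each connecting inclusion via Lemma~\ref{lem:Dykema} into corner-wise free products with finite-dimensional algebras or copies of $\cB(\cH)$, and then use Propositions~\ref{prop:std_embedding_free_product}, \ref{prop:std_embedding_compression}, and \ref{prop:std_embedding_inductive_limit} to identify the inductive limit with $(T_G,\vphi_G)\oplus C$. The only organizational difference is that the paper engineers the truncation in advance---choosing $K$ so that $\phi(p_A)<1-\psi(q_B)$ and $\psi(p_B)<1-\phi(q_A)$, which forces $C_N=C$ and keeps the collapsed tail scalars in the diffuse summand from the first stage, and then refines the tail in two passes (scalars first, then blocks)---whereas you let $C_N$ stabilize a posteriori; both variants rest on the same central-support bookkeeping from Lemma~\ref{lem:Dykema}.
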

\begin{proof}
Define $I_0\subset \bbN$ as the subset of $i\in \bbN$ such that either $k_i=1$ and there exists $j\in \bbN$ such that $\sum_{k=1}^n \frac{1-\alpha_{i,1}}{\beta_{j,k}} <1$, or there exists $j\in \bbN$ such that $\ell_j=1$ such that  $\sum_{k=1}^n \frac{1-\beta_{j,1}}{\alpha_{i,k}} <1$. Note that $I_0$ must be a finite set.  Define $J_0$ similarly. Up to relabeling, we may assume $I_0 = \{1,\ldots, N\}$ and $J_0=\{1,\ldots, M\}$ for some $N,M\in \bbN$, and then define
	\[
		A_0:= \bigoplus_{i=1}^N \underset{\alpha_{i,1},\cdots, \alpha_{i, k_{i}}}{\overset{p_{i,1}, \cdots, p_{i, k_{i}}}{M_{k_{i}}(\C)}} \qquad\text{and}\qquad B_0:= \bigoplus_{j = 1}^{M} \underset{\beta_{j,1},\cdots, \beta_{j, \ell_{j}}}{\overset{q_{j,1}, \cdots, q_{j, \ell_{j}}}{M_{\ell_{j}}(\C)}}  ,
	\]
and let $q_A$ and $q_B$ denote their respective identities. Let $K$ be sufficiently large so that if $p_A$ and $p_B$ are the respective identities of
	\[
		\bigoplus_{i=N+K+1}^\infty \underset{\alpha_{i,1},\cdots, \alpha_{i, k_{i}}}{\overset{p_{i,1}, \cdots, p_{i, k_{i}}}{M_{k_{i}}(\C)}} \oplus \bigoplus_{i = N+K+1}^{\infty} \underset{\gamma_{i}}{\overset{\overline{p}_{i}}{(B(\cH_{i}), \phi_{i})}} \qquad\text{and}\qquad \bigoplus_{j = M+K+1}^{\infty} \underset{\beta_{j,1},\cdots, \beta_{j, \ell_{j}}}{\overset{q_{j,1}, \cdots, q_{j, \ell_{j}}}{M_{\ell_{j}}(\C)}} \oplus \bigoplus_{j=M+K+1}^{\infty} \underset{\delta_{j}}{\overset{\overline{q}_{j}}{(B(\cK_{j}), \psi_{j})}},
	\]
then $\phi(p_A) < 1- \psi(q_B)$ and $\psi(p_B) < 1 - \phi(q_A)$. Define
	\begin{align*}
			(A_1,\phi):=&\bigoplus_{i=1}^{N+K} \underset{\alpha_{i,1},\cdots, \alpha_{i, k_{i}}}{\overset{p_{i,1}, \cdots, p_{i, k_{i}}}{M_{k_{i}}(\C)}} \oplus \bigoplus_{i = 1}^{N+K} \underset{\gamma_{i}}{\overset{\overline{p}_{i}}{(B(\cH_{i}), \phi_{i})}} \oplus \overset{p_A}{\bbC}\\
			(B_1,\psi):=&\bigoplus_{j = 1}^{M+K} \underset{\beta_{j,1},\cdots, \beta_{j, \ell_{j}}}{\overset{q_{j,1}, \cdots, q_{j, \ell_{j}}}{M_{\ell_{j}}(\C)}} \oplus \bigoplus_{j=1}^{M+K} \underset{\delta_{j}}{\overset{\overline{q}_{j}}{(B(\cK_{j}), \psi_{j})}} \oplus \overset{p_B}{\bbC}.
	\end{align*}
Increasing $K$ if necessary, we assume that at least one of the summands in either $A_1$ or $B_1$ is non-tracial. Let $H_{s,t}$ denote the subgroup of $\bbR^+$ generated by
	\begin{align*}
			&\left\{ \frac{\alpha_{i,j}}{\alpha_{i,k}} \colon 1\leq i \leq N+K+s-1,\ 1\leq j,k\leq k_i\right\},\\
			&\left\{ \frac{\beta_{j,i}}{\beta_{j,k}} \colon 1\leq j\leq M+K+t-1,\ 1\leq i,k\leq \ell_j \right\},
	\end{align*}
as well as the point spectra of $\Delta_{\phi_{i}}$ and $\Delta_{\psi_{j}}$ for $i \leq i \leq N+K+s-1$  and $j \leq M+K+t-1$. If we let $C$ be as in the statement of the lemma, then by Theorem~\ref{thm:finitedim} we have
	\[
		(A_1,\phi) * (B_1, \psi) \cong \overset{P}{(T_{H_{1,1}} ,\vphi_{H_{1,1}})} \oplus C.
	\]
Moreover, $p_A,p_B\leq P$ by our choice of $K$. Now, for $n,m\geq 2$ define
	\begin{align*}
		(A_n, \phi) :=&\bigoplus_{i=1}^{N+K} \underset{\alpha_{i,1},\cdots, \alpha_{i, k_{i}}}{\overset{p_{i,1}, \cdots, p_{i, k_{i}}}{M_{k_{i}}(\C)}} \oplus \bigoplus_{i = 1}^{N+K} \underset{\gamma_{i}}{\overset{\overline{p}_{i}}{(B(\cH_{i}), \phi_{i})}} \oplus \left(\bigoplus_{i=N+K+1}^{N+K+n-1} \overset{p_i}{\bbC} \oplus\overset{\overline{p}_{i}}{\C}\right) \oplus \overset{p_{A,n}}{\bbC}\\
		(B_n,\psi):=&\bigoplus_{j = 1}^{M+K} \underset{\beta_{j,1},\cdots, \beta_{j, \ell_{j}}}{\overset{q_{j,1}, \cdots, q_{j, \ell_{j}}}{M_{\ell_{j}}(\C)}}  \oplus \bigoplus_{j=1}^{M+K} \underset{\delta_{j}}{\overset{\overline{q}_{j}}{(B(\cK_{j}), \psi_{j})}} \oplus \left(\bigoplus_{j=M+K+1}^{M+K+n-1} \overset{q_j}{\bbC}\oplus \overset{\overline{q}_j}{\bbC}\right) \oplus \overset{p_{B,n}}{\bbC},
	\end{align*}
where
	\begin{align*}
		p_i &= p_{i,1}+\cdots +p_{i,k_i} \qquad\qquad\qquad\qquad \overline{p}_i \text{ is the identity of $\cB(\cH_{i})$}\\
		q_j &= q_{j,1}+\cdots + q_{j,\ell_j} \qquad\qquad\qquad\qquad \overline{q}_j \text{ is the identity of $\cB(\cK_{j})$}\\
		p_{A,n} &= p_A - (p_{N+K+1}+\cdots + p_{N+K+n-1}) - (\overline{p}_{N+K+1}+\cdots + \overline{p}_{N+K+n-1})\\
		p_{B,n} &= p_B - (q_{M+K+1}+\cdots + q_{M+K+n-1}) - (\overline{q}_{M+K+1}+\cdots + \overline{q}_{M+K+n-1}).
	\end{align*}
By Lemma~\ref{lem:Dykema} we have
	\[
		p_{A,1} \left[(A_2,\phi) * (B_1,\psi)\right] p_{A,1} \cong p_{A,1}\left[(A_1, \phi) * (B_1,\psi)\right] p_{A,1} * \left[ \overset{p_{N+K+1}}{\bbC} \oplus \overset{\overline{p}_{N+K+1}}{\bbC} \oplus \overset{p_{A,2}}{\bbC}\right]
	\]
Hence it follows from Propositions \ref{prop:std_embedding_compression} and \ref{prop:std_embedding_free_product} that the canonical inclusion of $(A_1,\phi)* (B_1,\psi)$ into $(A_2,\phi)*(B_1,\psi)$ is a standard embedding. Iterating this argument, we obtain that the canonical inclusions 
	\[
		(A_n,\phi)*(B_n,\psi) \hookrightarrow (A_{n+1},\phi)*(B_n,\psi) \hookrightarrow (A_{n+1},\phi)*(B_{n+1},\psi)
	\]
are standard embeddings. By Proposition~\ref{prop:std_embedding_inductive_limit}, we therefore have
	\[
		(A^{(1, 1)},\phi)* (B^{(1, 1)},\psi)\cong (T_{H_{1,1}},\vphi_{H_{1,1}})\oplus C,
	\]
where
	\begin{align*}
		(A^{(1, 1)},\phi):=&\bigoplus_{i=1}^{N+K} \underset{\alpha_{i,1},\cdots, \alpha_{i, k_{i}}}{\overset{p_{i,1}, \cdots, p_{i, k_{i}}}{M_{k_{i}}(\C)}} \oplus  \bigoplus_{i = 1}^{N+K} \underset{\gamma_{i}}{\overset{\overline{p}_{i}}{(B(\cH_{i}), \phi_{i})}} \oplus \bigoplus_{i=N+K+1}^{\infty} \overset{p_i}{\bbC} \oplus \bigoplus_{i=N+K+1}^{\infty} \overset{\overline{p}_i}{\bbC}\\
		(B^{(1, 1)},\psi):=&\bigoplus_{j = 1}^{M+K} \underset{\beta_{j,1},\cdots, \beta_{j, \ell_{j}}}{\overset{q_{j,1}, \cdots, q_{j, \ell_{j}}}{M_{\ell_{j}}(\C)}} \oplus  \bigoplus_{j=1}^{M+K} \underset{\delta_{j}}{\overset{\overline{q}_{j}}{(B(\cK_{j}), \psi_{j})}}  \oplus \bigoplus_{j=M+K+1}^{\infty} \overset{q_j}{\bbC}  \oplus \bigoplus_{j=M+K+1}^{\infty} \overset{\overline{q}_j}{\bbC}.
	\end{align*}

Next, for $n\geq 2$ define
	\begin{align*}
	(A^{(n, 1)},\phi):=&\bigoplus_{i=1}^{N+K+n-1} \underset{\alpha_{i,1},\cdots, \alpha_{i, k_{i}}}{\overset{p_{i,1}, \cdots, p_{i, k_{i}}}{M_{k_{i}}(\C)}} \oplus  \bigoplus_{i = 1}^{N+K+n - 1} \underset{\gamma_{i}}{\overset{\overline{p}_{i}}{(B(\cH_{i}), \phi_{i})}} \oplus \bigoplus_{i=N+K+n}^{\infty} \overset{p_i}{\bbC} \oplus \bigoplus_{i=N+K+n}^{\infty} \overset{\overline{p}_i}{\bbC}\\
	(A^{(n, 2)},\phi):=&\bigoplus_{i=1}^{N+K+n} \underset{\alpha_{i,1},\cdots, \alpha_{i, k_{i}}}{\overset{p_{i,1}, \cdots, p_{i, k_{i}}}{M_{k_{i}}(\C)}} \oplus  \bigoplus_{i = 1}^{N+K+n - 1} \underset{\gamma_{i}}{\overset{\overline{p}_{i}}{(B(\cH_{i}), \phi_{i})}} \oplus \bigoplus_{i=N+K+n+1}^{\infty} \overset{p_i}{\bbC} \oplus \bigoplus_{i=N+K+n}^{\infty} \overset{\overline{p}_i}{\bbC}\\
	(B^{(n, 1)},\psi):=&\bigoplus_{j = 1}^{M+K+n-1} \underset{\beta_{j,1},\cdots, \beta_{j, \ell_{j}}}{\overset{q_{j,1}, \cdots, q_{j, \ell_{j}}}{M_{\ell_{j}}(\C)}} \oplus  \bigoplus_{j=1}^{M+K+n-1} \underset{\delta_{j}}{\overset{\overline{q}_{j}}{(B(\cK_{j}), \psi_{j})}}  \oplus \bigoplus_{j=M+K+n}^{\infty} \overset{q_j}{\bbC}  \oplus \bigoplus_{j=M+K+n}^{\infty} \overset{\overline{q}_j}{\bbC}\\
	(B^{(n, 2)},\psi):=&\bigoplus_{j = 1}^{M+K+n} \underset{\beta_{j,1},\cdots, \beta_{j, \ell_{j}}}{\overset{q_{j,1}, \cdots, q_{j, \ell_{j}}}{M_{\ell_{j}}(\C)}} \oplus  \bigoplus_{j=1}^{M+K+n-1} \underset{\delta_{j}}{\overset{\overline{q}_{j}}{(B(\cK_{j}), \psi_{j})}}  \oplus \bigoplus_{j=M+K+n+1}^{\infty} \overset{q_j}{\bbC}  \oplus \bigoplus_{j=M+K+n}^{\infty} \overset{\overline{q}_j}{\bbC}.		
	\end{align*}
By Lemma~\ref{lem:Dykema} for $i=N+K+1$ we have
	\[
		p_i\left[ ( A^{(1,2)},\phi)*(B^{(1,1)},\psi)\right] p_i \cong p_i\left[ (A^{(1,1)},\phi)* (B^{(1,1)},\psi)\right] p_i * \overset{p_{i,1},\ldots, p_{i,k_i}}{\underset{\alpha_{i,1},\ldots, \alpha_{i,k_i}}{M_{k_i}(\bbC)}}.
	\]	
As above, we have that the canonical inclusion of $(A^{(1,1)},\phi)* (B^{(1,1)},\psi)$ into $( A^{(1,2)},\phi)*(B^{(1,1)},\psi)$ is a standard embedding. Iterating yields that the canonical inclusions
	\begin{align*}
		( A^{(n,1)},\phi)*(B^{(m, 1)},\psi) &\hookrightarrow ( A^{(n, 2)},\phi)*(B^{(m, 1)},\psi)\\
			 &\hookrightarrow ( A^{(n+1, 1)},\phi)*(B^{(m, 1)},\psi)\\
			 &\hookrightarrow ( A^{(n+1, 1)},\phi)*(B^{(m, 2)},\psi) \hookrightarrow ( A^{(n+1, 1)},\phi)*(B^{(m+1, 1)},\psi)
	\end{align*}
are standard embeddings. Appealing to Proposition~\ref{prop:std_embedding_inductive_limit} concludes the proof.
\end{proof}

\begin{proof}[Proof of Theorem~\ref{thm:hyperfinite}]
Write
	\[
		A = \bigoplus_{i= 1}^\infty A_{0,i} \oplus \bigoplus_{i=1}^\infty A_{1,i} \oplus \bigoplus_{i=1}^\infty A_{2,i} \qquad B=\bigoplus _{j= 1}^\infty B_{0,j} \oplus \bigoplus_{j=1}^\infty B_{1,j} \oplus \bigoplus_{j=1}^\infty B_{2,j}
	\]
where $\{A_{0,i}\}$ consists of all the type I factor direct summands of $A$, $\{A_{1,j}\}$ consists of all diffuse summands of the form $\displaystyle \overline{\bigotimes_{k=1}^{\infty} (F_{k}, \phi_{k}) }$ for $F_{k}$ finite-dimensional, and $\{A_{2, i}\}$ consists of the summands which are of the form
\begin{itemize}

\item $(M, \gamma) \otimes L(\F_{t})$ with $M$ a separable type I factor, finite or infinite-dimensional;

\item $(N, \gamma') \otimes (T_{G}, \vphi_{G})$ with $N$ a separable type I factor, finite or infinite-dimensional, and $G$ a nontrivial countable subgroup of $\R_{+}$.

\end{itemize}

The $\{B_{0, j}\}$, $\{B_{1, j}\}$, and $\{B_{2, j}\}$ are defined similarly. For the collections $\{A_{0,i}\}$ and $\{B_{0,j}\}$ define $I_0$ and $J_0$ as in Lemma~\ref{lem:inf_direct_sum_M_n}. Set
	\[
		A_1:= \bigoplus_{i\in 1}^\infty A_{0,i} \oplus \bigoplus_{i=1}^\infty A_{1,i}' \oplus \bigoplus_{i=1}^{\infty} M_{n_{i}}(\C)  \qquad B_1=\bigoplus _{j\in 1}^\infty B_{0,j} \oplus \bigoplus_{j=1}^\infty B_{1,j}' \oplus \bigoplus_{j=1}^{\infty} M_{m_{j}}(\C) 
	\]
where $A_{1,i}'$ is a finite-dimensional subalgebra of $A_{1,i}$ with dimension large enough so that its minimal projections have mass smaller than
	\[
		 1- \psi\left( \sum_{j=1}^\infty 1_{B_{0,j}}\right).
	\]
The state on each $M_{n_{i}}(\C)$ is tracial, the identity on $M_{n_{i}}(\C)$ is the identity on $A_{2, i}$, the inclusion of $M_{n_{i}}(\C)$ into $A_{2, i}$ is modular, and $n_{i}$ is large enough so that the summand $M_{n_{i}}(\C)$ is in the diffuse summand of $A_{1}*B_{1}$. Similar statements hold for the $B_{1,j}'$ and $M_{m_{j}}(\C)$. These conditions ensure that $A_1*B_1$ has the predicted finite-dimensional $C$. By Lemma~\ref{lem:inf_direct_sum_M_n} we know
	\[
		(A_1,\phi)* (B_1,\psi) \cong (T_{H_{1,1}}, \vphi_{H_{1,1}}) \oplus C,
	\]
where $H_{1,1}$ is the subgroup of $\bbR^+$ generated by the point spectra of $\Delta_{\phi\mid_{A_1}}$ and $\Delta_{\psi\mid_{B_1}}$. We then proceed to build back up to $A$ and $B$ by tensoring the summands $A_{1,i}'$ and $B_{1,j}'$ by appropriate finite-dimensional algebras, one at a time, as well as by tensoring von Neumann algebras of the form
\begin{itemize}

\item $(M, \gamma) \otimes L(\F_{t})$ with $M$ a separable type I factor, finite or infinite-dimensional (first tensor with $M$, then tensor with $L(\F_{t})$);

\item $(N, \gamma') \otimes (T_{G}, \vphi_{G})$ with $N$ a separable type I factor, finite or infinite-dimensional, and $G$ a nontrivial countable subgroup of $\R_{+}$  (first tensor with $N$, then tensor with $(T_{G}, \vphi_{G}$).

\end{itemize}
on each $M_{n_{i}}(\C)$  and each  $M_{m_{j}}(\C)$ and using
	\[
		\underset{\frac{1}{n}, \dots, \frac{1}{n}}{M_{n}(\C)}\otimes (L(\F(1 + n^{-2}(t-1))),\tau) \cong (L(\F_{t}),\tau) \qquad \text{and}\qquad \underset{\frac{1}{n}, \dots, \frac{1}{n}}{M_{n}(\C)} \otimes (T_{G}, \vphi_{G}) \cong (T_{G}, \vphi_{G})
	\]
for $G$ non-trivial. Lemma~\ref{lem:Dykema} together with Propositions \ref{prop:std_embedding_compression} and \ref{prop:std_embedding_free_product} ensure the canonical inclusions are standard embeddings on the orthogonal compliment of $C$. The result then follows from Proposition~\ref{prop:std_embedding_inductive_limit}.
\end{proof}

\begin{rem} It should be noted that, by Theorem 7.2 in \cite{HSV16}, there are  hyperfinite von Neumann algebras equipped with non-almost periodic states whose free product is not a free Araki--Woods factor of any kind.  Given Theorem \ref{thm:hyperfinite} above, it is natural to conjecture that the free product of injective von Neumann algebras is a free Araki--Woods factor plus a finite-dimensional von Neumann algebra if and only if both injective von Neumann algebras are equipped with almost periodic states.

\end{rem}

\bibliographystyle{amsalpha}
\bibliography{bibliography}

\end{document}